\renewcommand{\subjclass}[1]{\thanks{\emph{2000 Mathematics Subject Classification:}~#1}}
\renewcommand{\keywords}[1]{\thanks{\emph{Keywords and Phrases:}~#1}}
\newtheorem{thm}{Theorem}[section]
\newtheorem{cor}[thm]{Corollary}
\newtheorem{lemma}[thm]{Lemma}
\newtheorem{prop}[thm]{Proposition}
\theoremstyle{definition}
\newtheorem{prg}{}[section]
\theoremstyle{remark}
\numberwithin{equation}{section}
\newcommand{\Cc}{{\mathbb C}}
\newcommand{\Zz}{{\mathbb Z}}
\newcommand{\Qq}{{\mathbb Q}}
\newcommand{\Rr}{{\mathbb R}}
\newcommand{\Xx}{{\mathbb X}}
\newcommand{\OQq}{\overline{{\mathbb Q}}}
\newcommand{\LL}{\mbox{$\mathcal{L}$}}
\newcommand{\x}{{\bf x}}
\newcommand{\cc}{{\bf c}}
\newcommand{\dd}{{\bf d}}
\newcommand{\hh}{{\bf h}}
\newcommand{\rr}{{\bf r}}
\newcommand{\ii}{{\bf i}}
\newcommand{\jj}{{\bf j}}
\newcommand{\kk}{{\bf k}}
\newcommand{\XX}{{\bf X}}
\newcommand{\Ur}{\mathcal{U}({\bf r})}
\newcommand{\Uri}{\mathcal{U}({\bf r},{\bf i})}
\newcommand{\Uw}{\mathcal{U}_w}
\newcommand{\del}{\partial}
\newcommand{\eps}{\varepsilon}
\newcommand{\half}{\textstyle{\frac{1}{2}}}
\renewcommand{\leq}{\leqslant}
\renewcommand{\geq}{\geqslant}
\newcommand{\kdots}{,\ldots ,}
\renewcommand{\span}{{\rm span}\,}
\newcommand{\HLcQ}{H_{\mathcal{L},{\bf c},Q}}
\newcommand{\DL}{\Delta_{\mathcal{L}}}
\newcommand{\HL}{H_{\mathcal{L}}}
\newcommand{\lin}{{\rm lin}}
\newcommand{\propersubset}
{\,\mbox{{\raisebox{-1ex}
{$\stackrel{\textstyle{\subset}}{\scriptscriptstyle{\not=}}$}}}\,}
\title[Improvement of the Quantitative Subspace Theorem]
{A further improvement of the Quantitative Subspace Theorem}
\subjclass{11J68, 11J25}
\keywords{Diophantine approximation, Subspace Theorem}
\author[J.-H.~EVERTSE]{Jan-Hendrik~EVERTSE}
\author[R.~G.~FERRETTI]{Roberto~G.~FERRETTI}
\address{J.-H. Evertse,
Universiteit Leiden, Mathematisch Instituut,
Postbus 9512, 2300 RA Leiden,
The Netherlands}
\email{evertse@math.leidenuniv.nl}
\address{R.G. Ferretti,
Universit\`a della Svizzera Italiana,
Via Buffi 23, CH-6900 Lugano, Switzerland}
\email{roberto.ferretti@lu.unisi.ch}
\date{\today}
\begin{document}

\maketitle

\begin{abstract}
In 2002, Evertse and Schlickewei \cite{Evertse-Schlickewei2002}
obtained a quantitative
version of the so-called Absolute Parametric Subspace Theorem.
This result deals with a parametrized class of twisted heights.
One of the consequences of this result
is a quantitative version
of the Absolute Subspace Theorem, giving an explicit upper bound
for the number of subspaces containing the solutions of the
Diophantine inequality under consideration.

In the present paper, we further improve Evertse's and Schlickewei's
quantitative version of the Absolute Parametric Subspace Theorem,
and deduce an improved quantitative version of the Absolute Subspace Theorem.
We combine ideas from the proof of Evertse and Schlickewei 
(which is basically a substantial
refinement of Schmidt's proof of his Subspace Theorem from 1972
\cite{Schmidt1972}), with ideas from Faltings' and W\"{u}stholz' proof
of the Subspace Theorem \cite{Faltings-Wuestholz1994}.
\end{abstract}

\section{Introduction}\label{1}

\begin{prg}
Let $K$ be an algebraic number field.
Denote by $M_K$ its set of places and by $\|\cdot\|_v$ ($v\in M_K$)
its normalized absolute values, i.e., if $v$
lies above $p\in M_{\Qq}:=\{\infty\}\cup\{ {\rm prime\ numbers}\}$, then
the restriction of $\|\cdot \|_v$ to $\Qq$ is
$|\cdot |_p^{[K_v:\Qq_p]/[K:\Qq ]}$.
Define the norms and absolute height of $\x =(x_1\kdots x_n)\in K^n$
by $\| \x\|_v:=\max_{1\leq i\leq n}\|x_i\|_v$ for $v\in M_K$ and
$H(\x ):=\prod_{v\in M_K}\| \x\|_v$.

Next, let
$S$ be a finite subset of $M_K$, $n$ an integer $\geq 2$,
and $\{ L_1^{(v)}\kdots L_n^{(v)}\}$ ($v\in S$) linearly independent systems
of linear forms from $K[X_1\kdots X_n]$.
The Subspace Theorem asserts
that
for every $\eps >0$, the set of solutions of
\begin{equation}\label{1.1}
\prod_{v\in S}\prod_{i=1}^n \frac{\|L_i^{(v)}(\x )\|_v}{\|\x\|_v}
\leq H(\x )^{-n-\varepsilon}\ \ \mbox{in }\x\in K^n
\end{equation}
lies in a finite union $T_1\cup\cdots\cup T_{t_1}$ of proper linear subspaces
of $K^n$. Schmidt \cite{Schmidt1975} proved the Subspace Theorem in the case
that $S$ consists of the archi\-med\-ean places of $K$ and Schlickewei 
\cite{Schlickewei1977} extended this to the general case. Much work on the
$p$-adization of the Subspace Theorem was done independently by Dubois and Rhin
\cite{Dubois-Rhin1975}.
 
By an elementary combinatorial argument originating from Mahler
(see \cite[\S21]{Evertse-Schlickewei2002}), inequality \eqref{1.1}
can be reduced to a finite number of systems of inequalities
\begin{equation}\label{1.2}
\frac{\|L_i^{(v)}(\x )\|_v}{\|\x\|_v}\leq H(\x )^{d_{iv}} \
(v\in S,\, i=1\kdots n)\ \ \mbox{in } \x\in K^n,
\end{equation}
where
\[
\sum_{v\in S}\sum_{i=1}^n d_{iv}<-n.
\]
Thus, an equivalent formulation of the Subspace Theorem is, that the
set of solutions of \eqref{1.2} is contained in a finite union
$T_1\cup\cdots \cup T_{t_2}$ of proper linear subspaces of $K^n$.
Making more precise earlier work of Vojta \cite{Vojta1989} and
Schmidt \cite{Schmidt1993}, 
Faltings and W\"{u}stholz \cite[Theorem 9.1]{Faltings-Wuestholz1994} 
obtained the
following refinement: \emph{There exists a single, effectively computable
proper linear subspace $T$ of $K^n$ such that \eqref{1.2} has only finitely
many solutions outside $T$.}

\eqref{1.2} can be translated into a single twisted height inequality.
Put
\[
\delta := -1-\frac{1}{n}\big(\sum_{v\in S}\sum_{i=1}^n d_{iv}\big),\ \ \ \ 
c_{iv}:= d_{iv}-\frac{1}{n}\sum_{j=1}^n d_{jv}\ (v\in S,\, i=1\kdots n).
\]
Thus, 
\[
\sum_{i=1}^n c_{iv} =0\ \mbox{for } v\in S,\ \ \ \delta >0.
\]
For $Q\geq 1$, $\x\in K^n$ define the twisted height
\begin{equation}\label{1.3a}
H_Q(\x ):=\prod_{v\in S} 
\Big(\max_{1\leq i\leq n} \|L_i^{(v)}(\x )\|_vQ^{-c_{iv}}\Big)
\cdot\prod_{v\not\in S}\|\x\|_v.
\end{equation}
Let $\x\in K^n$ be a solution to \eqref{1.2} and take $Q:= H(\x )$.
Then
\begin{equation}\label{1.4}
H_Q(\x )\leq Q^{-\delta}.
\end{equation}

It is very useful to consider \eqref{1.4} with arbitrary reals $c_{iv}$,
not just those arising from system \eqref{1.2}, and with arbitrary reals $Q$
not necessarily equal to $H(\x )$. As will be explained in Section \ref{2},
the definition of $H_Q$ 
can be extended to $\OQq^n$ (where it is assumed that $\OQq\supset K$)
hence \eqref{1.4} can be considered for points $\x\in\OQq^n$.
This leads to the following \emph{Absolute Parametric Subspace Theorem:}
\\[0.1cm]
\emph{Let $c_{iv}$ ($v\in S,\, i=1\kdots n$) be any reals with
$\sum_{i=1}^n c_{iv}=0$ for $v\in S$, and let $\delta >0$.
Then there are a real $Q_0>1$ and a finite number of
proper linear subspaces $T_1\kdots T_{t_3}$ of $\OQq^n$, defined over $K$,
such that for
every $Q\geq Q_0$ there is $T_i\in\{ T_1\kdots T_{t_3}\}$ with}
\[
\{ \x\in \OQq^n:\, H_Q(\x)\leq Q^{-\delta}\}\subset T_i.
\]
Recall that a subspace of $\OQq^n$ is defined over $K$ if it has a basis from $K^n$.
In this general form, this result was first stated and proved in \cite{Evertse-Schlickewei2002}.   
The non-absolute version of the Parametric Subspace Theorem, with solutions
$\x\in K^n$ instead of $\x\in\OQq^n$, was proved implicitly along 
with the Subspace Theorem. To our knowledge, this notion of twisted height
was used for the first time, but in a function field setting, by Dubois
\cite{Dubois1977}.
\end{prg}

\begin{prg}
In 1989, Schmidt was the first to obtain a quantitative version of the
Subspace Theorem.
In \cite{Schmidt1989} he obtained, in the case $K=\Qq$, 
$S=\{ \infty\}$,
an explicit upper bound for the number $t_1$
of subspaces containing the solutions of \eqref{1.1}.
This was generalized to arbitrary $K,S$ by Schlickewei \cite{Schlickewei1992}
and improved by Evertse \cite{Evertse1996}.
First in 1996 Schlickewei \cite{Schlickewei1996} in a special case,
and then in 2002
Evertse and Schlickewei \cite{Evertse-Schlickewei2002} in full generality,
obtained a quantitative version 
of the Absolute Parametric Subspace Theorem, i.e., with explicit upper bounds for $Q_0$ and $t_3$.
As it turned out, this version is in general more useful for applications
than the existing
quantitative versions of the basic Subspace Theorem concerning \eqref{1.1}.
For instance,
the work of Evertse and Schlickewei led to uniform upper bounds for the 
number of solutions of linear equations in unknowns
from a multiplicative group of finite rank \cite{Evertse-Schlickewei-Schmidt2002} and
for the zero multiplicity of linear recurrence sequences \cite{Schmidt1999},
and more recently to results on the
complexity of $b$-ary expansions of algebraic
numbers \cite{Bugeaud-Evertse2008}, \cite{Bugeaud2008}, 
to improvements and generalizations of the Cugiani-Mahler theorem \cite{Bugeaud2007},
and approximation to algebraic numbers by algebraic numbers \cite{Bugeaud2011}.
For an overview of recent applications of the Quantitative Subspace Theorem 
we refer to Bugeaud's survey paper \cite{Bugeaud2010}. 
\end{prg}

\begin{prg}
In the present paper, we obtain an improvement of the quantitative version of 
Evertse and Schlickewei on the Absolute Parametric Subspace Theorem, 
with a substantially sharper bound for $t_3$. Our general result is stated in Section \ref{2}. 
In Section \ref{3} we give some applications to \eqref{1.2} and \eqref{1.1}.

To give a flavour, in this introduction we state special cases of our results.
Let $K,S$ be as above, and let $c_{iv}$ $(v\in S,\, i=1\kdots n)$ be reals with
\begin{equation}\label{1.5}
\sum_{i=1}^n c_{iv} =0\ \mbox{for } v\in S,\ \ \ \ \sum_{v\in S}\max (c_{1v}\kdots c_{nv})\leq 1;
\end{equation}
the last condition is a convenient normalization. Further,
let $L_i^{(v)}$ ($v\in S,\, i=1\kdots n)$ be linear forms such that for $v\in S$,
\begin{equation}\label{1.6}
\left\{
\begin{array}{l}
\{ L_1^{(v)}\kdots L_n^{(v)}\}\subset\{ X_1\kdots X_n,\, X_1+\cdots +X_n\},
\\
\{ L_1^{(v)}\kdots L_n^{(v)}\}\ \mbox{is linearly independent,}
\end{array}\right. 
\end{equation}
and let $H_Q$ be the twisted height defined by \eqref{1.3a} and then extended to $\OQq$.
Finally, let $0<\delta\leq 1$.
Evertse and Schlickewei proved in \cite{Evertse-Schlickewei2002} that in this case,
the above stated Absolute Parametric Subspace Theorem holds with 
\[
Q_0:= n^{2/\delta},\ \ \ t_3\leq  4^{(n+9)^2}\delta^{-n-4}. 
\]
This special case is the basic tool in the work of
\cite{Evertse-Schlickewei-Schmidt2002}, \cite{Schmidt1999} quoted above.
We obtain the following improvement.

\begin{thm}\label{th:1.1}
Assume \eqref{1.5}, \eqref{1.6} and let $0<\delta\leq 1$. 
Then there are proper linear subspaces $T_1\kdots T_{t_3}$ of $\OQq^n$, all defined
over $K$, with
\[
t_3\leq 10^6 2^{2n}n^{10}\delta^{-3}\big(\log (6n\delta^{-1})\big)^2,
\]
such that for every $Q$ with $Q\geq n^{1/\delta}$ there is $T_i\in\{ T_1\kdots T_{t_3}\}$ with
\[
\{ \x\in\OQq^n:\, H_Q(\x )\leq Q^{-\delta}\}\subset T_i.
\]
\end{thm}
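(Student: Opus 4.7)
The plan is to combine the parametric geometry-of-numbers framework used by Evertse--Schlickewei with the exterior-power/semistability argument of Faltings--W\"{u}stholz in order to sharpen the dependence on $\delta$ and $n$ in the number of exceptional subspaces.

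First, I would analyse $H_Q$ through its adelic successive minima $\lambda_1(Q)\le\cdots\le\lambda_n(Q)$ relative to $K^n$. Minkowski's second theorem in the adelic form gives $\prod_i\lambda_i(Q)$ bounded above and below by constants depending only on $n$, so any $\x$ with $H_Q(\x)\le Q^{-\delta}$ lies in the $K$-span of the minimum vectors realising the indices $i$ with $\lambda_i(Q)\le Q^{-\delta/n}$; since not all minima can be this small, this span is a proper subspace. The restriction \eqref{1.6} forces the linear forms $L_i^{(v)}$ to come from the explicit set $\{X_1,\ldots,X_n,X_1+\cdots+X_n\}$, so the twisted lattice is determined by one of at most $2^{O(n)}$ combinatorial ``types'' of form-assignments across $S$, and within each type the exponents determining $H_Q$ depend piecewise linearly on $\log Q$. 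This is the source of the $2^{2n}$ factor in the bound.

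Next, I would partition the half-line $Q\ge n^{1/\delta}$ on a geometric scale into subintervals $I_j$ on which each quotient $\log\lambda_i(Q)/\log Q$ is essentially constant. A standard gap-principle together with a convex-geometry argument (as in \cite{Evertse-Schlickewei2002}) shows that the number of such subintervals is $O(\delta^{-1}\log(n\delta^{-1}))$ per combinatorial type, and that the subspace $T_j$ spanned by the ``small-minima'' vectors is constant on each $I_j$. This step accounts for one of the two $\log(6n\delta^{-1})$ factors and one factor of $\delta^{-1}$.

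The heart of the argument, and where the main obstacle lies, is to count the distinct subspaces $T_j$ arising across all intervals more efficiently than Schmidt's Roth-lemma induction allows. Here I would invoke the Faltings--W\"{u}stholz idea by passing to exterior powers: the twisted heights induced on $\bigwedge^k K^n$ inherit the parametric structure of $H_Q$, and applying Minkowski/Mahler together with a semistability argument in these exterior powers produces the required hyperplane with only polynomial loss in $\delta^{-1}$, giving $O(n^{10}\delta^{-2}\log(n\delta^{-1}))$ subspaces per interval rather than the exponential dependence implicit in Schmidt's Roth-lemma approach. Multiplying the three factors --- combinatorial types, intervals per type, and subspaces per interval --- yields the claimed bound. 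The delicate step will be making the exterior-power gap argument uniform in $Q$ on each $I_j$ while preserving the polynomial dependence on $\delta^{-1}$; this uniformity is precisely the ingredient that was not exploited in \cite{Evertse-Schlickewei2002} and is what gives the improvement over their $4^{(n+9)^2}\delta^{-n-4}$.
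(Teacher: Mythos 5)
Your high-level plan points at the right ingredients (successive infima, a gap principle, exterior powers, semistability from Faltings--W\"ustholz), and it matches the paper's architecture in outline: Theorem~\ref{th:1.1} is indeed obtained from an interval result (Theorem~\ref{th:1.2}, i.e.\ the special case of Theorem~\ref{th:2.3}) combined with the gap principle of Proposition~\ref{pr:4.2}, which shows that for $Q$ ranging over a short interval $[A,A^{1+\delta/2})$ all solutions of $H_Q(\x)\leq Q^{-\delta}$ lie in a \emph{single} proper subspace.

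However, there is a genuine gap at the heart of your argument. The gap principle together with convexity of the successive-infima exponents can only tell you that each short interval contributes one subspace; it cannot bound the \emph{number} of short intervals on which solutions actually exist, since $Q$ ranges over the unbounded set $[n^{1/\delta},\infty)$ and a priori infinitely many of the intervals $[A,A^{1+\delta/2})$ could contain bad values of $Q$. The finiteness, and the quantitative bound $m\approx 10^5 2^{2n}n^{10}\delta^{-2}\log(6n\delta^{-1})$ on the number of intervals $[Q_h,Q_h^{\omega})$ containing such $Q$, is exactly the content of the interval theorem, and it requires the full Diophantine machinery: Davenport's lemma to produce well-positioned points in $\wedge^{n-k}\OQq^n$, the Faltings--W\"ustholz-style auxiliary polynomial built with Siegel's lemma and Hoeffding's inequality, the non-vanishing result descending from Faltings' Product Theorem (which forces the $Q_h$ to be widely spaced, $Q_{h+1}>Q_h^{\omega_2}$), the contradiction via the product formula, and then the reduction of the general case to the semistable one through the filtration of Sections~\ref{15}--\ref{18}. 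Your sentence asserting that ``Minkowski/Mahler together with a semistability argument in these exterior powers produces the required hyperplane with only polynomial loss'' states the conclusion of this hard part without supplying any of it. Two secondary inaccuracies: the factor $2^{2n}$ does not come from counting combinatorial types of form-assignments (there are at most $n+1$ forms in play, and the count of types is absorbed into $R$); it comes from $N=\binom{n}{k}\leq 2^{n-1}$ entering squared through the choice $\eps=\delta/(11n^2 2^{n-1})$ in the auxiliary-polynomial construction. And your final accounting (types $\times$ intervals per type $\times$ subspaces per interval) is structurally off: each short interval contributes exactly one subspace, and the product that actually yields $t_3$ is (number of long intervals $m$) $\times$ (number of short subintervals $\approx 3\delta^{-1}\log\omega$ needed to cover each long one), plus one for the exceptional space $T$.
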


A new feature of our paper is the following \emph{interval result}.

\begin{thm}\label{th:1.2}
Assume again \eqref{1.5}, \eqref{1.6}, $0<\delta\leq 1$.
Put
\[
m:= \left[10^5 2^{2n}n^{10}\delta^{-2}\log (6n\delta^{-1})\right],\ \ \
\omega :=  \delta^{-1}\log 6n .
\]
Then there are an effectively computable proper linear subspace $T$ of $\OQq^n$, defined over $K$, and reals
$Q_1\kdots Q_m$ with $n^{1/\delta}\leq Q_1<\cdots <Q_m$, such that
for every $Q\geq 1$ with
\[
\{ \x\in\OQq^n:\, H_Q(\x )\leq Q^{-\delta}\}\not\subset T
\]
we have 
\[
Q\in \left[\left. 1, n^{1/\delta}\right)\right.\cup 
\left[\left. Q_1,Q_1^{\omega}\right)\right.\cup\cdots\cup\left[\left. Q_m,Q_m^{\omega}\right)\right. .
\]
\end{thm}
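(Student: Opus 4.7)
The plan is to extract $T$ and the exceptional intervals from a single run of the argument that proves \thmref{th:1.1}, organized by the behaviour on the $\log Q$ axis of the successive minima of the twisted parallelepipeds associated with $H_Q$. For each $Q\geq n^{1/\delta}$ write $\lambda_1(Q)\leq\cdots\leq\lambda_n(Q)$ for the adelic successive minima of the convex body defined by $\|L_i^{(v)}(\mathbf{y})\|_v\leq Q^{c_{iv}}$ for $v\in S$ together with the standard conditions for $v\notin S$. Under the normalization \eqref{1.5}, the analogue of Minkowski's second theorem gives $\lambda_1(Q)\cdots\lambda_n(Q)\asymp 1$, and the $\lambda_i$ are piecewise log-linear functions of $\log Q$. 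If $H_Q(\mathbf{x})\leq Q^{-\delta}$ has a solution $\mathbf{x}$ outside the $(n-1)$-dimensional subspace $T(Q)$ spanned by lattice vectors realising $\lambda_1(Q),\ldots,\lambda_{n-1}(Q)$, then necessarily $\lambda_n(Q)\leq Q^{-\delta}$.

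The next step is to show that $Q\mapsto T(Q)$ is eventually constant, and to take $T$ to be its long-run value. The candidate subspaces form a finite family of $(n-1)$-dimensional subspaces of $\OQq^n$, defined over $K$ and effectively computable from the forms $L_i^{(v)}$ and the exponents $c_{iv}$; this matches the subspace produced by Faltings and W\"ustholz \cite{Faltings-Wuestholz1994} and cited after \eqref{1.2}. The set of parameters $Q$ for which $\{\mathbf{x}:H_Q(\mathbf{x})\leq Q^{-\delta}\}\not\subset T$ is then contained in the union of neighbourhoods, on the $\log Q$ scale, of the transitions at which either two adjacent $\lambda_i$ cross or $\lambda_n(Q)$ meets $Q^{-\delta}$ from below.

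The crucial quantitative input is that each such neighbourhood spans a fixed ratio $\omega=\delta^{-1}\log 6n$, i.e.\ has the form $[Q_i,Q_i^\omega)$: inside such a window the log-linearity of the $\lambda_i$ can permit $\lambda_n(Q)\leq Q^{-\delta}$ to persist, but an extension beyond it forces the gap $\lambda_n(Q)/\lambda_{n-1}(Q)$ to exceed $Q^{\delta}$ and hence every solution back into $T$. This is the gap-principle underlying the Evertse--Schlickewei counting. The number of such windows is bounded by running the quantitative argument of \thmref{th:1.1}, but because each exceptional window now absorbs a log-factor $\omega$ of the parameter $Q$, the total count reduces from $t_3$ to roughly $t_3/\omega$, producing the bound $m$.

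The main obstacle is the stability of $T(Q)$ together with the fact that, throughout a non-exceptional window, every solution of $H_Q(\mathbf{x})\leq Q^{-\delta}$ lies in $T(Q)=T$, not merely those lattice vectors attaining the small minima. This is a Faltings--W\"ustholz product-theorem-type statement, which has to be engrafted onto the Evertse--Schlickewei geometry-of-numbers apparatus; once it is in place, the exact placement of the exceptional intervals, the effectivity of $T$, and the final count $m$ follow by comparatively routine book-keeping.
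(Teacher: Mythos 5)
Your proposal has the right general picture (successive minima of the twisted height, a gap principle, a fixed exceptional subspace), but it contains a genuine gap at exactly the point where the theorem is hard, and two of its structural claims are wrong.

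First, the identification of $T$. You take $T(Q)$ to be the span of vectors realising $\lambda_1(Q),\dots,\lambda_{n-1}(Q)$ and assert it is eventually constant. This is false in general. What stabilises (this is the content of Theorem \ref{th:16.2}) are only the spaces $T_{d_l}(Q)$ attached to the filtration $\{{\bf 0}\}\propersubset T_1\propersubset\cdots\propersubset T_{r-1}\propersubset \OQq^n$ determined by the weights $w_{\mathcal{L},\cc}(U)$; the intermediate spaces $T_i(Q)$ for $d_{l-1}<i<d_l$ need not converge, and in particular $T_{n-1}(Q)$ stabilises only when $\dim T_{r-1}=n-1$. The correct exceptional space is the destabilising subspace $T=T(\mathcal{L},\cc)$ of \eqref{2.scss} (here: the space \eqref{1.7} cut out by sums over disjoint index sets, via Lemma \ref{le:15.special}); its dimension can be anything from $0$ to $n-1$, and it is defined purely from $(\mathcal{L},\cc)$, not as a limit of minimal subspaces.

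Second, the counting. Your bound on the number of windows is obtained by dividing the subspace count of Theorem \ref{th:1.1} by $\omega$. This inverts the logical order of the paper: Theorem \ref{th:1.1} is \emph{deduced from} Theorem \ref{th:1.2} together with the gap principle of Proposition \ref{pr:4.2}, and no pre-existing proof of a Theorem-\ref{th:1.1}-type bound yields a single exceptional subspace. Likewise, the claim that the exceptional $Q$ are confined to neighbourhoods of finitely many ``transitions'' of piecewise log-linear functions $\lambda_i$ is unsubstantiated: over $\OQq$ (and already over $K$) there is no a priori finiteness of such transition points, and establishing that the set $\{Q:\lambda_1(Q)\le Q^{-\delta}\}$ (after passing to the quotient by $T$) is a union of boundedly many intervals $[Q_h,Q_h^{\omega})$ is precisely the main theorem in the semistable case (Theorem \ref{th:8.1}). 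Its proof requires Davenport's lemma in an exterior power, the Faltings--W\"ustholz auxiliary polynomial built with Siegel's lemma and Hoeffding's inequality, and the non-vanishing result descending from the Product Theorem --- the step you defer with ``once it is in place''. That step, together with the reduction of the general case to the semistable one via the filtration and the height bound $H_2(T)\le H_2^{4^n}$ of Proposition \ref{pr:17.5}, is the proof; without it the proposal is a restatement of the goal rather than an argument.
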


The reals $Q_1\kdots Q_m$ cannot be determined effectively from our proof.
Theorem \ref{th:1.1} is deduced from Theorem \ref{th:1.2} and a gap principle. The precise
definition of $T$ is given in Section \ref{2}. We show that in the case
considered here, i.e., with \eqref{1.6}, 
the space $T$ is the set of $\x =(x_1\kdots x_n)\in\OQq^n$
with
\begin{equation}\label{1.7}
\sum_{j\in I_i} x_j=0\ \mbox{for } i=1\kdots p,
\end{equation}   
where $I_1\kdots I_p$ ($p=n-\dim T$) are certain pairwise disjoint
subsets of $\{ 1\kdots n\}$
which can be determined effectively.
 
As an application, we give a refinement of the Theorem of Faltings and 
W\"{u}stholz on \eqref{1.2} mentioned above, again under assumption \eqref{1.6}.

\begin{cor}\label{co:1.3}
Let $K,S$ be as above, let $L_i^{(v)}$ $(v\in S,\, i=1\kdots n)$ be linear forms with
\eqref{1.6} and let $d_{iv}$ ($v\in S,\, i=1\kdots n$) be reals with
\[
d_{iv}\leq 0\ \mbox{for } v\in S,\, i=1\kdots n,\ \ \ \ \sum_{v\in S}\sum_{i=1}^n d_{iv}=-n-\eps\ 
\mbox{with } 0<\eps\leq 1.
\]
Put
\[
m{'}:= \left[10^6 2^{2n}n^{12}\eps^{-2}\log (6n\eps^{-1})\right],\ \ \  
\omega{'} :=  2n\eps^{-1}\log 6n .
\]
Then there are an effectively computable linear subspace $T'$ of $K^n$,
and reals $H_1\kdots H_{m'}$ with $n^{n/\eps}\leq H_1<H_2<\cdots <H_{m'}$
such that for every solution $\x\in K^n$ of \eqref{1.2} we have
\[
\x\in T'\ \ \mbox{or  } H(\x )\in 
\big[ 1, n^{n/\eps}\big)\cup 
\big[ H_1,H_1^{\omega{'}}\big)\cup
\cdots\cup\big[ H_{m'},H_{m'}^{\omega{'}}\big) .
\]
\end{cor}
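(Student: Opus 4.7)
The plan is to deduce this corollary from Theorem \ref{th:1.2} applied to suitably rescaled parameters. Put
\[
\delta_0:=\eps/n,\qquad c_{iv}^{(0)}:=d_{iv}-\tfrac{1}{n}\sum_{j=1}^n d_{jv}\quad (v\in S,\,i=1\kdots n),
\]
so that $\sum_{i=1}^n c_{iv}^{(0)}=0$ for each $v\in S$. The reduction already recalled around \eqref{1.4} shows that every solution $\x\in K^n$ of \eqref{1.2} satisfies $H^{(0)}_{Q_0}(\x)\leq Q_0^{-\delta_0}$ with $Q_0:=H(\x)$, where $H_Q^{(0)}$ denotes the twisted height \eqref{1.3a} formed with the $c_{iv}^{(0)}$.

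The hypothesis $d_{iv}\leq 0$ yields $\sum_{v\in S}\max_i c_{iv}^{(0)}\leq -\tfrac{1}{n}\sum_{v\in S,i}d_{iv}=1+\eps/n$, so the normalization in \eqref{1.5} may fail by a factor $a:=1+\eps/n$. I would repair this by rescaling: set $c_{iv}:=c_{iv}^{(0)}/a$, $\delta:=\delta_0/a=\eps/(n+\eps)$ and $\tilde Q:=Q_0^a=H(\x)^a$. A direct inspection of \eqref{1.3a} shows that $H_{\tilde Q}(\x)=H^{(0)}_{Q_0}(\x)\leq Q_0^{-\delta_0}=\tilde Q^{-\delta}$, while $\sum_i c_{iv}=0$ and $\sum_v\max_i c_{iv}\leq 1$. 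Hypothesis \eqref{1.6} is inherited, $0<\delta\leq 1$, so \thmref{th:1.2} applies.

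It produces an effectively computable proper $K$-subspace $T$ of $\OQq^n$ and reals $Q_1<\cdots<Q_m$ with $Q_1\geq n^{1/\delta}$ such that, whenever $\x\in K^n$ solves \eqref{1.2} and $\x\notin T$, the parameter $\tilde Q=H(\x)^a$ lies in $[1,n^{1/\delta})\cup\bigcup_{i=1}^m[Q_i,Q_i^{\omega_0})$ with $\omega_0=\delta^{-1}\log(6n)$. Setting $T':=T\cap K^n$ and $H_i:=Q_i^{1/a}$, and extracting $a$-th roots (a monotone operation), the interval $[Q_i,Q_i^{\omega_0})$ becomes $[H_i,H_i^{\omega_0})$, since $Q_i^{\omega_0/a}=(H_i^a)^{\omega_0/a}=H_i^{\omega_0}$, while $[1,n^{1/\delta})$ becomes $[1,n^{1/(a\delta)})=[1,n^{n/\eps})$; in particular $H_1\geq n^{n/\eps}$. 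Since $T$ is defined over $K$, $T'$ is a proper $K$-subspace of $K^n$ and is effectively computable together with $T$.

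Finally, I would verify the numerical bounds $\omega_0\leq\omega'$ and $m\leq m'$. For the former, $\omega_0=(n+\eps)\eps^{-1}\log 6n\leq 2n\eps^{-1}\log 6n=\omega'$, using $\eps\leq 1\leq n$. For the latter, $\delta^{-1}=(n+\eps)/\eps\leq 2n/\eps$ gives $\delta^{-2}\leq 4n^2\eps^{-2}$, and $\log(6n\delta^{-1})\leq\log(12n^2\eps^{-1})\leq 2\log(6n\eps^{-1})$; feeding these into the definition of $m$ in \thmref{th:1.2} produces $m\leq 8\cdot 10^5\cdot 2^{2n}n^{12}\eps^{-2}\log(6n\eps^{-1})+1\leq m'$, the generous constant $10^6$ absorbing both the factor $8$ and the ceiling. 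The main ``obstacle'' in the whole argument is really bookkeeping: checking that the twisted height is genuinely preserved under $(c^{(0)},Q_0)\mapsto(c^{(0)}/a,Q_0^a)$ and that the intervals transform correctly under the $a$-th root map, since the analytic content is packaged inside \thmref{th:1.2}.
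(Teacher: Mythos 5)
Your proposal is correct and follows exactly the route the paper indicates: the substitution $c_{iv}=\tfrac{n}{n+\eps}(d_{iv}-\tfrac1n\sum_j d_{jv})$, $\delta=\eps/(n+\eps)$, $Q=H(\x)^{1+\eps/n}$ is precisely your rescaling by $a=1+\eps/n$, and your verification of the normalization \eqref{1.5}, the transformation of the intervals under the $a$-th root, and the numerical comparisons $\omega_0\leq\omega'$, $m\leq m'$ are all sound. The paper leaves these details to the reader, so nothing further is needed.
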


Corollary \ref{co:1.3} follows by applying Theorem \ref{th:1.2} with 
\begin{eqnarray*}
&&c_{iv}:=\frac{n}{n+\eps}\Big( d_{iv}-\frac{1}{n}\sum_{j=1}^n d_{jv}\Big)\ \, (v\in S,\, i=1\kdots n),
\\
&&\delta := \frac{\eps}{n+\eps},\ \ \ Q:=H(\x )^{1+\eps /n}.
\end{eqnarray*}
The exceptional subspace $T'$ is the set of $\x \in K^n$ with \eqref{1.7}
for certain pairwise disjoint subsets $I_1\kdots I_p$ of $\{ 1\kdots n\}$.

It is an open problem to estimate from above the number of solutions
of \eqref{1.2} outside $T'$.  
\end{prg}

\begin{prg}
In Sections \ref{2}, \ref{3} we formulate our generalizations of the above
stated results to arbitrary linear forms.
In particular, in Theorem \ref{th:2.1} we give our general quantitative version
of the Absolute Parametric Subspace Theorem, which 
improves the result of Evertse and Schlickewei from
\cite{Evertse-Schlickewei2002},
and in Theorem \ref{th:2.3} we give our general interval result,
dealing with points $\x\in\OQq^n$ outside an exceptional subspace $T$.
Further, in Theorem \ref{th:2.2} we give an ``addendum" to Theorem \ref{th:2.1}
where we consider \eqref{1.4} for small values of $Q$.
In Section \ref{3} we give some applications to the Absolute Subspace Theorem,
i.e., we consider absolute generalizations of \eqref{1.2}, \eqref{1.1}, 
with solutions $\x$ taken from $\OQq^n$ instead of $K^n$.
Our central result is Theorem \ref{th:2.3}
from which the other results are deduced.
\end{prg}

\begin{prg}
We briefly discuss the proof of Theorem \ref{th:2.3}.
Recall that Schmidt's proof of his 1972 version of the Subspace Theorem
\cite{Schmidt1972}, \cite{Schmidt1980}
is based on geometry of numbers and
``Roth machinery," i.e., the construction of an auxiliary multi-homogeneous
polynomial and an application of Roth's Lemma.
The proofs of the quantitative versions of the Subspace Theorem and 
Parametric Subspace Theorem published since,
including that of Evertse and Schlickewei, essentially follow the same lines.
In 1994, Faltings and W\"{u}stholz \cite{Faltings-Wuestholz1994} 
came with a very different
proof of the Subspace Theorem.
Their proof is an inductive argument which involves 
constructions of auxiliary global line bundle sections
on products of projective varieties of very large degrees, and an application
of Faltings' Product Theorem.
Ferretti observed that with their method,
it is possible to
prove quantitative results like ours,
but with much larger bounds, 
due to the highly non-linear projective varieties
that occur in the course of the argument.

In our proof of Theorem \ref{th:2.3} we use ideas from
both Schmidt and Faltings and W\"{u}stholz.
In fact, similarly to Schmidt, we pass from $\OQq^n$ to an
exterior power $\wedge^p \OQq^n$ by means of techniques from the
geometry of numbers, and apply the Roth machinery to the exterior power.
But there, we replace Schmidt's construction of an auxiliary polynomial 
by that of Faltings and W\"{u}stholz.

A price we have to pay is, that our Roth machinery works only
in the so-called \emph{semistable case}
(terminology from \cite{Faltings-Wuestholz1994})
where the exceptional space $T$ in Theorem \ref{th:2.3} 
is equal to $\{ {\bf 0}\}$.
Thus, we need an involved additional argument to
reduce the general case
where $T$ can be arbitrary to the semistable case.

In this reduction we obtain, as a by-product of some independent interest,
a result on the limit behaviour of the successive infima 
$\lambda_1(Q)\kdots \lambda_n(Q)$ of $H_Q$ as $Q\to\infty$,
see Theorem \ref{th:16.2}. 
Here, $\lambda_i(Q)$ is the infimum of all $\lambda >0$,
such that the set of $\x\in\OQq^n$ with $H_Q(\x )\leq\lambda$ 
contains at least $i$
linearly independent points. Our limit result may be viewed as the ``algebraic" analogue
of recent work of Schmidt and Summerer \cite{Schmidt-Summerer2009}.  
\end{prg}

\begin{prg}
Our paper is organized as follows.
In Sections \ref{2}, \ref{3} we state our results.
In Sections \ref{4}, \ref{5}
we deduce from Theorem \ref{th:2.3} the other theorems 
stated in Sections \ref{2}, \ref{3}. 
In Sections \ref{6}, \ref{9} we have collected 
some notation and simple facts 
used throughout the paper.
In Section \ref{8} we state the semistable case of Theorem \ref{th:2.3}.
This is proved in Sections \ref{10}--\ref{14}.
Here we follow \cite{Evertse-Schlickewei2002}, 
except that we use the auxiliary polynomial
of Faltings and W\"{u}stholz instead of Schmidt's.
In Sections \ref{15}--\ref{18} 
we deduce the general case of Theorem \ref{th:2.3} from the semistable case. 
\end{prg}

\section{Results for twisted heights}\label{2}

\begin{prg}\label{notation}
All number fields considered in this paper
are contained in a given algebraic closure $\OQq$ of $\Qq$. 
Given a field $F$, we denote by $F[X_1\kdots X_n]^{\lin}$ the $F$-vector space of
linear forms $\alpha_1X_1+\cdots +\alpha_nX_n$ with 
$\alpha_1\kdots\alpha_n\in F$.

Let $K\subset\OQq$ be an algebraic number field.
Recall that the normalized absolute values $\|\cdot\|_v$ ($v\in M_K$)
introduced in Section \ref{1}
satisfy the Product Formula
\begin{equation}\label{2.1}
\prod_{v\in M_K} \|x\|_v=1\ \ \mbox{for } x\in K^*.
\end{equation}
Further, if $E$ is any finite extension of $K$ and we define normalized
absolute values $\|\cdot \|_w$ ($w\in M_E$) in the same manner as those
for $K$, we have for every place $v\in M_K$ and each place
$w\in M_E$ lying above $v$,
\begin{equation}\label{2.2}
\| x\|_w=\|x\|_v^{d(w|v)}\ \mbox{for } x\in K,\ \mbox{where }
d(w|v):=\frac{[E_w:K_v]}{[E:K]}
\end{equation}
and $K_v,\, E_w$ denote the completions of $K$ at $v$, $E$ at $w$, respectively.
Notice that
\begin{equation}\label{2.3}
\sum_{w|v} d(w|v)=1,
\end{equation}
where '$w|v$' indicates that $w$ is running through all places of $E$
that lie above $v$.
\end{prg}

\begin{prg}\label{twisted-height}
We list the definitions and technical assumptions
needed in the statements of our theorems. In particular,
we define our twisted heights.

Let again $K\subset\OQq$ be an algebraic number field.
Further, let $n$ be an integer, 
$\LL =(L_i^{(v)}:\, v\in M_K,\, i=1\kdots n)$ a tuple of linear forms,
and $\cc =(c_{iv}:\, v\in M_K,\, i=1\kdots n)$ a tuple of reals satisfying
\begin{eqnarray} 
\label{2.10a}
&n\geq 2,\ \ \ \mbox{$L_i^{(v)}\in K[X_1\kdots X_n]^{\lin}$ for $v\in M_K$,
$i=1\kdots n$,}&
\\
&
\label{2.6a}
\{ L_1^{(v)}\kdots L_n^{(v)}\}\ \mbox{is linearly independent for $v\in M_K$,}& 
\\
\label{2.7}
&\displaystyle{\bigcup_{v\in M_K} \{ L_1^{(v)}\kdots L_n^{(v)}\}=:\{ L_1\kdots L_r\}\ 
\mbox{is finite,}}&
\end{eqnarray}
\begin{eqnarray}
\label{2.7a}
&c_{1v}=\cdots =c_{nv}=0\ \mbox{for all but finitely many $v\in M_K$,}&
\\
\label{2.8}
&\displaystyle{\sum_{i=1}^n c_{iv}=0\ \mbox{for } v\in M_K,}
\\
\label{2.9}
&
\displaystyle{\sum_{v\in M_K}\max (c_{1v}\kdots c_{nv})\leq 1.}
\end{eqnarray}
In addition, let $\delta ,R$ be reals with 
\begin{equation}\label{2.10}
0<\delta\leq 1,\ \ \
R\geq r=\#\left(\bigcup_{v\in M_K} \{ L_1^{(v)}\kdots L_n^{(v)}\}\right),
\end{equation}
and put
\begin{eqnarray}
\label{2.3a}
&\displaystyle{\DL := \prod_{v\in M_K}\|\det (L_1^{(v)}\kdots L_n^{(v)})\|_v,}&
\\[0.15cm]
\label{2.9a}
&\displaystyle\HL :=\prod_{v\in M_K}\max_{1\leq i_1<\cdots <i_n\leq r}
\|\det (L_{i_1}\kdots L_{i_n})\|_v,&
\end{eqnarray}
where the maxima are taken over all $n$-element subsets of
$\{ 1\kdots r\}$.

For $Q\geq 1$ we define the twisted height
$H_{\mathcal{L},{\bf c},Q}:\, K^n\to \Rr$
by
\begin{equation}\label{2.4}
\HLcQ (\x ):=\prod_{v\in M_K}
\max_{1\leq i\leq n} \Big(\| L_i^{(v)}({\bf x})\|_v\cdot Q^{-c_{iv}}\Big).
\end{equation}
In case that $\x={\bf 0}$ we have $\HLcQ (\x )=0$.
If $\x\not={\bf 0}$, it follows from \eqref{2.10a}--\eqref{2.7a}
that all factors in the product are
non-zero and equal to $1$ for all but finitely many $v$;
hence the twisted height is well-defined and non-zero.

Now let $\x\in \OQq^n$. Then there is a finite extension $E$ of $K$ such that
$\x\in E^n$. For $w\in M_E$, $i=1\kdots n$, define
\begin{equation}\label{2.5}
L_i^{(w)}:=L_i^{(v)},\ \ \ c_{iw}:=c_{iv}\cdot d(w|v)
\end{equation}
if $v$ is the place of $K$ lying below $w$, and put
\begin{equation}\label{2.6}
\HLcQ (\x ):= \prod_{w\in M_E}\max_{1\leq i\leq n}
\Big(\| L_i^{(w)}({\bf x})\|_w\cdot Q^{-c_{iw}}\Big).
\end{equation}
It follows from \eqref{2.5}, \eqref{2.2}, \eqref{2.3} that this is
independent of the choice of $E$. Further, by \eqref{2.1},
we have $\HLcQ (\alpha \x )=\HLcQ (\x )$ for $\x\in\OQq^n$, $\alpha\in\OQq^*$.

To define $\HLcQ$, we needed only \eqref{2.10a}--\eqref{2.7a};
properties \eqref{2.8}, \eqref{2.9} are merely convenient normalizations.
\end{prg}

\begin{prg}
Under the above hypotheses, Evertse and Schlickewei
\cite[Theorem 2.1]{Evertse-Schlickewei2002}
obtained the following quantitative version of the Absolute Parametric Subspace Theorem:
\\[0.15cm]
There is a collection $\{ T_1\kdots T_{t_0}\}$ of proper linear subspaces of
$\OQq^n$, all defined over $K$, with
\[
t_0\leq 4^{(n+8)^2}\delta^{-n-4}\log (2R)\log\log (2R)
\] 
such that for every real $Q\geq \max (\HL^{1/R} ,n^{2/\delta})$
there is $T_i\in\{ T_1\kdots T_{t_0}\}$ for which 
\begin{equation}\label{2.12b}
\big\{ \x\in\OQq^n :\,\HLcQ (\x )\leq \DL^{1/n}Q^{-\delta}\big\}\,\subset\, T_i.
\end{equation}
We improve this as follows.

\begin{thm}\label{th:2.1}
Let $n,\LL ,\cc ,\delta ,R$ satisfy \eqref{2.10a}--\eqref{2.10},
and let $\DL ,\HL$ be given by \eqref{2.3a}, \eqref{2.9a}.
\\
Then there are proper linear subspaces $T_1\kdots T_{t_0}$ of $\OQq^n$,
all defined over $K$, with
\begin{equation}\label{2.11}
t_0\leq 10^6 2^{2n}n^{10}\delta^{-3}\log (3\delta^{-1}R)\log (\delta^{-1}\log 3R),
\end{equation}
such that for every real $Q$ with
\begin{equation}\label{2.12}
Q\geq C_0:= \max\big(H_{\mathcal{L}}^{1/R}, n^{1/\delta}\big)
\end{equation}
there is $T_i\in\{ T_1\kdots T_{t_0}\}$ with \eqref{2.12b}.
\end{thm}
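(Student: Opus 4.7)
The plan is to deduce Theorem~\ref{th:2.1} from the general interval result (Theorem~\ref{th:2.3}, stated later in this section), combined with a gap principle in~$Q$. First, I apply Theorem~\ref{th:2.3} to the data $(\LL,\cc,\delta,R)$. This yields an effectively computable proper linear subspace $T$ of $\OQq^n$ (defined over $K$), together with a parameter $\omega$ of order $\delta^{-1}\log(\delta^{-1}\log 3R)$ and real numbers $C_0\leq Q_1<\cdots <Q_m$, with $m$ of order $2^{2n}n^{10}\delta^{-2}\log(3\delta^{-1}R)$, such that for every $Q\geq C_0$ the solution set
\[
V(Q):=\big\{\x\in\OQq^n:\,\HLcQ(\x)\leq\DL^{1/n}Q^{-\delta}\big\}
\]
is either contained in $T$ or else forces $Q\in [Q_i,Q_i^{\omega})$ for some $i\leq m$. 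The exceptional subspace $T$ supplies one of the $T_j$ sought in Theorem~\ref{th:2.1}; the remaining task is to cover all $V(Q)$ with $Q$ lying in one of the $m$ bad intervals by a controlled number of further subspaces.

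For this I would apply a gap principle inside each bad interval $[Q_i,Q_i^{\omega})$: subdivide it into $\lceil\omega/\tau\rceil$ geometric pieces $[B,B^{1+\tau})$ with $\tau$ of order $\delta$. Using \eqref{2.8}--\eqref{2.9} one sees that for $Q_1\leq Q_2\leq Q_1^{1+\tau}$ and any $\x\in\OQq^n$ one has a bound of the form
\[
H_{\LL,\cc,Q_1}(\x)\leq H_{\LL,\cc,Q_2}(\x)\cdot Q_1^{O(\tau)},
\]
so that $V(Q_2)$ sits in a slightly enlarged level set of the twisted height at $Q_1$. A standard argument (cf.\ the deduction of Theorem~\ref{th:1.1} from Theorem~\ref{th:1.2} indicated in the introduction) then produces a single proper subspace of $\OQq^n$ containing all $V(Q)$ with $Q$ in one such sub-interval. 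Summing contributions gives
\[
t_0\leq 1+m\cdot\lceil\omega/\tau\rceil\leq 10^6\cdot 2^{2n}n^{10}\delta^{-3}\log(3\delta^{-1}R)\log(\delta^{-1}\log 3R),
\]
matching \eqref{2.11}.

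The real obstacle is not this deduction but the interval result Theorem~\ref{th:2.3} itself, whose proof (announced for Sections~\ref{8}--\ref{18}) occupies the bulk of the paper; there the reduction from the general case to the semistable case $T=\{\mathbf{0}\}$ is the technical heart. Within the deduction above, the delicate point will be that the gap principle must yield one subspace per sub-interval \emph{without} reintroducing the large factor $2^{2n}n^{10}$ from the interval result, so one must work directly with the shape of the level sets $V(Q)$ (e.g.\ via the successive minima of $\HLcQ$ in an exterior power, in the spirit of the geometry-of-numbers portion of Schmidt's original proof) rather than invoke the quantitative Subspace Theorem recursively.
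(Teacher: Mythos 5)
Your proposal follows essentially the same route as the paper's own deduction: Theorem~\ref{th:2.3} supplies the exceptional subspace $T$ and the bad intervals, and the paper's Proposition~\ref{pr:4.2} is exactly your gap principle (a comparison of $\HLcQ$ with $H_{\mathcal{L},\cc,A}$ for $Q\in[A,A^{1+\delta/2})$, followed by a Hadamard/product-formula determinant argument showing that the union of the level sets over such a subinterval spans a single proper subspace defined over $K$). The only slips are quantitative: the number of geometric pieces per interval is $\lceil\log\omega_0/\log(1+\tau)\rceil\approx\delta^{-1}\log\omega_0$ rather than $\lceil\omega_0/\tau\rceil$, and $\omega_0=\delta^{-1}\log 3R$ (the factor $\log(\delta^{-1}\log 3R)$ in \eqref{2.11} arises as $\log\omega_0$); with these corrections the count $1+3\delta^{-1}m_0(1+\log\omega_0)$ gives exactly \eqref{2.11}.
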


\noindent
Notice that in terms of $n,\delta$, our upper bound for $t_0$ 
improves that of Evertse and Schlickewei
from $c_1^{n^2}\delta^{-n-4}$ to $c_2^n\delta^{-3}(\log\delta^{-1})^2$,
while it has the same dependence on $R$.

The lower bound $C_0$ in \eqref{2.12} still has an exponential dependence on $\delta^{-1}$.
We do not know of a method how to reduce it in our general absolute setting.
If we restrict to solutions ${\bf x}$ in $K^n$,
the following can be proved.

\begin{thm}\label{th:2.2}
Let again $n,\LL ,\cc ,\delta ,R$ satisfy \eqref{2.10a}--\eqref{2.10}.
Assume in addition
that $K$ has degree $d$.
\\
Then there are proper linear subspaces
$U_1\kdots U_{t_1}$ of $K^n$, with
\[
t_1\leq \delta^{-1}\big( (90n)^{nd}+3\log\log 3\HL^{1/R}\big)
\]
such that for every $Q$ with $1\leq Q< C_0=\max (\HL^{1/R},n^{1/\delta})$,
there is $U_i\in\{ U_1\kdots U_{t_1}\}$ with
\[
\left\{ \x\in K^n :\HLcQ (\x )\leq \DL^{1/n}Q^{-\delta}\right\}\,\subset\, U_i.
\]
\end{thm}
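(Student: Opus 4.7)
The plan is to reduce the parametric inequality on the bounded range $Q\in[1,C_0)$ to a sequence of non-parametric Subspace-type inequalities over $K$, by discretizing the parameter $Q$ on a geometric scale and then applying, on each piece, a classical (non-parametric) quantitative Subspace Theorem. The starting point is a variation estimate for $H_{\mathcal{L},\cc,Q}$ as a function of $Q$: using \eqref{2.8}, \eqref{2.9}, together with the elementary consequence $\min_i c_{iv}\geq-(n-1)\max_i c_{iv}$ at each place $v\in M_K$, one derives that for any $\x\in K^n\setminus\{\mathbf{0}\}$ and $Q'\geq Q\geq 1$,
\[
(Q'/Q)^{-1}\,\leq\,\frac{H_{\mathcal{L},\cc,Q'}(\x)}{H_{\mathcal{L},\cc,Q}(\x)}\,\leq\,(Q'/Q)^{n-1}.
\]
Hence on an interval $[Q_0,Q_0(1+\eta)]$ the parametric inequality $H_{\mathcal{L},\cc,Q}(\x)\leq\DL^{1/n}Q^{-\delta}$ implies a corresponding non-parametric inequality at the base point $Q_0$, with a slightly weakened exponent provided $\eta$ is chosen in terms of $\delta$ and $n$.

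Next, for each fixed base point $Q_0\ge 1$, I would invoke a classical non-parametric quantitative Absolute Subspace Theorem over $K$ (in the spirit of Schmidt, Schlickewei and \cite{Evertse1996}) to bound the set $\{\x\in K^n: H_{\mathcal{L},\cc,Q_0}(\x)\leq \DL^{1/n} Q_0^{-\delta/2}\}$ by at most $(90n)^{nd}$ proper linear subspaces of $K^n$, with the familiar $(Cn)^{nd}$-shaped dependence. Applied at each base point of the discretization, this produces the bulk of the subspaces.

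The range $[1,C_0)$ is then split at $n^{1/\delta}$. On the lower range $[1,n^{1/\delta})$, a partition into $\lceil\delta^{-1}\rceil$ geometric pieces of ratio comparable to $n$ reduces each piece by the variation estimate and handles it by the non-parametric Subspace Theorem, for a total of $\delta^{-1}(90n)^{nd}$ subspaces. On the upper range $[n^{1/\delta},\HL^{1/R})$ (non-empty only if $\HL^{1/R}>n^{1/\delta}$), a doubly-exponential partition $Q_{k+1}:=Q_k^2$ yields only $O(\log\log\HL^{1/R})$ pieces; on each such piece a dedicated gap-principle argument, rather than a fresh application of the Subspace Theorem, keeps the contribution to $O(\delta^{-1})$ subspaces per piece, totalling $3\delta^{-1}\log\log(3\HL^{1/R})$.

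The main obstacle, I expect, is the additive rather than multiplicative structure of the final bound $\delta^{-1}\bigl((90n)^{nd}+3\log\log 3\HL^{1/R}\bigr)$. A naive reapplication of the non-parametric Subspace Theorem on each doubly-exponential piece of the upper range would inflate the bound to the much larger $(90n)^{nd}\log\log\HL^{1/R}$. To avoid this one must exploit the lower bound $H_{\mathcal{L},\cc,Q_0}(\x)>c\DL^{1/n}Q_0^{-\delta/2}$ which holds automatically for $\x$ outside the subspaces already produced on the lower range; combined with the variation estimate, this directly controls the logarithmic measure of those $Q\in[n^{1/\delta},\HL^{1/R})$ compatible with the parametric inequality, bypassing any renewed use of the Subspace Theorem on the upper range.
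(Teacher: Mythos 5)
Your skeleton matches the paper's: split $[1,C_0)$ at $n^{1/\delta}$, discretize geometrically, get one subspace per piece above the split and a $(Cn)^{nd}$-type count per piece below, and your variation estimate $(Q'/Q)^{-1}\le H_{\mathcal L,\cc,Q'}(\x)/H_{\mathcal L,\cc,Q}(\x)\le (Q'/Q)^{n-1}$ is correct and is exactly the reduction the paper performs. However, both covering statements you then need are asserted rather than proved, and in both cases the mechanism you point to is the wrong one. On the lower range, the set $\{\x\in K^n:\ H_{\mathcal L,\cc,Q_0}(\x)\le\DL^{1/n}Q_0^{-\delta/2}\}$ for a \emph{fixed} $Q_0$ is not of Subspace-Theorem type: the right-hand side does not decay in $H(\x)$, so no non-parametric quantitative Subspace Theorem applies to it, and the bound $(90n)^{nd}$ cannot be ``invoked''. (The $(cn)^{nd}$-shaped counts in Schmidt, Schlickewei and \cite{Evertse1996} come from gap principles for small solutions, not from the Roth machinery.) What is actually needed is elementary: at each of the at most $d$ archimedean places cover $\Cc^n$ by $(20n)^nM^2$ regions ($M=2^n$) on which any $n$ vectors satisfy $|\det|\le M^{-1}\prod\|\cdot\|$ (Lemma \ref{le:4.3}), and then the ultrametric inequality at the finite places plus the product formula force any $n$ solutions in the same class to be linearly dependent; this is Proposition \ref{pr:4.4}, and it has to be proved, not cited.

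On the upper range your proposed mechanism fails. Membership of $\x$ outside the subspaces produced for $Q<n^{1/\delta}$ gives no lower bound on $H_{\mathcal L,\cc,Q_0}(\x)$ for $Q_0\ge n^{1/\delta}$, and in any case controlling the ``logarithmic measure'' of the set of admissible $Q$ does not yield the subspaces the theorem requires: the statement is that for each $Q$ the whole solution set lies in one $U_i$, not that each individual $\x$ serves few $Q$. The correct point --- and the reason the $\log\log$ term carries no $(90n)^{nd}$ factor --- is that once $A\ge n^{1/\delta}$ plain Hadamard suffices: every $\x$ solving the inequality for some $Q\in[A,A^{1+\delta/2})$ satisfies $H_{\mathcal L,\cc,A}(\x)<\DL^{1/n}A^{-\delta/2}$, whence for any $n$ such vectors
\[
\prod_{v}\|\det(\x_1\kdots\x_n)\|_v\le n^{n/2}A^{-n\delta/2}\le 1,
\]
forcing the determinant to vanish; so each interval $[A,A^{1+\delta/2})$ contributes a \emph{single} subspace (Proposition \ref{pr:4.2}), and covering $[n^{1/\delta},\HL^{1/R})$ by $O(\delta^{-1}\log\log 3\HL^{1/R})$ such intervals gives the second term of the bound. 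Without these two determinant arguments your proof does not close.
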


We mention that in various special cases, by an ad-hoc approach
the upper bound for $t_1$ can be reduced.
Recent work of Schmidt \cite{Schmidt2009} on the number of 
``small solutions" in Roth's Theorem (essentially the case $n=2$ in our setting)
suggests that there should be
an upper bound for $t_1$ with a polynomial instead of exponential dependence on $d$.
\end{prg}

\begin{prg}
We now formulate our general interval result for twisted heights.
We first define an exceptional vector space.
We may view a linear form $L\in \OQq [X_1\kdots X_n]^{\lin}$
as a linear function on $\OQq^n$. Then its restriction to a linear
subspace $U$ of $\OQq^n$ is denoted by $L|_U$.
  
Let $n,\LL ,\cc ,\delta ,R$ satisfy \eqref{2.10a}--\eqref{2.10}.
Let $U$ be a $k$-dimensional linear subspace of $\OQq^n$. 
For $v\in M_K$ we define $w_v(U)=w_{\mathcal{L},\cc,v}(U):=0$ if $k=0$ and 
\begin{eqnarray}\label{2.weightv-space}
&&w_v(U)=w_{\mathcal{L},\cc ,v}(U):=\min\Big\{ c_{i_1,v}+\cdots +c_{i_k,v}:
\\
\nonumber
&&\hspace*{3cm} 
L_{i_1}^{(v)}|_U\kdots L_{i_k}^{(v)}|_U
\ \mbox{are linearly independent}\Big\}
\end{eqnarray}
if $k>0$, where the minimum is taken over all $k$-tuples $i_1\kdots i_k$
such that $L_{i_1}^{(v)}|_U\kdots L_{i_k}^{(v)}|_U$ are linearly
independent. 
Then the \emph{weight of $U$ with respect to $(\mathcal{L} ,\cc )$} 
is defined by 
\begin{equation}\label{2.weight-space}
w(U)=w_{\mathcal{L} ,\cc}(U):=\sum_{v\in M_K} w_v(U).
\end{equation}
This is well-defined since by \eqref{2.7a} at most finitely many of the
quantities $w_v(U)$ are non-zero.

By theory from, e.g., \cite{Faltings-Wuestholz1994}
(for a proof see Lemma \ref{le:15.between} below) 
there is a unique,
proper linear subspace $T=T(\mathcal{L} ,\cc )$ of $\OQq^n$ such that
\begin{equation}\label{2.scss}
\left\{\begin{array}{l}
\displaystyle{\frac{w(T)}{n-\dim T}\geq \frac{w(U)}{n-\dim U}}
\\[0.1cm] 
\hspace*{3cm}\mbox{for every proper linear subspace $U$ of $\OQq^n$;}
\\[0.2cm]
\mbox{subject to this condition, $\dim T$ is minimal.}
\end{array}\right.
\end{equation}
Moreover, this space $T$ is defined over $K$. 

In Proposition \ref{pr:17.5} below, we prove that
\[ 
H_2(T)\leq \left(\max_{v,i} H_2(L_i^{(v)})\right)^{4^n}
\]
with ``Euclidean" heights $H_2$ for subspaces and linear forms defined
in Section \ref{6} below.
Thus, $T$ is effectively computable and it belongs to a finite
collection depending only on $\mathcal{L}$. 
In Lemma \ref{le:15.special} below, we prove that in the special case
considered in Section \ref{1}, i.e.,
\[
\{ L_1^{(v)}\kdots L_n^{(v)}\}\subset\{ X_1\kdots X_n,\, X_1+\cdots +X_n\}\ \mbox{for } v\in M_K
\]
we have
\[
T=\{ \x\in\OQq^n:\, \sum_{j\in I_i} x_j=0\ \mbox{for } j=1\kdots p\}
\]
for certain pairwise disjoint subsets $I_1\kdots I_p$ of $\{ 1\kdots n\}$.

Now our interval result is as follows.

\begin{thm}\label{th:2.3}
Let $n,\LL ,\cc ,\delta ,R$ satisfy \eqref{2.10a}--\eqref{2.10},
and let the vector space $T$ be given by \eqref{2.scss}. 
Put
\begin{eqnarray}
\label{2.13}
&m_0:= \left[10^5 2^{2n}n^{10}\delta^{-2}\log (3\delta^{-1}R)\right],\ \
\omega_0 :=  \delta^{-1}\log 3R .&
\end{eqnarray}
Then there are reals $Q_1\kdots Q_{m_0}$ with
\begin{equation}
\label{2.14}
C_0:=\max (H_{\mathcal{L}}^{1/R},n^{1/\delta})\leq Q_1<\cdots <Q_{m_0}
\end{equation}
such that for every $Q\geq 1$ for which
\begin{equation}\label{2.14a}
\{ \x\in\OQq^n:\, \HLcQ (\x)\leq \DL^{1/n}Q^{-\delta}\}\, \not\subset T
\end{equation}
we have
\begin{equation}\label{2.15}
Q\in [1,C_0)\cup [Q_1,Q_1^{\omega_0})\cup\cdots\cup [Q_{m_0},Q_{m_0}^{\omega_0}).
\end{equation}
\end{thm}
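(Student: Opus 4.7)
The plan is to follow the two-step strategy outlined in Section 1.5: first prove the ``semistable'' version of Theorem \ref{th:2.3}, where the exceptional subspace $T$ coincides with $\{{\bf 0}\}$, and then reduce the general case to the semistable one using a careful analysis of how the successive infima $\lambda_i(Q)$ of $\HLcQ$ evolve with $Q$. The conclusion to aim for is structural: if there are more than $m_0$ values $Q$ admitting a solution outside $T$, and the ratios between consecutive such $Q$'s exceed $\omega_0 = \delta^{-1}\log 3R$, then one can extract enough data to contradict the minimality clause in the definition \eqref{2.scss} of $T$.

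For the semistable case, which I would isolate as in Section \ref{8} and prove in Sections \ref{10}--\ref{14}, the core idea is to suppose for contradiction the existence of $m_0+1$ parameters $Q_1<\cdots <Q_{m_0+1}$ each admitting a nonzero solution $\x_i$ with $\HLcQ (\x_i)\leq \DL^{1/n}Q_i^{-\delta}$, satisfying $Q_{i+1}\geq Q_i^{\omega_0}$. Using Minkowski's theorem on successive minima in the twisted geometry of numbers for $\HLcQ$, I would pass to a suitable exterior power $\wedge^p \OQq^n$ to ``straighten'' the solutions; since the problem is semistable, the first minimum is comparable to a fixed root of the product of all minima, yielding well-behaved exterior-power points. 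On the product space $(\mathbb{P}^{n-1})^m$ for $m$ suitably chosen depending on $n$ and $\delta$, I would then construct an auxiliary global section in the style of Faltings--W\"{u}stholz \cite{Faltings-Wuestholz1994}, rather than Schmidt's multi-homogeneous polynomial, apply Faltings' Product Theorem to analyse its vanishing on the $m$-tuple $(\x_{i_1}\kdots \x_{i_m})$, and ultimately contradict the gap $Q_{i+1}\geq Q_i^{\omega_0}$ via a Roth-type non-vanishing estimate. The bookkeeping of the exponents in $m_0$ and $\omega_0$ comes directly out of this step.

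For the reduction to the semistable case, corresponding to Sections \ref{15}--\ref{18}, I would first use the characterization \eqref{2.scss} to show that $T$ is the ``maximal destabilizing'' subspace for the weighted system $(\LL ,\cc )$, in analogy with Harder--Narasimhan theory. On the quotient $\OQq^n / T$ one obtains a naturally induced system $(\overline{\mathcal{L}},\overline{\cc})$ which is semistable in the sense that the associated $T(\overline{\mathcal{L}},\overline{\cc})=\{{\bf 0}\}$, and the parameter $\delta$ needs to be replaced by a slightly smaller effective value computed from the slopes. The delicate point is that points $\x \not\in T$ with $\HLcQ (\x)$ small need not descend to points of small induced twisted height on $\OQq^n/T$; to handle this I would invoke the limit result Theorem \ref{th:16.2} to show that, for $Q$ sufficiently large, the contributions of $T$ and $\OQq^n/T$ to the product $\prod_i \lambda_i(Q)$ decouple up to controlled error, so that a solution outside $T$ forces a small first minimum on the quotient. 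Applying the semistable case to the quotient then produces the desired intervals, after absorbing the finitely many $Q<C_0$ into the leading interval $[1,C_0)$.

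The main obstacle will be the reduction step rather than the semistable case, which is a relatively direct Roth-machinery argument. Concretely, three things require care: (i) proving existence and uniqueness of $T$ in \eqref{2.scss} effectively, together with the height bound $H_2(T)\leq (\max_{v,i}H_2(L_i^{(v)}))^{4^n}$ used later in Proposition \ref{pr:17.5}; (ii) controlling the interaction between the successive infima of $T$ and of $\OQq^n/T$ uniformly in $Q$, which is precisely what Theorem \ref{th:16.2} provides but which is genuinely global in $Q$; and (iii) packaging the loss of $\delta$ in the reduction so that the final exponent $m_0$ and width $\omega_0$ are of the stated order. The technical heart of the argument is the combinatorial/convex-geometric study of the weights $w_v(U)$ from \eqref{2.weightv-space}, which must be shown to behave well enough under intersection and quotient to preserve semistability of the quotient.
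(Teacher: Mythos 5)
Your proposal follows essentially the same route as the paper: first the semistable interval result (Theorem \ref{th:8.1}, proved via geometry of numbers, passage to an exterior power, and a Faltings--W\"{u}stholz-style auxiliary polynomial combined with a Roth-type non-vanishing lemma), then a reduction to the quotient $\OQq^n/T$ using the Harder--Narasimhan-type filtration of Section \ref{15}, the limit result Theorem \ref{th:16.2}, and the height bound on $T$ from Proposition \ref{pr:17.5}. The one small imprecision is the direction of the delicate step in the reduction: points of small twisted height outside $T$ \emph{do} descend to points of small induced height on the quotient (Lemma \ref{le:16.0}(ii), Proposition \ref{pr:18.5}); what the slope-maximality of $T$ actually buys is that the normalizing shift $\sum_v\theta_v=-w(T)/(n-k)$ is negative, which is what makes the descended inequality strong enough to feed into the semistable theorem.
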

\end{prg}

\section{Applications to Diophantine inequalities}\label{3}

\begin{prg}
We state some results for ``absolute" generalizations of \eqref{1.2}, \eqref{1.1}.
We fix some notation.
The absolute Galois group ${\rm Gal}(\OQq /K)$ 
of a number field $K\subset\OQq$ is denoted by $G_K$.
The absolute height $H(\x )$ of $\x\in\OQq^n$ is defined by choosing a number field
$K$ such that $\x\in K^n$ and taking $H(\x ):=\prod_{v\in M_K}\|\x\|_v$.
The \emph{inhomogeneous} height of
$L=\alpha_1X_1+\cdots +\alpha_nX_n\in\OQq [X_1\kdots X_n]^{\lin}$ is given by
$H^*(L):= H({\bf a})$,
where ${\bf a}=(1,\alpha_1\kdots\alpha_n)$.
Further, for a number field $K$, we define the field $K(L):=K(\alpha_1\kdots \alpha_n)$.

We fix an algebraic number field $K\subset\OQq$.
Further, for every place $v\in M_K$ we choose and then fix an extension of $\|\cdot\|_v$ to $\OQq$. 
For $\x =(x_1\kdots x_n)\in \OQq^n$, $\sigma\in G_K$, $v\in M_K$, we put 
$\sigma (\x ):=(\sigma (x_1)\kdots\sigma (x_n))$, $\|\x\|_v:=\max_{1\leq i\leq n}\|x_i\|_v$.
\end{prg}   

\begin{prg}
We list some technical assumptions and then state our results.
Let $n$ be an integer $\geq 2$, $R$ a real, $S$ a finite subset of $M_K$,
$L_i^{(v)}$ ($v\in S ,\, i=1\kdots n$) linear forms from
$\OQq [X_1\kdots X_n]^{\lin}$, 
and $d_{iv}$ ($v\in S,\, i=1\kdots n$) reals,
such that
\begin{eqnarray}
\label{3.1}
&\{ L_1^{(v)}\kdots L_n^{(v)}\}\ \mbox{is linearly independent for $v\in S$,}&
\\
\label{3.2}
&H^*(L_i^{(v)})\leq H^*,\ \ [K(L_i^{(v)}):K]\leq D\ \mbox{for $v\in S$, $i=1\kdots n$,}&
\\
\label{3.3}
&\displaystyle{
\#\left(\bigcup_{v\in S}\{ L_1^{(v)}\kdots L_n^{(v)}\}\right)\leq  R,}&
\end{eqnarray}
\begin{eqnarray}
\label{3.4}
&\displaystyle{
\sum_{v\in S}\sum_{i=1}^n d_{iv}=  -n-\varepsilon\
\mbox{with } 0<\varepsilon \leq 1,}&
\\
\label{3.5}
&d_{iv}\leq 0\ \mbox{for } v\in S,\,\, i=1\kdots n.&
\end{eqnarray}
Further, put
\begin{equation}\label{3.5a}
A_v:=\|\det (L_1^{(v)}\kdots L_n^{(v)})\|_v^{1/n}\ \ \mbox{for } v\in S.
\end{equation}
\end{prg}

\begin{prg}
We consider the system of inequalities
\begin{equation}\label{3.6}
\max_{\sigma\in G_K}\frac{\|L_i^{(v)}(\sigma (\x ))\|_v}{\|\sigma (\x )\|_v}\,
\leq A_vH(\x)^{d_{iv}}\
(v\in S,\, i=1\kdots n)\ \ \mbox{in } \x\in\OQq^n.
\end{equation}
According to \cite[Theorem 20.1]{Evertse-Schlickewei2002}, the set of solutions $\x\in\OQq^n$
of \eqref{3.6} with $H(\x )\geq \max (H^*, n^{2n/\eps})$ is contained in a union of at most
\begin{equation}\label{1.extra}
2^{3(n+9)^2}\eps^{-n-4}\log (4RD)\log\log (4RD)
\end{equation}
proper linear subspaces of $\OQq^n$ which are defined over $K$.
We improve this as follows.

\begin{thm}\label{th:3.1}
Assume \eqref{3.1}--\eqref{3.5a}. Then the set of solutions
$\x\in\OQq^n$ of system \eqref{3.6} with
\begin{equation}\label{3.7}
H(\x )\geq C_1:=\max ((H^*)^{1/3RD},\, n^{n/\varepsilon})
\end{equation}
is contained in a union of at most
\begin{equation}\label{3.8}
10^9 2^{2n}n^{14}\varepsilon^{-3}\log \big(3\varepsilon^{-1}RD\big)
\log \big( \varepsilon^{-1}\log 3RD\big)
\end{equation}
proper linear subspaces of $\OQq^n$ which are all defined over $K$.
\end{thm}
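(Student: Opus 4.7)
The plan is to reduce Theorem~\ref{th:3.1} to the quantitative Parametric Subspace Theorem~\ref{th:2.1} in two stages. First, translate the system \eqref{3.6} into a single twisted-height inequality. Second, enlarge the data to a Galois-invariant family over a suitable finite Galois extension $E/K$ so the inequality becomes literally of the form \eqref{2.12b}, and then invoke Theorem~\ref{th:2.1}.

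Following the parameter choice indicated in the Introduction for Corollary~\ref{co:1.3}, I would set
\[
c_{iv}:=\frac{n}{n+\eps}\Big(d_{iv}-\frac{1}{n}\sum_{j=1}^n d_{jv}\Big),\qquad \delta:=\frac{\eps}{n+\eps},\qquad Q:=H(\x)^{1+\eps/n}.
\]
Then $\sum_i c_{iv}=0$, and using $d_{iv}\leq 0$ with $\sum_{v,i}d_{iv}=-n-\eps$ one verifies $\sum_v\max_i c_{iv}\leq 1$ and $0<\delta\leq 1/(n+1)$. A short computation shows that, for any $\sigma\in G_K$, the inequalities in \eqref{3.6} applied to $\sigma(\x)$ combine, after multiplying over $v\in S$ and taking the maximum over $i$, into $\HLcQ(\sigma(\x))\leq \DL^{1/n}Q^{-\delta}$: the exponent $d_{iv}-c_{iv}\log Q/\log H(\x)=\tfrac{1}{n}\sum_j d_{jv}$ is independent of $i$ by design, $\prod_{v\in S}A_v=\DL^{1/n}$, and $Q^{\delta}=H(\x)^{\eps/n}$ matches the residual exponent of $H(\x)$.

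Next I would take $E/K$ finite Galois containing every coefficient of every $L_i^{(v)}$ ($v\in S$). For each place $w$ of $E$ above some $v\in S$, pick $\sigma_w\in G_K$ so that the chosen extension of $\|\cdot\|_v$ to $\OQq$, composed with $\sigma_w^{-1}$, restricts to $\|\cdot\|_w$ on $E$. Set $L_i^{(w)}:=L_i^{(v)}\circ\sigma_w^{-1}$ and $c_{iw}:=c_{iv}\,d(w|v)$, and at the remaining places $w$ of $E$ take $L_i^{(w)}:=X_i$ with $c_{iw}=0$. Since each $L_i^{(v)}$ has at most $D$ Galois conjugates by \eqref{3.2}, the enlarged family comprises at most $R':=RD$ distinct forms and still satisfies \eqref{2.10a}--\eqref{2.9}. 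Using the identities $\|\sigma(\cdot)\|_v=\|\cdot\|_{v\sigma}$ and $\sum_{w|v}d(w|v)=1$, the translation of the previous paragraph becomes, on the nose, $\HLcQ(\x)\leq \DL^{1/n}Q^{-\delta}$ over $\OQq^n$; the Galois maximum in \eqref{3.6} is exactly captured by the product over $w\mid v$ of the suitably twisted conjugate forms.

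Applying Theorem~\ref{th:2.1} to the enlarged data with parameters $R'=RD$, $\delta=\eps/(n+\eps)$ produces at most
\[
10^6\cdot 2^{2n}\cdot n^{10}\cdot \delta^{-3}\cdot\log(3\delta^{-1}R')\log(\delta^{-1}\log 3R')
\]
proper linear subspaces of $\OQq^n$ defined over $K$; substituting $\delta^{-3}\leq (2n/\eps)^3$ (valid since $\eps\leq 1\leq n$) together with $R'=RD$ yields the claimed bound \eqref{3.8} after absorbing absolute constants into $10^9$. The threshold $Q\geq\max(\HL^{1/R'},n^{1/\delta})$ translates, on extracting the $(n+\eps)/n$-th root, to $H(\x)\geq C_1$ in \eqref{3.7}: one uses $\HL\leq (H^*)^{O(nR')}$, since every coefficient has absolute height at most $H^*$, together with $n^{1/\delta}=n^{(n+\eps)/\eps}$. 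The main obstacle is the bookkeeping in the Galois extension step, namely confirming that the $d(w|v)$-weighted product over places $w\in M_E$ above $v$ reproduces the $\max_{\sigma\in G_K}$ in \eqref{3.6} through the relation $\|\sigma(\cdot)\|_v=\|\cdot\|_{v\sigma}$; once that verification is made precise, the remaining parameter tracking is routine.
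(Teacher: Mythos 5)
Your reduction follows the same route as the paper's own proof (Section \ref{5}): the substitution $c_{iv}=\frac{n}{n+\eps}\bigl(d_{iv}-\frac1n\sum_jd_{jv}\bigr)$, $\delta=\eps/(n+\eps)$, $Q=H(\x)^{1+\eps/n}$, the passage to a finite Galois extension carrying the conjugate forms $\tau_{v'}^{-1}(L_i^{(v)})$ together with $X_1\kdots X_n$ at the remaining places, and an application of Theorem \ref{th:2.1}. Two steps, however, do not work as written. First, with $R'=RD$ the threshold of Theorem \ref{th:2.1} does not translate into \eqref{3.7}. One has $\HL\leq n^{n/2}(H^*)^{RD}$ (at most $RD$ of the forms are non-coordinate forms, each of inhomogeneous height at most $H^*$), so $\HL^{1/R'}=\HL^{1/RD}$ can be of the order of $H^*$ itself, whereas $C_1^{1+\eps/n}$ is only of the order of $(H^*)^{1/3RD}$. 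A solution with $(H^*)^{1/3RD}\leq H(\x)\ll H^*$ then need not satisfy $Q\geq \HL^{1/R'}$, and Theorem \ref{th:2.1} cannot be invoked for it. The remedy is that $R$ in Theorem \ref{th:2.1} need only be an \emph{upper bound} for the number of distinct forms: the paper takes $R'=6(RD)^2$, which forces $\HL^{1/R'}\leq C_1^{1+\eps/n}$ and changes the final count only by absolute constants (and one should also note that the enlarged family has $RD+n$ forms, not $RD$, because of the added coordinate forms).

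Second, Theorem \ref{th:2.1} applied over the extension field produces subspaces defined over that extension, not over $K$, so your concluding sentence does not yet follow. The ingredient you need is already in your first paragraph: since $\HLcQ(\sigma(\x))\leq\DL^{1/n}Q^{-\delta}$ for \emph{every} $\sigma\in G_K$, the whole orbit $\{\sigma(\x):\sigma\in G_K\}$ lies in the single subspace $T_i$ furnished by Theorem \ref{th:2.1}, whence $\x\in T_i':=\bigcap_{\sigma\in G_K}\sigma(T_i)$, a proper linear subspace of $\OQq^n$ defined over $K$. This intersection step must be carried out explicitly; it is exactly how the paper descends from $K'$ to $K$. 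With these two repairs the remaining parameter tracking is, as you say, routine and matches the paper's computation.
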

 
Apart from a factor $\log\eps^{-1}$, in terms of $\eps$ 
our bound is precisely the best known bound
for the number of ``large" approximants 
to a given algebraic number in Roth's Theorem
(see, e.g., \cite{Schmidt2009}).
%
%

Although for applications this seems to be of lesser importance now,
for the sake of completeness we give without proof a quantitative version
of an absolute generalization of \eqref{1.1}. 
We keep the notation and assumptions from \eqref{3.1}--\eqref{3.5a}.
In addition, we put 
\[
s:=\# S,\ \ \  \Delta :=\prod_{v\in S}\| \det (L_1^{(v)}\kdots L_n^{(v)})\|_v.
\]
Consider
\begin{equation}\label{3.x}
\prod_{v\in S}\prod_{i=1}^n 
\max_{\sigma\in G_K}\frac{\| L_i^{(v)}(\sigma (\x ))\|_v}{\|\sigma (\x )\|_v}
\leq \Delta H(\x )^{-n-\eps}.
\end{equation}

\begin{cor}\label{co:3.1b}
The set of solutions $\x\in\OQq^n$ of \eqref{3.x} with $H(\x )\geq H_0$
is contained in a union of at most
\[
\big(9n^2\eps^{-1}\big)^{ns}\cdot
10^{10} 2^{2n}n^{15}\varepsilon^{-3}\log \big(3\varepsilon^{-1}D\big)
\log \big( \varepsilon^{-1}\log 3D\big)
\]
proper linear subspaces of $\OQq^n$ which are all defined over $K$.
\end{cor}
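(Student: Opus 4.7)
The plan is a standard Mahler-type combinatorial reduction, of the sort carried out in \S 21 of \cite{Evertse-Schlickewei2002}: one converts the single product inequality \eqref{3.x} into a bounded family of systems of the form \eqref{3.6}, and then applies Theorem \ref{th:3.1} to each such system.

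Concretely, for a solution $\x$ of \eqref{3.x}, set
$a_{iv} := \max_{\sigma \in G_K}\|L_i^{(v)}(\sigma\x)\|_v/\|\sigma\x\|_v$
and define $d_{iv}\in\Rr$ by $a_{iv} = A_v H(\x)^{d_{iv}}$. Since $\Delta = \prod_{v\in S} A_v^n$, inequality \eqref{3.x} becomes $\sum_{v\in S}\sum_{i=1}^n d_{iv}\leq -n-\eps$. A routine bound on linear forms together with the inequality $A_v\leq nH^*$ gives $d_{iv}\leq 2\log(nH^*)/\log H(\x)$, so choosing $H_0$ much larger than any fixed power of $nH^*$ forces all $d_{iv}$ below any prescribed threshold. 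Setting $\eta := \eps/(6n^2 s)$ and taking $H_0$ large enough that $d_{iv}\leq\eta/2$ throughout, I round each $d_{iv}$ up to the nearest multiple of $\eta$, obtaining $\tilde d_{iv}$ with $a_{iv}\leq A_v H(\x)^{\tilde d_{iv}}$ and $\tilde d_{iv}\leq\eta$. A standard scaling adjustment --- replacing each $L_i^{(v)}$ by a scalar multiple $\lambda_{iv} L_i^{(v)}$ with $\prod_{v,i}\|\lambda_{iv}\|_v = 1$, so that both \eqref{3.x} and the value of $\Delta$ are preserved --- then brings us into the case $\tilde d_{iv}\leq 0$, with $\sum_{v,i} \tilde d_{iv}\leq -n - \eps/2$.

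Each $\tilde d_{iv}$ is a non-positive multiple of $\eta$ lying in $[-(n+1),0]$, so the number of possible tuples is at most $((n+1)/\eta+1)^{ns}\leq(9n^2/\eps)^{ns}$. For each such tuple, the corresponding solutions satisfy a system of the form \eqref{3.6} with parameters $\tilde d_{iv}$ and with $\eps/2$ in place of $\eps$. Provided $H_0\geq C_1(\eps/2)$, Theorem \ref{th:3.1} covers them by at most
\[
10^9\, 2^{2n} n^{14}(\eps/2)^{-3}\log(6\eps^{-1}RD)\log(2\eps^{-1}\log 3RD)
\]
proper linear subspaces of $\OQq^n$ defined over $K$. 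Using $R\leq ns$ to absorb the $R$-dependence into extra factors of $n$ and $\log n$, and multiplying by the number of tuples, produces the bound in the corollary.

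The main non-routine point is the scaling step, which forces the rounded exponents to be non-positive while preserving both \eqref{3.x} and the value of $\Delta$; this is what pins down the precise form of the threshold $H_0$, as the natural max of $C_1(\eps/2)$ and a polynomial-in-$\eps^{-1}$ power of $nH^*$. The remainder --- distributing the $O(ns\eta)$ rounding slack into a loss of $\eps/2$ in the exponent, and consolidating polynomial and logarithmic factors to match the exact form displayed in \eqref{3.8} --- is mechanical bookkeeping.
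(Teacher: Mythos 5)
Your overall strategy --- Mahler's combinatorial reduction of \eqref{3.x} to a bounded family of systems \eqref{3.6}, followed by an application of Theorem \ref{th:3.1} to each --- is exactly the route the paper intends (it gives no proof, referring to \cite[Section 21]{Evertse-Schlickewei2002}). Two of your steps, however, do not work as written.

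First, the tuple count, which is the heart of the matter. With your mesh $\eta=\eps/(6n^2s)$ (and some mesh of size $O(\eps/ns)$ is forced, since the total rounding slack over $ns$ coordinates must be $O(\eps)$), the number of multiples of $\eta$ in $[-(n+1),0]$ is about $6n^3s/\eps$, so independent box-counting yields roughly $(6n^3s/\eps)^{ns}$ tuples; your asserted inequality $((n+1)/\eta+1)^{ns}\le (9n^2\eps^{-1})^{ns}$ is false already for $s=1$, and in general is off by a factor of order $(ns)^{ns}$. To reach $(9n^2\eps^{-1})^{ns}$ one must exploit the linear constraint $\sum_{v,i}\tilde d_{iv}\approx -n-\eps$: the admissible tuples are lattice points in (a thin slab of) a simplex, and their number is at most $\binom{N+ns}{ns}\le\bigl(e(1+N/ns)\bigr)^{ns}$, where the per-coordinate density $N/ns\asymp n/\eps$ is \emph{independent of $s$}. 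This simplex count, not a box count, is what produces the stated factor; it is missing from your argument.

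Second, the scaling adjustment. Replacing $L_i^{(v)}$ by $\lambda_{iv}L_i^{(v)}$ shifts $\log a_{iv}$ by the constant $\log\|\lambda_{iv}\|_v$, hence shifts the exponent $d_{iv}$ by $\log\|\lambda_{iv}\|_v/\log H(\x)$, an amount depending on $\x$; no fixed choice of the $\lambda_{iv}$ can make the rounded exponents non-positive for all solutions simultaneously. (Relatedly, your claim $d_{iv}\le 2\log(nH^*)/\log H(\x)$ would require a lower bound on $A_v$ at the single place $v$, which need not exist; only the product of the $A_v$ over all places is controlled.) The standard fix is to parametrize by $a_{iv}/A_v=\Psi^{e_{iv}}$ with $\Psi:=\prod_{v,i}a_{iv}/A_v\le H(\x)^{-n-\eps}$, so that $\sum_{v,i}e_{iv}=1$; rounding \emph{down} in the $e$-coordinates automatically yields non-positive exponents $d_{iv}=-(n+\eps)\tilde e_{iv}$, and the slightly negative $e_{iv}$ are absorbed using the product bound on $\prod_{v,i}\max(1,a_{iv}/A_v)$ together with the largeness of $H_0$.
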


Evertse and Schlickewei \cite[Theorem 3.1]{Evertse-Schlickewei2002}
obtained a similar result, with an upper bound for the number of subspaces
which is about $\big(9n^2\eps^{-1}\big)^{ns}$ times
the quantity in \eqref{1.extra}. 
So in terms of $n$, their bound is of the
order $c^{n^2}$ whereas ours is of the order $c^{n\log n}$. 
Our Corollary \ref{co:3.1b} can be deduced
by following the arguments of \cite[Section 21]{Evertse-Schlickewei2002},
except that instead of Theorem 20.1 of that paper, one has to use
our Theorem \ref{th:3.1}.  
\\[0.15cm]

We now state our interval result, making more precise
the result of Faltings and W\"{u}stholz on \eqref{1.2}.

\begin{thm}\label{th:3.2}
Assume again \eqref{3.1}--\eqref{3.5a}. Put
\begin{eqnarray*}
&&m_1 :=
\left[10^8 2^{2n}n^{14}\varepsilon^{-2}\log
\big( 3\varepsilon^{-1}RD\big)\right],
\\
&&\omega_1 := 3n\varepsilon^{-1}\log 3RD .
\end{eqnarray*}
There are a proper linear subspace $T$ of $\OQq^n$ defined over $K$
which is effectively computable and belongs to a finite collection
depending only on $\{ L_i^{(v)}:\, v\in S,\, i=1\kdots n\}$, as well as reals
$H_1\kdots H_{m_1}$ with
\[
C_1:=\max ((H^*)^{1/3RD},\, n^{n/\varepsilon})\leq H_1<\cdots <H_{m_1},
\]
such that for every solution $\x\in\OQq^n$ of \eqref{3.6}
we have
\[
\x\in T\ \ \ \mbox{or}\ \ \ H(\x )\in [1,C_1)\cup [H_1,H_1^{\omega_1})
\cup\cdots\cup [H_{m_1},H_{m_1}^{\omega_1}).
\]
\end{thm}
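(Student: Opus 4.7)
The plan is to deduce Theorem~\ref{th:3.2} from Theorem~\ref{th:2.3} by the same substitution that derives Corollary~\ref{co:1.3} from Theorem~\ref{th:1.2}, together with a Galois enlargement of the system of linear forms to absorb the absolute setting of \eqref{3.6}. Specifically, I would set
\[
\delta := \frac{\eps}{n+\eps},\qquad
c_{iv} := \frac{n}{n+\eps}\Bigl(d_{iv}-\frac{1}{n}\sum_{j=1}^n d_{jv}\Bigr),\qquad
Q := H(\x)^{1+\eps/n},
\]
with $c_{iv}=0$ for $v\notin S$. Condition \eqref{2.8} is immediate from the construction, and using $d_{iv}\le 0$ together with $\sum_{v,i}d_{iv}=-n-\eps$ one checks that $\sum_{v}\max_i c_{iv}\le (n/(n+\eps))(1+\eps/n)=1$, so \eqref{2.9} holds.

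\emph{Galois enlargement.} Since the $L_i^{(v)}$ have coefficients in $\OQq$ and \eqref{3.6} carries a $\max_{\sigma\in G_K}$, I would enlarge the family. Let $E/K$ be a finite Galois extension containing the coefficients of every $L_i^{(v)}$, and adjoin to $\LL$ all $G_K$-conjugates of these forms; since each form has at most $D$ distinct conjugates, the enlarged family has cardinality $R'\le RD$. At each place $w$ of $E$ above $v\in S$ assign the $G_K$-translate of $\{L_1^{(v)},\ldots,L_n^{(v)}\}$ compatible with $w$, and transport the $c_{iv}$ to $c_{iw}:=d(w|v)c_{iv}$ as in \eqref{2.5}. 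The presence of all Galois conjugates encodes exactly the $\max_{\sigma\in G_K}$-terms in \eqref{3.6}, so a direct computation shows that every solution $\x\in\OQq^n$ of \eqref{3.6}, viewed in a suitable finite extension of $E$, satisfies $\HLcQ(\x)\le \DL^{1/n}Q^{-\delta}$.

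\emph{Applying Theorem~\ref{th:2.3} and translating.} Apply Theorem~\ref{th:2.3} to $\LL,\cc,\delta$ with $R'\le RD$ in place of $R$. The exceptional subspace $T$ from \eqref{2.scss} is $G_K$-invariant, because the enlarged $\LL$ and the $\cc$ are by construction; the uniqueness clause of \eqref{2.scss} then forces $T$ to be defined over the original field $K$. The effective computability and finite-collection assertions for $T$ follow from the Euclidean-height bound on $T$ recalled after \eqref{2.scss}. Since $H(\x)=Q^{n/(n+\eps)}$, each interval $[Q_j,Q_j^{\omega_0})$ for $Q$ transfers to an interval $[H_j,H_j^{\omega_0})$ for $H(\x)$ with $H_j:=Q_j^{n/(n+\eps)}$; substituting $\delta=\eps/(n+\eps)$ and $R'\le RD$ into the formulas defining $m_0,\omega_0,C_0$ of Theorem~\ref{th:2.3} then yields the stated $m_1,\omega_1,C_1$ after routine numerical bookkeeping. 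The main obstacle is this Galois bookkeeping: one must choose, for each place of $E$ above $v\in S$, the correct conjugate family $\{\sigma L_i^{(v)}\}$ so that the enlarged twisted height simultaneously captures every $\|L_i^{(v)}(\sigma\x)\|_v/\|\sigma\x\|_v$ on the left side of \eqref{3.6}, and then verify that the resulting weight data remain $G_K$-symmetric so that $T$ descends to $K$.
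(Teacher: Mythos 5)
Your proposal follows essentially the paper's own route: the substitution $\delta=\eps/(n+\eps)$, $c_{iv}=\frac{n}{n+\eps}\bigl(d_{iv}-\frac1n\sum_j d_{jv}\bigr)$, $Q=H(\x)^{1+\eps/n}$, passage to a Galois extension $K'$ of $K$ containing the coefficients of the $L_i^{(v)}$ (the system at a place $v'$ of $K'$ above $v\in S$ being the conjugate $\tau_{v'}^{-1}(L_i^{(v)})$, so the enlarged family has at most $RD+n$ elements), verification that every solution of \eqref{3.6} satisfies the twisted height inequality (the paper's Lemma \ref{le:5.1}), and then an application of Theorem \ref{th:2.3}. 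Two execution points deserve attention. First, the descent of $T$ to $K$: you propose to check that the weight data of the enlarged system are $G_K$-symmetric and invoke the uniqueness in \eqref{2.scss}. This can be made to work (for $\sigma\in{\rm Gal}(K'/K)$ the place $v'_\sigma$ with $\|\cdot\|_{v'_\sigma}=\|\sigma(\cdot)\|_{v'}$ has $d(v'_\sigma|v)=d(v'|v)$ and carries the system $\sigma^{-1}(L_i^{(v')})$ with identical weights, whence $w(\sigma(U))=w(U)$ for all $U$), but this is precisely the verification you leave open; the paper sidesteps it entirely by proving $\HLcQ(\sigma(\x))\le\DL^{1/n}Q^{-\delta}$ for \emph{every} $\sigma\in G_K$ and then replacing $T$ by $\bigcap_{\sigma\in G_K}\sigma(T)$, which is automatically proper and defined over $K$. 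Second, and more substantively, the threshold $C_1$: if you feed $R'\le RD+n$ into Theorem \ref{th:2.3}, the resulting lower bound is $C_0=\max(\HL^{1/R'},n^{1/\delta})$, and since by \eqref{5.6} one only has $\HL\le n^{n/2}(H^*)^{RD}$, the quantity $\HL^{1/R'}$ is of the order of $H^*$ itself, vastly larger than the $(H^*)^{1/3RD}$ appearing in $C_1$. The claimed inequality $C_1\le H_1$ therefore does not fall out of routine bookkeeping: you must either bridge $[C_1,\,C_0^{n/(n+\eps)})$ with $O(\log RD)$ extra intervals of the form $[H,H^{\omega_1})$ (which $m_1$ can absorb), or do what the paper does and apply Theorem \ref{th:2.3} with $6(RD)^2$ in place of $R$, so that $\HL^{1/6(RD)^2}\le\bigl(n^{n/2}(H^*)^{RD}\bigr)^{1/6(RD)^2}\le C_1^{1+\eps/n}$.
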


Our interval result implies that the solutions $\x\in\OQq^n$ of \eqref{3.6}
outside $T$ have bounded height. In particular, \eqref{1.2} 
has only finitely many solutions $\x\in K^n$ outside $T$.
\end{prg}

\section{Proofs of Theorems \ref{th:2.1} and \ref{th:2.2}}\label{4}

We deduce Theorem \ref{th:2.1} from Theorem \ref{th:2.3},
and prove Theorem \ref{th:2.2}. For this purpose,
we need some gap principles.
We use the notation introduced in Section \ref{2}.
In particular, $K$ is a number field, $n\geq 2$,
$\mathcal{L} =(L_i^{(v)}:\, v\in M_K ,\, i=1\kdots n)$ 
a tuple from $K[X_1\kdots X_n]^{\lin}$, and
$\cc =(c_{iv}:\, v\in M_K:\, i=1\kdots n)$ a tuple of reals.
The linear forms $L_i^{(w)}$ and reals $c_{iw}$, where $w$ is a place
on some finite extension $E$ of $K$, are given by \eqref{2.5}.

We start with a simple lemma.

\begin{lemma}\label{le:4.1}
Suppose that $\mathcal{L} ,\cc$ satisfy
\eqref{2.10a}--\eqref{2.7a}. 
Let $\x \in\OQq^n$, $\sigma\in G_K$, $Q\geq 1$. Then
$\HLcQ (\sigma (\x ))=\HLcQ (\x )$.
\end{lemma}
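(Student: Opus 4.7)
The plan is to use Galois-invariance of the normalized absolute values, together with the fact that the linear forms $L_i^{(v)}$ and coefficients $c_{iv}$ are indexed by places of $K$ and hence unaffected by $\sigma$.

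First I would pick a finite Galois extension $E/K$ with $\x \in E^n$; then $\sigma(\x)\in E^n$ as well, because $\sigma$ restricts to an element of $\Gal(E/K)$. Throughout, I compute $\HLcQ$ at both $\x$ and $\sigma(\x)$ using this $E$, via formula \eqref{2.6}. The key input is that $\sigma$ induces a permutation $w\mapsto \sigma_*w$ of $M_E$ (defined by $\|y\|_{\sigma_*w}:=\|\sigma^{-1}(y)\|_w$), and that if $w$ lies above $v\in M_K$, then so does $\sigma_*w$, because $\sigma$ fixes $K$ pointwise.

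The core computation is a change-of-variables in the product over $w\in M_E$. Using the definitions \eqref{2.5}, since $L_i^{(w)}$ and $c_{iw}$ depend only on the place $v$ below $w$, I get $L_i^{(\sigma_*w)}=L_i^{(w)}$; moreover $c_{i,\sigma_*w}=c_{iw}$ because the local degrees $[E_{\sigma_*w}:K_v]$ and $[E_w:K_v]$ coincide (the completions are $K_v$-isomorphic). Since $L_i^{(v)}$ has coefficients in $K$ by \eqref{2.10a}, one has $L_i^{(v)}(\sigma(\x))=\sigma(L_i^{(v)}(\x))$, and therefore
\[
\|L_i^{(\sigma_*w)}(\sigma(\x))\|_{\sigma_*w}
=\|\sigma(L_i^{(v)}(\x))\|_{\sigma_*w}
=\|L_i^{(v)}(\x)\|_w
=\|L_i^{(w)}(\x)\|_w.
\]
Consequently the factor at $\sigma_*w$ in $\HLcQ(\sigma(\x))$ equals the factor at $w$ in $\HLcQ(\x)$.

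Finally, because $w\mapsto\sigma_*w$ is a bijection of $M_E$, relabeling the index in the product \eqref{2.6} yields $\HLcQ(\sigma(\x))=\HLcQ(\x)$. There is no real obstacle here — the only subtlety is bookkeeping: one must verify that each ingredient entering the $w$-factor (the linear form, the exponent $c_{iw}$, and the normalized valuation) transforms correctly under the place permutation induced by $\sigma$, which follows from the standard compatibility \eqref{2.2} between $\|\cdot\|_w$ and $\|\cdot\|_v$ together with the $K$-rationality of the $L_i^{(v)}$.
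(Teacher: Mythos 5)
Your proof is correct and follows essentially the same route as the paper's: choose a Galois extension $E$ containing the coordinates of $\x$, observe that $\sigma$ permutes the places of $E$ above each $v\in M_K$ while preserving the local data $L_i^{(w)}$, $c_{iw}$ and matching up the normalized absolute values, and then relabel the product over $M_E$. The only difference is the immaterial choice of convention ($\|\sigma^{-1}(\cdot)\|_w$ versus the paper's $\|\sigma(\cdot)\|_w$) in defining the induced permutation of places.
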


\begin{proof}
Let $E$ be a finite Galois extension of $K$ such that $\x\in E^n$.
For any place $v$ of $K$ and any place $w$ of $E$ lying above $v$,
there is a unique place $w_{\sigma}$ of $E$ lying above $v$
such that $\|\cdot\|_{w_\sigma} =\|\sigma (\cdot )\|_w$.
By \eqref{2.5} and
$[E_{w_{\sigma}}:K_v]=[E_w:K_v]$ we have $L_i^{(w_{\sigma})}=L_i^{(w)}$,
$c_{i,w_{\sigma}}=c_{iw}$ for $i=1\kdots n$.
Thus,
\begin{eqnarray*}
\HLcQ (\sigma (\x ))&=&\prod_{v\in M_K}\prod_{w|v} \left(\max_{1\leq i\leq n}
\|L_i^{(w)}(\sigma (\x ))\|_wQ^{-c_{iw}}\right)
\\
\nonumber
&=&\prod_{v\in M_K}\prod_{w|v} \left(\max_{1\leq i\leq n}
\|L_i^{(w_{\sigma})}(\x )\|_wQ^{-c_{i,w_{\sigma}}}\right)=\HLcQ (\x ).
\end{eqnarray*}
\end{proof}

We assume henceforth that $n,\mathcal{L} ,\cc ,\delta ,R$ 
satisfy \eqref{2.10a}--\eqref{2.10}. Let $\mu$, $\DL$, $\HL$ 
be given by \eqref{2.9}, \eqref{2.3a}, \eqref{2.9a}.
Notice that
\eqref{2.2}, \eqref{2.3}, \eqref{2.5} imply that
\eqref{2.10a}--\eqref{2.9}
remain valid if we replace $K$ by $E$
and the index $v\in M_K$ by the index $w\in M_E$.
Likewise, in the definitions of $\mu ,\DL ,\HL$ we may replace $K$ by $E$
and $v\in M_K$ by $w\in M_E$.
This will be used frequently in the sequel.

We start with our first gap principle.
For ${\bf a}=(a_1\kdots a_n)\in\Cc^n$ we put
$\|{\bf a}\|:=\max (|a_1|\kdots |a_n|)$.

\begin{prop}\label{pr:4.2}
Let
\begin{equation}\label{4.2}
A\geq n^{1/\delta}.
\end{equation}
Then there is a single proper linear subspace $T_0$ of $\OQq^n$,
defined over $K$, such that for every $Q$ with
\[
A\leq Q<A^{1+\delta /2}
\]
we have $\{ \x\in\OQq^n:\, \HLcQ (\x )\leq\DL^{1/n}Q^{-\delta}\}
\subset T_0$.
\end{prop}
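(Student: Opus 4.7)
The plan is to take $T_0$ to be the $\OQq$-linear span of $\bigcup_{A\leq Q<A^{1+\delta/2}}V_Q$, where $V_Q:=\{\x\in\OQq^n:\HLcQ(\x)\leq\DL^{1/n}Q^{-\delta}\}$, and to show that $\dim T_0<n$. \lemref{le:4.1} gives $G_K$-invariance of every $V_Q$, hence of $T_0$, so $T_0$ is automatically defined over $K$; only the bound on its dimension requires proof.

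Suppose for contradiction $T_0=\OQq^n$, and pick linearly independent $\x_1,\ldots,\x_n$ with $\x_i\in V_{Q_i}$ for some $Q_i\in[A,A^{1+\delta/2})$; enlarge $K$ to a finite extension $E$ containing all the coordinates. The argument combines a short-interval comparison with a determinantal lower bound. For the comparison, the local estimate $(Q'/Q)^{c_{iw}}\leq(Q'/Q)^{\max_j c_{jw}}$ valid for $Q'\geq Q\geq 1$, together with the normalization $\sum_{w\in M_E}\max_j c_{jw}=\sum_{v\in M_K}\max_j c_{jv}\leq 1$ obtained by transferring \eqref{2.9} to $E$ via \eqref{2.2}--\eqref{2.3}, yields
\[
\HLcQ(\x)\,\leq\,(Q'/Q)\cdot H_{\LL,\cc,Q'}(\x).
\]
Applying this with $Q=A$, $Q'=Q_i$ and substituting the defining bound $H_{\LL,\cc,Q_i}(\x_i)\leq\DL^{1/n}Q_i^{-\delta}$ gives $H_{\LL,\cc,A}(\x_i)\leq\DL^{1/n}Q_i^{1-\delta}/A$. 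Multiplying over $i$ and using $Q_i<A^{1+\delta/2}$ with $1-\delta\geq 0$ produces
\[
\prod_{i=1}^n H_{\LL,\cc,A}(\x_i)\,\leq\,\DL\cdot A^{-n\delta(1+\delta)/2}.
\]

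The matching lower bound is the determinantal inequality
\[
\prod_{i=1}^n\HLcQ(\x_i)\,\geq\,\DL/n^{n/2}
\qquad (Q\geq 1,\ \x_1,\ldots,\x_n\in E^n\text{ lin.\ indep.}).
\]
At each archimedean $w\in M_E$ apply Hadamard's inequality $|\det N|\leq n^{n/2}\prod_i\max_j|N_{ij}|$ (in the complex absolute value on $E_w$) to the matrix $N^{(w)}_{ij}:=L^{(w)}_j(\x_i)\cdot Q^{-c_{jw}/s_w}$, where $s_w:=[E_w:\Rr]/[E:\Qq]$. Since $\sum_j c_{jw}=0$ the column scalings cancel in $\det N^{(w)}=\det(L^{(w)}_j(\x_i))$; raising the inequality to the $s_w$-th power converts each $|N^{(w)}_{ij}|^{s_w}$ into the local factor $\|L^{(w)}_j(\x_i)\|_w\,Q^{-c_{jw}}$ of $\HLcQ(\x_i)$. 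At non-archimedean $w$ the analogous bound holds with no constant by expanding the determinant over permutations and using the ultrametric inequality together with $\sum_i c_{\pi(i),w}=0$. Taking the product over all $w\in M_E$, using $\sum_{w\mid\infty}s_w=1$ and the product formula applied to $\det(\x_1,\ldots,\x_n)\in E^\ast$, yields the claim.

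Combining the two estimates gives $\DL/n^{n/2}\leq\DL\cdot A^{-n\delta(1+\delta)/2}$, i.e.\ $A^{\delta(1+\delta)}\leq n$; but $A\geq n^{1/\delta}$ forces $A^{\delta(1+\delta)}\geq n^{1+\delta}>n$, a contradiction. The technical crux is securing the sharp Hadamard constant $n^{n/2}$: the crude permanent bound $|\det M|\leq n!\cdot\max_\pi\prod_i|m_{i,\pi(i)}|$ yields only the weaker lower bound $\DL/n!$, which fails to match the hypothesis $A\geq n^{1/\delta}$ for small $n$; one genuinely needs the row-norm version of Hadamard applied to the column-scaled matrix, with the scaling chosen so that it cancels in the determinant after raising to the local exponent $s_w$.
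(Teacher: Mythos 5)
Your proposal is correct and follows essentially the same route as the paper: a short-interval comparison reducing every $Q\in[A,A^{1+\delta/2})$ to the single parameter $A$ (the paper gets $H_{\mathcal{L},\mathbf{c},A}(\x)<\DL^{1/n}A^{-\delta/2}$, you get the slightly sharper exponent $-\delta(1+\delta)/2$; either suffices), combined with the Hadamard/ultrametric/product-formula argument showing that $n$ linearly independent solutions would force $\det(\x_1,\ldots,\x_n)=0$. Your closing remark about needing the $n^{n/2}$ Hadamard constant rather than the $n!$ permanent bound is also consistent with the paper's computation, which uses exactly that constant against the threshold $A\geq n^{1/\delta}$.
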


\begin{proof}
Let $Q\in [A,A^{1+\delta /2})$,
and let $\x\in\OQq^n$ with ${\bf x}\not= {\bf 0}$ and
$\HLcQ (\x )\leq \DL^{1/n}Q^{-\delta}$.
Take a finite extension $E$ of $K$ such that $\x\in E^n$.
For $w\in M_E$, put
\[
\theta_w:=\max_{1\leq i\leq n} c_{iw}.
\]
By \eqref{2.5}, \eqref{2.8}, \eqref{2.9} we have
\begin{equation}\label{4.3}
\sum_{i=1}^n c_{iw}=0\ \mbox{for } w\in M_E ,\ \  \sum_{w\in M_E}\theta_w\leq 1.
\end{equation}
Let $w\in M_E$ with $\theta_w>0$.
Using $A\leq Q<A^{1+\delta /2}$ we have 
\[
\max_{1\leq i\leq n}\| L_i^{(w)}(\x )\|_wQ^{-c_{iw}}>
\left(\max_{1\leq i\leq n}\| L_i^{(w)}(\x )\|_wA^{-c_{iw}}\right)\cdot
A^{-\theta_w\delta /2}.
\]
If $w\in M_E$ with $\theta_w=0$ then $c_{iw}=0$ for $i=1\kdots n$
and so we trivially have an equality instead of a strict inequality. 
By taking the product over $w$ and using \eqref{4.3}, we obtain
\begin{eqnarray*}
\HLcQ (\x )&>& H_{\mathcal{L},{\bf c},A}(\x )A^{-\delta /2}\ \
\mbox{if $\theta_w>0$ for some $w\in M_E$,}
\\
\HLcQ (\x )&=& H_{\mathcal{L},{\bf c},A}(\x )> H_{\mathcal{L},{\bf c},A}(\x )A^{-\delta /2}\ \
\mbox{otherwise.}
\end{eqnarray*}
Hence
\begin{equation}\label{4.4}
H_{\mathcal{L},{\bf c},A}(\x )< \DL^{1/n}A^{-\delta /2}.
\end{equation}
This is clearly true for ${\bf x}={\bf 0}$ as well.

Let $T_0$ be the $\OQq$-vector space spanned by the vectors $\x\in\OQq^n$
with \eqref{4.4}. By Lemma \ref{le:4.1}, if $\x$ satisfies \eqref{4.4}
then so does $\sigma (\x )$ for every $\sigma\in G_K$.
Hence $T_0$ is defined over $K$. 
Our Proposition follows once we have shown that $T_0\not= \OQq^n$,
and for this, it suffices to show that $\det (\x_1\kdots\x_n)= 0$
for any $\x_1\kdots \x_n\in\OQq^n$ with \eqref{4.4}.

So take $\x_1\kdots\x_n\in\OQq^n$ with \eqref{4.4}.
Let $E$ be a finite extension of $K$ with $\x_1\kdots\x_n\in E^n$.
We estimate from above
$\|\det (\x_1\kdots \x_n)\|_w$ for $w\in M_E$.
For $w\in M_E$, $j=1\kdots n$, put
\[
\Delta_w :=\|\det (L_1^{(w)}\kdots L_n^{(w)})\|_w,\ \
H_{jw}:=\max_{1\leq i\leq n} \|L_i^{(w)}(\x_j)\|_wA^{-c_{iw}}.
\]

First, let $w$ be an infinite place of $E$.
Put $s(w):=[E_w:\Rr ]/[E:\Qq ]$.
Then there is an embedding $\sigma_w :\, E\hookrightarrow\Cc$
such that $\|\cdot \|_w=|\sigma_w(\cdot )|^{s(w)}$. Put
\begin{equation}\label{4.4a}
{\bf a}_{jw}:= \left( A^{-c_{1w}/s(w)}\sigma_w(L_1^{(w)}(\x_j))\kdots
A^{-c_{nw}/s(w)}\sigma_w(L_n^{(w)}(\x_j))\right)
\end{equation}
for $j=1\kdots n$.
Then $H_{jw}=\|{\bf a}_{jw}\|^{s(w)}$.
So by Hadamard's inequality and \eqref{4.3},
\begin{eqnarray}
\label{4.4b}
\|\det (\x_1\kdots \x_n)\|_w&=& \Delta_w^{-1}\|\det\big(L_i^{(w)}(\x_j)\big)_{i,j}\|_w
\\
\nonumber
&=&
\Delta_w^{-1}A^{c_{1w}+\cdots +c_{nw }}
|\det ({\bf a}_{1w}\kdots {\bf a}_{nw})|^{s(w)}
\\
\nonumber
&\leq& \Delta_w^{-1}n^{ns(w)/2}H_{1w}\cdots H_{nw}.
\end{eqnarray}
Next, let $w$ be a finite place of $E$. Then by
the ultrametric inequality and \eqref{4.3},
\begin{eqnarray}
\label{4.4c}
\|\det (\x_1\kdots \x_n)\|_w&=& \Delta_w^{-1}\|\det\big(L_i^{(w)}(\x_j)\big)_{i,j}\|_w
\\
\nonumber
&\leq &
\Delta_w^{-1}\max_{\rho}
\|L_{\rho (1)}(\x_1)\|_w\cdots \|L_{\rho (n)}(\x_n)\|_w
\\
\nonumber
&\leq&
\Delta_w^{-1}A^{c_{1w}+\cdots +c_{nw }} H_{1w}\cdots H_{nw}
\\
\nonumber
&=&
\Delta_w^{-1}H_{1w}\cdots H_{nw},
\end{eqnarray}
where the maximum is taken over all permutations $\rho$ of $1\kdots n$.

We take the product over $w\in M_E$.
Then using $\prod_{w\in M_E} \Delta_w=\DL$
(by \eqref{2.2}, \eqref{2.5}, \eqref{2.3a}), 
$\sum_{w|\infty} s(w)=1$ (sum of local degrees is global degree),
\eqref{4.3}, \eqref{4.4},
and lastly our assumption $A\geq n^{1/\delta}$,
we obtain
\[
\prod_{w\in M_E} \|\det (\x_1\kdots \x_n)\|_w\leq\DL^{-1}n^{n/2}
\prod_{j=1}^n H_{\mathcal{L},{\bf c},A}(\x_j)< n^{n/2}A^{-n\delta /2}\leq 1.
\]
Now the product formula implies that $\det (\x_1\kdots\x_n)=0$, as required.
\end{proof}

For our second gap principle we need the following lemma.

\begin{lemma}\label{le:4.3}
Let $M\geq 1$. Then $\Cc^n$ is a union of at most $(20n)^nM^2$
subsets, such that for any ${\bf y}_1\kdots {\bf y}_n$ in the same subset,
\begin{equation}\label{4.5}
|\det ({\bf y}_1\kdots {\bf y}_n)|\leq M^{-1}\|{\bf y}_1\|\cdots \|{\bf y}_n\|.
\end{equation}
\end{lemma}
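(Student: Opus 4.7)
The plan is to partition $\Cc^n$ according to the ``direction'' of a vector so that any $n$ vectors sharing a cell are nearly proportional, whence their determinant will be small by Hadamard's inequality. For $k=1\kdots n$, let $D_k$ denote the set of nonzero ${\bf y}\in\Cc^n$ for which $|y_k|=\|{\bf y}\|$ and $k$ is the smallest index with this property. The sets $D_1\kdots D_n$ partition $\Cc^n\setminus\{{\bf 0}\}$, and on $D_k$ the map ${\bf y}\mapsto (y_i/y_k)_{i\neq k}$ takes values in the closed polydisk $\bar{\Delta}^{n-1}$, where $\bar{\Delta}:=\{z\in\Cc:|z|\leq 1\}$.

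Set $\delta:=(2(n-1))^{-1/2}M^{-1/(n-1)}$, which satisfies $\delta\leq 1$ since $M\geq 1$ and $n\geq 2$. Cover $\bar{\Delta}$ by at most $\lceil 2/\delta\rceil^2\leq (3/\delta)^2$ axis-parallel closed squares of side $\delta$, and take products to cover $\bar{\Delta}^{n-1}$ by at most $(3/\delta)^{2(n-1)}$ boxes. Each box, paired with a choice of $k$, carves out one cell of the partition; adjoining $\{{\bf 0}\}$ as a further cell gives a partition of $\Cc^n$ into at most
\[
1+n\,(3/\delta)^{2(n-1)}=1+n\bigl(18(n-1)\bigr)^{n-1}M^2\leq (20n)^n M^2
\]
cells, the last inequality being routine arithmetic.

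Within one such cell, take ${\bf y}_1\kdots {\bf y}_n\in D_k$ lying in a common box (if some ${\bf y}_j={\bf 0}$ the determinant vanishes and \eqref{4.5} is immediate). Write ${\bf u}_j:={\bf y}_j/y_{kj}$, so that $({\bf u}_j)_k=1$, $|({\bf u}_j)_i|\leq 1$ for $i\neq k$, and $|({\bf u}_j)_i-({\bf u}_1)_i|\leq \delta\sqrt{2}$ for every $i\neq k$ (the diameter of a complex square of side $\delta$). The column operations ${\bf u}_j\mapsto {\bf u}_j-{\bf u}_1$ for $j\geq 2$ leave the first column unchanged and make the $k$-th row equal to $(1,0\kdots 0)$; expanding along that row reduces $\det({\bf u}_1\kdots{\bf u}_n)$ to an $(n-1)\times(n-1)$ determinant whose entries have absolute value at most $\delta\sqrt{2}$. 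Hadamard's inequality together with the choice of $\delta$ then yields
\[
|\det({\bf u}_1\kdots{\bf u}_n)|\leq\bigl(\sqrt{n-1}\cdot\delta\sqrt{2}\bigr)^{n-1}=\bigl(2(n-1)\bigr)^{(n-1)/2}\delta^{n-1}=M^{-1}.
\]
Since $|\det({\bf y}_1\kdots{\bf y}_n)|=|y_{k1}\cdots y_{kn}|\cdot|\det({\bf u}_1\kdots{\bf u}_n)|=\|{\bf y}_1\|\cdots\|{\bf y}_n\|\cdot|\det({\bf u}_1\kdots{\bf u}_n)|$, inequality \eqref{4.5} follows.

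There is no serious obstacle here: the argument is a straightforward geometric-combinatorial partition. The only delicate points are tracking the $\sqrt{2}$ factor arising from the diameter of a complex $\delta$-square and choosing $\delta$ of the optimal order $M^{-1/(n-1)}$, so that the resulting count of cells fits below the target $(20n)^n M^2$.
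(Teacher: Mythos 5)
Your proof is correct: the normalization onto the unit polydisk via the largest coordinate, the covering by boxes of side $\delta\asymp M^{-1/(n-1)}$, the column subtraction, and the Hadamard bound all check out, and the count $1+n(18(n-1))^{n-1}M^2\leq(20n)^nM^2$ is right. The paper itself gives no proof here but simply cites \cite[Lemma 4.3]{Evertse2010}, and your argument is the standard covering/Hadamard proof of that result, so there is nothing further to compare.
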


\begin{proof}\cite[Lemma 4.3]{Evertse2010}.
\end{proof}

\begin{prop}\label{pr:4.4}
Let $d:=[K:\Qq ]$  and $A\geq 1$.
Then there are proper linear subspaces $T_1\kdots T_t$ of $K^n$,
with
\[
t\leq (80n)^{nd}
\]
such that for every $Q$ with
\[
A\leq Q<2A^{1+\delta /2}
\]
there is $T_i\in\{ T_1\kdots T_t\}$ with
\[
\{ \x\in K^n: \HLcQ (\x )\leq \DL^{1/n}Q^{-\delta}\}\subset T_i.
\]
\end{prop}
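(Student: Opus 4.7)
The plan adapts the product-formula argument from \propref{pr:4.2} to the longer interval $[A,2A^{1+\delta/2})$ and to arbitrary $A\geq 1$ (dropping the assumption $A\geq n^{1/\delta}$), producing a list of at most $(80n)^{nd}$ proper subspaces rather than a single one. The key new input is \lemref{le:4.3}, whose refined Hadamard-type bound at the archimedean places compensates for the weaker quantitative information now available.

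First I would prove a $Q$-independent gap principle: every $\x\in K^n$ satisfying $\HLcQ(\x)\leq\DL^{1/n}Q^{-\delta}$ for some $Q\in [A,2A^{1+\delta/2})$ satisfies
\[
H_{\mathcal{L},\cc,A}(\x)<2\DL^{1/n}A^{-\delta/2}.
\]
This is obtained by copying the place-by-place comparison in the proof of \propref{pr:4.2} with $Q<2A^{1+\delta/2}$ in place of $Q<A^{1+\delta/2}$: for each $w\in M_E$ one obtains
\[
\max_i\|L_i^{(w)}(\x)\|_wQ^{-c_{iw}}\geq 2^{-\theta_w}A^{-\theta_w\delta/2}\cdot\max_i\|L_i^{(w)}(\x)\|_wA^{-c_{iw}},
\]
with strict inequality whenever $\theta_w>0$. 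Taking the product over $w$ and using $\sum_w\theta_w\leq 1$ and $Q\geq A$ yields the claim, the factor $2$ being the sole price of the longer interval.

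Next, for each infinite place $v$ of $K$ with a chosen embedding $\sigma_v$, I set ${\bf a}_v(\x):=\bigl(A^{-c_{iv}/s(v)}\sigma_v(L_i^{(v)}(\x))\bigr)_{i=1}^n\in\Cc^n$, so that $\|{\bf a}_v(\x)\|^{s(v)}$ equals the local $A$-twisted-height factor at $v$. Applying \lemref{le:4.3} with $M:=\max(1,2^nA^{-n\delta/2})\leq 2^n$ partitions $\Cc^n$ into at most $(20n)^nM^2\leq (80n)^n$ subsets; the Cartesian product over the $r_1+r_2\leq d$ infinite places of $K$ refines this into at most $(80n)^{n(r_1+r_2)}\leq (80n)^{nd}$ classes, into one of which every nonzero $\x\in K^n$ falls. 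For each class $C$, let $T_C$ be the $K$-span of those $\x\in C$ with $H_{\mathcal{L},\cc,A}(\x)<2\DL^{1/n}A^{-\delta/2}$. For any $\x_1\kdots\x_n$ lying in a single class, combining the determinantal bound of \lemref{le:4.3} at infinite places, the ultrametric inequality at finite places, and the identity $\sum_{v\mid\infty}s(v)=1$ yields
\[
\prod_{v\in M_K}\|\det(\x_1\kdots\x_n)\|_v\leq M^{-1}\DL^{-1}\prod_{j=1}^nH_{\mathcal{L},\cc,A}(\x_j)<M^{-1}2^nA^{-n\delta/2}\leq 1,
\]
so $\det(\x_1\kdots\x_n)=0$ by the product formula and hence $T_C$ is a proper subspace of $K^n$. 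Listing the $T_C$ as $T_1\kdots T_t$ gives at most $(80n)^{nd}$ proper subspaces, and by the gap principle every solution of the inequality for any $Q\in[A,2A^{1+\delta/2})$ lies in one of them, as required.

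The main obstacle is the simultaneous calibration of $M$: one needs $M\geq 2^nA^{-n\delta/2}$ (to force $\prod_v\|\det\|_v<1$ after absorbing the factor $2^n$ introduced by the weakened bound on $H_{\mathcal{L},\cc,A}$) and $(20n)^nM^2\leq (80n)^n$ (to meet the target count). The choice $M=\max(1,2^nA^{-n\delta/2})$, valid because $A\geq 1$ forces $M\leq 2^n$, satisfies both; by contrast, the unrefined Hadamard bound used in \propref{pr:4.2} would introduce an unabsorbable factor $n^{n/2}$, which is exactly the reason \propref{pr:4.2} needed $A\geq n^{1/\delta}$.
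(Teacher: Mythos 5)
Your argument is correct and follows the paper's proof essentially step for step: the same gap principle reducing every $Q\in[A,2A^{1+\delta/2})$ to the single inequality $H_{\mathcal{L},\cc ,A}(\x )<2\DL^{1/n}A^{-\delta/2}$, the same application of Lemma \ref{le:4.3} to the vectors ${\bf a}_v(\x )$ at the infinite places, and the same product-formula conclusion giving at most $((20n)^nM^2)^{r}\leq (80n)^{nd}$ proper subspaces. The only (immaterial) difference is your choice $M=\max (1,2^nA^{-n\delta/2})$, where the paper simply takes $M=2^n$, which already suffices because $A\geq 1$ gives $A^{-n\delta/2}\leq 1$.
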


\begin{proof}
We use the notation from the proof of Proposition \ref{pr:4.2}.
Temporarily, we index places of $K$ also by $w$.
Similarly as in the proof of Proposition \ref{pr:4.2} we infer
that if $\x\in K^n$ is such that there exists $Q$ with
$Q\in [A,2A^{1+\delta /2})$ and
$\HLcQ (\x )\leq \DL^{1/n}Q^{-\delta}$, then
\begin{equation}\label{4.6}
H_{\mathcal{L},{\bf c},A}(\x )< 2\DL^{1/n}A^{-\delta /2}.
\end{equation}

Put $M:= 2^n$. Let $w_1\kdots w_r$ be the infinite places of $K$,
and for $i=1\kdots r$ take an embedding $\sigma_{w_i}:\, K\hookrightarrow \Cc$ 
such that
$\|\cdot\|_{w_i}=|\sigma_{w_i}(\cdot )|^{s(w_i)}$.

For $\x\in K^n$ with \eqref{4.6} and $w\in\{ w_1\kdots w_r\}$ put
\[
{\bf a}_{w}(\x ):=\left( A^{-c_{1w}/s(w)}\sigma_{w}(L_1^{(w)}(\x))\kdots
A^{-c_{nw}/s(w)}\sigma_{w}(L_n^{(w)}(\x ))\right).
\]
By Lemma \ref{le:4.3}, the set of vectors
$\x\in K^n$ with \eqref{4.6} is a union of at most
\[
((20n)^nM^2)^r\leq (80n)^{nd}
\]
classes, such that for any $n$ vectors $\x_1\kdots \x_n$ in the same class,
\begin{equation}\label{4.7}
|\det ({\bf a}_w(\x_1)\kdots {\bf a}_w(\x_n))|\leq M^{-1}\ \mbox{for $w=w_1\kdots w_r$.}
\end{equation}

We prove that the vectors $\x\in K^n$ with \eqref{4.6}
belonging to the same class
lie in a single proper linear subspace of $K^n$,
i.e., that any $n$ such vectors have zero determinant. This clearly suffices.

Let $\x_1\kdots\x_n$ be vectors from $K^n$ that satisfy \eqref{4.6}
and lie in the same class.
Let $w$ be an infinite place of $K$. 
Then using \eqref{4.7} instead of Hadamard's inequality,
we obtain, instead of \eqref{4.4b},
\[
\|\det (\x_1\kdots \x_n)\|_w
\leq  \Delta_w^{-1}M^{-s(w)}H_{1w}\cdots H_{nw}.
\]
For the finite places $w$ of $K$ we still have \eqref{4.4c}.
Then by taking the product over $w\in M_K$, we obtain, 
with a similar computation
as in the proof of Proposition \ref{pr:4.2}, employing our choice $M=2^n$,
\[
\prod_{w\in M_K}\|\det (\x_1\kdots\x_n)\|_w< M^{-1}(2A^{-\delta /2})^n\leq 1.
\]
Hence $\det (x_1\kdots x_n)=0$. This completes our proof.
\end{proof}

In the proofs of Theorems \ref{th:2.1} and \ref{th:2.3} we keep the assumptions
\eqref{2.10a}--\eqref{2.10}.

\begin{proof}[Deduction of Theorem \ref{th:2.1} from Theorem \ref{th:2.3}]
Define
\[
\mathcal{S}_Q:=\{\x\in\OQq^n:\, \HLcQ (\x)\leq\DL^{1/n}Q^{-\delta }\}.
\]
Theorem \ref{th:2.3} implies that if $Q$ is a real such that
\[
Q\geq C_0=\max (\HL^{1/R},n^{1/\delta}),\ \ \mathcal{S}_Q\not\subset T
\]
then
\[
Q\in\bigcup_{h=1}^{m_0}\bigcup_{k=1}^s \left[\left. Q_h^{(1+\delta /2)^{k-1}},Q_h^{(1+\delta /2)^k}
\right)\right. ,
\]
where $s$ is the integer with
$(1+\delta /2)^{s-1}<\omega_0\leq (1+\delta /2)^s$.
Notice that we have a union of at most
\[
m_0s\leq m_0\left(1+\frac{\log\omega_0}{\log (1+\delta /2)}\right)\leq 3\delta^{-1}m_0(1+\log \omega_0)
\]
intervals. By Proposition \ref{pr:4.2}, for each of these intervals $I$,
the set $\bigcup_{Q\in I}\mathcal{S}_Q$ 
lies in a proper linear subspace of $\OQq^n$,
which is defined over $K$. Taking into consideration also the exceptional subspace $T$,
it follows that for the number $t_0$ of subspaces in Theorem \ref{th:2.1}
we have
\begin{eqnarray*}
t_0&\leq& 1+3\delta^{-1}m_0 (1+\log\omega_0)
\\
&\leq& 10^6 2^{2n}n^{10}\delta^{-3}\log (3\delta^{-1}R)\log (\delta^{-1}\log 3R).
\end{eqnarray*}
This proves Theorem \ref{th:2.1}.
\end{proof}

\begin{proof}[Proof of Theorem \ref{th:2.2}]
We distinguish between 
$Q\in [ n^{1/\delta},\, C_0)$
and $Q\in [1,n^{1/\delta})$.

Completely similarly as above, we have
\[
[ n^{1/\delta},\, C_0)\,\subseteq\,\bigcup_{j=1}^{s_1}
[n^{(1+\delta/2)^{j-1}/\delta},\, n^{(1+\delta/2)^j/\delta})\ \ (j=1\kdots s_1),
\] 
where $n^{(1+\delta/2)^{s_1-1}/\delta}< C_0\leq n^{(1+\delta/2)^{s_1}/\delta}$,
i.e.,
\begin{equation}\label{4.x}
s_1=1+\left[\frac{\log (\delta\log C_0/\log n)}{\log (1+\delta /2)}\right]
\leq 2+3\delta^{-1}\log\log 3\HL^{1/R}.
\end{equation}
By Proposition \ref{pr:4.2}, for each of the $s_1$ intervals $I$ on the right-hand side, the set
$\left(\bigcup_{Q\in I}\mathcal{S}_Q\right)\cap K^n$ 
lies in a proper linear subspace of $K^n$.

Next consider $Q$ with $1\leq Q<n^{1/\delta}$. 
Define $\gamma_0:=0$,  $\gamma_k:=1+\gamma_{k-1}(1+\delta /2)$ for $k=1,2,\ldots$, 
i.e.,
\[
\gamma_k := \frac{(1+\delta /2)^k-1}{\delta /2}\ \ \mbox{for $k=0,1,2,\ldots$.}
\]
Then
\[
[1,n^{1/\delta})\subseteq \bigcup_{k=1}^{s_2} \left[\left. 2^{\gamma_{k-1}},2^{\gamma_k}\right)\right.
\]
where $(1+\delta /2)^{s_2-1}<\frac{\log (2n^{1/2})}{\log 2}\leq (1+\delta /2)^{s_2}$, i.e.,
\begin{equation}\label{4.y}
s_2= 1+\left[\frac{\log \big(\log (2n^{1/2})/\log 2)}{\log (1+\delta /2)}\right]
<4\delta^{-1}\log\log 4n^{1/2}.
\end{equation}
Applying Proposition \ref{pr:4.4} with $A=2^{\gamma_{k-1}}$ ($k=1\kdots s_2$), 
we see that for each of the $s_2$ intervals $I$ on the right-hand side, 
there is
a collection of at most $(80n)^{nd}$ proper linear subspaces of $K^n$,
such that for every $Q\in I$, the set $\mathcal{S}_Q\cap K^n$ 
is contained in one of these subspaces.

Taking into consideration \eqref{4.x}, \eqref{4.y},
it follows that for the number of subspaces $t_1$ in Theorem \ref{th:2.2}
we have
\begin{eqnarray*}
t_1&\leq& s_1+(80n)^{nd}s_2 \leq 2+ 3\delta^{-1}\log\log 3\HL^{1/R}+ 
(80n)^{nd}\cdot 4\delta^{-1}\log\log 4n^{1/2}
\\
&<& \delta^{-1}\big( (90n)^{nd}+3\log\log 3\HL^{1/R}\big).
\end{eqnarray*}
This proves Theorem \ref{th:2.2}.
\end{proof}

\section{Proofs of Theorems \ref{th:3.1} and \ref{th:3.2}}\label{5}

\begin{prg}
We use the notation introduced in Section \ref{3} and keep the assumptions
\eqref{3.1}--\eqref{3.5a}.
Further, for $L=\sum_{i=1}^n \alpha_iX_i\in\OQq [X_1\kdots X_n]^{\lin}$
and $\sigma\in G_K$, we put
$\sigma (L):= \sum_{i=1}^n \sigma (\alpha_i)X_i$.

Fix
a finite Galois extension $K'\subset\OQq$ of $K$
such that all linear forms
$L_i^{(v)}$ ($v\in S$, $i=1\kdots n$) have their coefficients in $K'$.
Recall that for every $v\in M_K$ we have chosen a continuation of
$\|\cdot \|_v$ to $\OQq$.
Thus, for every $v'\in M_{K'}$ there is
$\tau_{v'}\in{\rm Gal}(K'/K)$
such that
$\|\alpha\|_{v'}=\|\tau_{v'}(\alpha )\|_v^{d(v'|v)}$ for $\alpha\in K'$,
where $v$ is the place of $K$ lying below $v'$.
Put
\begin{equation}\label{5.1}
L_i^{(v)}:=X_i,\  d_{iv}:=0\ \mbox{for $v\in M_K\setminus S$, $i=1\kdots n$}
\end{equation}
and then,
\begin{eqnarray}
\label{5.2}
&&L_i^{(v')}:= \tau_{v'}^{-1}(L_i^{(v)}),\ \ \ 
c_{i,v'}:= 
\displaystyle{d(v'|v)\cdot \frac{n}{n+\varepsilon}\left( d_{iv}-\frac{1}{n}\sum_{j=1}^n d_{jv}\right)\ }
\\[0.1cm]
\nonumber
&&\hspace*{3cm}\mbox{for $v'\in M_{K'}$, $i=1\kdots n$},
\end{eqnarray}
\begin{equation}\label{5.2a}
\left\{
\begin{array}{l}
\mathcal{L}:= (L_i^{(v')}:\, v'\in M_{K'},\, i=1\kdots n),
\\
{\bf c}:= (c_{i,v'}:\, v'\in M_{K'},\, i=1\kdots n),
\end{array}\right.
\end{equation}
and finally,
\begin{equation}\label{5.3}
\delta :=\frac{\varepsilon}{n+\varepsilon}.
\end{equation}
Clearly, 
\begin{eqnarray*}
&&c_{1,v'}=\cdots =c_{n,v'}=0\ \mbox{for all but finitely many $v'\in M_{K'}$,}
\\
&&\sum_{j=1}^n c_{j,v'}=0\ \mbox{for } v\in M_{K'}.
\end{eqnarray*}
Moreover, by \eqref{5.1}, \eqref{5.2},
\eqref{3.5}, \eqref{3.4},
\begin{equation}\label{5.2y}
\Big(\sum_{v'\in M_{K{'}}}\max_{1\leq i\leq n} c_{i,{v'}}\Big)\leq 1.
\end{equation} 
By \eqref{5.1}, \eqref{3.2} we have
\begin{equation}\label{5.4}
\# \bigcup_{v'\in M_{K'}}\{ L_1^{(v')}\kdots L_n^{(v')}\}\leq RD+n.
\end{equation}
These considerations show that \eqref{2.10a}--\eqref{2.10} are satisfied
with $K'$ in place of $K$, 
with the choices of $\LL ,\cc ,\delta$ from \eqref{5.1}--\eqref{5.3},
and with $RD+n$ in place of $R$. 
Further,
\begin{equation}\label{5.5}
\DL =\prod_{v'\in M_{K'}}\|\det (L_1^{(v')}\kdots L_n^{(v')})\|_{v'}
=\prod_{v\in S} \| \det (L_1^{(v)}\kdots L_n^{(v)})\|_v,
\end{equation}
\begin{equation}\label{5.5b}
\HL =\prod_{v'\in M_{K'}}\max_{1\leq i_1<\cdots <i_n\leq r}\|\det (L_{i_1}\kdots L_{i_n})\|_{v'},
\end{equation}
where 
$\bigcup_{v'\in M_{K'}}\{ L_1^{(v')}\kdots L_n^{(v')}\}=:
\{ L_1\kdots L_r\}$.
 
By \eqref{3.2} and the fact that conjugate linear forms
have the same inhomogeneous height, we have
\begin{equation}\label{5.5a}
\max_{1\leq i\leq r} H^*(L_i)=H^*.
\end{equation}
For $v'\in M_{K'}$, $1\leq i_1<\cdots <i_n\leq r$ we have,
by Hadamard's inequality if $v'$ is infinite and
the ultrametric inequality if $v'$ is finite, that
\[
\|\det (L_{i_1}\kdots L_{i_n})\|_{v'}
\leq D_{v'}\prod_{i=1}^r \max (1,\| L_i\|_{v'})
\]
where $D_{v'}:=n^{n[K'_{v'}:\Rr ]/2[K':\Qq ]}$ if $v'$ is infinite and
$D_{v'}:=1$ if $v'$ is finite. 
Taking the product over $v'\in M_{K'}$,
noting that by \eqref{5.1}, \eqref{5.5a}, the set 
$\{ L_1\kdots L_r\}$ contains
$X_1\kdots X_n$, which have inhomogeneous height $1$, and at most $DR$
other linear forms of inhomogeneous height $\leq H^*$,
we obtain
\begin{equation}\label{5.6}
\HL \leq n^{n/2}H^*(L_1)\cdots H^*(L_r)\leq n^{n/2}(H^*)^{DR}.
\end{equation}

The next lemma links system \eqref{3.6}
to a twisted height inequality.
\end{prg}

\begin{lemma}\label{le:5.1}
Let $\x\in\OQq^n$ be a solution of \eqref{3.6}. Then with
$\mathcal{L}$, ${\bf c}$, $\delta$ as defined by \eqref{5.1}--\eqref{5.3}
and with
\[
Q:= H(\x )^{1+\varepsilon /n}
\]
we have
\[
\HLcQ (\sigma (\x ))\leq\DL^{1/n}Q^{ -\delta}\ \mbox{for } \sigma\in G_K.
\]
\end{lemma}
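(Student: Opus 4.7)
The plan is to fix $\sigma\in G_K$, pick a finite Galois extension $E$ of $K'$ inside $\OQq$ containing the coordinates of $\sigma(\x)$, and unpack
\[
\HLcQ(\sigma(\x))=\prod_{v\in M_K}\prod_{v'|v}\prod_{w|v'}\max_i\|L_i^{(w)}(\sigma(\x))\|_w Q^{-c_{iw}}
\]
place by place. For $v\in M_K\setminus S$ the $v$-contribution is trivial: there $L_i^{(v')}=X_i$ and $c_{iv'}=0$ by \eqref{5.1}, so it reduces to $\prod_{w|v}\|\sigma(\x)\|_w$. All the substance lies at $v\in S$.

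The main obstacle is relating the $E$-local factor $\|L_i^{(w)}(\sigma(\x))\|_w$ to hypothesis \eqref{3.6}, which is an inequality at $v\in M_K$ involving arbitrary Galois conjugates of $\x$. For each $w|v'|v$ in $E$, choose $\tau_w\in\mathrm{Gal}(E/K)$ with $\|\alpha\|_w=\|\tau_w(\alpha)\|_v^{d(w|v)}$ for $\alpha\in E$. Because $w$ restricts to $v'$ on $K'$, one checks $\tau_w|_{K'}=\tau_{v'}$, so $\tau_w\tau_{v'}^{-1}\in\mathrm{Gal}(E/K')$ fixes the coefficients of $L_i^{(v)}$. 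Combined with $L_i^{(v')}=\tau_{v'}^{-1}(L_i^{(v)})$ from \eqref{5.2}, this gives
\[
\tau_w\bigl(L_i^{(v')}(\sigma(\x))\bigr)=L_i^{(v)}\bigl(\rho_w(\x)\bigr),\qquad \rho_w:=\tau_w\sigma\in G_K.
\]
Applying \eqref{3.6} to $\rho_w(\x)$ and raising to the $d(w|v)$-th power then bounds $\|L_i^{(w)}(\sigma(\x))\|_w$ by $A_v^{d(w|v)}H(\x)^{d_{iv}d(w|v)}\|\sigma(\x)\|_w$, using that $\|\tau_w\sigma(\x)\|_v^{d(w|v)}=\|\sigma(\x)\|_w$.

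Once this local inequality is in place the rest is routine algebra. With $Q=H(\x)^{(n+\eps)/n}$ and $c_{iw}=d(w|v)\cdot\tfrac{n}{n+\eps}\bigl(d_{iv}-\tfrac{1}{n}\sum_j d_{jv}\bigr)$, the factor $Q^{-c_{iw}}$ cancels precisely the $i$-dependent term $H(\x)^{d_{iv}d(w|v)}$ in the local bound; this is the whole point of the normalization chosen in \eqref{5.2}. The $\max_i$ therefore disappears, and the product over $w|v$ (using $\sum_{w|v}d(w|v)=1$) yields a clean bound at each $v\in S$ of the form $A_v H(\x)^{(1/n)\sum_j d_{jv}}\prod_{w|v}\|\sigma(\x)\|_w$. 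Finally, multiplying over all $v\in M_K$, using $\prod_{w\in M_E}\|\sigma(\x)\|_w=H(\sigma(\x))=H(\x)$, together with the identities $\prod_{v\in S}A_v=\DL^{1/n}$ from \eqref{5.5} and $\sum_{v\in S}\sum_i d_{iv}=-n-\eps$ from \eqref{3.4}, and converting $H(\x)^{-\eps/n}$ to $Q^{-\delta}$ via $\delta=\eps/(n+\eps)$, produces the desired inequality $\HLcQ(\sigma(\x))\leq\DL^{1/n}Q^{-\delta}$.
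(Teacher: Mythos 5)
Your proposal is correct and follows essentially the same route as the paper's proof: you relate the $E$-local factor at $w$ to the hypothesis \eqref{3.6} evaluated at the conjugate $\tau_w\sigma(\x)$ via an automorphism $\tau_w$ compatible with $\tau_{v'}$, observe that the normalization of $c_{iw}$ and $Q$ makes the $i$-dependent factor cancel, and then multiply over places using \eqref{5.5}, \eqref{3.4} and $H(\sigma(\x))=H(\x)$. The only cosmetic difference is that the paper simply \emph{chooses} $\tau_w$ with $\tau_w|_{K'}=\tau_{v'}$ rather than deriving it, but the argument is the same.
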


\begin{proof}
Let $\sigma\in G_K$. Put $A_v:=1$ for $v\in M_K\setminus S$. 
Pick a finite Galois extension $E$ of $K$ containing $K'$ and the coordinates
of $\sigma (\x )$.
Let $w\in M_E$ lie above $v'\in M_{K'}$ and the latter in turn above $v\in M_K$.
In accordance with \eqref{2.5}
we define $L_i^{(w)}:=L_i^{(v')}$,
$c_{iw}:=d(w|v')c_{i,v'}$ for $i=1\kdots n$.
Further, we put
$d_{iw}:= d(w|v)d_{iv}$, 
$A_w:= A_v^{d(w|v)}$, 
and we choose $\tau_w\in{\rm Gal}(\OQq /K)$ 
such that $\tau_w|_{K'}=\tau_v$ and
\begin{equation}\label{5.x0}
\|\alpha \|_w=\| \tau_w(\alpha )\|_v^{d(w|v)}\ \mbox{for } \alpha\in E.
\end{equation}
Then \eqref{5.1}, \eqref{5.2} imply for $i=1\kdots n$,
\begin{equation}\label{5.x1}
L_i^{(w)}=\tau_w^{-1}(L_i^{(v)}),\ \ \ \
c_{iw}=\frac{n}{n+\eps}\left( d_{iw}-\frac{1}{n}\sum_{j=1}^n d_{jw}\right).
\end{equation}
If $v\in S$, then from \eqref{3.6} it follows that
\begin{equation}\label{5.x2}
\frac{\|L_i^{(w)}(\sigma (\x ))\|_w}{\|\sigma (\x )\|_w}= 
\left(\frac{\|L_i^{(v)}(\tau_w\sigma (\x ))\|_v}{\|\tau_w\sigma (\x )\|_v}\right)^{d(w|v)}
\leq A_wH(\x )^{d_{iw}},
\end{equation}
while if $v\not\in S$, we have $A_w=1$ and $L_i^{(w)}=X_i$, $d_{iw}=0$
for $i=1\kdots n$,  
and so the inequality is trivially true. 
Finally, \eqref{3.5}, \eqref{3.5a}, \eqref{5.5} imply
\begin{equation}\label{5.x3}
\sum_{w\in M_E}\sum_{i=1}^n d_{iw}=-n-\eps ,\ \ \prod_{w\in M_E} A_w=\DL^{1/n}. 
\end{equation}

By our choice of $Q$ and by \eqref{5.x1}, \eqref{5.x2}, we have
\begin{eqnarray*}
\|L_i^{(w)}(\sigma (\x ))\|_w Q^{-c_{iw}}&=&
\|L_i^{(w)}(\sigma (\x ))\|_wH(\x )^{-d_{iw}+\frac{1}{n}\sum_{j=1}^n d_{jw}}
\\
&\leq& A_w\|\sigma (\x )\|_wH(\x)^{\frac{1}{n}\sum_{j=1}^n d_{jw}}.
\end{eqnarray*}
By taking the product over $w$, using $H(\sigma (\x ))=H(\x )$,
\eqref{5.x3} and again our choice of $Q$
we arrive at
\[
\HLcQ (\sigma (\x ))\leq \DL^{1/n} H(\x )^{1-1-\eps/n}=\DL^{1/n}Q^{-\delta}.
\]
\end{proof} 

In addition we need the following easy observation
which is stated as a lemma for convenient reference.

\begin{lemma}\label{le:5.2}
Let $m,m'$ be integers and $A_0,B_0, \omega ,\omega'$ reals with
$B_0\geq A_0\geq 1$, $\omega '\geq \omega >1$ and $m'\geq m>0$, and let
$A_1\kdots A_m$ be reals with $A_0\leq A_1<\cdots <A_m$.
Then there are reals $B_1\kdots B_{m'}$ with $B_0\leq B_1<\cdots <B_{m'}$
such that
\[
[1,A_0)\cup\left(\bigcup_{h=1}^m\left[\left. A_h,A_h^{\omega}\right)\right.\right)\subseteq
[1,B_0)\cup\left(\bigcup_{h=1}^{m'}\left[\left. B_h,B_h^{\omega ' }\right)\right.\right).
\]
\end{lemma}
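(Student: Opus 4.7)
The plan is straightforward, since the lemma is essentially a packing claim: because $\omega' \geq \omega > 1$ and $B_0 \geq A_0 \geq 1$, each input interval is already contained in (an initial piece plus) a wider interval of the required form, so the whole proof reduces to a bookkeeping argument.

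First I will establish a single-interval covering claim. For $h \in \{1, \ldots, m\}$ set $B'_h := \max(A_h, B_0)$; I will show that
\[
[A_h, A_h^\omega) \,\subseteq\, [1, B_0) \,\cup\, \bigl[B'_h,\, (B'_h)^{\omega'}\bigr).
\]
If $A_h \geq B_0$ then $B'_h = A_h$ and the claim is immediate from $\omega' \geq \omega$. If instead $A_h < B_0$, then $B'_h = B_0$ and the portion of $[A_h, A_h^\omega)$ lying in $[B_0, \infty)$ is contained in $[B_0, A_h^\omega)$; the inequalities $A_h < B_0$, $\omega > 1$, and $\omega' \geq \omega$ give $A_h^\omega < B_0^\omega \leq B_0^{\omega'}$, so this is contained in $[B_0, B_0^{\omega'})$, as desired.

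Next, I list the distinct values among $B'_1, \ldots, B'_m$ in increasing order as $C_1 < C_2 < \cdots < C_k$, where $k \leq m$. Since each $A_h \geq A_0$ and $B_0 \geq A_0$, one has $B'_h \geq B_0$ for every $h$, so in particular $C_1 \geq B_0$. Because $m' \geq m \geq k$, I can extend the sequence to a strictly increasing $C_1 < \cdots < C_{m'}$ by appending arbitrary reals larger than $C_k$ (for instance $C_{k+i} := C_k + i$). Setting $B_j := C_j$ for $j = 1, \ldots, m'$ gives the desired sequence, and the displayed inclusion of the lemma follows by taking the union over $h$ of the single-interval claim.

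The only point that requires a moment's thought is ensuring $C_1 \geq B_0$ and that the padding produces exactly $m'$ distinct terms; both are automatic from the hypotheses $B_0 \geq A_0$ and $m' \geq m$. There is no serious obstacle.
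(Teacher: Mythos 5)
Your proof is correct and rests on the same elementary observation as the paper's: each interval $[A_h,A_h^{\omega})$ is absorbed by $[1,B_0)\cup[\max(A_h,B_0),\max(A_h,B_0)^{\omega'})$, using $A_h\geq A_0\geq 1$ and $\omega'\geq\omega$. The paper packages this as a greedy selection (each $B_j$ is the smallest point of $\bigcup_{h=1}^m[A_h,A_h^{\omega})\cup[A_m^{\omega},\infty)$ not yet covered), while you assign $B_h'=\max(A_h,B_0)$ directly and then deduplicate and pad to length $m'$; both are valid instances of the same bookkeeping argument.
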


\begin{proof} Let 
$S:=\bigcup_{h=1}^m\left[\left. A_h,A_h^{\omega}\right)\right.\cup [A_m^{\omega},\infty )$.
It is easy to see that the lemma is satisfied with 
$B_1$ the smallest real in $S$ with $B_1\geq B_0$ and
$B_j$ the smallest real in $S$ outside 
$\bigcup_{h=1}^{j-1}\left[\left. B_h,B_h^{\omega ' }\right)\right.$
for $j=2\kdots m'$.    
\end{proof}

\begin{proof}[Proof of Theorem \ref{th:3.1}]
We apply Theorem \ref{th:2.1} with $K'$ instead of $K$,
and with $\mathcal{L},{\bf c},\delta$ as in
\eqref{5.1}--\eqref{5.3}; according to \eqref{5.4} we could have taken $n+DR$,
but instead we take $6(DR)^2$ instead of $R$.
Then by \eqref{5.6} the quantity $C_0$ in Theorem \ref{th:2.1} becomes 
\begin{eqnarray*}
C_0':=\max (\HL^{1/6(RD)^2},n^{1/\delta})&\leq&
\max\left( \big(n^{n/2}(H^*)^{RD}\big)^{1/6(RD)^2},\, n^{1/\delta}\right)
\\
&\leq&
\left(\max ((H^*)^{1/3RD}, n^{n/\eps})\right)^{1+\varepsilon/n}=H_0^{1+\eps /n}
\end{eqnarray*}
and the upper bound for the number of subspaces $t_0$ 
in Theorem \ref{th:2.1} becomes
\begin{eqnarray*}
&&10^6 2^{2n}n^{10}(1+n\varepsilon^{-1})^3\times
\\
&&\qquad\qquad\times\log \big(18(1+n\varepsilon^{-1})(RD)^2\big)
\log\big( (1+n\varepsilon^{-1})\log (18(RD)^2)\big)
\\[0.1cm]
&&\leq
10^9 2^{2n}n^{14}\varepsilon^{-3}\log \big(3\varepsilon^{-1}RD\big)
\log\big( \varepsilon^{-1}\log 3RD)\big)
\end{eqnarray*}
which is precisely the upper bound for the number of subspaces in Theorem \ref{th:3.1}.

Let $\x\in\OQq^n$ be a solution to \eqref{3.6} with $H(\x )\geq H_0$
and put $Q:= H(\x )^{1+\varepsilon /n}$.
Then $Q\geq C_0'$. 
Moreover,
by Lemma \ref{le:5.1} and Theorem \ref{th:2.1} we have
\[
\{ \sigma (\x ):\, \sigma\in G_K\}\subseteq \{ {\bf y}\in\OQq^n:\,
\HLcQ({\bf y})\leq\DL^{1/n}Q^{-\delta}\}\subset T_i
\]
for some $T_i\in\{ T_1\kdots T_{t_0}\}$. But then we have in fact,
that $\x\in T_i':= \bigcap_{\sigma\in G_K} \sigma (T_i)$,
which is a proper linear subspace of $\OQq^n$ defined over $K$.
We infer that the solutions $\x\in\OQq^n$ of \eqref{3.6} with
$H(\x )\geq H_0$ lie in a union $T_1'\cup\cdots\cup T_{t_0}'$
of proper linear subspaces of $\OQq^n$, defined over $K$.
This completes our proof.
\end{proof}


\begin{proof}[Proof of Theorem \ref{th:3.2}]
We apply Theorem \ref{th:2.3} with $K'$ instead of $K$,
with $\mathcal{L},{\bf c},\delta$ as in
\eqref{5.1}--\eqref{5.3} and with $6(DR)^2$ instead of $R$.
An easy computation shows that with these choices, the expressions
for $m_0,\omega_0$ in Theorem \ref{th:2.3},
are bounded above by the quantities $m_1,\omega_1$
from the statement of Theorem \ref{th:3.2}.
Further, $C_0$ becomes a quantity bounded above by $C_1^{1+\eps /n}$.
Now according to Theorem \ref{th:2.3} and Lemmas \ref{le:5.1}, \ref{le:5.2}, 
there are reals $Q_1\kdots Q_{m_1}$ with $C_1^{1+\eps /n}\leq Q_1<\cdots <Q_{m_1}$
such that if $\x\in\OQq^n$ is a solution to \eqref{3.6} outside the subspace
$T=T(\mathcal L ,{\bf c})$ from Theorem \ref{th:2.3},
then
\[
Q:= H(\x )^{1+\varepsilon /n}\in \left[\left. 1,C_1^{1+\eps /n}\right)\right.\cup
\bigcup_{h=1}^{m_1} \left[\left. Q_h,Q_h^{\omega_1}\right)\right. .
\]
So with $H_i:= Q_i^{(1+\eps /n)^{-1}}$ ($i=1\kdots m_1$), we have
\[
H(\x )\in [1,C_1)\cup
\bigcup_{h=1}^{m_1} \left[\left. H_h,H_h^{\omega_1}\right)\right. .
\]
In fact, $H(\x )$ belongs to the above union of intervals
if $\sigma (\x )\not\in T$ for any $\sigma\in G_K$, so in fact already
if $\x\not\in T':=\bigcap_{\sigma\in G_K} \sigma (T)$.
Now $T'$ is a proper $\OQq$-linear subspace
of $\OQq^n$ defined over $K$ and $T'$ is effectively determinable
in terms of $T$. The space $T$ in turn is effectively determinable
and belongs to a finite collection
depending only on $\{ L_i^{(v')}:\, v'\in M_{K'},\,\,i=1\kdots n\}$,
so ultimately only on $\{ L_i^{(v)}:\, v\in S,\,\, i=1\kdots n\}$.
Hence the same must apply to $T'$. This completes our proof.
\end{proof}

\section{Notation and simple facts}\label{6}

We have collected some notation and simple facts for later reference.
We fix an algebraic number field $K\subset\OQq$ 
and use $v$ to index places on $K$. 
We have to deal with 
varying finite extensions $E\subset\OQq$ of $K$ 
and sometimes with varying towers
$K\subset F\subset E\subset \OQq$; 
then places on $E$ are indexed by $w$
and places on $F$ by $u$. 
Completions are denoted by $K_v,E_w,F_u$, etc.
We use notation $w|u$, $u|v$ to indicate
that $w$ lies above $u$, $u$ above $v$.
If $w|v$ we put $d(w|v):=[E_w:K_v]/[E:K]$. 

\begin{prg} {\bf Norms and heights.}
Let $E$ be any algebraic number field.
If $w$ is an infinite place of $E$, there is an embedding $\sigma_w:\, E\hookrightarrow\Cc$
such that $\|\cdot\|_w=|\sigma_w(\cdot )|^{[E_w:\Rr ]/[E:\Qq ]}$.
If $w$ is a finite place of $E$ lying above the prime $p$, then
$\|\cdot\|_w$ is an extension of $|\cdot |_p^{[E_w:\Qq_p]/[E:\Qq ]}$ to $E$.

To handle infinite and finite places simultaneously, we introduce
\begin{equation}\label{6.1}
s(w):=\textstyle{\frac{[E_w:\Rr ]}{[E:\Qq ]}}\ \mbox{if $w$ is infinite,  }
s(w):=0\ \mbox{if $w$ is finite.}
\end{equation}
Thus, for $x_1\kdots x_n\in E$, $a_1\kdots a_n\in\Zz$, $w\in M_E$ we have
\begin{equation}\label{6.2}
\| a_1x_1+\cdots +a_nx_n\|_w\leq \big(\sum_{i=1}^n |a_i|\big)^{s(w)}\cdot
\max (\|x_1\|_w\kdots\|x_n\|_w).
\end{equation}

Let $\x =(x_1\kdots x_n)\in E^n$. Put
\begin{eqnarray*}
&&\| \x \|_w:=\max (\| x_1\|_w\kdots \| x_n\|_w)\ \mbox{for } w\in M_E,
\\[0.15cm]
&&\left.\begin{array}{l}
\displaystyle{\| \x\|_{w,1}:=\big(\sum_{i=1}^n |\sigma_w(x_i)|\big)^{s(w)}}\\
\displaystyle{\| \x\|_{w,2}:=\big(\sum_{i=1}^n |\sigma_w(x_i)|^2\big)^{s(w)/2}}
\end{array}\right\}\ \mbox{for $w\in M_E$, $w$ infinite,}
\\[0.15cm]
&&\|\x\|_{w,1}=\|\x\|_{w,2}:=\|\x\|_w\ \ \mbox{for $w\in M_E$, $w$ finite.}
\end{eqnarray*}
Now for $\x\in\OQq^n$ we define
\[
H(\x ):=\prod_{w\in M_E}\|\x\|_w,\ \ 
H_1(\x ):= \prod_{w\in M_E} \|\x\|_{w,1},\ \ 
H_2(\x ):= \prod_{w\in M_E} \|\x\|_{w,2},
\]
where $E$ is any number field such that $\x\in E^n$.
This is independent of the choice of $E$. Then 
\begin{equation}\label{6.3}
n^{-1}H_1(\x )\leq n^{-1/2}H_2(\x )\leq H(\x )\leq H_2 (\x )\leq H_1(\x )\ 
\mbox{for $\x\in\OQq^n$.}
\end{equation}
The standard inner product of ${\bf x}=(x_1\kdots x_n)$,
${\bf y}=(y_1\kdots y_n)\in\OQq^n$ 
is defined by ${\bf x}\cdot {\bf y}=\sum_{i=1}^n x_iy_i$.
Let again $E$ be an arbitrary number field and $w\in M_E$.
Then by the Cauchy-Schwarz inequality for the infinite places
and the ultrametric inequality for the finite places,
\begin{equation}
\label{6.Cauchy-Schwarz}
\|{\bf x}\cdot{\bf y}\|_w\leq \|{\bf x}\|_{w,2}\cdot\|{\bf y}\|_{w,2}\ \ 
\mbox{for ${\bf x},{\bf y}\in E^n$, $w\in M_E$}.
\end{equation}

If $P$ is a polynomial with coefficients in a number field $E$
or in $\OQq$, we define $\|P\|_w$, $\|P\|_{w,1}$, $\|P\|_{w,2}$,
$H(P)$, $H_1(P)$, $H_2(P)$ by applying the above definitions
to the vector $\x$ of coefficients of $P$.
Then for $P_1\kdots P_r\in E[X_1\kdots X_m]$, $w\in M_E$ we have
\begin{equation}\label{6.4}
\left\{\begin{array}{l}
\| P_1+\cdots +P_r\|_{w,1}\leq r^{s(w)}\max(\| P_1\|_{w,1}\kdots \| P_r\|_{w,1}),
\\ 
\| P_1\cdots P_r\|_{w,1}\leq \| P_1\|_{w,1}\cdots \| P_r\|_{w,1}. 
\end{array}\right.
\end{equation}
\end{prg}

\begin{prg} {\bf Exterior products.}
Let $n$ be an integer $\geq 2$ and $p$ an integer with $1\leq p<n$.
Put $N:=\binom{n}{p}$.
Denote by $C(n,p)$ the sequence of $p$-element subsets of $\{ 1\kdots n\}$,
ordered lexicographically, i.e., $C(n,p)=(I_1\kdots I_N)$,
where
\begin{eqnarray*}
&&I_1=\{ 1\kdots p\},\ I_2=\{ 1\kdots p-1,p+1\}\kdots
\\
&&I_{N-1}=\{ n-p,n-p+2\kdots n\},\ I_N=\{ n-p+1\kdots n\}.
\end{eqnarray*}
We use short-hand notation $I=\{ i_1<\cdots <i_p\}$ for a set 
$I=\{ i_1\kdots i_p\}$ with $i_1<\cdots <i_p$.

We denote by $\det (a_{ij})_{i,j=1\kdots p}$ the $p\times p$-determinant 
with $a_{ij}$ on the $i$-th row and $j$-th column.
The exterior product of 
$\x_1=(x_{11}\kdots x_{1n})$$\kdots$
$\x_p=(x_{p1}\kdots x_{pn})\in\OQq^n$ is given by
\[
\x_1\wedge\cdots\wedge \x_p :=(A_1\kdots A_N),
\]
where 
\[
A_l:=\det (x_{i,i_j})_{i,j=1\kdots p},
\]
with $\{ i_1<\cdots <i_p\}=I_l$ the $l$-th set in the sequence $C(n,p)$,
for $l=1\kdots N$.

Let $\x_1\kdots \x_n$ be linearly independent vectors from $\OQq^n$.
For $l=1\kdots N$, define $\widehat{\x_l}:=\x_{i_1}\wedge\cdots\wedge\x_{i_p}$,
where $I_l=\{ i_1<\cdots<i_p\}$ is the $l$-th set in $C(n,p)$. 
Then
\begin{equation}\label{6.4b}
\det (\widehat{\x}_1\kdots\widehat{\x}_N)=
\pm \Big(\det (\x_1\kdots\x_n)\Big)^{\binom{n-1}{p-1}}.
\end{equation}

Given a number field $E$ such that $\x_1\kdots \x_p\in E^n$ we have,
by Hadamard's inequality for the infinite places and the ultrametric inequality
for the finite places,
\begin{equation}\label{6.4c}
\|\x_1\wedge\cdots\wedge \x_p\|_{w,2}\leq 
\|\x_1\|_{w,2}\cdots\|\x_p\|_{w,2}\ \mbox{for $w\in M_E$.}
\end{equation}
Hence
\begin{equation}\label{6.5}
H_2(\x_1\wedge\cdots\wedge \x_p)\leq H_2(\x_1)\cdots H_2(\x_p)\ \mbox{for }
\x_1\kdots\x_p\in\OQq^n.
\end{equation}

The above definitions and inequalities are carried over to linear forms
by identifying a linear form 
$L=\sum_{j=1}^n a_jX_j ={\bf a}\cdot {\bf X}\in \OQq [X_1\kdots X_n]^{\lin}$
with its coefficient vector ${\bf a}=(a_1\kdots a_n)$,
e.g., $\|L\|_w:=\|{\bf a}\|_w$, $H(L):=H({\bf a})$.
The exterior product of
$L_i=\sum_{j=1}^n a_{ij}X_j={\bf a}_i\cdot{\bf X}
\in \OQq [X_1\kdots X_n]^{\lin}$ ($i=1\kdots p$)
is defined by
\[
L_1\wedge\cdots \wedge L_p := A_1X_1+\cdots +A_NX_N,
\]
where $(A_1\kdots A_N)={\bf a}_1\wedge\cdots\wedge {\bf a}_p$. 
Analogously to \eqref{6.5} we have for any linear forms
$L_1\kdots L_p\in \OQq [X_1\kdots X_n]^{\lin}$ $(1\leq p \leq n)$,
\begin{equation}\label{6.6}
H_2(L_1\wedge\cdots \wedge L_p)\leq H_2(L_1)\cdots H_2(L_p).
\end{equation}
Finally, for any $L_1\kdots L_p\in \OQq [X_1\kdots X_n]^{\lin}$,
$\x_1\kdots \x_p\in\OQq^n$, we have
\begin{equation}\label{6.7}
(L_1\wedge\cdots\wedge L_p)(\x_1\wedge\cdots\wedge\x_p )=
\det (L_i(\x_j))_{1\leq i,j\leq p}.
\end{equation} 
\end{prg}

\begin{prg} {\bf Heights of subspaces.}
Let $T$ be a linear subspace of $\OQq^n$.
The height $H_2(T)$ of $T$ is given by 
$H_2(T):=1$ if $T=\{{\bf 0}\}$ or $\OQq^n$ and
\[
H_2(T):=H_2(\x_1\wedge\cdots\wedge\x_p)
\]
if $T$ has dimension $p$ with $0<p<n$ and $\{ \x_1\kdots \x_p \}$
is any basis of $T$. This is independent of the choice of the basis.
Thus, by \eqref{6.5}, if $\{ \x_1\kdots\x_p\}$ is any basis of $T$,
\begin{equation}\label{6.8}
H_2(T)\leq H_2(\x_1)\cdots H_2(\x_p).
\end{equation}
By a result of Struppeck and Vaaler 
\cite{Struppeck-Vaaler1991}
we have for any two linear subspaces $T_1,T_2$ of $\OQq^n$,
\begin{eqnarray}\label{6.9}
\max\big( H_2(T_1\cap T_2),\, H_2(T_1+T_2)\big)&\leq& 
H_2(T_1\cap T_2)H_2(T_1+T_2)
\\
\nonumber
&\leq& H_2(T_1)H_2(T_2).
\end{eqnarray}

Given a linear subspace $V$ of $\OQq [X_1\kdots X_n]^{\lin}$,
we define $H_2(V):=1$ if $V=\{{\bf 0}\}$ or $\OQq [X_1\kdots X_n]^{\lin}$ 
and $H_2(V):=H_2(L_1\wedge\cdots\wedge L_p)$ otherwise,
where $\{L_1\kdots L_p\}$ is any basis of $V$.

Let $T$ be a linear subspace of $\OQq^n$. Denote by $T^{\bot}$
the $\OQq$-vector space of linear forms $L\in \OQq [X_1\kdots X_n]^{\lin}$
such that $L(\x )=0$ for all $\x\in T$.
Then (\cite[p. 433]{Schmidt1967})
\begin{equation}\label{6.10}
H_2(T^{\bot})=H_2(T).
\end{equation}

We finish with the following lemma.

\begin{lemma}\label{le:6.1}
Let $T$ be a $k$-dimensional linear subspace of $\OQq^n$. Put $p:= n-k$.
Let $\{ {\bf g}_1\kdots {\bf g}_n\}$ be a basis of $\OQq^n$ such that
$\{ {\bf g}_1\kdots {\bf g}_k\}$ is a basis of $T$. 
\\
For $j=1\kdots N$,
put $\widehat{{\bf g}}_j:= {\bf g}_{i_1}\wedge\cdots\wedge {\bf g}_{i_p}$,
where $\{ i_1<\cdots <i_p\}=I_j$ is the $j$-th set in the sequence $C(n,p)$.
Let $\widehat{T}$ be the linear subspace of $\OQq^N$ spanned by
$\widehat{{\bf g}}_1\kdots\widehat{{\bf g}}_{N-1}$. Then
\[
H_2(\widehat{T})=H_2(T).
\]
\end{lemma}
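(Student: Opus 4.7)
The plan is to pass to annihilators on both sides and identify them explicitly via a dual basis.

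First I would note that by \eqref{6.4b}, $\det(\widehat{\bf g}_1,\ldots,\widehat{\bf g}_N)\neq 0$, so $\widehat{\bf g}_1,\ldots,\widehat{\bf g}_N$ form a basis of $\overline{\mathbb{Q}}^N$ and hence $\widehat{T}$ has dimension $N-1$ and $\widehat{T}^\bot$ has dimension $1$. By \eqref{6.10} applied to both $T$ and $\widehat{T}$, the claim $H_2(\widehat{T})=H_2(T)$ is equivalent to $H_2(\widehat{T}^\bot)=H_2(T^\bot)$, which is the form I would aim to establish.

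Next I would introduce the dual basis $\{{\bf g}_1^\ast,\ldots,{\bf g}_n^\ast\}\subset\overline{\mathbb Q}[X_1\kdots X_n]^{\lin}$ characterized by ${\bf g}_i^\ast({\bf g}_j)=\delta_{ij}$. Since $T=\span({\bf g}_1,\ldots,{\bf g}_k)$, a linear form $L=\sum\alpha_i {\bf g}_i^\ast$ vanishes on $T$ iff $\alpha_1=\cdots=\alpha_k=0$, so $T^\bot=\span({\bf g}_{k+1}^\ast,\ldots,{\bf g}_n^\ast)$, and by definition of the height of a subspace,
\[
H_2(T^\bot)=H_2({\bf g}_{k+1}^\ast\wedge\cdots\wedge {\bf g}_n^\ast).
\]

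The key computation is to identify $\widehat{T}^\bot$. For each $j=1\kdots N$ with $I_j=\{i_1<\cdots <i_p\}$, I would set $\widehat{{\bf g}}_j^{\,\ast}:={\bf g}_{i_1}^\ast\wedge\cdots\wedge {\bf g}_{i_p}^\ast$, a linear form on $\overline{\mathbb Q}^N$. Using \eqref{6.7},
\[
\widehat{{\bf g}}_j^{\,\ast}(\widehat{{\bf g}}_l)=\det\bigl({\bf g}_{i_a}^\ast({\bf g}_{i'_b})\bigr)_{a,b}=\det(\delta_{i_a,i'_b})_{a,b},
\]
where $I_l=\{i'_1<\cdots<i'_p\}$. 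The latter determinant equals $1$ if $I_j=I_l$ and $0$ otherwise (since any mismatch produces a zero row). Hence $\{\widehat{{\bf g}}_j^{\,\ast}\}_j$ is the dual basis of $\{\widehat{{\bf g}}_j\}_j$, and consequently $\widehat{T}^\bot$ is spanned precisely by the single dual vector corresponding to the missing basis element, namely $\widehat{{\bf g}}_N^{\,\ast}={\bf g}_{k+1}^\ast\wedge\cdots\wedge {\bf g}_n^\ast$ (using $I_N=\{k+1,\ldots,n\}$).

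Combining everything:
\[
H_2(\widehat{T})=H_2(\widehat{T}^\bot)=H_2({\bf g}_{k+1}^\ast\wedge\cdots\wedge {\bf g}_n^\ast)=H_2(T^\bot)=H_2(T).
\]
There is no genuine obstacle; the only point requiring attention is the compatibility of the exterior-product dual basis with the original dual basis, which is a direct application of \eqref{6.7} together with the observation that the ordering conventions for $C(n,p)$ make the matrix $(\delta_{i_a,i'_b})$ the identity exactly when $I_j=I_l$.
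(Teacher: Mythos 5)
Your proof is correct and follows essentially the same route as the paper: both pass to annihilators via \eqref{6.10}, identify $T^{\bot}$ with the span of the last $n-k$ dual basis forms, and use \eqref{6.7} to see that their exterior product spans $\widehat{T}^{\bot}$. Your only extra step is computing the full dual basis of $\{\widehat{\bf g}_j\}$, whereas the paper checks just the vanishing $(L_{k+1}\wedge\cdots\wedge L_n)(\widehat{\bf g}_j)=0$ for $j\leq N-1$; this is an immaterial difference.
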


\begin{proof}
Let $L_1\kdots L_n\in\OQq [X_1\kdots X_n]^{\lin}$ such that 
for $i,j=1\kdots n$ we have $L_i({\bf g}_j)=1$
if $i=j$ and $0$ otherwise. 
Then $\{ L_{k+1}\kdots L_n\}$ is a basis of $T^{\bot}$.
Moreover, by \eqref{6.7}, we have
\[
(L_{k+1}\wedge\cdots\wedge L_n)(\widehat{{\bf g}}_j)=0
\]
for $j=1\kdots N-1$. 
Hence $L_{k+1}\wedge\cdots\wedge L_n$ spans $\widehat{T}^{\bot}$.
Now a repeated application of \eqref{6.10} gives
\[
H_2(\widehat{T})=H_2(\widehat{T}^{\bot})=H_2(L_{k+1}\wedge\cdots\wedge L_n) 
=H_2(T^{\bot})=H_2(T).
\]
\end{proof} 
  
\end{prg}

\section{Simple properties of twisted heights}\label{9}

We fix tuples $\LL =(L_i^{(v)}:\, v\in M_K,\, i=1\kdots n)$, 
$\cc =(c_{iv}:\, v\in M_K,\, i=1\kdots n)$ 
satisfying the minimal requirements needed to define the twisted height
$\HLcQ$, that is, \eqref{2.10a}--\eqref{2.7a}.
Further, $\DL$, $\HL$ are defined by \eqref{2.3a}, \eqref{2.9a}, respectively.
Write
$\bigcup_{v\in M_K}\{ L_1^{(v)}\kdots L_n^{(v)}\}=\{ L_1\kdots L_r\}$, and
let $d_1\kdots d_t$ be the non-zero numbers among
\begin{equation}\label{9.-1}
\det (L_{i_1}\kdots L_{i_n})\ \ (1\leq i_1<\cdots <i_n\leq n).
\end{equation}
Then
\begin{equation}\label{9.1}
\prod_{v\in M_K}\max (\|d_1\|_v\kdots \|d_t\|_v)=\HL .
\end{equation}
Clearly,
\[
\prod_{v\in M_K}\min (\|d_1\|_v\kdots \|d_t\|_v)\geq
\prod_{v\in M_K}\frac{\| d_1\cdots d_t\|_v}
{(\max (\|d_1\|_v\kdots \|d_t\|_v))^{t-1}}
\]
and so, invoking the product formula and $t\leq \binom{r}{n}$,
\begin{equation}
\label{9.2}
\prod_{v\in M_K}\min (\|d_1\|_v\kdots \|d_t\|_v)\geq
\HL^{1-\binom{r}{n}}.
\end{equation}
Consequently, for the quantity $\DL$ given by \eqref{2.6} we have
\begin{equation}\label{9.3}
\HL^{1-\binom{r}{n}}\leq\DL \leq \HL .
\end{equation}

\begin{lemma}\label{le:9.1}
Put $\theta :=\sum_{v\in M_K}\max (c_{1v}\kdots c_{nv})$.
Let $Q\geq 1$, $\x\in\OQq^n$, $\x\not={\bf 0}$. Then
\[
\HLcQ (\x) \geq n^{-1}\HL^{-\binom{r}{n}}Q^{-\theta}.
\]
\end{lemma}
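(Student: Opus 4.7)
The plan is to decouple the $Q$-factors from the linear-form values, and then to argue via Cramer's rule and the product formula. Fix a finite extension $E$ of $K$ with $\x\in E^n$. Since $Q\geq 1$ and $c_{iw}\leq\max_j c_{jw}$, one has $Q^{-c_{iw}}\geq Q^{-\max_j c_{jw}}$ for each $i,w$; selecting at each place $w$ the index that maximises $\|L_i^{(w)}(\x)\|_w$, multiplying over $w\in M_E$, and using $\sum_{w|v}d(w|v)=1$ to collapse $\sum_{w\in M_E}\max_j c_{jw}$ back to $\theta$, yields
\[
\HLcQ(\x)\;\geq\;Q^{-\theta}\prod_{w\in M_E}\max_{1\leq i\leq n}\|L_i^{(w)}(\x)\|_w.
\]
Thus it suffices to prove $\prod_w\max_i\|L_i^{(w)}(\x)\|_w\geq n^{-1}\HL^{-\binom{r}{n}}$.

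For each $v\in M_K$ and each $k\in\{1,\ldots,r\}$, I would expand $L_k=\sum_{l=1}^n\beta_{kl}^{(v)}L_l^{(v)}$ by Cramer's rule in the basis $L_1^{(v)},\ldots,L_n^{(v)}$; the coefficient
\[
\beta_{kl}^{(v)}=\frac{\det(L_1^{(v)},\ldots,L_{l-1}^{(v)},L_k,L_{l+1}^{(v)},\ldots,L_n^{(v)})}{\det(L_1^{(v)},\ldots,L_n^{(v)})}
\]
has numerator either $0$ or, up to sign, one of the $d_m$ in \eqref{9.-1}, so $\|\beta_{kl}^{(v)}\|_v\leq\max_m\|d_m\|_v\cdot\|\det(L_1^{(v)},\ldots,L_n^{(v)})\|_v^{-1}$. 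Evaluating at $\x$, estimating the resulting sum at each $w\mid v$ (which costs at most a factor $n^{s(w)}$ via the triangle or ultrametric inequality), taking $\max$ over $k$, and multiplying over $w\in M_E$ — using $\sum_{w\in M_E}s(w)=1$, $\sum_{w|v}d(w|v)=1$, \eqref{9.1}, and the definition of $\DL$ — yields
\[
\prod_{w\in M_E}\max_{1\leq k\leq r}\|L_k(\x)\|_w\;\leq\;n\cdot\frac{\HL}{\DL}\cdot\prod_{w\in M_E}\max_{1\leq i\leq n}\|L_i^{(w)}(\x)\|_w.
\]

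To close the argument, I would observe that the left-hand side is at least $1$: since $\x\neq\mathbf{0}$ and $\{L_1^{(v)},\ldots,L_n^{(v)}\}\subseteq\{L_1,\ldots,L_r\}$ is a basis of the dual space, some $L_{k_0}(\x)$ must be a non-zero element of $E$, whereupon the product formula gives $\prod_w\|L_{k_0}(\x)\|_w=1$. Combining this with the lower bound $\DL\geq\HL^{1-\binom{r}{n}}$ supplied by \eqref{9.3} produces the required inequality $\prod_w\max_i\|L_i^{(w)}(\x)\|_w\geq \DL/(n\HL)\geq n^{-1}\HL^{-\binom{r}{n}}$.

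The only real point requiring care is the passage from $K$ to $E$: one must verify that $\HL$ and $\DL$ are unaffected by the change of base field, and that the various $c_{iw}$ reassemble correctly into the global quantity $\theta$. Both checks are immediate from \eqref{2.2}, \eqref{2.3}, and the definition $c_{iw}=d(w|v)c_{iv}$, so no genuine obstacle arises.
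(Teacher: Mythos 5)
Your proof is correct and follows essentially the same route as the paper's: both rest on Cramer's rule to bound the change-of-basis coefficients between $\{L_1,\ldots,L_r\}$ and the local bases $\{L_1^{(v)},\ldots,L_n^{(v)}\}$ by $\HL/\DL$, the product formula applied to a non-vanishing $L_{k_0}(\x)$ to get the lower bound $1$, and \eqref{9.3} to replace $\DL^{-1}\HL$ by $\HL^{\binom{r}{n}}$. The only cosmetic differences are that the paper works with a fixed set of $n$ linearly independent forms (forming the vector ${\bf y}=(L_1(\x),\ldots,L_n(\x))$) rather than all $r$, and decouples the $Q$-powers at the end rather than at the start.
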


\begin{proof}
Let $E$ be a finite extension of $K$ with $\x\in E^n$.
Assume without loss of generality that $L_1\kdots L_n$ (from
$\{ L_1\kdots L_r\}$ defined above) are linearly independent,
and put $\delta_w :=\det (L_1^{(w)}\kdots L_n^{(w)})$
for $w\in M_E$. Note that also $\sum_{w\in M_E} \max_i c_{iw}=\theta$. 
We may write
\[
L_i =\sum_{j=1}^n \gamma_{ijw}L_j^{(w)}\ \ \mbox{for $w\in M_E$, $i=1\kdots n$,}
\]
with $\gamma_{ijw}\in K$. By Cramer's rule, we have
$\gamma_{ijw}=\delta_{ijw}/\delta_w$, 
where $\delta_{ijw}$ is the determinant
obtained from $\delta_w$ by replacing $L_j^{(w)}$ by $L_i$.
So $\delta_{ijw}$ belongs to the set of numbers in \eqref{9.-1}.
Further, $\prod_{w\in M_E}\|\delta_w\|_w =\DL$.
Now \eqref{9.3} gives
\[
\prod_{w\in M_E}\max_{1\leq i,j\leq n}\|\gamma_{ijw}\|_w\leq \DL^{-1}\HL\leq \HL^{\binom{r}{n}}.
\]
Put ${\bf y}:=(L_1({\bf x})\kdots L_n({\bf x}))$. Then,
noting that ${\bf y}\not={\bf 0}$,  
\begin{eqnarray*}
1&\leq& H({\bf y})\leq n\HL^{\binom{r}{n}}
\prod_{w\in M_E}\max_{1\leq i\leq n}\| L_i^{(w)}(\x )\|_w
\\
&\leq&
n\HL^{\binom{r}{n}}Q^{\theta}
\prod_{w\in M_E}\max_{1\leq i\leq n}\| L_i^{(w)}(\x )\|_wQ^{-c_{iw}}
=n\HL^{\binom{r}{n}}Q^{\theta}\HLcQ(\x ).
\end{eqnarray*}
This proves our lemma.
\end{proof}

\begin{lemma}\label{le:9.3}
let $\theta_v$ ($v\in M_K$) be reals, at most finitely many of which are 
non-zero. Put $\Theta := \sum_{v\in M_K} \theta_v$.
Define ${\bf d}=(d_{iv}:\, v\in M_K,\, i=1\kdots n)$ by
$d_{iv}:= c_{iv}-\theta_v$ for $v\in M_K$, $i=1\kdots n$.
\\
(i) Let $\x\in\OQq^n$, $Q\geq 1$. Then
\[
H_{\mathcal{L},{\bf d},Q}(\x )=Q^{\Theta}\HLcQ (\x ).
\]
(ii) Let $U$ be a linear subspace of $\OQq^n$. Then
\[ 
w_{\mathcal{L},\dd}(U)=w_{\mathcal{L},\cc}(U)-\Theta\dim U.
\]
(iii) $T(\LL ,{\bf d})=T(\LL ,\cc )$.
\end{lemma}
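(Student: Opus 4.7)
For (i), my plan is to pick a finite extension $E/K$ with $\x\in E^n$ and expand both twisted heights via the product formula \eqref{2.6}. The key computation uses \eqref{2.5}: for a place $w$ of $E$ lying above $v\in M_K$, we have $d_{iw}=d_{iv}\cdot d(w|v)=c_{iw}-\theta_v\, d(w|v)$, and so $Q^{-d_{iw}}=Q^{\theta_v d(w|v)}\cdot Q^{-c_{iw}}$. Since the prefactor $Q^{\theta_v d(w|v)}$ is independent of $i$, it pulls out of the inner $\max$ at place $w$. Taking the product over $w\in M_E$, the accumulated exponents give $\sum_v\theta_v\sum_{w|v}d(w|v)=\sum_v\theta_v=\Theta$ by \eqref{2.3}, which yields $H_{\LL,\dd,Q}(\x)=Q^\Theta\HLcQ(\x)$.

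For (ii), I would observe that the condition ``$L_{i_1}^{(v)}|_U,\ldots,L_{i_k}^{(v)}|_U$ linearly independent'' in the definition \eqref{2.weightv-space} depends only on $\LL$ and $U$, so both minima $w_{\LL,\cc,v}(U)$ and $w_{\LL,\dd,v}(U)$ range over the same set of $k$-tuples, where $k=\dim U$. Since $d_{iv}=c_{iv}-\theta_v$ is a shift independent of $i$, each candidate sum decreases by exactly $k\theta_v$, giving $w_{\LL,\dd,v}(U)=w_{\LL,\cc,v}(U)-k\theta_v$. Summing over $v$ produces the claimed identity $w_{\LL,\dd}(U)=w_{\LL,\cc}(U)-\Theta\dim U$.

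For (iii), my plan is to use (ii) to translate the defining inequality \eqref{2.scss} for $T(\LL,\dd)$ into one involving $w_{\LL,\cc}$. Concretely, $w_{\LL,\dd}(U)/(n-\dim U)=w_{\LL,\cc}(U)/(n-\dim U)-\Theta\dim U/(n-\dim U)$, a dimension-dependent shift. The idea is to exploit an equivalent shift-invariant characterization of $T$: the slope $w(U)/\dim U$ for $U\neq\{{\bf 0}\}$ transforms by the uniform additive constant $-\Theta$ under $\cc\mapsto\dd$, so the structure of maximal destabilizing subspaces is preserved. I would then reconcile this with \eqref{2.scss} using the minimum-dimension clause that pins $T$ down uniquely. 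The main obstacle will be the clean bookkeeping needed to show that neither the maximizer nor the minimum-dimension tiebreaker is disturbed by the dimension-dependent term; I expect the argument to lean on the uniqueness proof of $T$ (Lemma~\ref{le:15.between}) in an essential way.
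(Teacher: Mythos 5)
Parts (i) and (ii) are exactly the paper's (essentially one-line) argument, correctly filled in: the factor $Q^{\theta_v d(w|v)}$ is independent of $i$, pulls out of each local maximum, and accumulates to $Q^{\Theta}$ by \eqref{2.3}; and each candidate sum in \eqref{2.weightv-space} drops by $\theta_v\dim U$. For (iii) your instinct that the dimension-dependent term needs care is right, but the invariant you propose is not quite the right one: $w(U)/\dim U$ does shift uniformly by $-\Theta$, yet that slope characterizes the \emph{first} step of the filtration (the maximal destabilizing subspace), not $T$, which is the \emph{last} proper step. The quantity that does the job is the one from Lemma \ref{le:15.between}, namely $\mu(\OQq^n,U)=\frac{w(\OQq^n)-w(U)}{n-\dim U}$: applying (ii) to both $U$ and $\OQq^n$ gives $\mu_{\LL,\dd}(\OQq^n,U)=\mu_{\LL,\cc}(\OQq^n,U)-\Theta$ for every proper $U$, a uniform shift, so neither the minimizer nor the minimal-dimension tiebreaker moves; this is all the paper means by ``(iii) is an immediate consequence of (ii)''. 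By contrast, the raw ratio $\frac{w(U)}{n-\dim U}$ in \eqref{2.scss} does \emph{not} shift uniformly (it picks up $-\frac{n\Theta}{n-\dim U}+\Theta$), and \eqref{2.scss} coincides with the $\mu$-characterization only when $w(\OQq^n)=0$; since $\dd$ need not satisfy \eqref{2.8} even when $\cc$ does, $T(\LL,\dd)$ has to be read via Lemma \ref{le:15.between}. With that reading your argument closes, and no appeal to the uniqueness proof beyond the statement of Lemma \ref{le:15.between} is needed.
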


\begin{proof}
(i) Choose a finite extension $E$ of $K$ with $\x\in E^n$.
In accordance with our conventions, we put $\theta_w:=d(w|v)\theta_v$
if $w\in M_E$ lies above $v\in M_K$; 
thus, $\sum_{w\in M_E}\theta_w=\sum_{v\in M_K} \theta_v$. 
The lemma now follows trivially
by considering the factors of the twisted heights for $w\in M_E$
and taking the product.

(ii) is obvious, and (iii) is an immediate consequence of (ii).
\end{proof}

For $L\in\OQq [X_1\kdots X_n]^{\lin}$ and a linear map
\[
\varphi :\, \OQq^m\to\OQq^n:\,  
(x_1\kdots x_m)\mapsto (\sum_{j=1}^m a_{1j}x_j\kdots \sum_{j=1}^m a_{nj}x_j)
\]
we define $L\circ\varphi\in \OQq [X_1\kdots X_m]^{\lin}$ by
\[
L\circ\varphi := 
L(\sum_{j=1}^m a_{1j}X_j\kdots \sum_{j=1}^m a_{nj}X_j).
\]
If $L\in K[X_1\kdots X_n]^{\lin}$ and $\varphi$ is defined over $K$,
i.e., $a_{ij}\in K$ for all $i,j$, we have $L\circ\varphi\in K[X_1\kdots X_m]^{\lin}$.
More generally,  for a system of linear forms 
$\mathcal{L}=(L_i^{(v)}:\, v\in M_K,\, i=1\kdots n)$
we put
$\mathcal{L}\circ\varphi := 
(L_i^{(v)}\circ\varphi :\, v\in M_K ,\, i=1\kdots n)$.

\begin{lemma}\label{le:9.2}
Let $(\LL ,\cc )$ be a pair satisfying \eqref{2.10a}--\eqref{2.7a}, 
and $\varphi :\OQq^n\to \OQq^n$
an invertible linear map defined over $K$.
\\[0.1cm]
(i) Let $\x\in \OQq^n$, $Q\geq 1$. Then 
$H_{\mathcal{L}\circ\varphi ,\cc ,Q}(\x )=\HLcQ (\varphi (\x ))$.
\\[0.1cm]
(ii) Let $U$ be a proper linear subspace of $\OQq^n$. Then 
$w_{\mathcal{L}\circ\varphi ,\cc}(U)=w_{\mathcal{L},\cc}(\varphi (U))$.
\\[0.1cm]
(iii) Let $T(\LL\circ\varphi ,\cc )$ be the subspace defined by \eqref{2.scss},
but with $\LL\circ\varphi$ instead of $\varphi$. 
Then $T(\LL\circ\varphi ,\cc )=\varphi^{-1}(T(\LL ,\cc ))$.
\\[0.1cm]
(iv) $\Delta_{\mathcal{L}\circ\varphi}=\DL$, 
$H_{\mathcal{L}\circ\varphi}=\HL$.
\end{lemma}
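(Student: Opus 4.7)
The plan is to verify each of the four statements by unwinding the relevant definition and using the fact that $\varphi$ is a $K$-linear isomorphism together with the product formula. The key elementary observation throughout is that $(L\circ\varphi)(\x)=L(\varphi(\x))$ for every linear form $L\in\OQq[X_1\kdots X_n]^{\lin}$ and every $\x\in\OQq^n$, which immediately transfers all linear-algebraic data from $\LL$ on $\varphi(\OQq^n)$ to $\LL\circ\varphi$ on $\OQq^n$.

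For (i), fix a finite extension $E$ of $K$ with $\x\in E^n$; note that $\varphi(\x)\in E^n$ as well since $\varphi$ is defined over $K$. By \eqref{2.5} the forms $L_i^{(w)}$ and reals $c_{iw}$ used to compute both sides of (i) are attached to the same places $w\in M_E$. The identity $(L_i^{(w)}\circ\varphi)(\x)=L_i^{(w)}(\varphi(\x))$ makes each local factor in the product \eqref{2.6} match, and taking the product over $w\in M_E$ gives the claim.

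For (ii), the point is that $\varphi$ restricts to a $\OQq$-linear isomorphism $U\to\varphi(U)$. Hence the restrictions $L_{i_1}^{(v)}\circ\varphi\big|_U\kdots L_{i_k}^{(v)}\circ\varphi\big|_U$ are linearly independent as linear functions on $U$ if and only if $L_{i_1}^{(v)}\big|_{\varphi(U)}\kdots L_{i_k}^{(v)}\big|_{\varphi(U)}$ are linearly independent on $\varphi(U)$. Thus the minimum in \eqref{2.weightv-space} is taken over the same index tuples with the same $c_{iv}$'s, yielding $w_v(\LL\circ\varphi,U)=w_v(\LL,\varphi(U))$; summing over $v\in M_K$ gives (ii). Statement (iii) is then a formal consequence: since $\dim U=\dim\varphi(U)$ and, by (ii), the ratio $w(U)/(n-\dim U)$ computed with $\LL\circ\varphi$ equals $w(\varphi(U))/(n-\dim\varphi(U))$ computed with $\LL$, and since $U\mapsto\varphi(U)$ is a bijection on proper subspaces, the space $T$ characterized by the two conditions in \eqref{2.scss} for $\LL\circ\varphi$ is precisely $\varphi^{-1}$ applied to the corresponding space for $\LL$.

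For (iv), the multilinearity of the determinant in the rows gives
\[
\det(L_{i_1}\circ\varphi\kdots L_{i_n}\circ\varphi)=\det\varphi\cdot\det(L_{i_1}\kdots L_{i_n})
\]
for any choice $1\leq i_1<\cdots<i_n\leq r$. Taking $\|\cdot\|_v$ and using that $\|\det\varphi\|_v$ is a nonzero scalar (so it pulls out of the maximum), then taking the product over $v\in M_K$, one gets $\Delta_{\LL\circ\varphi}=\DL\cdot\prod_v\|\det\varphi\|_v$ and $H_{\LL\circ\varphi}=\HL\cdot\prod_v\|\det\varphi\|_v$. Since $\varphi$ is defined over $K$ and invertible, $\det\varphi\in K^*$, and the product formula \eqref{2.1} yields $\prod_{v\in M_K}\|\det\varphi\|_v=1$, giving (iv). There is no real obstacle here: all four statements are direct bookkeeping, and the only ingredient beyond definitions is the product formula needed in (iv).
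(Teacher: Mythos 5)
Your proposal is correct and follows essentially the same route as the paper, which simply declares (i) and (ii) trivial, derives (iii) from (ii), and proves (iv) via $\det(L_{i_1}\circ\varphi\kdots L_{i_n}\circ\varphi)=\det\varphi\cdot\det(L_{i_1}\kdots L_{i_n})$ together with the product formula applied to $\det\varphi\in K^*$. Your write-up just supplies the routine details the authors omit.
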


\begin{proof}
(i), (ii) are trivial. (iii) is a consequence of (ii).
As for (iv), we have by the product formula that
\[
\Delta_{\mathcal{L}\circ\varphi}=
\prod_{v\in M_K}\left(\|\det (\varphi )\|_v\cdot\|\det (L_1^{(v)}\kdots L_n^{(v)}\|_v\right)=\DL
\]
and likewise, $H_{\mathcal{L}\circ\varphi}=\HL$.
\end{proof}

\noindent
{\bf Remark.} A consequence of this lemma is, that in order to prove Theorem \ref{th:2.3},
it suffices to prove it for $\mathcal{L}\circ\varphi$ instead of $\mathcal{L}$
where $\varphi$ is any linear transformation of $\OQq^n$ defined over $K$.
For instance, pick any $v_0\in M_K$ and choose $\varphi$ such that $L_i^{(v_0)}\circ\varphi =X_i$
for $i=1\kdots n$. Thus, we see that in the proof of Theorem \ref{th:2.3} we may assume 
without loss of generality that $L_i^{(v_0)}=X_i$ for $i=1\kdots n$.
It will be convenient to choose $v_0$ such that $v_0$ is non-archimedean, and $c_{i,v_0}=0$
for $i=1\kdots n$. 

\section{An interval result in the semistable case}\label{8}

We formulate an interval result like Theorem \ref{th:2.3},
but under some additional constraints.

We keep the notation and assumptions from Section \ref{2}.
Thus $K$ is an algebraic number field, and $n$,
$\mathcal{L}=(L_i^{(v)}:\, v\in M_K,\, i=1\kdots n)$, 
${\bf c}=(c_{iv}:\, v\in M_K,\, i=1\kdots n)$, $\delta ,R$ satisfy
\eqref{2.10a}--\eqref{2.10}. Further, we add the condition as discussed
in the above remark.

The weight $w(U)=w_{\mathcal{L},{\bf c}}(U)$ 
of a $\OQq$-linear subspace $U$ of $\OQq^n$ 
is defined by \eqref{2.weight-space}.
In addition to the above, we assume that
the pair $(\mathcal{L},{\bf c})$ is \emph{semistable}, that is,
the exceptional space $T=T(\mathcal{L} ,\cc )$ defined by \eqref{2.scss} is
equal to $\{{\bf 0}\}$.

For reference purposes, we have listed all our conditions below.
Thus, $K$ is an algebraic number field, 
and $n$ is a positive integer,
$\delta ,R$ are reals, 
$\mathcal{L}=(L_i^{(v)}:\, v\in M_K,\, i=1\kdots n)$ is a tuple
of linear forms and 
${\bf c}=(c_{iv}:\, v\in M_K,\, i=1\kdots n)$
a tuple of reals such that
\begin{equation}\label{8.1} 
R\geq n\geq 2,\ \  0<\delta\leq 1,
\end{equation}
\begin{eqnarray}
\label{8.5}
&c_{1v}=\cdots =c_{nv}=0\ 
\mbox{for all but finitely many $v\in M_K$,}&
\\
\label{8.6}
&\displaystyle{\sum_{i=1}^n c_{iv}=0\ \mbox{for $v\in M_K$,}}&
\\
\label{8.7}
&\displaystyle{\sum_{v\in M_K}\max (c_{1v}\kdots c_{nv})\leq 1,}&
\end{eqnarray}
\begin{eqnarray}
\label{8.7a}
&L_i^{(v)}\in K[X_1\kdots X_n]^{\lin}\ \mbox{for $v\in M_K$, $i=1\kdots n$,}&
\\
\label{8.3}
&\{ L_1^{(v)}\kdots L_n^{(v)}\}\  \mbox{is linearly independent for $v\in M_K$,}&
\\
\label{8.4}
&\displaystyle{\# \bigcup_{v\in M_K} \{ L_1^{(v)}\kdots L_n^{(v)}\}\,\leq R,}&
\\
\label{8.2}
&
\begin{array}{l}
\mbox{there is a non-archimedean place $v_0\in M_K$ such that}
\\
\mbox{$c_{i,v_0}=0$, $L_i^{(v_0)}=X_i$ for $i=1\kdots n$,}
\end{array}
&
\end{eqnarray}
\begin{equation}\label{8.8}
w(U)\leq 0\ 
\mbox{for every proper linear subspace $U$ of $\OQq^n$.}
\end{equation}
Notice that
\eqref{8.8} is equivalent to the assumption that the space
$T$ defined by
\eqref{2.scss} is $\{ {\bf 0}\}$.

\begin{thm}\label{th:8.1}
Assume \eqref{8.1}--\eqref{8.8}. Put
\begin{equation}
\label{8.definitions}
\left\{\begin{array}{l}
m_2:=\left[ 61n^6 2^{2n}\delta^{-2}\log (22n^2 2^nR/\delta )\right],
\\[0.15cm]
\omega_2:= m_2^{5/2} ,\ \ \ C_2:= (2\HL )^{m_2^{2m_2}}.
\end{array}\right.
\end{equation}
Then there are reals $Q_1\kdots Q_{m_2}$ with 
\[
C_2\leq Q_1<\cdots <Q_{m_2}
\]
such that for every $Q\geq 1$ with
\begin{equation}\label{8.9}
\left\{ \x\in\OQq^n:\, \HLcQ (\x )\leq Q^{-\delta}\right\}
\not=\{ {\bf 0}\}
\end{equation}
we have $Q\in [1,C_2)\cup \bigcup_{h=1}^{m_2}
\left[\left. Q_h,Q_h^{\omega_2}\right)\right.$.
\end{thm}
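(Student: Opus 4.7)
The plan is to argue by contradiction, combining a geometry-of-numbers reduction to an exterior power with the Faltings--Wüstholz style construction of an auxiliary global section, since Roth's machinery in its strong form is only available under the semistability hypothesis \eqref{8.8}. First I would introduce the successive minima $\lambda_1(Q) \leq \cdots \leq \lambda_n(Q)$ of $\HLcQ$ on $\OQq^n$; condition \eqref{8.9} is precisely the statement that $\lambda_1(Q) \leq Q^{-\delta}$. The goal is to rule out $Q \geq C_2$ lying outside the union of $m_2$ intervals $[Q_h, Q_h^{\omega_2})$.

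Suppose for contradiction that there exist values $Q^{(1)} < Q^{(2)} < \cdots < Q^{(m_2+1)}$, all at least $C_2$, at which \eqref{8.9} holds, and no short geometric progression of ratio $\omega_2$ absorbs them. For each such $Q^{(j)}$ pick a nonzero $\x_j$ witnessing $\lambda_1(Q^{(j)}) \leq (Q^{(j)})^{-\delta}$. By pigeonholing on the finitely many possible ``types'' at each place (i.e.\ which subset of the $L_i^{(v)}$ realizes the maximum in \eqref{2.4}), I may pass to a subsequence on which the combinatorial type is constant. A Minkowski-style argument in the style of \cite{Evertse-Schlickewei2002}, applied to the lattice-like structure coming from $\HLcQ$, then lets me extract, for a suitable $p$ with $1 \leq p < n$, $p$-tuples of linearly independent vectors whose exterior products $\x_{j_1} \wedge \cdots \wedge \x_{j_p}$ in $\wedge^p \OQq^n \cong \OQq^N$ are simultaneously small with respect to the induced twisted height coming from the exterior power system $\wedge^p \mathcal{L}$, equipped with exponents derived from $\mathbf{c}$ via the combinatorics in \eqref{2.weightv-space}.

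The heart of the argument is then to apply Roth's machinery to this collection of exterior vectors. Following Faltings--Wüstholz, I would construct an auxiliary multi-homogeneous global section of a suitable line bundle on a product of projective spaces, with explicit control on degrees and heights, which vanishes to high order at the product point determined by the $\x_{j_1} \wedge \cdots \wedge \x_{j_p}$. The semistability hypothesis \eqref{8.8} enters here in a decisive way: it forces $w_{\mathcal{L},\mathbf{c}}(U) \leq 0$ for every proper subspace $U$, which is precisely what is needed to bound the height of the auxiliary section and to control the index, playing the same role that the stability condition plays in the Faltings--Wüstholz proof. An application of Faltings' Product Theorem (or the appropriate Roth-type lemma) to this auxiliary section then yields the contradiction, provided the $Q^{(j)}$ were spread out by a factor exceeding $\omega_2 = m_2^{5/2}$ and provided $Q^{(1)} \geq C_2$, where the double-exponential threshold $C_2 = (2\HL)^{m_2^{2m_2}}$ absorbs the height of the auxiliary construction and the error terms in the Product Theorem.

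The main obstacle, and the source of all the numerical work, will be the quantitative bookkeeping at the Product Theorem step: keeping explicit track of the degree parameters, the height of the auxiliary section, and the index as they propagate through the exterior power construction, so that the number of exceptional intervals comes out as $m_2 = [61 \cdot n^6 \cdot 2^{2n} \delta^{-2} \log(22 n^2 2^n R/\delta)]$ rather than something much larger. The improvement of Faltings--Wüstholz over Schmidt's auxiliary polynomial is exactly what produces the polynomial-in-$n$ dependence inside the logarithm and the $2^{2n}$ factor instead of a factor of shape $c^{n^2}$; all other quantitative features of the statement, including $\omega_2 = m_2^{5/2}$ and the tower height in $C_2$, should follow mechanically once this core estimate is in place.
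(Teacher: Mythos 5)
Your overall architecture does match the paper's: argue by contradiction, pass to an exterior power $\wedge^{n-k}\OQq^n$ via geometry of numbers, build the Faltings--W\"ustholz auxiliary polynomial, and conclude with a Roth-type non-vanishing result descending from the Product Theorem. But there is a genuine gap at the decisive step, namely \emph{where} the semistability hypothesis \eqref{8.8} enters. You assert that $w(U)\leq 0$ for all proper $U$ is ``what is needed to bound the height of the auxiliary section and to control the index.'' That is not its role, and the argument as you describe it would not close. The height of the auxiliary polynomial is controlled purely by Siegel's Lemma (Bombieri--Vaaler) together with a Hoeffding-type count of the number of vanishing conditions imposed; semistability plays no part there. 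What semistability actually provides is a \emph{lower} bound $H_2(T_k(Q))\geq Q^{\delta/3R^n}$ for the height of the subspace spanned by vectors realizing the first $k$ successive infima (Lemma \ref{le:10.4}), where $k$ is the index at which the infima have a gap $\lambda_k(Q)\leq Q^{-\delta/(n-1)}\lambda_{k+1}(Q)$. This lower bound, transported to the exterior power via Lemma \ref{le:6.1}, is exactly what verifies hypothesis \eqref{12.4} of the non-vanishing result: Roth's Lemma requires the target subspaces to have height growing like a power of $Q_1^{r_1}$ relative to $H_2(P)$, and without the semistable weight inequality \eqref{10.5} feeding into the determinant estimate for $\prod_w\|\theta_w\|_w$, there is no such lower bound and the Product Theorem step cannot be launched. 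Misplacing this input means the one non-formal verification in the whole proof is left unaddressed.

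A second, related defect: you propose to pick a single small witness $\x_j$ at each $Q^{(j)}$, pigeonhole on ``types,'' and then form exterior products of these witnesses. One small vector per $Q$ does not yield the $N-1$ linearly independent small vectors in $\OQq^N$ (spanning a hyperplane of large height, carrying a grid of size $N/\eps$) that the non-vanishing result demands. The paper instead works with the full flag of successive infima at each $Q_h$, applies Davenport's Lemma to produce a basis $\hh_1(Q_h)\kdots\hh_n(Q_h)$ simultaneously adapted to all the forms $L_i^{(w)}$, and takes the $(n-k)$-fold exterior products of these to get $\widehat{\hh}_1(Q_h)\kdots\widehat{\hh}_{N-1}(Q_h)$; the pigeonholing that does occur is over the gap index $k\in\{1\kdots n-1\}$ (accounting for the factor $n$ in $nm\leq m_2$), not over which forms attain the maximum in \eqref{2.4}. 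A further subtlety you would need to confront is that the exponents $\widehat{c}_{i,v_0}(Q_h)$ at the distinguished place $v_0$ genuinely depend on $Q_h$, which is why the auxiliary construction must be set up to tolerate $Q$-dependent weights.
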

\vskip 0.1cm\noindent
The factor $\DL^{1/n}$ occurring in \eqref{2.14a} has been absorbed into $C_2$.
Theorem \ref{th:8.1} may be viewed as an extension and refinement of a result
of Schmidt on general Roth systems \cite[Theorem 2]{Schmidt1971}.

Theorem \ref{th:8.1} is proved in Sections
\ref{10}--\ref{14}.
In Sections \ref{15}--\ref{18}
we deduce Theorem \ref{th:2.3}.

We outline how Theorem \ref{th:2.3} is deduced from Theorem \ref{th:8.1}.
Let again $T=T(\mathcal{L} ,\cc )$ 
be the exceptional subspace for $(\mathcal{L} ,\cc )$.
Put $k:= \dim T$.
With the notation used in Sections \ref{15}--\ref{18},
we construct a surjective homomorphism $\varphi{''} :\, \OQq^n\to\OQq^{n-k}$
defined over $K$
with kernel $T$, a tuple 
$\mathcal{L}{''}:=(L_i^{(v)}{''}:\, v\in M_K ,\, i=1\kdots n-k)$ 
in $K[X_1\kdots X_{n-k}]^{\lin}$
and a tuple of reals
${\bf d}=(d_{iv}:\, v\in M_K ,\, i=1\kdots n-k)$
such that $(\mathcal{L}{''},{\bf d})$
satisfies conditions analogous to \eqref{8.1}--\eqref{8.8}
and 
\[
H_{\mathcal{L}{''},{\bf d},Q'}(\varphi{''} (\x ))\ll \HLcQ (\x )
\ \ \mbox{for $\x\in\OQq^n$, $Q\geq C_2$,}
\]
where $Q'=Q^n$. 
Then Theorem \ref{th:8.1} is applied with 
$\mathcal{L}{''}$ and ${\bf d}$.

An important ingredient in the deduction of Theorem \ref{th:2.3} is an
upper bound for the height $H_2(T)$ of $T$.
In fact, in Sections \ref{15},\ref{16} we prove a limit result for the
sucessive infima for $\HLcQ$ (Theorem \ref{th:16.2}) 
where we need Theorem \ref{th:8.1}.
We use this limit result in Section \ref{17} 
to compute an upper bound for $H_2(T)$.
In Section \ref{18} we complete the proof of Theorem \ref{th:2.3}.

\section{Geometry of numbers for twisted heights}\label{10}

We start with some generalities on twisted heights.
Let $K$ be a number field and $n\geq 2$.
Let $(\mathcal{L} ,\cc )$ be a pair for which for the moment  
we require only \eqref{2.10a}--\eqref{2.7a}.

For $\lambda\in\Rr_{\geq 0}$ define 
$T(Q,\lambda )=T(\mathcal{L} ,\cc ,Q,\lambda )$
to be the $\OQq$-vector space generated by 
\[
\{ \x\in\OQq^n:\, \HLcQ (\x )\leq\lambda\}.
\]
We define the successive infima $\lambda_i(Q)=\lambda_i(\mathcal{L} ,\cc ,Q)$
$(i=1\kdots n)$ of $\HLcQ$ by
\[
\lambda_i(Q):=\inf\{\lambda\in\Rr_{\geq 0}:\, \dim T(Q,\lambda )\geq i\}.
\]
Since we are working over $\OQq$, the successive infima need not be 
minima. For $i=1\kdots n$, we define
\[
T_i(Q)=T_i(\mathcal{L} ,\cc ,Q)=\bigcap_{\lambda >\lambda_i(Q)}T(Q,\lambda ).
\]
We insert the following simple lemma.

\begin{lemma}\label{le:9.0}
Let $(\mathcal{L} ,\cc )$ be any pair with \eqref{2.10a}--\eqref{2.7a}, 
and let $Q\geq 1$.
\\
(i) The spaces $T_1(Q)\kdots T_n(Q)$ are defined over $K$.
\\
(ii)
Let $k\in\{ 1\kdots n-1\}$ 
and suppose that $\lambda_k(Q)<\lambda_{k+1}(Q)$.
Then $\dim T_k(Q)=k$ and $T(Q,\lambda )=T_k(Q)$ for all $\lambda$
with $\lambda_k(Q)<\lambda<\lambda_{k+1}(Q)$.
\end{lemma}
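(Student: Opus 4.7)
The plan is to prove the two claims in sequence, relying on Lemma \ref{le:4.1} for part (i) and on elementary manipulations of the successive infima for part (ii).

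For (i), I would first recall from Lemma \ref{le:4.1} that $\HLcQ$ is $G_K$-invariant: $\HLcQ(\sigma(\x)) = \HLcQ(\x)$ for every $\sigma \in G_K$ and $\x \in \OQq^n$. Consequently, the set $\{\x \in \OQq^n : \HLcQ(\x) \leq \lambda\}$ is stable under the $G_K$-action, and so is its $\OQq$-span $T(Q,\lambda)$. A $\OQq$-subspace of $\OQq^n$ that is $G_K$-stable is defined over $K$ (it has a basis in $K^n$), and an intersection of subspaces defined over $K$ is again defined over $K$. Applying this to the defining intersection $T_i(Q) = \bigcap_{\lambda > \lambda_i(Q)} T(Q,\lambda)$ gives the conclusion.

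For (ii), the key point is the monotonicity $T(Q,\lambda) \subseteq T(Q,\lambda')$ for $\lambda \leq \lambda'$. I would first show that for any $\lambda$ with $\lambda_k(Q) < \lambda < \lambda_{k+1}(Q)$ one has $\dim T(Q,\lambda) = k$: the inequality $\dim T(Q,\lambda) \geq k$ follows because $\lambda$ exceeds the infimum $\lambda_k(Q)$, so some $\lambda' \leq \lambda$ already realizes $\dim T(Q,\lambda') \geq k$, and then monotonicity gives $\dim T(Q,\lambda) \geq k$; conversely, if $\dim T(Q,\lambda) \geq k+1$ then $\lambda$ would lie in the set whose infimum defines $\lambda_{k+1}(Q)$, forcing $\lambda \geq \lambda_{k+1}(Q)$, a contradiction. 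Since all the spaces $T(Q,\lambda)$ for $\lambda_k(Q) < \lambda < \lambda_{k+1}(Q)$ have the same dimension $k$ and form a nested family, they must all coincide; call their common value $V$. Then by the definition of $T_k(Q)$ as the intersection over all $\lambda > \lambda_k(Q)$, and using that $T(Q,\lambda) \supseteq V$ also for $\lambda \geq \lambda_{k+1}(Q)$, we get $T_k(Q) = V$, which gives both conclusions simultaneously.

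There is no real obstacle here; the only subtle point is that, because we are working over $\OQq$ rather than a locally compact field, $\lambda_i(Q)$ is only an infimum and need not be attained. The hypothesis $\lambda_k(Q) < \lambda_{k+1}(Q)$ in (ii) is precisely what isolates a plateau on which the spaces $T(Q,\lambda)$ stabilize, and this is what allows the argument to go through without any compactness input.
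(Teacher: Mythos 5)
Your proposal is correct and follows essentially the same route as the paper: part (i) via the $G_K$-invariance of $\HLcQ$ from Lemma \ref{le:4.1}, and part (ii) via the definition of the successive infima combined with the monotonicity $T(Q,\lambda)\subseteq T(Q,\lambda')$ for $\lambda\leq\lambda'$. You merely spell out in more detail the step the paper dismisses with ``it follows at once.''
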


\begin{proof}
(i) Lemma \ref{le:4.1} implies that for any $\lambda\in\Rr_{>0}$
and any $\sigma\in G_K$ we have
$\sigma (T(Q,\lambda ))=T(Q,\lambda )$. Hence $T(Q,\lambda )$ is 
defined over $K$. This implies (i) at once.

(ii) 
From the definition of the successive infima it follows
at once that $\dim T(Q,\lambda )=k$ for all $\lambda$
with $\lambda_k(Q)<\lambda <\lambda_{k+1}(Q)$.
Since also $T(Q,\lambda )\subseteq T(Q,\lambda ')$ if $\lambda\leq\lambda '$
this implies (ii). 
\end{proof} 

The quantity $\DL$ is defined by \eqref{2.3a}.
We recall the following analogue of Minkowski's Theorem.

\begin{prop}\label{pr:9.1}
Let again $(\mathcal{L} ,\cc )$ be any pair with \eqref{2.10a}--\eqref{2.7a}. 
Put 
\[
\alpha :=\sum_{v\in M_K}\sum_{i=1}^n c_{iv}.
\]
Then for $Q\geq 1$ we have
\begin{equation}\label{10.minkowski-general}
n^{-n/2}\DL Q^{-\alpha}\leq \lambda_1(Q)\cdots\lambda_n(Q)\leq 2^{n(n-1)/2}\DL Q^{-\alpha}.
\end{equation}
In particular, if $\alpha =0$, then
\begin{equation}\label{10.minkowski}
n^{-n/2}\DL \leq \lambda_1(Q)\cdots\lambda_n(Q)\leq 2^{n(n-1)/2}\DL .
\end{equation}  
\end{prop}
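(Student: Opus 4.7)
The two sides call for very different arguments: the lower bound is elementary via determinants and the product formula, while the upper bound is the substantive content and reduces to an adelic form of Minkowski's second theorem. By Lemma~\ref{le:9.3}(i), replacing $c_{iv}$ by $d_{iv}:=c_{iv}-\theta_v$ for some $(\theta_v)$ with $\sum_v\theta_v=\alpha/n$ kills the sum while multiplying each $\lambda_i(Q)$ by $Q^{\alpha/n}$, so we may assume $\alpha=0$ and prove only \eqref{10.minkowski}; the factor $Q^{-\alpha}$ then comes out automatically.

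\textbf{Lower bound.} For each $\varepsilon>0$ I would build linearly independent $\x_1,\ldots,\x_n\in\OQq^n$ with $\HLcQ(\x_j)\leq\lambda_j(Q)+\varepsilon$, by choosing $\x_j$ inductively as a spanning vector of $T(Q,\lambda_j(Q)+\varepsilon)$ lying outside $\span(\x_1,\ldots,\x_{j-1})$ (this is possible because $\dim T(Q,\lambda_j(Q)+\varepsilon)\geq j$ and the space is spanned by vectors of twisted height $\leq\lambda_j(Q)+\varepsilon$). Fix a finite extension $E\supset K$ containing the coordinates of all $\x_j$. For each $w\in M_E$, expanding $\det(L_i^{(w)}(\x_j))_{i,j}$ by permutations and factoring $Q^{-c_{iw}}$ out of each column gives
\[
\|\det(L_i^{(w)}(\x_j))\|_w
\leq n^{ns(w)/2}\cdot Q^{\sum_i c_{iw}}\cdot
\prod_{j=1}^n\max_{1\leq i\leq n}\bigl\|L_i^{(w)}(\x_j)\bigr\|_wQ^{-c_{iw}},
\]
using Hadamard at the infinite places and the ultrametric inequality at the finite ones (with constant $1$), and noting that every permutation contributes the same factor $Q^{\sum_i c_{iw}}$. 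Dividing by $\|\det(L_1^{(w)},\ldots,L_n^{(w)})\|_w$, taking the product over $w\in M_E$, and applying the product formula to $\det(\x_1,\ldots,\x_n)\neq 0$ on the left, together with $\prod_w n^{ns(w)/2}=n^{n/2}$, $\prod_w\|\det(L^{(w)})\|_w=\DL$ and $\prod_wQ^{\sum_i c_{iw}}=Q^{\alpha}$, yields
\[
1\leq n^{n/2}\DL^{-1}Q^{\alpha}\prod_{j=1}^n\HLcQ(\x_j).
\]
Letting $\varepsilon\downarrow 0$ gives $\lambda_1(Q)\cdots\lambda_n(Q)\geq n^{-n/2}\DL Q^{-\alpha}$.

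\textbf{Upper bound.} Here I would invoke an adelic version of Minkowski's second theorem (e.g.\ Bombieri--Vaaler, or the formulation used in \cite{Evertse-Schlickewei2002}). Over any finite extension $E\supset K$, the conditions $\|L_i^{(w)}(\x)\|_w\leq\lambda Q^{c_{iw}}$ ($w\in M_E$, $i=1,\ldots,n$) cut out a symmetric adelic parallelepiped whose natural covolume (in the Haar-measure normalisation dictated by our absolute-value normalisations) is a fixed multiple of $\DL Q^{-\alpha}$. The adelic Minkowski bound then gives for its successive minima $\mu_1^{(E)},\ldots,\mu_n^{(E)}$ the inequality $\mu_1^{(E)}\cdots\mu_n^{(E)}\leq 2^{n(n-1)/2}\DL Q^{-\alpha}$, and plainly $\lambda_i(Q)\leq\mu_i^{(E)}$ since every vector in $E^n$ is also a vector in $\OQq^n$. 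An approximation step (letting $E$ range over a tower) shows that the absolute infima $\lambda_i(Q)$ agree with the limit of the $E$-rational minima, giving the stated upper bound.

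\textbf{Main obstacle.} The technical point is the absolute-vs-relative issue for the upper bound: the infima $\lambda_i(Q)$ live in $\OQq$ and need not be realised over any single number field. The standard remedy uses Galois invariance of $T(Q,\lambda)$ (Lemma~\ref{le:9.0}(i)) together with a compactness argument to ensure that the $\OQq$-infima are approximated by $E$-rational minima for $E$ large enough, so that the adelic Minkowski bound transfers. Once this is in place, the constant $2^{n(n-1)/2}$ comes directly from the Minkowski bound.
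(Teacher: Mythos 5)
Your lower-bound argument is complete and correct: it is the same determinant--Hadamard--product-formula computation that the paper itself performs in the proof of Proposition \ref{pr:4.2}, and the normalization to $\alpha=0$ via Lemma \ref{le:9.3}(i) is legitimate. For the record, the paper proves Proposition \ref{pr:9.1} by citation: it is a reformulation of Corollary 7.2 of \cite{Evertse-Schlickewei2002}, which in turn rests on the \emph{absolute} Minkowski theorem of Roy and Thunder \cite{Roy-Thunder1996}, proved directly over $\OQq$.

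The gap is in your mechanism for the upper bound. A relative adelic Minkowski second theorem over a fixed number field $E$ (Bombieri--Vaaler) does not yield $\mu_1^{(E)}\cdots\mu_n^{(E)}\leq 2^{n(n-1)/2}\DL Q^{-\alpha}$; its bound carries an extra factor $|D_E|^{n/2[E:\Qq]}=C_E^{\,n}$ coming from the covolume of the ring of integers, and this factor does not tend to $1$ along a tower $E\subset E'\subset\cdots$ exhausting the coordinates of approximating vectors: by the Minkowski (and Odlyzko) discriminant bounds the root discriminant $C_E^2$ is bounded away from $1$ once $[E:\Qq]$ is large, and is unbounded for general towers. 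So ``letting $E$ range over a tower'' cannot recover a field-independent constant; the Galois-invariance/compactness remedy you invoke addresses only the attainability of the infima over some finite extension, not the degradation of the Minkowski constant with the field. What is needed is a genuinely absolute Minkowski theorem for twisted heights on $\OQq^n$, with constants independent of any field of definition --- precisely Roy--Thunder's theorem (or Zhang's Arakelov-theoretic variant, which the paper notes would even improve $2^{n(n-1)/2}$ to $(cn)^n$). If you replace the ``relative Minkowski plus tower limit'' step by a direct appeal to \cite{Roy-Thunder1996} or to \cite{Evertse-Schlickewei2002}, Corollary 7.2, the proof closes and coincides with the paper's; as written, the upper bound with the stated constant is not established.
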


\begin{proof}
This is a reformulation of \cite[Corollary 7.2]{Evertse-Schlickewei2002}.
In fact, this result is an easy consequence of an analogue over $\OQq$
of Minkowski's Theorem on successive minima, due to Roy and Thunder
\cite{Roy-Thunder1996}. Using instead an Arakelov type result of S. Zhang
\cite{Zhang1995}, it is possible to improve $2^{n(n-1)/2}$ to $(cn)^n$
for some absolute constant $c$, but such a strengthening
would not have any effect on our final result.
\end{proof}

From now on, we assume that $n,\delta ,R ,\mathcal{L} ,\cc$ 
satisfy \eqref{8.1}--\eqref{8.8}.
We consider reals $Q$ with
\begin{equation}\label{10.2}
Q\geq C_2,
\end{equation}
where $C_2$ is given by \eqref{8.definitions},
and with \eqref{8.9}, i.e.,
\begin{equation}\label{10.3}
\lambda_1(Q)\leq Q^{-\delta}.
\end{equation}
Our assumptions imply $\alpha =0$, and so \eqref{10.minkowski} holds.
We deduce some consequences.

\begin{lemma}\label{le:10.2}
Suppose $n,\delta ,R, \mathcal{L} ,\cc$ satisfy \eqref{8.1}--\eqref{8.8}
and $Q$ satisfies \eqref{10.2}, \eqref{10.3}.
Let $i_1\kdots i_p$ be distinct indices from $\{ 1\kdots n\}$.
Then
\[
Q^{-p-\half}\leq \lambda_{i_1}(Q)\cdots\lambda_{i_p}(Q)\leq Q^{n-p+\half}.
\]
\end{lemma}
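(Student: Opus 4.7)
The plan is to derive both bounds from the monotonicity $\lambda_1(Q)\le\cdots\le\lambda_n(Q)$, combined with two standard ingredients already available in the paper: the upper Minkowski-type bound $\lambda_1(Q)\cdots\lambda_n(Q)\le 2^{n(n-1)/2}\DL$ from Proposition \ref{pr:9.1} (valid since $\sum_v\sum_i c_{iv}=0$ by \eqref{8.6}), and the pointwise lower bound $\lambda_1(Q)\ge n^{-1}\HL^{-\binom{r}{n}}Q^{-1}$ from Lemma \ref{le:9.1} (using $\theta\le 1$ from \eqref{8.7}). The oversized lower bound $Q\ge C_2=(2\HL)^{m_2^{2m_2}}$ from \eqref{10.2} will swallow every $\HL$- and $n$-dependent constant into a harmless factor $Q^{1/2}$.

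For the upper bound, I would write
\[
\lambda_{i_1}(Q)\cdots\lambda_{i_p}(Q)\le\lambda_{n-p+1}(Q)\cdots\lambda_n(Q)=\frac{\lambda_1(Q)\cdots\lambda_n(Q)}{\lambda_1(Q)\cdots\lambda_{n-p}(Q)},
\]
bound the numerator by $2^{n(n-1)/2}\DL\le 2^{n(n-1)/2}\HL$ using Proposition \ref{pr:9.1} together with \eqref{9.3}, and the denominator from below by $\bigl(n^{-1}\HL^{-\binom{r}{n}}Q^{-1}\bigr)^{n-p}$ via monotonicity and Lemma \ref{le:9.1}. This produces an estimate of the form $C(n,\HL)\cdot Q^{n-p}$ with $C(n,\HL)\le 2^{n(n-1)/2}n^{n-p}\HL^{1+(n-p)\binom{r}{n}}$, which is dominated by $Q^{1/2}$ since $\log\HL\le m_2^{-2m_2}\log Q$ and the exponent of $\HL$ is merely polynomial in $n$ and $R$.

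For the lower bound, monotonicity gives $\lambda_{i_1}(Q)\cdots\lambda_{i_p}(Q)\ge\lambda_1(Q)^p$, and the same application of Lemma \ref{le:9.1} yields
\[
\lambda_1(Q)^p\ge n^{-p}\HL^{-p\binom{r}{n}}Q^{-p}.
\]
Again the $\HL$-dependent factor $n^{-p}\HL^{-p\binom{r}{n}}$ is at least $Q^{-1/2}$ by the same comparison, producing the claimed bound $Q^{-p-\half}$.

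There is no real obstacle, and \eqref{10.3} is not needed in this particular lemma (it is the standing hypothesis of the section). The only point worth checking is numerical: verifying that $m_2^{2m_2}$, as defined in \eqref{8.definitions}, is large enough that $\bigl(2^{n(n-1)/2}n^{n}\HL^{n\binom{r}{n}+1}\bigr)^2\le Q$ whenever $Q\ge C_2$. Since $m_2\ge 61\cdot n^6 2^{2n}$ and $\binom{r}{n}\le\binom{R}{n}$, the tower $m_2^{2m_2}$ dwarfs any polynomial in $n$ and $R$, so the absorption into $Q^{1/2}$ is immediate.
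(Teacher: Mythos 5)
Your proposal is correct and follows essentially the same route as the paper: the lower bound via $\lambda_{i_1}(Q)\cdots\lambda_{i_p}(Q)\geq\lambda_1(Q)^p$ and Lemma \ref{le:9.1}, the upper bound by dividing the Minkowski bound $\lambda_1(Q)\cdots\lambda_n(Q)\leq 2^{n(n-1)/2}\DL\leq 2^{n(n-1)/2}\HL$ by a power of $\lambda_1(Q)$, and absorbing all $n$- and $\HL$-dependent constants into $Q^{1/2}$ using $Q\geq C_2$. Your observation that \eqref{10.3} is not actually used is also consistent with the paper's argument.
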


\begin{proof}
Write $\lambda_i$ for $\lambda_i(Q)$.
Lemma \ref{le:9.1} and the conditions \eqref{8.4} (i.e., $r\leq R$), \eqref{8.7}
and \eqref{10.2} imply
\[
\lambda_1\geq n^{-1}\HL^{-\binom{R}{n}}Q^{-1}\geq Q^{-1-1/(3n)}.
\]
This implies at once the lower bound for $\lambda_{i_1}\cdots\lambda_{i_p}$.
Further, by \eqref{10.minkowski}, the upper bound for $\DL$ in \eqref{10.3}
and again \eqref{10.2}, 
\[
\lambda_{i_1}\cdots\lambda_{i_p}\leq 2^{n(n-1)/2}\DL \lambda_1^{p-n}
\leq 2^{n(n-1)/2}\HL \lambda_1^{p-n}\leq Q^{n-p+\half}.
\]
\end{proof} 

\begin{lemma}\label{le:10.3}
Suppose again that $n,R,\delta ,\mathcal{L} ,\cc$ satisfy \eqref{8.1}--\eqref{8.8}, and
that $Q$ satisfies \eqref{10.2}, \eqref{10.3}. Then
there is $k\in\{ 1\kdots n-1\}$ such that
\[
\lambda_k(Q)\leq Q^{-\delta /(n-1)}\lambda_{k+1}(Q).
\]
\end{lemma}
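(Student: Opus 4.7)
The plan is to argue by contradiction, using the multiplicative form of Minkowski's theorem for twisted heights provided by \propref{pr:9.1} as the key ingredient, and to exploit the largeness of $Q \geq C_2$ to overwhelm the dependence on $\Delta_{\mathcal L}$.

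Suppose for contradiction that no such $k$ exists, so that for every $k \in \{1,\dots,n-1\}$ we have
\[
\lambda_{k+1}(Q) > Q^{\delta/(n-1)} \lambda_k(Q).
\]
Iterating gives $\lambda_i(Q) > Q^{(i-1)\delta/(n-1)} \lambda_1(Q)$ for $i = 1, \dots, n$. Multiplying these $n$ inequalities and using $\sum_{i=1}^n (i-1) = n(n-1)/2$, I obtain
\[
\prod_{i=1}^n \lambda_i(Q) > Q^{n\delta/2} \lambda_1(Q)^n.
\]
Hmm, this has the wrong direction; let me instead take the negation in the opposite form: the negation of the conclusion is $\lambda_k(Q) > Q^{-\delta/(n-1)} \lambda_{k+1}(Q)$ for all $k$, i.e.\ $\lambda_{k+1}(Q) < Q^{\delta/(n-1)} \lambda_k(Q)$, so the iteration yields $\lambda_i(Q) < Q^{(i-1)\delta/(n-1)} \lambda_1(Q)$, and multiplying,
\[
\prod_{i=1}^n \lambda_i(Q) < Q^{n\delta/2} \lambda_1(Q)^n \leq Q^{n\delta/2} \cdot Q^{-n\delta} = Q^{-n\delta/2},
\]
where the second inequality invokes the standing hypothesis \eqref{10.3}.

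On the other hand, condition \eqref{8.6} forces $\alpha := \sum_{v}\sum_i c_{iv} = 0$, so the lower bound in \eqref{10.minkowski} of \propref{pr:9.1} applies and gives
\[
\prod_{i=1}^n \lambda_i(Q) \geq n^{-n/2} \Delta_{\mathcal L}.
\]
Comparing the two bounds yields $n^{-n/2} \Delta_{\mathcal L} < Q^{-n\delta/2}$, equivalently
\[
Q^{\delta} < n\, \Delta_{\mathcal L}^{-2/n}.
\]
Now I invoke \eqref{9.3} to bound $\Delta_{\mathcal L}^{-1} \leq H_{\mathcal L}^{\binom{R}{n}-1}$, so the right-hand side is at most $n H_{\mathcal L}^{2\binom{R}{n}/n}$. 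It remains to verify that the assumption $Q \geq C_2 = (2H_{\mathcal L})^{m_2^{2m_2}}$ from \eqref{10.2}, combined with $\delta \leq 1$ and the size of $m_2$ from \eqref{8.definitions}, comfortably yields $Q^\delta \geq n H_{\mathcal L}^{2\binom{R}{n}/n}$, delivering the desired contradiction.

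The argument is essentially a one-shot gap principle; the only mildly subtle point is bookkeeping the exponents so that $C_2$ indeed dominates $n \Delta_{\mathcal L}^{-2/n}$, but since $C_2$ grows like $H_{\mathcal L}$ raised to a tower in $m_2$ (hence doubly exponential in $n$ and $\delta^{-1}$) while the required bound is only $H_{\mathcal L}^{O(R^n/n)}$, there is enormous slack. Thus no serious obstacle arises; this lemma is a straightforward consequence of the Minkowski-type estimate \eqref{10.minkowski} together with the chosen magnitude of $C_2$.
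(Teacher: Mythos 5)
Your argument is correct and uses exactly the same ingredients as the paper's proof: the Minkowski-type bound $\prod_i\lambda_i(Q)\geq n^{-n/2}\DL$ from Proposition \ref{pr:9.1}, the hypothesis $\lambda_1(Q)\leq Q^{-\delta}$, the bound $\DL\geq\HL^{1-\binom{R}{n}}$ from \eqref{9.3}, and the largeness of $C_2$; the paper merely phrases the pigeonhole step directly (the minimal ratio $\lambda_k/\lambda_{k+1}$ is at most the geometric mean $(\lambda_1/\lambda_n)^{1/(n-1)}$, with $\lambda_n\geq 1$ checked first) rather than by contradiction. The final bookkeeping you defer is indeed routine, since $\HL\geq 1$ by the product formula and $\delta m_2^{2m_2}$ vastly exceeds $\binom{R}{n}$.
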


\begin{proof}
Fix $Q$ with \eqref{10.2},\eqref{10.3}. 
Write $\lambda_i$ for $\lambda_i(Q)$, for $i=1\kdots n$.
Then by \eqref{10.minkowski}, the lower bound for $\DL$ in \eqref{9.3} 
and \eqref{10.3}, \eqref{10.2}, 
\[
\lambda_n\geq \left(n^{-n/2}\DL\lambda_1^{-1}\right)^{1/(n-1)}
\geq \left( n^{-n/2}\HL^{1-\binom{R}{n}}Q^{\delta}\right)^{1/(n-1)}\geq 1. 
\]
Take $k\in\{ 1\kdots n-1\}$ such that $\lambda_k/\lambda_{k+1}$ is minimal. Then
\[
\frac{\lambda_k}{\lambda_{k+1}}\leq
\left(\frac{\lambda_1}{\lambda_n}\right)^{1/(n-1)}\leq \lambda_1^{1/(n-1)}\leq 
Q^{-\delta /(n-1)}.
\]
\end{proof}

\section{A lower bound for the height of the $k$-th infimum subspace}\label{7}

Our aim is to deduce a useful lower bound for the height of the vector space $T_k(Q)$,
where $k$ is the index from Lemma \ref{le:10.3}.
It is only at this point that we have to use
our semistability assumption \eqref{8.8}.

We need some lemmas, which are used also elsewhere.
We write in the usual manner
\[
\bigcup_{v\in M_K}\{ L_1^{(v)}\kdots L_n^{(v)}\}=\{ L_1\kdots L_r\}.
\]
The quantity $\HL$ is given by \eqref{2.9a}.

\begin{lemma}\label{le:9.5}
Assume that $\LL$ contains $X_1\kdots X_n$.
Let $\{ d_1\kdots d_m\}$ be the set consisting of $1$, 
all determinants $\det (L_{i_1}\kdots L_{i_n})$ ($1\leq i_1<\cdots <i_n\leq r$),
and all subdeterminants of order $\leq n$ of these determinants. Then
\[
\prod_{v\in M_K}\max (\| d_1\|_v\kdots \|d_m\|_v)=\HL .
\]
\end{lemma}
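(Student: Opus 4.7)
The plan is to show that at each place $v\in M_K$ we actually have the identity
\[
\max(\|d_1\|_v\kdots\|d_m\|_v)=\max_{1\leq i_1<\cdots<i_n\leq r}\|\det(L_{i_1}\kdots L_{i_n})\|_v,
\]
from which the global statement follows by taking the product over $v$. Since the set $\{d_1\kdots d_m\}$ contains $1$ and all the $n\times n$ determinants $\det(L_{i_1}\kdots L_{i_n})$ themselves, the inequality $\geq$ is immediate. The content is in the reverse inequality.

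The key observation is that every subdeterminant of smaller order is, up to sign, itself a full $n\times n$ determinant of the required type. Fix $1\leq i_1<\cdots<i_n\leq r$ and consider a $k\times k$ minor of $\det(L_{i_1}\kdots L_{i_n})$ ($1\leq k\leq n$), obtained by selecting row indices $J\subset\{1\kdots n\}$, $|J|=k$, and column indices $C\subset\{1\kdots n\}$, $|C|=k$. Let $C^c=\{1\kdots n\}\setminus C=\{l_1\kdots l_{n-k}\}$. Because $\mathcal{L}$ contains $X_1\kdots X_n$, each $X_{l_t}$ is one of the forms $L_1\kdots L_r$. Build the $n\times n$ matrix whose rows are the coefficient vectors of the $k$ forms $\{L_{i_j}:j\in J\}$ together with the $n-k$ forms $\{X_{l_t}:1\leq t\leq n-k\}$, arranged in any order. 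The rows corresponding to $X_{l_t}$ are standard basis vectors $\mathbf{e}_{l_t}$, so a Laplace expansion along those $n-k$ rows gives, up to sign, exactly the original $k\times k$ minor. Hence this minor equals $\pm\det(L_{j_1}\kdots L_{j_n})$ for some $n$-subset $\{L_{j_1}\kdots L_{j_n}\}\subset\{L_1\kdots L_r\}$.

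Combining, every $d_i$ is either $0$, $\pm 1$, or of the form $\pm\det(L_{j_1}\kdots L_{j_n})$. Since $\|{\pm}a\|_v=\|a\|_v$ and $\|0\|_v=0$, and since $\pm 1=\det(X_{i_1}\kdots X_{i_n})$ already appears among the full determinants (so $\max_v\geq 1$ automatically), we obtain
\[
\max(\|d_1\|_v\kdots\|d_m\|_v)\leq\max_{1\leq i_1<\cdots<i_n\leq r}\|\det(L_{i_1}\kdots L_{i_n})\|_v,
\]
and the reverse inequality is trivial as noted. Taking the product over $v\in M_K$ and invoking the definition \eqref{2.9a} of $\HL$ yields the claimed identity. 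There is no real obstacle here — the whole argument rests on the Laplace-expansion trick, which works precisely because the hypothesis $\{X_1\kdots X_n\}\subset\{L_1\kdots L_r\}$ supplies the standard basis rows needed to promote any minor to a full $n\times n$ determinant within the allowed family.
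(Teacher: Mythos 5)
Your proof is correct and follows essentially the same route as the paper's: both rest on the observation that, because $X_1\kdots X_n$ occur among $L_1\kdots L_r$, any subdeterminant of order $k\leq n$ can be written (up to sign) as a full $n\times n$ determinant of forms from $\{L_1\kdots L_r\}$ by adjoining the appropriate standard basis rows. You merely spell out the Laplace expansion and the placewise comparison that the paper leaves implicit.
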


\begin{proof}
Pick indices $1\leq i_1<\cdots <i_n\leq r$.
Each of the subdeterminants of $\det (L_{i_1}\kdots L_{i_n})$
can be expressed
as a determinant of $n$ linear forms from $L_{i_1}\kdots L_{i_n},X_1\kdots X_n$.
Since $X_1\kdots X_n\in\{ L_1\kdots L_r\}$, these subdeterminants 
are up to sign in the set of determinants
$\det (L_{i_1}\kdots L_{i_n})$ ($1\leq i_1<\cdots <i_n\leq r$).
Now the lemma is clear from \eqref{2.9a}.
\end{proof}

\begin{lemma}\label{le:10.3a}
Let $\LL$, $\cc$ satisfy \eqref{2.10a}--\eqref{2.7a}, 
and suppose in addition that
$\LL$ contains $X_1\kdots X_n$.
Let $T$ be a $k$-dimensional linear subspace of $\OQq^n$,
and $\{ {\bf g}_1\kdots {\bf g}_k\}$ a basis of $T$.
Let $E$ be a finite extension of $K$ such that 
${\bf g}_i\in E^n$ for $i=1\kdots k$.
\\
Let $\theta_1\kdots \theta_u$ be the distinct non-zero numbers among
\[
\det (L_{i_l}({\bf g}_j))_{l,j=1\kdots k}\ \ (1\leq i_1<\cdots <i_k\leq r).
\]
Then
\begin{eqnarray}
\label{10.a}
&&\prod_{w\in M_E}\max (\| \theta_1\|_w\kdots \|\theta_u\|_w)\leq \binom{n}{k}^{1/2}\HL\cdot H_2(T),
\\
\label{10.b}
&&\prod_{w\in M_E}\min (\| \theta_1\|_w\kdots \|\theta_u\|_w)\geq 
\left(\binom{n}{k}^{1/2}\HL\cdot H_2(T)\right)^{1-\binom{r}{k}}.
\end{eqnarray}
\end{lemma}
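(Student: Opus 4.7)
The plan is to reinterpret each $\theta_j$ as a scalar product of two exterior-product vectors in $\OQq^N$ (with $N:=\binom{n}{k}$), apply Cauchy--Schwarz over all places for \eqref{10.a}, and use the product formula together with \eqref{10.a} itself for \eqref{10.b}. Set ${\bf g}:={\bf g}_1\wedge\cdots\wedge{\bf g}_k\in\OQq^N$, so $H_2({\bf g})=H_2(T)$ by definition of the height of a subspace. For each $k$-subset $\{i_1<\cdots<i_k\}$ of $\{1,\ldots,r\}$ put $\Lambda:=L_{i_1}\wedge\cdots\wedge L_{i_k}\in\OQq^N$, and list the resulting vectors as $\Lambda_1,\ldots,\Lambda_s$ with $s\leq\binom{r}{k}$. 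Identity \eqref{6.7} shows that $\det\bigl(L_{i_l}({\bf g}_j)\bigr)_{l,j=1}^{k}=\Lambda\cdot{\bf g}$, so each $\theta_j$ equals $\Lambda_\ell\cdot{\bf g}$ for some index $\ell$.

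For \eqref{10.a}, the Cauchy--Schwarz-type inequality \eqref{6.Cauchy-Schwarz} at each place $w\in M_E$ gives
\[
\max_j\|\theta_j\|_w \,\leq\, \bigl(\max_\ell\|\Lambda_\ell\|_{w,2}\bigr)\cdot\|{\bf g}\|_{w,2}.
\]
Taking the product over $w$ reduces the task to estimating $\prod_w\max_\ell\|\Lambda_\ell\|_{w,2}$. Every coordinate of each $\Lambda_\ell$ is, up to sign, a $k\times k$ subdeterminant of some $n\times n$ determinant $\det(L_{i_1},\ldots,L_{i_n})$, obtained by padding $L_{i_1},\ldots,L_{i_k}$ with $n-k$ of the coordinate forms $X_j$ (which are available in $\{L_1,\ldots,L_r\}$ by hypothesis). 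Lemma \ref{le:9.5} therefore yields $\prod_w\max_\ell\|\Lambda_\ell\|_w\leq\HL$, and passage from the max-norm to the $2$-norm loses only a global factor $N^{1/2}=\binom{n}{k}^{1/2}$, because $\sum_{w\mid\infty}s(w)=1$. This establishes \eqref{10.a}.

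For \eqref{10.b}, use that each $\theta_j$ is nonzero so that $\prod_w\|\theta_j\|_w=1$ by the product formula. The elementary inequality $\min_j\|\theta_j\|_w\geq\prod_j\|\theta_j\|_w\big/\bigl(\max_j\|\theta_j\|_w\bigr)^{u-1}$, multiplied over $w\in M_E$, yields
\[
\prod_w\min_j\|\theta_j\|_w \,\geq\, \Bigl(\prod_w\max_j\|\theta_j\|_w\Bigr)^{1-u}.
\]
Since $u\leq\binom{r}{k}$ and the base $\binom{n}{k}^{1/2}\HL H_2(T)\geq 1$, substituting the bound from \eqref{10.a} on the right-hand side and using that the exponent is being made more negative gives \eqref{10.b}.

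The main obstacle is the combinatorial identification of the coordinates of each $\Lambda_\ell$ with $k\times k$ subdeterminants of the full $n\times n$ determinants of $n$-tuples of $L_i$'s; this is exactly what is needed to invoke Lemma \ref{le:9.5} with the right quantity $\HL$ on the right-hand side, and it relies crucially on the assumption that $X_1,\ldots,X_n\in\{L_1,\ldots,L_r\}$. Once this identification is in place, both bounds follow by routine norm manipulations.
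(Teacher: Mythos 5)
Your proposal is correct and follows essentially the same route as the paper's proof: write each $\theta_j$ as $(L_{i_1}\wedge\cdots\wedge L_{i_k})\cdot({\bf g}_1\wedge\cdots\wedge{\bf g}_k)$ via \eqref{6.7}, apply \eqref{6.Cauchy-Schwarz} place by place, bound the wedge of linear forms by $\HL$ through Lemma \ref{le:9.5} (using that $X_1\kdots X_n$ occur among the $L_i$, so the order-$k$ subdeterminants lie in the set $\{d_1\kdots d_m\}$), and deduce \eqref{10.b} from \eqref{10.a} by the product formula. The only cosmetic difference is that the paper absorbs the factor $\binom{n}{k}^{s(w)/2}$ locally at each infinite place rather than invoking $\sum_{w\mid\infty}s(w)=1$ globally.
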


\begin{proof}
For $w\in M_E$, put 
\[
G_w:=\|{\bf g}_1\wedge\cdots\wedge{\bf g}_k\|_{w,2},\ \ 
H_w:=\max (\|d_1\|_w\kdots \|d_m\|_w),
\]
where
$\{ d_1\kdots d_m\}$ is the set from Lemma \ref{le:9.5}. 
Thus, 
\begin{equation}
\label{10.x}
\prod_{w\in M_E} G_w=H_2(T),\ \ 
\prod_{w\in M_E}H_w=\HL .
\end{equation}

Let $\{ L_{i_1}\kdots L_{i_k}\}$ be a $k$-element subset of
$\{ L_1\kdots L_r\}$.
Then the coefficients of $L_{i_1}\wedge\cdots\wedge L_{i_k}$
(being subdeterminants of order $k$) belong to $\{ d_1\kdots d_m\}$.
 
Now \eqref{6.7}, \eqref{6.Cauchy-Schwarz} imply for $w\in M_E$,
\begin{eqnarray*}
\|\det \left(L_{i_l}({\bf g}_j\right)_{1\leq l,j\leq k}\|_w&=&
\| (L_{i_1}\wedge\cdots\wedge L_{i_k})\cdot 
({\bf g}_1\wedge\cdots\wedge {\bf g}_k)\|_w
\\
&\leq& \|L_{i_1}\wedge\cdots\wedge L_{i_k}\|_{w,2}\cdot 
\|{\bf g}_1\wedge\cdots\wedge {\bf g}_k\|_{w,2}
\\
&\leq& \binom{n}{k}^{s(w)/2}H_wG_w.
\end{eqnarray*}
By taking the maximum over all tuples $i_1\kdots i_k$ 
and then the product over $w\in M_E$, and using \eqref{10.x},
inequality \eqref{10.a} follows.

By the product formula,
\begin{eqnarray*}
\prod_{w\in M_E}\min (\| \theta_1\|_w\kdots \|\theta_u\|_w)&\geq& 
\prod_{w\in M_E}
\frac{\| \theta_1\cdots\theta_u\|_w}{\max (\| \theta_1\|_w\kdots \|\theta_u\|_w)^{u-1}}
\\
&=&\left(\prod_{w\in M_E}\max (\| \theta_1\|_w\kdots \|\theta_u\|_w)\right)^{1-u}
\end{eqnarray*}
and together with \eqref{10.a}, $u\leq \binom{r}{k}$ this implies \eqref{10.b}.
\end{proof}

We now deduce our lower bound for the height of the vector space $T_k(Q)$.

\begin{lemma}\label{le:10.4}
Let $n,R,\delta ,\mathcal{L} ,\cc$ satisfy \eqref{8.1}--\eqref{8.8}
and let 
$Q$ satisfy \eqref{10.2}, \eqref{10.3}.
Further, let $k$ be the index from Lemma \ref{le:10.3}. Then
\[
H_2(T_k(Q))\geq Q^{\delta /3R^n}.
\]
\end{lemma}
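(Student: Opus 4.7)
The plan is to exploit the gap in the successive infima provided by Lemma~\ref{le:10.3} together with the semistability hypothesis~\eqref{8.8}, by relating $H_2(T_k(Q))$ to $k\times k$-subdeterminants of the form $\det(L_{i_l}^{(w)}({\bf g}_j))_{l,j}$, where ${\bf g}_1,\ldots,{\bf g}_k$ is a suitably chosen basis of $T_k(Q)$.

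First I would choose a basis ${\bf g}_1,\ldots,{\bf g}_k$ of $T_k(Q)$, lying in some finite extension $E$ of $K$, with $H_{\mathcal L,\cc,Q}({\bf g}_l)\le 2\lambda_l(Q)$ for $l=1,\ldots,k$ (possible by the definition of $T_k(Q)$ and Lemma~\ref{le:9.0}). For each $w\in M_E$, I would then pick indices $i_1^{(w)}<\cdots <i_k^{(w)}$ realising $w_w(T_k(Q))=\sum_l c_{i_l^{(w)},w}$, i.e.\ minimising the weight subject to $L_{i_1}^{(w)}|_{T_k(Q)},\ldots,L_{i_k}^{(w)}|_{T_k(Q)}$ being linearly independent. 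Expanding the determinant along permutations and using $\|L_i^{(w)}({\bf g}_j)\|_w\le \ell_j^{(w)} Q^{c_{iw}}$, where $\ell_j^{(w)}$ is the local factor of $H_{\mathcal L,\cc,Q}({\bf g}_j)$ at $w$, yields
\[
\|\det(L_{i_l^{(w)}}^{(w)}({\bf g}_j))\|_w\le (k!)^{s(w)}\Big(\prod_{j=1}^k \ell_j^{(w)}\Big)\cdot Q^{w_w(T_k(Q))}.
\]
Taking the product over $w\in M_E$, using $\prod_w \ell_j^{(w)}=H_{\mathcal L,\cc,Q}({\bf g}_j)$, $\sum_w w_w(T_k(Q))=w(T_k(Q))$, the semistability bound $w(T_k(Q))\le 0$ from \eqref{8.8}, and $\sum_w s(w)=1$, gives
\[
\prod_{w\in M_E}\|\det\|_w \,\le\, k!\cdot 2^k\prod_{l=1}^k \lambda_l(Q).
\]
On the other hand, since $\mathcal L$ contains $X_1,\ldots,X_n$ by~\eqref{8.2}, each such determinant is one of the nonzero $\theta_i$'s of Lemma~\ref{le:10.3a} applied to $T=T_k(Q)$, and hence
\[
\prod_{w\in M_E}\|\det\|_w \,\ge\,\bigl(\tbinom{n}{k}^{1/2}\HL\cdot H_2(T_k(Q))\bigr)^{1-\binom{r}{k}}.
\]
Solving for $H_2(T_k(Q))$ (noting $\binom{r}{k}-1>0$ since $k\le n-1\le r-1$) produces a lower bound on $H_2(T_k(Q))^{\binom{r}{k}-1}$ in terms of $\HL$, $\prod_{l=1}^k\lambda_l(Q)$ and harmless constants.

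The remaining task is to show $\prod_{l=1}^k\lambda_l(Q)$ is genuinely small. From Lemma~\ref{le:10.3}, $\lambda_{k+1}(Q)\ge Q^{\delta/(n-1)}\lambda_k(Q)$, hence $\lambda_{k+1}(Q)\cdots\lambda_n(Q)\ge Q^{(n-k)\delta/(n-1)}\lambda_k(Q)^{n-k}$. Combined with Minkowski's upper bound from Proposition~\ref{pr:9.1},
\[
\prod_{l=1}^k\lambda_l(Q)\,\le\,\frac{2^{n(n-1)/2}\HL}{Q^{(n-k)\delta/(n-1)}\lambda_k(Q)^{n-k}},
\]
and then using $\lambda_k(Q)^k\ge\prod_{l=1}^k\lambda_l(Q)$ to eliminate $\lambda_k(Q)$ and rearrange, I obtain
\[
\prod_{l=1}^k\lambda_l(Q)\,\le\,\bigl(2^{n(n-1)/2}\HL\bigr)^{k/n}\,Q^{-k(n-k)\delta/(n(n-1))}\,\le\,\bigl(2^{n(n-1)/2}\HL\bigr)^{k/n}\,Q^{-\delta/n},
\]
since $k(n-k)\ge n-1$ for $1\le k\le n-1$. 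Plugging this into the bound of the previous paragraph and absorbing all powers of $\HL$, $\tbinom{n}{k}^{1/2}$, $k!$, $2^k$ into a factor $Q^{\delta/(6n)}$ via the very strong assumption $Q\ge C_2=(2\HL)^{m_2^{2m_2}}$ of~\eqref{8.definitions}, one arrives at $H_2(T_k(Q))^{\binom{r}{k}-1}\ge Q^{\delta/(2n)}$. Since $\binom{r}{k}-1\le R^{n-1}$ and $2n\le 3R$ (as $R\ge n$), taking the $(\binom{r}{k}-1)$-th root delivers the required bound $H_2(T_k(Q))\ge Q^{\delta/(3R^n)}$.

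The main obstacle is the upper bound for $\prod_{l=1}^k\lambda_l(Q)$: Lemma~\ref{le:10.3} only gives a gap in the ratio $\lambda_{k+1}(Q)/\lambda_k(Q)$ and does not directly control any single $\lambda_l(Q)$, so the self-bootstrapping argument using $\lambda_k(Q)^k\ge\prod_{l\le k}\lambda_l(Q)$ together with Minkowski is what makes the estimate work and is the crucial place where $\delta$ enters the exponent. The rest is routine local-to-global bookkeeping combined with the two lemmas \ref{le:10.3} and \ref{le:10.3a}.
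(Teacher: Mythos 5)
Your proposal is correct and follows essentially the same route as the paper's proof: the same determinants $\det(L_{i_l^{(w)}}^{(w)}({\bf g}_j))$ with indices realising the local weights, the upper bound via semistability together with Lemma \ref{le:10.3}, Minkowski's theorem and the self-bootstrapping step eliminating $\lambda_k(Q)$, and the lower bound via Lemma \ref{le:10.3a}. The only cosmetic difference is that the paper picks the basis with $\HLcQ({\bf g}_j)\le (1+\eps)\lambda_j$ for $\eps$ small enough that $(1+\eps)\lambda_k<\lambda_{k+1}$, which guarantees the vectors lie in $T_k(Q)$; your factor $2$ works but one should note (as you implicitly do via Lemma \ref{le:9.0}) that a basis of $T_k(Q)$ itself with these height bounds exists.
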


\begin{proof}
Put $T:=T_k(Q)$ and $\lambda_i:=\lambda_i(Q)$ for $i=1\kdots n$.
 
Let $v\in M_K$. 
Choose $\{ i_1(v)\kdots i_k(v)\}\subset\{ 1\kdots n\}$
such that the linear\\ forms
$L_{i_1(v)}^{(v)}\kdots L_{i_k(v)}^{(v)}$ are linearly 
independent on $T$ and
\[
w_v(T)=\sum_{l=1}^k c_{i_l(v),v}.
\]
Then by assumption \eqref{8.8},
\[
\sum_{v\in M_K}\sum_{l=1}^k c_{i_l(v),v}=w(T)\leq 0.
\]
Given any finite extension $E$ of $K$ and $w\in M_E$, define
$i_l(w):=i_l(v)$ for $l=1\kdots k$, 
where $v$ is the place of $K$ below $w$. 
Then by \eqref{2.3}, \eqref{2.5} we have
\begin{equation}\label{10.5}
\sum_{w\in M_E}\sum_{l=1}^k c_{i_l(w),w}\leq 0.
\end{equation}

Choose $\eps$ such that
\begin{equation}\label{10.5a}
0<\eps <1,\qquad (1+\varepsilon )\lambda_k<\lambda_{k+1}.
\end{equation}
Then there are linearly independent vectors ${\bf g}_1\kdots{\bf g}_k\in T$ such that
\begin{equation}\label{10.6}
\HLcQ({\bf g}_j)\leq (1+\varepsilon )\lambda_j\ \ \mbox{for } j=1\kdots k.
\end{equation}
Let $E$ be a finite extension of $K$ such that ${\bf g}_j\in E^n$ 
for $j=1\kdots k$.
Put
\[
H_{jw}:=\max_{1\leq i\leq n}\|L_i^{(w)}({\bf g}_j)\|_wQ^{-c_{iw}}\ 
\mbox{for $w\in M_E$, $j=1\kdots k$.}
\]
Thus,
\begin{equation}\label{10.7}
\| L_i^{(w)}({\bf g}_j)\|_w\leq H_{jw}Q^{c_{iw}}\ 
\mbox{for $w\in M_E$, $i=1\kdots n$, $j=1\kdots k$.}
\end{equation}

For $w\in M_E$, put
\[
\theta_w:=\det\big(L_{i_l(w)}^{(w)}({\bf g}_j)\big)_{1\leq l,j\leq k}.
\]
We estimate from above and below $\prod_{w\in M_E}\|\theta_w\|_w$.
We start with the upper bound.
Let $w\in M_E$. First, by \eqref{10.7}, the triangle inequality if $w$
is infinite and the ultrametric inequality if $w$ is finite,
\[
\|\theta_w\|_w\leq (k!)^{s(w)}H_{1w}\cdots H_{kw}Q^{\sum_{l=1}^k c_{i_l(w),w}}.
\]
By taking the product over $w\in M_E$ and inserting \eqref{10.6}, \eqref{10.5},
\eqref{10.5a},
\begin{eqnarray}\label{10.8}
\prod_{w\in M_E}\|\theta_w\|_w &\leq& 
k! \HLcQ({\bf g}_1)\cdots \HLcQ({\bf g}_k)Q^{\sum_{w\in M_E}\sum_{l=1}^k c_{i_l(w),w}}
\\
\nonumber
&\leq& k!(1+\varepsilon )^k\lambda_1\cdots\lambda_k\leq 2^k k! \lambda_1\cdots\lambda_k .
\end{eqnarray}
By Lemma \ref{le:10.3} we have
\begin{eqnarray*}
\lambda_1\cdots\lambda_k&\leq&(\lambda_1\cdots\lambda_k)^{k/n}
\big(Q^{-\delta/(n-1)}\lambda_{k+1}\big)^{k(n-k)/n}
\\
&\leq&
Q^{-k(n-k)\delta /n(n-1)}(\lambda_1\cdots\lambda_n)^{k/n}.
\end{eqnarray*}
Applying \eqref{10.minkowski} and using the upper bound in \eqref{9.3} for $\DL$ we obtain
\[
\lambda_1\cdots\lambda_k \leq 2^{k(n-1)/2}\DL^{k/n}Q^{-k(n-k)\delta /n(n-1)}
\leq 2^{k(n-1)/2}\HL^{k/n}Q^{-k(n-k)\delta /n(n-1)},
\]
and inserting the latter into \eqref{10.8} and using assumption \eqref{10.2}   
leads us to the upper bound
\[
\prod_{w\in M_E}\|\theta_w\|_w\leq Q^{-\delta /2n}.
\] 

From \eqref{10.b} we conclude at once
\[
\prod_{w\in M_E}\|\theta_w\|_w\geq\left(\binom{n}{k}^{1/2}\HL\cdot H_2(T)\right)^{1-\binom{R}{k}}
\]
and a combination with the upper bound just established and again our assumption \eqref{10.2}
gives $H_2(T)\geq Q^{\delta /3R^n}$, as required.
\end{proof}

\section{Inequalities in an exterior power}\label{11}

Letting $Q$ be a real with \eqref{10.2}, \eqref{10.3}, 
$k$ the index from Lemma \ref{le:10.3},
and $N:=\binom{n}{k}$, 
we construct $N-1$ linearly independent vectors 
$\widehat{\hh}_1(Q)\kdots\\
\widehat{\hh}_{N-1}(Q)\in\wedge^{n-k}\OQq^n\cong \OQq^N$
satisfying an appropriate system of inequalities.
The construction is similar to that of \cite{Evertse-Schlickewei2002};
the basic tool is Davenport's Lemma.

In the subsequent sections, 
Theorem \ref{th:8.1} is proved by applying  
the Roth machinery to our system of inequalities.
More precisely, we recall a non-vanishing result in Section \ref{12},
and construct a suitable auxiliary polynomial $P$ in Section \ref{13}.
Assuming Theorem \ref{th:8.1} is false, we show that the non-vanishing
result is applicable to $P$, and with the inequalities derived in the present
section and the properties of $P$ we derive a contradiction.

We start with recalling \cite[Lemma 6.3]{Evertse-Schlickewei2002}.

\begin{lemma}\label{le:11.0}
Let $F$ be any algebraic number field and $A_u$ ($u\in M_F$) positive
reals such that
\[
A_u=1\ \mbox{for all but finitely many } u\in M_F;\ \ \ 
\prod_{u\in M_F} A_u\, >1. 
\]
Then there exist a finite extension $E$ of $F$, and $\alpha\in E^*$,
such that
\[
\|\alpha\|_w\leq A_w\ \mbox{for $w\in M_E$,}
\]
where we have written $A_w:=A_u^{d(w|u)}$, with $u$ the place of $F$ below $w$.
\end{lemma}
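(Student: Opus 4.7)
The plan is to reduce the lemma to an application of an adelic Minkowski-type theorem in a sufficiently large finite extension $E$ of $F$. The key invariants are $P := \prod_{u \in M_F} A_u > 1$ and $S := \{u \in M_F : A_u \neq 1\}$, which is finite by hypothesis. Since $A_w := A_u^{d(w|u)}$ and $\sum_{w|u} d(w|u) = 1$, the product $\prod_{w \in M_E} A_w = P$ is preserved under base change, and $A_w = 1$ for $w$ not lying above $S$. So the desired $\alpha \in E^*$ must be an integer of $E$ at every place outside $S_E := \{w \in M_E : w \mid u \mbox{ for some } u \in S\}$, and must satisfy $\|\alpha\|_w \leq A_w$ at $w \in S_E$.

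The main step I would take is to invoke the adelic Minkowski theorem of Bombieri--Vaaler applied to $E$: for any positive reals $B_w$ ($w \in M_E$) with $B_w = 1$ for almost all $w$ and $\prod_w B_w$ larger than an explicit constant $C(E)$ depending only on $E$, there exists $\alpha \in E^*$ with $\|\alpha\|_w \leq B_w$ for all $w$. An equivalent lattice-point reformulation sits inside the logarithmic embedding: the feasibility region $\{x \in \Rr^{M_E} : \sum_w x_w = 0,\ x_w \leq 0 \mbox{ for } w \notin S_E,\ x_w \leq \log A_w \mbox{ for } w \in S_E\}$ is a non-empty polyhedron in the hyperplane $\sum x_w = 0$ (non-empty because $\log P > 0$), and the log-images of nonzero $S_E$-integers of $E$ form a discrete subset that, once $E$ is chosen large enough, must have a representative in this region.

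The hard part is to guarantee the existence of an $E$ with $C(E) \leq P$. My plan here is to pick $E/F$ so that many places of $E$ lie above each $u \in S$: this drives each individual $d(w|u)$ toward $0$ and each $A_w$ toward $1$, while keeping $\prod_w A_w = P$ unchanged; concurrently, $|S_E|$ grows, the feasibility polyhedron is inflated, and the relevant Minkowski constant $C(E)$ in the normalized framework can be brought below $P$. Once $C(E) \leq P$ is achieved, adelic Minkowski (equivalently, a classical lattice-point argument using Dirichlet's $S_E$-unit theorem together with the product formula) delivers the required $\alpha \in E^*$, completing the proof.
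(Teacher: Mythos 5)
Your reduction to an adelic Minkowski theorem over a large extension $E$ has a genuine gap at the decisive step. You correctly observe that $\prod_{w\in M_E}A_w=P$ is invariant under base change; but this is exactly what dooms the plan of ``bringing $C(E)$ below $P$.'' In every normalization, the adelic Minkowski constant of a number field $E\neq\Qq$ is controlled from below by its root discriminant, and by Minkowski's discriminant bound the root discriminant of \emph{every} number field other than $\Qq$ is at least $\sqrt{3}$ (and tends to at least $\pi e^2/4\approx 5.8$ as the degree grows). Hence $C(E)$ is bounded away from $1$ by an absolute constant, uniformly over all choices of $E$, while $P$ may be any real number $>1$. If $1<P$ is below that constant, no extension $E$ whatsoever satisfies $C(E)\leq P$, so the existence statement you need never becomes available. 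The auxiliary heuristics do not rescue this: splitting a place $u$ into many places $w$ with $A_w=A_u^{d(w|u)}$ close to $1$ does not inflate the feasibility simplex $\{\sum_w x_w=0,\ x_w\leq\log A_w\}$ --- its volume is of order $(\log P)^{N}/N!$ and in fact collapses as $N=\#S_E$ grows --- and the covolume of the $S_E$-unit lattice (the $S_E$-regulator) does not tend to $0$ either.

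The freedom to pass to a finite extension must be used differently, namely to extract roots rather than to improve the Minkowski constant. The standard argument (this is how \cite[Lemma 6.3]{Evertse-Schlickewei2002}, which the paper simply cites, is proved) first amplifies: since $P>1$, choose $m$ so large that $P^m$ exceeds the fixed Minkowski constant of the \emph{ground field} $F$; the adelic Minkowski theorem over $F$ then produces $\beta\in F^*$ with $\|\beta\|_u\leq A_u^m$ for all $u\in M_F$. Now set $E:=F(\alpha)$ with $\alpha^m=\beta$. For $w\in M_E$ above $u$ one has, by \eqref{2.2}, $\|\alpha\|_w=\|\beta\|_w^{1/m}=\|\beta\|_u^{d(w|u)/m}\leq A_u^{d(w|u)}=A_w$, which is the assertion. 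Your write-up contains no amplification step of this kind, and without it the approach fails for $P$ close to $1$.
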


We keep the notation and assumptions from sections \ref{8},\ref{10}.
Thus, $n\geq 2$, $K$ is an algebraic number field and $\mathcal{L}$, $\cc$, $R$,
$\delta$ satisfy \eqref{8.1}--\eqref{8.8}. 
We fix a real number $Q\geq 1$.
Temporarily, we write $\lambda_i$ for the $i$-th successive infimum $\lambda_i(Q)$
of $\HLcQ$ ($i=1\kdots n$).
For a subset $\mathcal{S}$ of $\OQq^n$, we denote by $\span{\mathcal{S}}$
the $\OQq$-vector space generated by $\mathcal{S}$.

Let $v_0$ be the place from \eqref{8.4}.
Given a finite extension $E$ of $K$,
we write $w\in M_E,\, w|v_0$ to indicate that we let $w$ run through all places 
of $E$ lying above $v_0$, and $w\in M_E,\, w\nmid v_0$ to indicate that we let
$w$ run through all places of $E$ not lying above $v_0$.

Choose $\eps >0$ such that
\begin{equation}\label{11.2}
\left\{\begin{array}{l}
(1+\eps)^2\lambda_i<\lambda_{i+1}\ 
\mbox{for each $i$ with $\lambda_i<\lambda_{i+1}$,}
\\
(1+\eps )^{n+1}\cdot n\cdot 2^{n^2}<3^{n^2}.
\end{array}\right.
\end{equation}
Then choose linearly independent vectors ${\bf g}_1\kdots{\bf g}_n$ of $\OQq^n$ 
such that
\begin{equation}\label{11.3}
\HLcQ({\bf g}_i)\leq (1+\half\eps )\lambda_i\ \mbox{for $i=1\kdots n$.}
\end{equation}

\begin{lemma}\label{le:11.1}
There exist a finite extension $E$ of $K$, and scalar multiples
${\bf g}_1'\kdots{\bf g}_n'$ of ${\bf g}_1\kdots{\bf g}_n$, respectively, having their
coordinates in $E$, such that
\begin{eqnarray}
\label{11.4}
&&\hspace*{0.25cm} \|L_i^{(w)}({\bf g}_j')\|_w\leq n^{-s(w)}Q^{c_{iw}}\
(i,j=1\kdots n,\, w\in M_E,\,w\nmid v_0),
\\
\label{11.5}
&&\hspace*{0.25cm} \|L_i^{(w)}({\bf g}_j')\|_w\leq \big( (1+\eps )n\lambda_j\big)^{d(w|v_0)}\
(i,j=1\kdots n,\, w\in M_E,\,w\mid v_0).
\end{eqnarray}
\end{lemma}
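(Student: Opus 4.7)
The plan is to apply Lemma~\ref{le:11.0} separately to each index $j\in\{1\kdots n\}$, producing for each $j$ a nonzero scalar $\alpha_j$ in some finite extension of $K$, and then setting ${\bf g}_j':=\alpha_j{\bf g}_j$.

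First I fix a finite extension $F$ of $K$ containing the coordinates of ${\bf g}_1\kdots{\bf g}_n$, and for $w\in M_F$ and $j=1\kdots n$ I write
\[
H_{jw}:=\max_{1\leq i\leq n}\|L_i^{(w)}({\bf g}_j)\|_w\,Q^{-c_{iw}},
\]
so that $\prod_{w\in M_F}H_{jw}=\HLcQ({\bf g}_j)\leq (1+\half\eps)\lambda_j$ by \eqref{11.3}. For each $j$ I then define the target bounds
\[
A_{jw}:=n^{-s(w)}/H_{jw}\ \text{if } w\nmid v_0,\qquad A_{jw}:=\big((1+\eps)n\lambda_j\big)^{d(w|v_0)}/H_{jw}\ \text{if } w\mid v_0.
\]
Since $v_0$ is non-archimedean we have $s(w)=0$ for every $w\mid v_0$, hence $\sum_{w\nmid v_0}s(w)=1$; combining this with $\sum_{w|v_0}d(w|v_0)=1$ (formula \eqref{2.3}) gives
\[
\prod_{w\in M_F}A_{jw}=\frac{n^{-1}\cdot(1+\eps)n\lambda_j}{\prod_{w}H_{jw}}\geq\frac{1+\eps}{1+\half\eps}>1.
\]

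Lemma~\ref{le:11.0} then yields, for each $j$, a finite extension $E_j$ of $F$ and an element $\alpha_j\in E_j^*$ satisfying $\|\alpha_j\|_w\leq A_{jw}$ for all $w\in M_{E_j}$, where $A_{jw}$ is extended to $M_{E_j}$ by $A_{jw}:=A_{jw'}^{d(w|w')}$ with $w'$ the place of $F$ below $w$. Letting $E$ be a finite extension of $K$ containing all $E_j$ and setting ${\bf g}_j':=\alpha_j{\bf g}_j$, the desired inequalities follow by a direct computation: for any $w\in M_E$ and any $i$,
\[
\|L_i^{(w)}({\bf g}_j')\|_w=\|\alpha_j\|_w\,\|L_i^{(w)}({\bf g}_j)\|_w\leq \|\alpha_j\|_w\cdot H_{jw}\,Q^{c_{iw}}\leq A_{jw}\cdot H_{jw}\,Q^{c_{iw}},
\]
which simplifies to $n^{-s(w)}Q^{c_{iw}}$ when $w\nmid v_0$ (giving \eqref{11.4}), and to $\big((1+\eps)n\lambda_j\big)^{d(w|v_0)}$ when $w\mid v_0$ (using that $c_{iw}=0$ there, giving \eqref{11.5}).

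The argument is essentially routine; the only points requiring care are the bookkeeping of the normalisation factors $d(w|w')$ when extending the bounds from $F$ to $E_j$ (handled automatically by \eqref{2.2}, \eqref{2.5}) and the verification that the slack factor $(1+\eps)/(1+\half\eps)>1$ makes the degree of the adelic divisor strictly positive, so that Lemma~\ref{le:11.0} applies. I do not anticipate any genuine obstacle.
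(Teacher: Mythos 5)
Your proposal is correct and follows essentially the same route as the paper: apply Lemma~\ref{le:11.0} once for each $j$ with local targets $A_{jw}$ whose product exceeds $1$ by the slack factor $(1+\eps)/(1+\half\eps)$ coming from \eqref{11.3}, then rescale ${\bf g}_j$ by the resulting $\alpha_j$. The only cosmetic difference is that you fold the bound $\HLcQ({\bf g}_j)\leq(1+\half\eps)\lambda_j$ into the definition of $A_{jw}$ at the places above $v_0$, whereas the paper keeps $\HLcQ({\bf g}_j)$ explicit there; both verifications go through identically.
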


\begin{proof}
Choose a finite extension $F$ of $K$ such that
${\bf g}_1\kdots{\bf g}_n\in F^n$.
For $j\in\{ 1\kdots n\}$, put
\[
A_{ju}:=
\left\{\begin{array}{l}
\displaystyle{n^{-s(u)}\cdot
\left( \max_{1\leq i\leq n} \| L_i^{(u)}({\bf g}_j)\|_uQ^{-c_{iu}}\right)^{-1}\
(u\in M_F,\, u\nmid v_0),}
\\[0.2cm]
\displaystyle{\left( \frac{n(1+\eps)}{1+\half\eps}
\cdot \HLcQ({\bf g}_j)\right)^{d(u|v_0)}\cdot
\left( \max_{1\leq i\leq n} \| L_i^{(u)}({\bf g}_j)\|_u\right)^{-1}}
\\
\hspace*{8cm}(u\in M_F,\, u\mid v_0).
\end{array}\right.
\]
Notice that for $j=1\kdots n$, at most finitely many among the 
numbers $A_{ju}$ ($u\in M_F$) are $\not=1$, and $\prod_{u\in M_F} A_{ju}>1$.
So we can apply Lemma \ref{le:11.0} and obtain that there are
a finite extension $E$ of $F$, and $\alpha_1\kdots\alpha_n\in E^*$,
such that
\[
\|\alpha_j\|_w\leq A_{jw}\ \mbox{for $w\in M_E$, $j=1\kdots n$,}
\]
where we have written $A_{jw}:=A_{ju}^{d(w|u)}$, with $u$ the place of $F$
below $w$. As is easily seen, we have for $j=1\kdots n$, that
\[
A_{jw}:=
\left\{\begin{array}{l}
\displaystyle{n^{-s(w)}\cdot
\left( \max_{1\leq i\leq n} \| L_i^{(w)}({\bf g}_j)\|_wQ^{-c_{iw}}\right)^{-1}\
(w\in M_E,\, w\nmid v_0),}
\\[0.2cm]
\displaystyle{\left( \frac{n(1+\eps)}{1+\half\eps}
\cdot \HLcQ({\bf g}_j)\right)^{d(w|v_0)}\cdot
\left( \max_{1\leq i\leq n} \| L_i^{(w)}({\bf g}_j)\|_w\right)^{-1}}
\\
\hspace*{8cm}(w\in M_F,\, w\mid v_0).
\end{array}\right.
\]
Together with \eqref{11.3} this implies that 
${\bf g}_j':=\alpha_j{\bf g}_j$ ($j=1\kdots n$) satisfy \eqref{11.4}, \eqref{11.5}.
\end{proof}

\begin{lemma}[Davenport's Lemma]\label{le:11.2}
There exist a finite extension $E$ of $K$, a permutation $\pi$ of
$\{ 1\kdots n\}$, and vectors $\hh_j=\hh_j(Q)\in E^n$ ($j=1\kdots n$), 
with the following properties:
\begin{eqnarray}
\label{11.6}
&&\hspace*{0.25cm}\span\{\hh_1\kdots\hh_j\}=\span\{{\bf g}_1\kdots{\bf g}_j\}\ 
\mbox{for $j=1\kdots n$,}
\\
\label{11.7}
&&\hspace*{0.25cm}
\| L_i^{(w)}(\hh_j)\|_w\leq n^{-s(w)}Q^{c_{iw}}\ 
(i,j=1\kdots n,\, w\in M_E,\,
w\nmid v_0),
\\
\label{11.8}
&&\hspace*{0.25cm}
\| L_{\pi (i)}^{(w)}(\hh_j)\|_w
\leq \left(3^{n^2}\min(\lambda_i,\lambda_j)\right)^{d(w|v_0)}
\\
\nonumber
&&\hspace*{6cm} 
(i,j=1\kdots n,\, w\in M_E,\, w\mid v_0).
\end{eqnarray}
\end{lemma}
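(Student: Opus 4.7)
The plan is to upgrade the vectors ${\bf g}_1',\ldots,{\bf g}_n'$ of Lemma \ref{le:11.1} to $\hh_1,\ldots,\hh_n$ by means of a triangular change of basis
\[
\hh_j \;=\; \gamma_j\Big({\bf g}_j' + \sum_{k<j}\beta_{jk}{\bf g}_k'\Big)\qquad (j=1,\ldots,n),
\]
with $\gamma_j\in E^*$ and $\beta_{jk}\in E$ in a suitably enlarged finite extension $E$ of $K$. The triangular shape together with $\gamma_j\neq 0$ automatically yields the span property \eqref{11.6}. Because $v_0$ is non-archimedean with $L_i^{(v_0)}=X_i$ by assumption \eqref{8.2}, the bound \eqref{11.8} at $w\mid v_0$ is a bound on the individual coordinates $\|(\hh_j)_{\pi(i)}\|_w$, so the task at $v_0$ reduces to a Gauss-type elimination on the coordinate matrix $M=\big(({\bf g}_j')_i\big)_{i,j}$.

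First I would select the permutation $\pi$. After enlarging $E$ to be Galois over $K$, fix a place $w_0\mid v_0$ of $E$ and choose $\pi$ maximizing $\prod_j\|M_{\sigma(j),j}\|_{w_0}$ among all permutations $\sigma$; by Galois invariance of the magnitudes across the places above $v_0$, the maximality is preserved at every $w\mid v_0$ up to a bounded factor. From this extraction one obtains the classical Davenport pivot inequality
\[
\|M_{\pi(i),j}\|_w\,\|M_{\pi(j),i}\|_w\;\leq\;\|M_{\pi(i),i}\|_w\,\|M_{\pi(j),j}\|_w\qquad(w\mid v_0,\ 1\leq i,j\leq n).
\]
Next I would execute the Gaussian elimination. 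Processing the columns $j=1,2,\ldots,n$ in order, define $\beta_{jk}$ as explicit ratios of minors of $M$ so that, for each $i<j$, the $\pi(i)$-th coordinate of ${\bf g}_j'+\sum_{k<j}\beta_{jk}{\bf g}_k'$ is driven from order $\lambda_j^{d(w|v_0)}$ down to order $\lambda_i^{d(w|v_0)}$. The ultrametric inequality at every $w\mid v_0$ ensures that the cascaded eliminations incur only bounded multiplicative losses, which combine with the pivot inequality to yield the symmetric $\min(\lambda_i,\lambda_j)$ bound; the factor $(1+\varepsilon)^{n+1}n\cdot 2^{n^2}<3^{n^2}$ from \eqref{11.2} is precisely tailored to absorb these constants.

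The main obstacle, and the third step, is to restore \eqref{11.7} at the places $w\nmid v_0$. A priori the $\beta_{jk}$, being ratios of minors of $M$, have uncontrolled $w$-adic sizes there, so ${\bf g}_j'+\sum_{k<j}\beta_{jk}{\bf g}_k'$ can violate \eqref{11.7}. I would invoke Lemma \ref{le:11.0} a second time (possibly enlarging $E$ once more) to produce $\gamma_j\in E^*$ with $\|\gamma_j\|_w$ small at the finitely many $w\nmid v_0$ where the $\beta_{jk}$ blow up, while $\|\gamma_j\|_w$ stays close to $1$ at every $w\mid v_0$ so as not to spoil \eqref{11.8}. This is feasible because the total contribution of the $\beta_{jk}$ over all places is bounded by the height of the triangular basis-change matrix, providing a finite product-formula budget for $\gamma_j$; the remaining slack in the constant $3^{n^2}$ from \eqref{11.2} comfortably absorbs the compensating rescaling. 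The technical heart of the proof is precisely this reconciliation of the Gaussian elimination at $v_0$ with the bounds at all other places, achieved through the double application of Lemma \ref{le:11.0}.
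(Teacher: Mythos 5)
The paper's own ``proof'' is essentially a citation: it applies Lemma \ref{le:11.1} to replace ${\bf g}_j$ by the rescaled ${\bf g}_j'$ and then copies the proof of Lemma 9.2 of \cite{Evertse-Schlickewei2002} verbatim, with the constant $2^{n^2}n(1+\eps)^{n+1}<3^{n^2}$ coming from \eqref{11.2}. You are instead reconstructing that argument from scratch, which is fine in principle, but two steps of your reconstruction do not hold up. First, the selection of $\pi$: you fix one place $w_0\mid v_0$, maximize $\prod_j\|M_{\sigma(j),j}\|_{w_0}$, and assert that ``by Galois invariance'' the maximality persists at every $w\mid v_0$ up to a bounded factor. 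This is false. The entries of $M$ are fixed elements of $E$, and for two places $w\neq w'$ above $v_0$ the norms $\|x\|_w$ and $\|x\|_{w'}$ are absolute values of \emph{different} conjugates of $x$; there is no relation between them, so a permutation that is maximal at $w_0$ can fail the pivot inequality arbitrarily badly at another $w\mid v_0$. Since \eqref{11.8} is a per-place bound (with exponent $d(w|v_0)$ at each $w\mid v_0$), and a single permutation and single vectors $\hh_j$ must serve all these places simultaneously, this is a genuine gap, not a bookkeeping issue.

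Second, the repair at the places $w\nmid v_0$. Your $\beta_{jk}$ are ratios of minors of $M$, whose sizes at the places $w\nmid v_0$ (and whose global heights) depend on the vectors ${\bf g}_j'$ and hence grow without bound as $Q\to\infty$. To restore \eqref{11.7} you need $\|\gamma_j\|_w\max(1,\max_k\|\beta_{jk}\|_w)\leq 1$ at every $w\nmid v_0$, and the product formula then forces $\prod_{w\mid v_0}\|\gamma_j\|_w\geq\prod_{w\nmid v_0}\max(1,\max_k\|\beta_{jk}\|_w)$, a quantity that is unbounded in $Q$. This excess reappears in \eqref{11.8} and cannot be absorbed into the fixed constant $3^{n^2}$; the slack provided by \eqref{11.2} is a constant depending only on $n$, not on $Q$. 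The classical argument (Davenport/Schmidt, and its adaptation in \cite{Evertse-Schlickewei2002}) avoids this entirely: rather than taking exact minor ratios and rescaling afterwards, one chooses each elimination coefficient to be an element of $E$ that is \emph{integral} ($\|\beta\|_w\leq 1$) at every $w\nmid v_0$ and only approximates the ideal Gaussian-elimination coefficient at the places above $v_0$ --- the number-field analogue of rounding to the nearest integer --- so that \eqref{11.7} is preserved automatically and only a bounded multiplicative loss, cascaded through the triangular elimination, occurs at $v_0$. Without that device (or an equivalent one), your third step does not close.
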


\begin{proof}
The proof is the same as that of
\cite[Lemma 9.2]{Evertse-Schlickewei2002}, except for some small modifications.

In fact, starting with ${\bf g}_1\kdots{\bf g}_n$, we construct 
scalar multiples ${\bf g}_1'\kdots{\bf g}_n'$
as in Lemma \ref{le:11.1}. Then 
\cite[(9.17),(9.18)]{Evertse-Schlickewei2002} hold,
but with the vectors ${\bf g}_1\kdots {\bf g}_n$
being replaced by ${\bf g}_1'\kdots{\bf g}_n'$, 
and the numbers $Q^{c_{iw}}$, $(1+\eps )\lambda_j$ by $n^{-s(w)}Q^{c_{iw}}$
and $(1+\eps )n\lambda_j$, respectively, for $i,j=1\kdots n$.
We then copy the proof
of \cite[Lemma 9.2]{Evertse-Schlickewei2002}.
Here we have to use \eqref{11.2} instead of \cite[(9.15)]{Evertse-Schlickewei2002}.
This yields vectors $\hh_1\kdots\hh_n$ satisfying \eqref{11.6}, \eqref{11.7}
and \eqref{11.8} with $2^{n^2}n(1+\eps )^{n+1}$ instead of $3^{n^2}$.   
Together with our assumption \eqref{11.2} this implies
our Lemma \ref{le:11.2}.

In the proof of \cite[Lemma 9.2]{Evertse-Schlickewei2002}, the tuples 
$\LL =(L_i^{(v)}:\, v\in M_K, i=1\kdots n)$ under consideration satisfy, in addition
to \eqref{8.4}, \eqref{8.2},  
the following conditions: $\|\det (L_1^{(v)}\kdots L_n^{(v)})\|_v=1$ for $v\in M_K$,
and $L_1^{(v)}=X_1\kdots L_n^{(v)}=X_n$ for all but finitely many $v\in M_K$.
But these conditions are not used anywhere.
\end{proof}

Let $Q$ be a real with \eqref{10.2},\eqref{10.3} and let $k\in\{ 1\kdots n-1\}$ 
be the index from Lemma \ref{le:10.3}. That is, $Q$ satisfies 
\begin{eqnarray}
\label{11.13a}
&&Q\geq C_2,\ \ \lambda_1(Q)\leq Q^{-\delta},
\\
\label{11.13b}
&&\lambda_k(Q)\leq Q^{-\delta /(n-1)}\lambda_{k+1}(Q).
\end{eqnarray}

Put $N:=\binom{n}{k}$.
Let $C(n,n-k)=(I_1\kdots I_N)$ be the sequence of $(n-k)$-elements subsets
of $\{ 1\kdots n\}$,
arranged in lexicographical order.
Thus, $I_1=\{ 1\kdots n-k\}$, $I_2=\{ 1\kdots n-k-1,n-k+1\}$$\kdots$$I_{N-1}=\{ k,k+1\kdots n\}$,
$I_N=\{ k+1\kdots n\}$.
 
Let $\hh_j=\hh_j(Q)$ ($j=1\kdots n$) be the vectors from Lemma \ref{le:11.2}.
For $v\in M_K$, $j=1\kdots N$, define
\begin{eqnarray}\label{11.9}
&&\widehat{L}_j^{(v)}:=L_{i_1}^{(v)}\wedge\cdots\wedge L_{i_{n-k}}^{(v)},\ \ 
\widehat{c}_{jv}:=c_{i_1,v}+\cdots +c_{i_{n-k},v},
\\
\label{11.10}
&&\widehat{\hh}_j=\widehat{\hh}_j(Q):=\hh_{i_1}(Q)\wedge\cdots\wedge\hh_{i_{n-k}}(Q),
\\
\label{11.11}
&&\nu_j=\nu_j(Q):=\lambda_{i_1}(Q)\cdots\lambda_{i_{n-k}}(Q)
\end{eqnarray}
where $I_j=\{ i_1<\cdots <i_{n-k}\}$. The permutation $\pi$ from Lemma \ref{le:11.2}
induces a permutation $\widehat{\pi}$ of $\{ 1\kdots N\}$, such that if
$I_j=\{ i_1\kdots i_{n-k}\}$, then $I_{\widehat{\pi}(j)}=\{\pi (i_1)\kdots \pi (i_{n-k})\}$.
In the usual manner, we write
\begin{equation}\label{11.11a} 
\widehat{L}_j^{(w)}=\widehat{L}_j^{(v)},\ \ \widehat{c}_{jw}=d(w|v)\widehat{c}_{jv}
\end{equation}
for any place $w$ of any finite extension of $K$,
where $v$ is the place of $K$ below $w$.

Let $E$ be the finite extension of $K$ from Lemma \ref{le:11.2}.
By \eqref{6.7}, \eqref{11.7}, \eqref{11.8} we have for $w\in M_E$,
$i,j=1\kdots N$,
\begin{eqnarray}
\label{11.12}
&&\|\widehat{L}_i^{(w)}(\widehat{\hh}_j)\|_w= 
\|\det \big(L_{i_p}^{(w)}(\hh_{j_q})\big)_{1\leq p,q\leq n-k}\|_w 
\\
\nonumber
&&\qquad\leq 
\left\{\begin{array}{l}
((n-k)!)^{s(w)}n^{-(n-k)s(w)}Q^{\widehat{c}_{iw}}\leq Q^{\widehat{c}_{iw}}\ 
\mbox{if $w\nmid v_0$,}
\\[0.15cm]
3^{n^3}\min\left(\nu_{\widehat{\pi}^{-1}(i)},\,\nu_{\widehat{\pi}^{-1}(j)}\right) 
\ \mbox{if $w\mid v_0$,}
\end{array}\right.
\end{eqnarray}
where $I_i=\{ i_1<\cdots <i_{n-k}\}$, $I_j=\{ j_1<\cdots <j_{n-k}\}$.

Our concern is about the points $\widehat{\hh}_1\kdots\widehat{\hh}_{N-1}$.
Define the quantities $\widehat{c}_{i,v_0}(Q)$ 
($i=1\kdots N$) (so depending on $Q$ (!)) by
\begin{equation}\label{11.12x}
Q^{\widehat{c}_{i,v_0}(Q)}:=
\left\{\begin{array}{l}
3^{n^3}\nu_{\widehat{\pi}^{-1}(i)}(Q)\ \mbox{if } \widehat{\pi}^{-1}(i)\not= N,
\\[0.15cm]
3^{n^3}\nu_{N-1}(Q)\ \mbox{if } \widehat{\pi}^{-1}(i)=N.
\end{array}\right.
\end{equation}
Next, define 
\begin{equation}\label{11.12a}
\widehat{c}_{iw}(Q):=d(w|v_0)\widehat{c}_{i,v_0}(Q)
\end{equation}
if $w$ is a place of some finite extension of $K$ lying above $v_0$.

Now \eqref{11.12} implies for $i=1\kdots N$, $j=1\kdots N-1$,
\begin{equation}\label{11.13}
\left\{
\begin{array}{l}
\| \widehat{L}_i^{(w)}(\widehat{\hh}_j)\|_w\leq Q^{\widehat{c}_{iw}}\ (w\in M_E,\, w\nmid v_0),
\\[0.15cm]
\|\widehat{L}_i^{(w)}(\widehat{\hh}_j)\|_w\leq Q^{\widehat{c}_{iw}(Q)}\ (w\in M_E,\, w\mid v_0).
\end{array}\right. 
\end{equation}
We may take the same finite extension $E$ of $K$ as in \eqref{11.12}
but in fact, in view of \eqref{11.11a}, \eqref{11.12a}, we may take for $E$ any
finite extension of $K$ that contains the coordinates of 
$\widehat{\hh}_1\kdots\widehat{\hh}_{N-1}$.
It is a feature of our new approach,
as opposed to \cite{Evertse-Schlickewei2002},
that it allows to handle exponents $\widehat{c}_{iw}(Q)$
which vary with $Q$.

We have collected some properties of the exponents $\widehat{c}_{iv}$, $\widehat{c}_{i,v_0}(Q)$.

\begin{lemma}\label{le:11.3} 
Let $Q$ be a real with \eqref{11.13a}, \eqref{11.13b}.
Put $N:=\binom{n}{k}$.
Then 
\begin{eqnarray}
\label{11.14}
&&\sum_{i=1}^N\widehat{c}_{iv}=0\ \ \mbox{for } v\in M_K\setminus\{ v_0\}, 
\\
\label{11.14a}
&&\max_{1\leq i\leq N}|\widehat{c}_{iv}|\leq (n-1)\max_{1\leq i\leq n} c_{iv}\ \
\mbox{for } v\in M_K\setminus\{ v_0\},
\\
\label{11.15}
&&\sum_{v\in M_K\setminus\{ v_0\}}\max_{1\leq i\leq N}|\widehat{c}_{iv}|\leq n-1,
\\
\label{11.16}
&&\sum_{i=1}^N \widehat{c}_{i,v_0}(Q)\leq -\delta /n,
\\
\label{11.17}
&&\max_{1\leq i\leq N}|\widehat{c}_{i,v_0}(Q)|\leq n.
\end{eqnarray}
\end{lemma}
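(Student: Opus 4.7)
The plan is to verify the five estimates in turn by direct computation, using combinatorial identities about the sequence $C(n,n-k)$, the gap estimate from Lemma \ref{le:10.3}, Minkowski's bound \eqref{10.minkowski}, and the fact that $Q\geq C_2$ with $C_2=(2\HL)^{m_2^{2m_2}}$ is so large that logarithmic error terms $O(1/\log Q)$ become negligible compared to $\delta$.

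For \eqref{11.14} and \eqref{11.14a}, observe that $\widehat{c}_{jv}=\sum_{i\in I_j} c_{iv}$ where $I_j$ runs through all $(n-k)$-subsets of $\{1,\dots,n\}$. Each index $i\in\{1,\dots,n\}$ appears in exactly $\binom{n-1}{n-k-1}$ sets $I_j$, so $\sum_{j=1}^{N}\widehat{c}_{jv}=\binom{n-1}{n-k-1}\sum_{i=1}^n c_{iv}=0$ by \eqref{8.6}, giving \eqref{11.14}. For \eqref{11.14a}, set $M_v:=\max_i c_{iv}\geq 0$; then $\widehat{c}_{jv}\leq (n-k)M_v\leq(n-1)M_v$ trivially, and since $\widehat{c}_{jv}=-\sum_{i\notin I_j}c_{iv}$ by \eqref{11.14}-style reasoning applied to the single line $\sum_i c_{iv}=0$, we also get $\widehat{c}_{jv}\geq -kM_v\geq -(n-1)M_v$. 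Then \eqref{11.15} follows immediately by summing \eqref{11.14a} over $v\in M_K\setminus\{v_0\}$, invoking \eqref{8.7} and the fact that $c_{i,v_0}=0$ by \eqref{8.2} (so including or excluding $v_0$ in the sum does not matter).

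For \eqref{11.16}, exponentiating the sum gives $Q^{\sum_i \widehat{c}_{i,v_0}(Q)}=3^{Nn^3}\cdot\bigl(\prod_{j=1}^{N-1}\nu_j\bigr)\cdot\nu_{N-1}$, since $\widehat{\pi}^{-1}$ is a bijection and the single $i$ with $\widehat{\pi}^{-1}(i)=N$ contributes an extra $\nu_{N-1}$ in place of $\nu_N$. Writing $a:=\binom{n-1}{n-k-1}$, the combinatorial identity $\prod_{j=1}^{N}\nu_j=(\lambda_1\cdots\lambda_n)^{a}$ together with $\nu_N=\lambda_{k+1}\cdots\lambda_n$ and $\nu_{N-1}=\lambda_k\lambda_{k+2}\cdots\lambda_n$ yields
\[
\Bigl(\prod_{j=1}^{N-1}\nu_j\Bigr)\cdot\nu_{N-1}
=(\lambda_1\cdots\lambda_n)^{a}\cdot\frac{\lambda_k}{\lambda_{k+1}}.
\]
By \eqref{10.minkowski} and \eqref{9.3} we have $\lambda_1\cdots\lambda_n\leq 2^{n(n-1)/2}\HL$, and by Lemma \ref{le:10.3} (applied via \eqref{11.13b}) the ratio $\lambda_k/\lambda_{k+1}$ is at most $Q^{-\delta/(n-1)}$. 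Taking logarithms and dividing by $\log Q$ then gives
\[
\sum_{i=1}^N\widehat{c}_{i,v_0}(Q)\leq -\frac{\delta}{n-1}+\frac{Nn^3\log 3+a\log(2^{n(n-1)/2}\HL)}{\log Q}.
\]
The hypothesis $Q\geq C_2=(2\HL)^{m_2^{2m_2}}$ forces the error term above to be bounded by $\delta/(n(n-1))$, so the right-hand side is at most $-\delta/n$. This is the step that genuinely uses the enormous lower bound built into $C_2$; verifying that $m_2^{2m_2}$ indeed dominates the various combinatorial factors is the main bookkeeping obstacle.

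Finally, \eqref{11.17} follows directly from Lemma \ref{le:10.2}: each $\nu_j(Q)$ is a product of at most $n-k$ (or in the exceptional case $N-1$, at most $n-k$ as well) of the $\lambda_i(Q)$, so it lies in the interval $[Q^{-n+1/2},\,Q^{n-1/2}]$ by Lemma \ref{le:10.2} with $p\leq n-k\leq n-1$. Hence $|\log\nu_j(Q)/\log Q|\leq n-1/2$, and the additional contribution $n^3\log 3/\log Q$ coming from the factor $3^{n^3}$ is bounded by $1/2$ under $Q\geq C_2$, which gives $|\widehat{c}_{i,v_0}(Q)|\leq n$. Throughout, the recurring theme is that $C_2$ was calibrated precisely so that error terms of the form $O((\log\HL+1)/\log Q)$ are absorbed; once this is accepted, each estimate reduces to elementary counting.
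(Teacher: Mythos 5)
Your proposal is correct and follows essentially the same route as the paper: (11.14)--(11.15) by the combinatorial structure of the $\widehat{c}_{jv}$ together with \eqref{8.6}, \eqref{8.7}; (11.16) via the identity $\prod_j\nu_j=(\lambda_1\cdots\lambda_n)^{\binom{n-1}{n-k-1}}$, the Minkowski bound \eqref{10.minkowski}, the gap \eqref{11.13b}, and absorption of the constants using $Q\geq C_2$; and (11.17) from Lemma \ref{le:10.2}. The only cosmetic difference is that you write the product in (11.16) as $\bigl(\prod_{j=1}^{N-1}\nu_j\bigr)\nu_{N-1}$ where the paper writes $\nu_1\cdots\nu_N(\nu_{N-1}/\nu_N)$, which is the same quantity.
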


\begin{proof}
\eqref{11.14}, \eqref{11.14a} and \eqref{11.15} are easy consequences of 
\eqref{11.9}, \eqref{8.5}--\eqref{8.7}
and the choice of $v_0$: 
\eqref{11.14} is immediate, for \eqref{11.14a} observe that
\[
|\widehat{c}_{jv}|=
\max\left(\sum_{i\in I_j} c_{iv},\sum_{i\not\in I_j} c_{iv}\right)
\leq 
(n-1)\max_{1\leq i\leq n} c_{iv},
\]
and for \eqref{11.15} take the sum over $v$ and apply \eqref{8.7}.
We prove \eqref{11.16}. Write again $\lambda_i,\nu_j$ for $\lambda_i(Q)$, $\nu_j(Q)$
and put $N':=\binom{n-1}{n-k-1}$.
Notice that by \eqref{11.11},
$\nu_{N-1}=\lambda_k\lambda_{k+2}\cdots\lambda_N$,
$\nu_N=\lambda_{k+1}\cdots\lambda_N$. 
Together with \eqref{11.12x}, \eqref{10.minkowski}, 
Lemma \ref{le:10.3}, \eqref{11.13b}, \eqref{11.13a} 
this implies 
\begin{eqnarray*}
Q^{\sum_{i=1}^N \widehat{c}_{i,v_0}(Q)}&=&3^{n^3N}\nu_1\cdots \nu_N(\nu_{N-1}/\nu_N)
\\
\nonumber
&=&3^{n^3N}(\lambda_1\cdots\lambda_n)^{N'}(\lambda_k/\lambda_{k+1})
\\
&\leq&3^{n^3N}2^{n(n-1)N'/2}Q^{-\delta /(n-1)}\leq Q^{-\delta /n}.
\end{eqnarray*}

We finish with proving \eqref{11.17}. let $i\in \{ 1\kdots N\}$. 
By \eqref{11.11}, \eqref{11.12x} we have
\[
Q^{\widehat{c}_{i,v_0}(Q)}=3^{n^3}\lambda_{i_1}\cdots\lambda_{i_{n-k}}
\]
for certain disctinct indices $i_1\kdots i_{n-k}\in\{ 1\kdots n\}$.
Together with Lemma \ref{le:10.2}, \eqref{11.13a},
this implies 
\[
Q^{|\widehat{c}_{i,v_0}(Q)|}\leq 3^{n^3}Q^{n-\half}\leq Q^n.
\]
\end{proof}

Next, we prove some properties of the linear forms $\widehat{L}_i^{(v)}$.
For $v\in M_K$, denote by $\widehat{A}_v$ the matrix of which the $j$-th row consists 
of the coefficients of $\widehat{L}_j^{(v)}$, for $j=1\kdots N$.
The inhomogeneous height of a set 
$\mathcal{S}=\{\alpha_1\kdots\alpha_s\}\subset\OQq$ is given by 
$H^*(\mathcal{S}):=\prod_{w\in M_E}\max (1,\|\alpha_1\|_w\kdots\|\alpha_s\|_w)$
where $E$ is any number field containing $\mathcal{S}$.
If $A_1\kdots A_m$ are matrices with elements from $\OQq$,
we denote by $H^*(A_1\kdots A_m)$ the inhomogeneous height of the set 
of elements of $A_1\kdots A_m$.

\begin{lemma}\label{le:11.4}
Let $\widehat{A}_1\kdots \widehat{A}_s$ be the distinct matrices among 
$\widehat{A}_v\ (v\in M_K)$.
Then
\[
H^*(\widehat{A}_1^{-1}\kdots \widehat{A}_s^{-1})\leq \HL^{R^n}.
\]
\end{lemma}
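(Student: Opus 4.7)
My plan is to identify $\widehat A_v$ with the $(n-k)$-th compound matrix of the coefficient matrix $A_v$ of $(L_1^{(v)}\kdots L_n^{(v)})$, apply Jacobi's complementary minor identity to write the entries of $\widehat A_v^{-1}$ as $k\times k$ minors of $A_v$ divided by $\det A_v$, and then fold everything into a comparison with $\HL$ from Section~\ref{9}, using crucially the assumption \eqref{8.2} that $X_1\kdots X_n$ themselves appear among $L_1\kdots L_r$.

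First, by the definition of exterior products in Section~\ref{6}, the $(j,l)$-entry of $\widehat A_v$ is the minor $\det A_v[I_j,I_l]$, so $\widehat A_v$ is exactly the compound matrix $C_{n-k}(A_v)$. Since the compound operation is multiplicative, $\widehat A_v^{-1}=C_{n-k}(A_v^{-1})$, and Jacobi's identity gives
\[
\widehat A_v^{-1}[j,l] \;=\; \pm\,(\det A_v)^{-1}\cdot \det A_v[I_l^c,I_j^c],
\]
where $I_l^c,I_j^c$ are the $k$-element complements of $I_l,I_j$ in $\{1\kdots n\}$. Next, I would express every $k\times k$ minor $\det A_v[I,J]$ as, up to sign, the full $n\times n$ determinant $\det(L_{i_1}^{(v)}\kdots L_{i_k}^{(v)},X_{m_1}\kdots X_{m_{n-k}})$ with $\{i_1\kdots i_k\}=I$ and $\{m_1\kdots m_{n-k}\}=\{1\kdots n\}\setminus J$; this is Laplace expansion along the columns in $J^c$, since the coordinate forms $X_m$ contribute only in those columns. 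By \eqref{8.2}, $X_1\kdots X_n\in\{L_1\kdots L_r\}$, so this determinant is, when nonzero, one of the numbers $d_1\kdots d_t$ of Section~\ref{9}, and so is $\det A_v$ by \eqref{8.3}. Hence every entry of every $\widehat A_v^{-1}$ is a quotient $\pm d_i/d_{i'}$ with $d_{i'}\neq 0$.

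Finally, for each place $w$ of a sufficiently large number field $E$ containing all the relevant coefficients, set $D_w:=\max_i\|d_i\|_w$ and $\delta_w:=\min_{d_i\neq 0}\|d_i\|_w$. The previous step yields $\|\widehat A_v^{-1}[j,l]\|_w\leq D_w/\delta_w$ uniformly in $v,j,l$, and since $D_w/\delta_w\geq 1$ the outer $\max(1,\cdot)$ appearing in $H^*$ is automatic. Taking the product over $w\in M_E$ and using \eqref{9.1} (which gives $\prod_w D_w=\HL$) together with \eqref{9.2} (which gives $\prod_w \delta_w\geq \HL^{1-\binom{r}{n}}$) I would conclude
\[
H^*(\widehat A_1^{-1}\kdots\widehat A_s^{-1})\;\leq\; \prod_{w\in M_E} D_w/\delta_w\;\leq\; \HL^{\binom{r}{n}}\;\leq\;\HL^{R^n},
\]
the last step using $r\leq R$ and $\HL\geq 1$ (the product formula applied to any single $d_i$ shows $\HL\geq 1$). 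The whole argument is bookkeeping once the compound-matrix viewpoint is in place; the only step requiring real care is the sign analysis in Jacobi's identity together with the lexicographic conventions for $C(n,n-k)$, but these do not affect the height bound, so I do not anticipate a genuine obstacle.
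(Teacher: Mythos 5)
Your proof is correct and follows essentially the same route as the paper: the paper packages your Jacobi step as the statement that $(\det\widehat A_v)\,\widehat A_v^{-1}$ has as entries (up to signs and a power of $\det A_v$) the coefficients of the wedges $L_{i_1}^{(v)}\wedge\cdots\wedge L_{i_k}^{(v)}$, and your Laplace-expansion step using \eqref{8.2} is exactly the content of its Lemma \ref{le:9.5}, after which both arguments conclude with \eqref{9.1} and \eqref{9.2}. Your bookkeeping even gives the marginally cleaner exponent $\binom{r}{n}$, which of course still lands within the stated bound $\HL^{R^n}$.
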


\begin{proof}
Write $\bigcup_{v\in M_K}\{ L_1^{(v)}\kdots L_n^{(v)}\}=\{ L_1\kdots L_r\}$;
then $r\leq R$. 
For $i=1\kdots s$, let $B_i:=(\det\widehat{A}_i)\widehat{A}_i^{-1}$.
For $v\in M_K$,
put $\delta_v:=\det (L_1^{(v)}\kdots L_n^{(v)})$, and let $\delta_1\kdots\delta_u$
be the distinct numbers among $\delta_v$ ($v\in M_K$).

Thanks to assumption \eqref{8.4}, we can apply Lemma \ref{le:9.5}.
For $v\in M_K$, the elements of the matrix $(\det \widehat{A}_v)\widehat{A}_v^{-1}$ 
are up to sign
the coefficients of $L_{i_1}^{(v)}\wedge\cdots\wedge L_{i_k}^{(v)}$
for all $k$-element subsets $\{ i_1<\cdots <i_k\}$ of $\{ 1\kdots r\}$,
and so are up to sign among the set $\{d_1\kdots d_m\}$ from Lemma \ref{le:9.5}.
Hence
\[
H^*(B_1\kdots B_s)\leq \HL .
\]
By \eqref{6.4b}, we have $\det \widehat{A}_v=\delta_v^{N{'}}$ for $v\in M_K$,
where $N':= \binom{n-1}{n-k-1}$.
Now a combination of \eqref{9.2} and the inequality
just established gives
\begin{eqnarray*}
H^*(\widehat{A}_1^{-1}\kdots \widehat{A}_s^{-1})&\leq&
\HL\cdot\prod_{v\in M_K}\max_{i\leq i\leq u}\|\delta_i\|_v^{-N{'}}
\\
&\leq& \HL^{1+N{'}(\binom{r}{n}-1)}\leq \HL^{R^n}.
\end{eqnarray*}
This proves our lemma.
\end{proof}

\begin{lemma}\label{le:11.5}
Suppose $Q$ satisfies \eqref{11.13a}, \eqref{11.13b} and put $N:=\binom{n}{k}$.
Let $\widehat{T}(Q)$ be the $\OQq$-vector space spanned by the
vectors $\widehat{\hh}_1(Q)\kdots\widehat{\hh}_{N-1}(Q)$. Then
\[
H_2(\widehat{T}(Q))\geq Q^{\delta/3R^n}.
\]  
\end{lemma}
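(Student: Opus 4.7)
The plan is to reduce Lemma \ref{le:11.5} to the lower bound for $H_2(T_k(Q))$ established in Lemma \ref{le:10.4}, using the height-preserving identification between a $k$-dimensional subspace and the span of the exterior products of a complementary basis provided by Lemma \ref{le:6.1}. Everything here is bookkeeping; no new estimates are needed.

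First I would identify $T_k(Q)$ as $\span\{\hh_1(Q),\ldots,\hh_k(Q)\}$. By \eqref{11.3}, each of ${\bf g}_1,\ldots,{\bf g}_k$ satisfies $H_{\LL,\cc,Q}({\bf g}_i)\leq (1+\tfrac12\eps)\lambda_i\leq (1+\tfrac12\eps)\lambda_k$, so these are $k$ linearly independent vectors inside $T(Q,\mu)$ for any $\mu\in\big((1+\tfrac12\eps)\lambda_k,\lambda_{k+1}\big)$; such $\mu$ exists by the choice \eqref{11.2}, which in particular guarantees the strict gap $\lambda_k<\lambda_{k+1}$ (indeed $\lambda_k\leq Q^{-\delta/(n-1)}\lambda_{k+1}$ by \eqref{11.13b} and $Q\geq C_2>1$). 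Lemma \ref{le:9.0}(ii) then gives $\dim T_k(Q)=k$ and $T(Q,\mu)=T_k(Q)$ for such $\mu$, so $\span\{{\bf g}_1,\ldots,{\bf g}_k\}=T_k(Q)$. Combining with \eqref{11.6} yields $\span\{\hh_1,\ldots,\hh_k\}=T_k(Q)$ as well.

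Next I would apply Lemma \ref{le:6.1} to the basis $\{\hh_1,\ldots,\hh_n\}$ of $\OQq^n$ and to the subspace $T=T_k(Q)$, with $p=n-k$. The exterior products $\widehat{{\bf g}}_j$ of Lemma \ref{le:6.1} are (with $\hh_i$ in place of ${\bf g}_i$) exactly the vectors $\widehat{\hh}_j(Q)$ of \eqref{11.10}, and the single index excluded from the span, $j=N$, corresponds to $I_N=\{k+1,\ldots,n\}$, the unique $(n-k)$-subset disjoint from $\{1,\ldots,k\}$. Hence the space $\widehat{T}(Q)$ of our statement coincides with the $\widehat{T}$ of Lemma \ref{le:6.1}, and therefore
\[
H_2(\widehat{T}(Q))=H_2(T_k(Q)).
\]

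Finally, the hypotheses \eqref{11.13a}, \eqref{11.13b} on $Q$ together with the standing assumptions \eqref{8.1}--\eqref{8.8} are exactly what is required in Lemma \ref{le:10.4}, so that lemma gives $H_2(T_k(Q))\geq Q^{\delta/3R^n}$, and combining with the displayed equality proves the claim. The only point that needs care is Step 1: one must use the gap $(1+\eps)^2\lambda_k<\lambda_{k+1}$ from \eqref{11.2} (not merely $\lambda_k<\lambda_{k+1}$) to place ${\bf g}_1,\ldots,{\bf g}_k$ strictly inside $T_k(Q)$; but this is automatic from the choice of $\eps$, so no genuine obstacle arises.
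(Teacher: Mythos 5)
Your proposal is correct and follows essentially the same route as the paper: identify $T_k(Q)=\span\{\hh_1(Q)\kdots\hh_k(Q)\}$ via \eqref{11.3}, \eqref{11.6} and Lemma \ref{le:9.0}, transfer the height by Lemma \ref{le:6.1}, and conclude with Lemma \ref{le:10.4}. The extra detail you supply in Step 1 is exactly what the paper leaves implicit in the phrase ``we have seen that $T$ is spanned by $\hh_1(Q)\kdots\hh_k(Q)$.''
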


\begin{proof}
Put $T:=T_k(Q)$, $\widehat{T}:=\widehat{T}(Q)$.
We have seen that $T$ is spanned by ${\bf h}_1(Q)\kdots {\bf h}_k(Q)$.
So we may apply Lemma \ref{le:6.1}. Now this lemma together with 
Lemma \ref{le:10.4} gives
$H_2(\widehat{T})=H_2(T)\geq Q^{\delta /3R^n}$.
\end{proof}

\section{A non-vanishing result}\label{12}

Let $N,m$ be integers $\geq 2$. 
Below, $\ii$, $\jj$ will denote $mN$-tuples
$(i_{hl}:\, h=1\kdots m,\, j=1\kdots N)$,
$(j_{hl}:\, h=1\kdots m,\, j=1\kdots N)$ of integers,
and $\ii \pm\jj$ their componentwise sum/difference.
 
We consider polynomials $P\in \OQq [\XX_1\kdots\XX_m ]=\OQq [X_{11}\kdots X_{mN}]$
in $m$ blocks of $N$ variables
$\XX_h=(X_{h1}\kdots X_{hN})$ ($h=1\kdots m)$.
Such a polynomial $P$ is expressed as
\begin{equation}\label{12.1}
P=\sum_{\jj} a(\jj )\XX^{\jj}\ \ \ \mbox{with } 
\XX^{\jj}:=\prod_{h=1}^m\prod_{l=1}^N X_{hl}^{j_{hl}}, 
\end{equation}
where the sum is over a finite set of tuples
$\jj\in\Zz_{\geq 0}^{mN}$, and where
$a (\jj )\in \OQq$.
For a polynomial $P$ as above and for
$\ii\in\Zz_{\geq 0}^{mN}$ we define
\[
P_{\ii}:=\left(
\prod_{h=1}^m\prod_{l=1}^N\frac{1}{i_{hl}!}\frac{\del^{i_{hj}}}{\del X_{hj}^{i_{hj}}}
\right)
P.
\]
Thus, if $P$ is given by \eqref{12.1}, then
\begin{eqnarray}\label{12.2}
P_{\ii}&=&\sum_{\jj} \binom{\ii +\jj}{\ii} a(\ii +\jj )\XX^{\jj},
\\
\nonumber
&&\mbox{where } 
\binom{\ii +\jj}{\ii}:=\prod_{h=1}^m\prod_{l=1}^N\binom{i_{hl}+j_{hl}}{i_{hl}}.
\end{eqnarray}
We say that $P\in \OQq [\XX_1\kdots \XX_m]$ is multihomogeneous of degree
$(r_1\kdots r_m)$ if it is homogeneous of degree $r_h$ in block $\XX_h$
for $h=1\kdots m$, i.e., if in \eqref{12.1} the sum is taken over tuples
$\jj\in\Zz_{\geq 0}^{mN}$ with $\sum_{l=1}^N j_{hl}=d_h$ for $h=1\kdots m$.

We write points in $\OQq^{mN}$ as $(\x_1\kdots \x_m)$, where
$\x_1\kdots\x_m\in \OQq^N$.

The height $H_2(P)$
of $P\in\OQq [\XX_1\kdots\XX_m]$ is defined as $H_2({\bf a}_P)$,
where ${\bf a}_P$ is a vector consisting of the non-zero coefficients of $P$.

Let $T$ be a finite dimensional $\OQq$-vector space and $B$ a positive
integer.
By a \emph{grid of size $B$} in $T$ we mean a set of the shape
\[
\left\{ 
\sum_{i=1}^d x_i{\bf a}_i:\, x_i\in\Zz,\, |x_i|\leq B\ \mbox{for } i=1\kdots d
\right\}
\]
where $d=\dim T$ and $\{ {\bf a}_1\kdots {\bf a}_d\}$ is any basis of $T$.

We recall \cite[Lemma 26]{Evertse1996}.
We note that this result was deduced from a sharp version
of Roth's Lemma, and ultimately goes back to Faltings' Product Theorem
\cite{Faltings1991}.

\begin{prop}\label{pr:12.1}
Let $m,N$ be integers $\geq 2$, $\eps$ a real with $0<\eps\leq 1$, and $r_1\kdots r_m$
positive integers such that
\begin{equation}\label{12.3}
\frac{r_h}{r_{h+1}}\geq\frac{2m^2}{\eps}\ \mbox{for } h=1\kdots m-1.
\end{equation}
Next, let $P$ be a non-zero polynomial in $\OQq [\XX_1\kdots\XX_m]$
which is homogeneous of degree $r_h$ in the block $\XX_h$ for $h=1\kdots m$,
and let $T_1\kdots T_m$ be $(N-1)$-dimensional linear subspaces of $\OQq^N$
such that
\begin{equation}\label{12.4}
H_2(T_h)^{r_h}\geq \left( e^{r_1+\cdots +r_m}H_2(P)\right)^{(N-1)(3m^2/\eps )^m}.
\end{equation}
Finally, for $h=1\kdots m$ let $\Gamma_h$ be a grid in $T_h$ of size $N/\eps$.
\\[0.2cm]
Then there are $\x_h\in\Gamma_h$ with $\x_h\not= {\bf 0}$ for $h=1\kdots m$
and $\ii\in\Zz_{\geq 0}^{mN}$ with
\begin{equation}\label{12.4a}
\sum_{h=1}^m\frac{1}{r_h}\left(\sum_{l=1}^N i_{hl}\right)\leq 2m\eps
\end{equation}
such that
\begin{equation}\label{12.5}
P_{\ii}(\x_1\kdots \x_m)\not= 0.
\end{equation}
\end{prop}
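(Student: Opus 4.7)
The plan is to apply a sharp form of Roth's Lemma—the one underlying the quantitative Subspace Theorem, itself a special case of Faltings' Product Theorem—to the polynomial $P$ after restricting it, or more precisely parameterising, the product $T_1\times\cdots\times T_m$. The statement is in fact a re-packaging of \cite[Lemma~26]{Evertse1996}, so the task is really to arrange the hypotheses so that the sharp Roth's Lemma applies, and then to read off the conclusion in terms of grid points.

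First I would parameterise each subspace $T_h$ by $\OQq^{N-1}$: fix a basis $\{\mathbf{a}_{h,1}\kdots\mathbf{a}_{h,N-1}\}$ of $T_h$ (say, one defining the height $H_2(T_h)$ via $\mathbf{a}_{h,1}\wedge\cdots\wedge\mathbf{a}_{h,N-1}$), and identify the grid $\Gamma_h$ of size $N/\eps$ with the integer box $\{-N/\eps\kdots N/\eps\}^{N-1}$. Substituting these parameterisations into $P$ yields a multi-homogeneous polynomial $\widetilde{P}$ in $m$ blocks $\YY_h$ of $N-1$ variables, of the same multi-degree $(r_1\kdots r_m)$, whose height $H_2(\widetilde{P})$ admits a standard bound in terms of $H_2(P)$ and the heights $H_2(T_h)$, using the Gauss-type inequalities \eqref{6.4}, \eqref{6.6} and the fact that each $\mathbf{a}_{h,l}$ has $H_2$-height at most $H_2(T_h)$. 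A generic non-zero grid point $\x_h\in\Gamma_h$ is the image of a non-zero lattice point $\mathbf{y}_h\in\Zz^{N-1}$ with $\|\mathbf{y}_h\|\leq N/\eps$, whose $H_2$-height (as an element of $\OQq^{N-1}$) is at most $(N/\eps)$.

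Now I would argue by contradiction: suppose that for every choice of non-zero $\x_h\in\Gamma_h$ and every $\ii$ with $\sum_h r_h^{-1}\sum_l i_{hl}\leq 2m\eps$ one has $P_{\ii}(\x_1\kdots\x_m)=0$. Transferring this to $\widetilde{P}$, one obtains that $\widetilde{P}$ and all of its derivatives of "index" $\leq 2m\eps$ (in the sense weighted by the multi-degrees) vanish at every tuple of non-zero lattice points in the boxes of size $N/\eps$. Combined with the gap hypothesis \eqref{12.3} on $r_1\kdots r_m$ and the height lower bound \eqref{12.4} on $H_2(T_h)^{r_h}$—whose shape is precisely calibrated to match the Roth's Lemma threshold $(e^{r_1+\cdots+r_m}H_2(P))^{(N-1)(3m^2/\eps)^m}$—the sharp Roth's Lemma (as formulated in the version of Evertse--Schlickewei--Schmidt, cf.\ \cite[Ch.~II]{Evertse1996}) then forces $\widetilde{P}\equiv 0$. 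But this contradicts the non-vanishing of $P$, since the parameterising maps $\OQq^{N-1}\to T_h$ are bijective and $T_1\times\cdots\times T_m$ contains a Zariski-dense subset of a non-degenerate product variety in each multi-degree.

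The main obstacle is bookkeeping rather than conceptual: one has to verify that the height passage $P\rightsquigarrow\widetilde P$ inflates the height only by a factor absorbable into the $(N-1)$ and $(3m^2/\eps)^m$ exponents of \eqref{12.4}, and that the weighted-index hypothesis on $\ii$ translates cleanly between the original and reparameterised polynomials (the identification is linear, so only a mild combinatorial loss occurs in the index). Given these straightforward but tedious estimates, the content of the proposition is then entirely the sharp Roth's Lemma, which I would invoke as a black box from \cite{Evertse1996}.
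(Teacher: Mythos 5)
Your proposal is correct and coincides with the paper's approach: the paper gives no proof of Proposition \ref{pr:12.1} at all, but simply recalls it as \cite[Lemma 26]{Evertse1996}, noting (as you do) that it is deduced from a sharp form of Roth's Lemma and ultimately from Faltings' Product Theorem. Your additional sketch of the reduction (parameterising the $T_h$, restricting $P$, and invoking the sharp Roth's Lemma) is a reasonable outline of how that cited lemma is itself proved, so there is nothing to fault here.
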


\section{Construction of the auxiliary polynomial}\label{13}

We start with recalling our main tools,
which are a version of Siegel's Lemma due to Bombieri and Vaaler 
and Hoeffding's inequality from probability theory.

For an algebraic number field $K$ we 
denote by $D_K$ the discriminant of $K$, and put
\[
C_K:=|D_K|^{1/2[K:\Qq ]}.
\]

\begin{lemma}\label{le:13.1}
Let $K$ be a number field, $U,V$ integers with $V>U>0$,
and $L_1\kdots L_U$ non-zero linear forms from $K[X_1\kdots X_V]^{{\rm lin}}$.
Then there exists $\x\in K^V\setminus\{ {\bf 0}\}$ such that
\begin{eqnarray}
\label{13.1}
&&L_1(\x )=0\kdots L_U(\x )=0,
\\[0.15cm]
\label{13.2}
&&H_2(\x )\leq V^{1/2}C_K\big( H_2(L_1)\cdots H_2(L_U)\big)^{1/(V-U)}.
\end{eqnarray}
\end{lemma}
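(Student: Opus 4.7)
The plan is to derive this as a direct consequence of the Bombieri--Vaaler form of Siegel's lemma over number fields, together with the height bounds for subspaces and exterior products collected in Section \ref{6}.

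First I would reduce to the case where $L_1,\ldots ,L_U$ are linearly independent over $K$: if they span only a $U'$-dimensional subspace with $U'<U$, replace them by a maximal linearly independent subsystem $L_{i_1},\ldots ,L_{i_{U'}}$; the solution set of \eqref{13.1} is unchanged, and since $H_2(L_j)\geq 1$ for each $j$, the product on the right-hand side of \eqref{13.2} only decreases, while $V-U$ is replaced by $V-U'\geq V-U$, so it suffices to prove the stronger inequality for the smaller system. Set $M:=V-U$ and let
\[
W:=\{\x\in K^V:\, L_1(\x )=\cdots =L_U(\x )=0\}\subset K^V,
\]
a $K$-linear subspace of dimension $M$.

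Next I would bound $H_2(W)$ in terms of the $H_2(L_i)$. The orthogonal $W^{\bot}$ in the sense of Section \ref{6} is precisely the $K$-span of $L_1,\ldots ,L_U$, so by \eqref{6.10} we have $H_2(W)=H_2(W^{\bot})$, and since $\{L_1,\ldots ,L_U\}$ is a basis of $W^{\bot}$, the exterior product bound \eqref{6.6} gives
\[
H_2(W)=H_2(L_1\wedge\cdots\wedge L_U)\leq H_2(L_1)\cdots H_2(L_U).
\]

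The core of the proof is the Bombieri--Vaaler Siegel lemma applied to $W$: there exists a basis $\x_1,\ldots ,\x_M$ of $W$ over $K$ such that
\[
H_2(\x_1)\cdots H_2(\x_M)\leq V^{M/2}\,C_K^{M}\,H_2(W),
\]
the factor $V^{M/2}$ coming from converting between the sup-norm and the $\ell^2$-norm on $K_v^V$ at the archimedean places, and $C_K^M$ from the adelic Minkowski theorem applied with the covolume $|D_K|^{1/2}$ of $\mathcal{O}_K$ in $\prod_{v\mid\infty}K_v$. Choosing $\x\in\{\x_1,\ldots ,\x_M\}$ to minimize $H_2$, we obtain
\[
H_2(\x )^M\leq H_2(\x_1)\cdots H_2(\x_M)\leq V^{M/2}C_K^M\prod_{i=1}^U H_2(L_i),
\]
and taking $M$-th roots with $M=V-U$ yields \eqref{13.2}.

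The main obstacle is justifying the precise form of the Bombieri--Vaaler inequality with the constant $V^{1/2}C_K$ per basis vector, as several variants exist in the literature differing by polynomial factors in $V$ and $M$; one must select or re-derive the variant that outputs exactly the stated bound (in particular, the $V^{1/2}$ rather than something like $M^{1/2}$ or $V^{(V-U+1)/2}$), which is a careful but standard archimedean-local norm comparison in the proof of the adelic Minkowski theorem.
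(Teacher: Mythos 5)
Your proposal is correct and follows essentially the same route as the paper, which simply cites Bombieri--Vaaler \cite[Theorem 9]{Bombieri-Vaaler1983} without spelling out the reduction; your added steps (passing to a maximal independent subsystem, identifying $H_2(W)=H_2(W^{\bot})=H_2(L_1\wedge\cdots\wedge L_U)\leq H_2(L_1)\cdots H_2(L_U)$ via \eqref{6.10} and \eqref{6.6}, and pigeonholing on the product of the basis heights) are exactly what is needed to extract the stated form. The factor $V^{1/2}$ you worry about is harmless slack: Bombieri--Vaaler's bound for the product of the heights of the $V-U$ independent solutions already involves only $C_K^{V-U}$ times the Grassmann height of the coefficient matrix, and the $V^{1/2}$ generously absorbs any conversion between the sup-norm and $\ell^2$-norm heights of the solution vectors as in \eqref{6.3}.
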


\begin{proof}
This is a consequence of Bombieri and Vaaler \cite[Theorem 9]{Bombieri-Vaaler1983}.
\end{proof}

In the lemma below, all 
random variables under consideration are defined on a given probability
space with probability measure ${\rm Prob}$.
The expectation of a random variable $\Xx$ is denoted by $E(\Xx )$.

\begin{lemma}\label{le:13.2}
Let $\Xx_1\kdots\Xx_m$ be mutually independent random variables such that
\[
{\rm Prob} \big(\Xx_h\in [a_h,b_h]\big)=1,\ \ E(\Xx_h)=\mu_h\ \ \ \mbox{for } h=1\kdots m,
\]
where $a_h,b_h,\mu_h\in\Rr$, $a_h<b_h$ for $h=1\kdots m$.
Then for every $\eps >0$ we have
\begin{equation}\label{13.3}
{\rm Prob}\left(\sum_{h=1}^m(\Xx_h-\mu_h)\geq m\eps\right)\,\leq\,
\exp\left(-\frac{2m^2\eps^2}{\sum_{h=1}^m (b_h-a_h)^2}\right).
\end{equation}
\end{lemma}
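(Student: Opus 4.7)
The plan is to establish this by the standard Chernoff-style moment generating function argument (this is in fact the classical Hoeffding inequality). I would first fix $t>0$ to be optimized later, and apply Markov's inequality to the exponentiated sum: writing $Y_h := X_h - \mu_h$, which takes values in $[a_h-\mu_h, b_h-\mu_h]$ and has mean zero, we have
\[
{\rm Prob}\Big(\sum_{h=1}^m Y_h\geq m\eps\Big)
= {\rm Prob}\Big(e^{t\sum_h Y_h}\geq e^{tm\eps}\Big)
\leq e^{-tm\eps}\, E\Big(e^{t\sum_h Y_h}\Big),
\]
and by mutual independence of $Y_1,\ldots,Y_m$ the expectation factorises as $\prod_{h=1}^m E(e^{tY_h})$.

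The main analytic step is then the classical lemma that for a mean-zero random variable $Y$ supported in $[a,b]$ one has $E(e^{tY})\leq \exp(t^2(b-a)^2/8)$. I would prove this by the convexity bound $e^{ty}\leq \frac{b-y}{b-a}e^{ta}+\frac{y-a}{b-a}e^{tb}$ valid for $y\in[a,b]$, taking expectations (using $E(Y)=0$) to get $E(e^{tY})\leq \frac{b}{b-a}e^{ta}-\frac{a}{b-a}e^{tb}$, and then estimating the logarithm of the right-hand side by a second-order Taylor expansion in $t$ around $0$, where the supremum of the second derivative is bounded by $(b-a)^2/4$. This is the one genuinely non-trivial point; the rest is arithmetic.

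Applying this to each $Y_h$ (with $b_h-\mu_h-(a_h-\mu_h)=b_h-a_h$) yields
\[
{\rm Prob}\Big(\sum_{h=1}^m Y_h\geq m\eps\Big)\leq \exp\Big(-tm\eps + \frac{t^2}{8}\sum_{h=1}^m (b_h-a_h)^2\Big).
\]
Finally I would optimise the right-hand side in $t>0$: the exponent is a quadratic in $t$ minimised at $t^\ast = 4m\eps/\sum_h(b_h-a_h)^2$, and substituting this value gives exactly
\[
\exp\Big(-\frac{2m^2\eps^2}{\sum_{h=1}^m (b_h-a_h)^2}\Big),
\]
which is the desired bound. The only real obstacle is Hoeffding's lemma itself (the one-variable bound on the moment generating function); once that is in hand, independence plus optimisation over $t$ finishes the argument immediately.
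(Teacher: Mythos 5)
Your proof is correct: the Chernoff--Markov step, the factorisation by independence, Hoeffding's lemma $E(e^{tY})\leq \exp\bigl(t^2(b-a)^2/8\bigr)$ for a mean-zero $Y$ supported in $[a,b]$, and the optimisation at $t^\ast = 4m\eps/\sum_h(b_h-a_h)^2$ do yield exactly the stated bound. The paper gives no proof of this lemma at all --- it simply cites Hoeffding's original article --- and your argument is precisely the standard proof of that cited theorem, so there is nothing to compare beyond noting that you have supplied the proof the paper outsources.
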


\begin{proof}
See W. Hoeffding \cite[Theorem 2]{Hoeffding1963}.
\end{proof} 

For positive integers $m,N$ and a tuple of positive integers 
$\rr =(r_1\kdots r_m)$ 
define $\Ur$ to be the set of tuples
\[ 
\jj =(j_{hl}:\, h=1\kdots m,\, l=1\kdots N)\in\Zz_{\geq 0}^{mN}
\]
such that
\[
\sum_{l=1}^N j_{hl}=r_h\ \mbox{for } h=1\kdots m.  
\]
Put
\begin{equation}\label{13.7}
V:= \#\Ur =\prod_{h=1}^m\prod_{l=1}^N \binom{r_h+N-1}{N-1}.
\end{equation}
Using the inequality 
\[
\binom{a+b}{b}\leq\frac{(a+b)^{a+b}}{a^a b^b}=\left( 1+\frac{b}{a}\right)^a
\left( 1+\frac{a}{b}\right)^b
\leq \left( e\Big(1+\frac{b}{a}\Big)\right)^a
\]
for positive integers $a,b$, it follows that
\begin{equation}\label{13.8}
V\leq (eN)^{r_1+\cdots +r_m}.
\end{equation}

We deduce the following combinatorial lemma.

\begin{lemma}\label{le:13.3}
Let $N$ be a positive integer, $\rr =(r_1\kdots r_m)$ a tuple
of positive integers, $\eps ,\gamma$ reals with $0<\eps \leq 1$ and $\gamma >0$,
and $\widehat{\cc}_h=(\widehat{c}_{h1}\kdots \widehat{c}_{hN})$ ($h=1\kdots m$) tuples of reals such that
\begin{equation}\label{13.4}
|\widehat{c}_{hl}|\leq\gamma\ \ \mbox{for } h=1\kdots m,\,\, l=1\kdots N.
\end{equation}
Then the number of tuples $\jj=(j_{hl}:\, h=1\kdots m,\, l=1\kdots N)\in\Ur$
such that
\begin{equation}\label{13.5}
\sum_{h=1}^m \frac{1}{r_h}\left(\sum_{l=1}^N j_{hl}\widehat{c}_{hl}\right)\geq 
\frac{1}{N}\left(\sum_{h=1}^m\sum_{l=1}^N \widehat{c}_{hl}\right)+m\gamma\eps
\end{equation}
is at most
\begin{equation}\label{13.6}
e^{-m\eps^2/2}V.
\end{equation}
\end{lemma}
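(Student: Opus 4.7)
The plan is to reinterpret the counting problem probabilistically and then apply Hoeffding's inequality (Lemma \ref{le:13.2}) to a sum of $m$ independent, bounded random variables.

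Equip $\Ur$ with the uniform probability measure, so that each of the $V$ tuples $\jj$ has probability $1/V$. Observe that $\Ur$ factors as a product
\[
\Ur \;=\; \prod_{h=1}^m \Ur^{(h)},\qquad
\Ur^{(h)}:=\Bigl\{(j_{h1}\kdots j_{hN})\in\Zz_{\geq 0}^N:\,\sum_{l=1}^N j_{hl}=r_h\Bigr\},
\]
and the uniform measure on $\Ur$ is the product of the uniform measures on the $\Ur^{(h)}$. Consequently, the random variables
\[
Y_h \;:=\;\frac{1}{r_h}\sum_{l=1}^N j_{hl}\widehat{c}_{hl}\qquad(h=1\kdots m)
\]
are mutually independent under the uniform measure on $\Ur$.

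Next I would compute their expectations and ranges. By symmetry of the uniform distribution on $\Ur^{(h)}$ under permutations of the coordinates, $E(j_{hl})$ is the same for every $l$; since $\sum_l j_{hl}=r_h$, this common value is $r_h/N$, giving $\mu_h:=E(Y_h)=\frac{1}{N}\sum_{l=1}^N \widehat{c}_{hl}$. Summed over $h$, this yields $\sum_h \mu_h=\frac{1}{N}\sum_{h,l}\widehat{c}_{hl}$, which is precisely the quantity subtracted on the right of \eqref{13.5}. Moreover, because $j_{hl}\geq 0$ and $\sum_l j_{hl}=r_h$, each $Y_h$ is a convex combination of $\widehat{c}_{h1}\kdots\widehat{c}_{hN}$, hence
\[
Y_h\in[a_h,b_h]:=\bigl[\min_l \widehat{c}_{hl},\,\max_l \widehat{c}_{hl}\bigr],\qquad b_h-a_h\leq 2\gamma,
\]
where the last bound uses \eqref{13.4}.

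Finally I would invoke Lemma \ref{le:13.2} with the $Y_h$ as above and with the parameter $\varepsilon$ in \eqref{13.3} replaced by $\gamma\varepsilon$: condition \eqref{13.5} is exactly $\sum_h (Y_h-\mu_h)\geq m\gamma\varepsilon$, so
\[
{\rm Prob}\!\Bigl(\sum_{h=1}^m(Y_h-\mu_h)\geq m\gamma\varepsilon\Bigr)\;\leq\;
\exp\!\left(-\frac{2m^2\gamma^2\varepsilon^2}{\sum_{h=1}^m(b_h-a_h)^2}\right)\;\leq\;
\exp\!\left(-\frac{2m^2\gamma^2\varepsilon^2}{4m\gamma^2}\right)\;=\;e^{-m\varepsilon^2/2}.
\]
Multiplying by $V$ converts this probability into the desired upper bound for the number of $\jj\in\Ur$ satisfying \eqref{13.5}. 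There is no real obstacle here: the only thing to verify carefully is the product structure of the uniform measure on $\Ur$ (giving independence of the $Y_h$) and the fact that each $Y_h$ takes values in an interval of length at most $2\gamma$, both of which are immediate. The choice $\eta=\gamma\varepsilon$ in Hoeffding is dictated by the form of \eqref{13.5}, and the factor $(2\gamma)^2$ in the denominator is exactly what is needed to collapse the exponent to $-m\varepsilon^2/2$.
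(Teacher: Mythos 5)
Your proof is correct and follows essentially the same route as the paper: uniform measure on $\Ur$, independence of the blockwise averages from the product structure, the symmetry computation of $E(Y_h)=\frac{1}{N}\sum_l\widehat{c}_{hl}$, and Hoeffding's inequality with threshold $m\gamma\eps$ and range $2\gamma$. The only cosmetic difference is that the paper first normalizes $\gamma=1$, whereas you carry $\gamma$ through the estimate; the exponent collapses to $-m\eps^2/2$ either way.
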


\begin{proof}
We assume without loss of generality that $\gamma =1$.
We view $\jj$ as a uniformly distributed random variable on $\Ur$,
i.e., each possible value of $\jj$ is given probability $1/V$.
Define random variables on $\Ur$ by
\[
\Xx_h:=\frac{1}{r_h}\sum_{l=1}^N j_{hl}\widehat{c}_{hl}\ \ (h=1\kdots m).
\]
Notice that $\Xx_1\kdots\Xx_m$ are mutually independent and for $h=1\kdots m$,
\begin{eqnarray*}
&&{\rm Prob}\big(\Xx_h\in [-1,1]\big)=1,\ \ \ \mbox{(by \eqref{13.4} and $\gamma =1$),}
\\
&&E(\Xx_h)=\mu_h:=\frac{1}{N}\sum_{l=1}^N \widehat{c}_{hl}.
\end{eqnarray*}
Now the number of tuples $\jj\in\Ur$ with \eqref{13.5} is precisely
\[
V\cdot{\rm Prob}\left(\sum_{h=1}^m(\Xx_h-\mu_h)\geq m\eps \right),
\]
and by Lemma \ref{le:13.2} this is at most $V\cdot e^{-m\eps^2/2}$.
\end{proof}

Let $K$ be an algebraic number field and $m,N, r_1\kdots r_m$ integers $\geq 2$.
We keep the notation introduced in Section \ref{12}.
In particular, by $\ii$ we denote an $mN$-tuple of non-negative integers
$\ii =(i_{hl}:\, h=1\kdots m,\, l=1\kdots N)$, and similarly
for $\jj$, $\kk$.
Further, $K[\XX_1\kdots\XX_m]$ denotes the ring of polynomials with coefficients in $K$
in the blocks of variables $\XX_h=(X_{h1}\kdots X_{hN})$ ($h=1\kdots m$).

We consider polynomials in this ring which are homogeneous of degree $r_h$
in $\XX_h$, for $h=1\kdots m$. In analogy to \eqref{12.1}, such a polynomial $P$
can be expressed as
\begin{equation}\label{13.9}
P=\sum_{\jj\in\Ur} a(\jj )\XX^{\jj}\ \mbox{with }
{\bf a}_P:=\big( a(\jj ):\, \jj\in\Ur\big)\in K^V.
\end{equation}

We prove a simple auxiliary result.

\begin{lemma}\label{le:13.4}
Let $P$ be a non-zero polynomial with \eqref{13.9}.
Further, let
\[
\widehat{L}_i=\sum_{j=1}^N \alpha_{ij}X_j\ (i=1\kdots N)
\] 
be linearly independent linear forms with coefficients in $K$
and 
\[
\big(\beta_{ij}\big)_{i,j=1\kdots N}=\Big(\big( \alpha_{ij}\big)_{i,j=1\kdots N}\Big)^{-1}
\]
the inverse of the coefficient matrix of $\widehat{L}_1\kdots\widehat{L}_N$.
Lastly, put
\[
C_v:=\max_{i,j=1\kdots N}\|\beta_{ij}\|_v\ \mbox{for } v\in M_K.
\]
Then for every $\ii\in\Zz_{\geq 0}^{mN}$ we have
\begin{eqnarray}\label{13.9a}
&&P_{\ii}=\sum_{\jj\in\Uri} d_{\ii ,\jj}({\bf a}_P)\prod_{h=1}^m\prod_{l=1}^N \widehat{L}_l(\XX_h)^{j_{hl}}
\\
\nonumber
&&\qquad
\mbox{with } \Uri:=\{ \jj\in\Zz_{\geq 0}^{mN}:\, \jj +\ii\in\Ur\},
\end{eqnarray}
where $d_{\ii ,\jj}$ is a linear form with coefficients in $K$ in $V$ variables satisfying
\begin{equation}\label{13.10}
\|d_{\ii ,\jj}\|_{v,1}\leq \left( (6N^2)^{s(v)}C_v\right)^{r_1+\cdots +r_m}
\ \ \mbox{for } \jj\in\Ur ,\, v\in M_K.
\end{equation}
\end{lemma}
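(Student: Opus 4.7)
The plan is to first write $P_{\ii}$ explicitly by differentiating the monomial expansion of $P$, then re-express the surviving monomials $\XX^{\kk}$ in terms of the linear forms $\widehat{L}_l(\XX_h)$ via the inverse matrix $(\beta_{ij})$, and finally bound the resulting coefficients by a direct combinatorial estimate.

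First I would observe that applying the divided-derivative operator to \eqref{13.9} gives
\[
P_{\ii} \;=\; \sum_{\jj\in\Ur,\,\jj\geq\ii}\binom{\jj}{\ii}\,a(\jj)\,\XX^{\jj-\ii} \;=\; \sum_{\kk\in\Uri}\binom{\kk+\ii}{\ii}\,a(\kk+\ii)\,\XX^{\kk}.
\]
Since $\widehat{L}_1\kdots \widehat{L}_N$ are linearly independent, the matrix $A=(\alpha_{ij})$ is invertible with inverse $B=(\beta_{ij})$, so $X_j=\sum_{p=1}^N\beta_{jp}\widehat{L}_p$. For each block $h$ and each $l$, the multinomial theorem yields
\[
X_{hl}^{k_{hl}}=\sum_{\substack{m^{(h,l)}_1,\ldots,m^{(h,l)}_N\geq 0\\ \sum_p m^{(h,l)}_p=k_{hl}}}\binom{k_{hl}}{m^{(h,l)}_1,\ldots,m^{(h,l)}_N}\prod_{p=1}^N \beta_{lp}^{m^{(h,l)}_p}\widehat{L}_p(\XX_h)^{m^{(h,l)}_p}.
\]
Taking the product over $(h,l)$ and collecting the exponent of each $\widehat{L}_p(\XX_h)$ into a single integer $j_{hp}:=\sum_l m^{(h,l)}_p$ (which satisfies $\sum_p j_{hp}=\sum_l k_{hl}=r_h-\sum_l i_{hl}$, so $\jj\in\Uri$) expresses $\XX^{\kk}$ as a linear combination over $\jj\in\Uri$ of the products $\prod_{h,p}\widehat{L}_p(\XX_h)^{j_{hp}}$. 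Substituting this back yields the representation \eqref{13.9a}, with $d_{\ii,\jj}$ explicitly a $K$-linear form in the coordinates $a(\kk+\ii)$ of ${\bf a}_P$.

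For the norm bound I would combine four ingredients: $\binom{\kk+\ii}{\ii}\leq 2^{r_1+\cdots+r_m}$ since $\sum_{h,l}(k_{hl}+i_{hl})=r_1+\cdots+r_m$; the multinomial factors $\prod_l \binom{k_{hl}}{m^{(h,l)}_{\cdot}}$ summed over their internal indices contribute at most $\prod_l N^{k_{hl}}=N^{r_h-\sum_l i_{hl}}$ per block, hence at most $N^{r_1+\cdots+r_m}$ in total; the number of tuples $\kk\in\Uri$ is at most $V\leq(eN)^{r_1+\cdots+r_m}$ by \eqref{13.8}; and each $\|\beta_{lp}\|_v$ is bounded by $C_v$. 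At a finite $v$, the ultrametric inequality eliminates every combinatorial factor and $\prod_{l,p}\|\beta_{lp}\|_v^{m^{(h,l)}_p}\leq C_v^{r_1+\cdots+r_m}$ (replacing $C_v$ by $\max(1,C_v)$ if necessary), which is the required bound since $s(v)=0$. At an infinite $v$, the triangle inequality bounds the sum of absolute values of the coefficients of $d_{\ii,\jj}$ by $(2\cdot N\cdot eN\cdot C_v^{1/s(v)})^{r_1+\cdots+r_m}$; raising to the power $s(v)$ and using $2eN^2\leq 6N^2$ gives $\|d_{\ii,\jj}\|_{v,1}\leq((6N^2)^{s(v)}C_v)^{r_1+\cdots+r_m}$, as claimed.

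The main obstacle is the combinatorial bookkeeping in tracking the multi-level collection of indices $(m^{(h,l)}_p)$ through the two nested expansions, and verifying that the various partial sums telescope to the clean exponent $r_1+\cdots+r_m$ everywhere. None of the individual steps is deep, but notational discipline is needed to show that the $\beta$-powers, the binomial $\binom{\kk+\ii}{\ii}$, and the multinomials combine with the right overall exponent to produce the stated bound.
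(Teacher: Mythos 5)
Your proof is correct and follows essentially the same route as the paper's: expand $P_{\ii}$ by divided derivatives, substitute $X_{hl}=\sum_{p}\beta_{lp}\widehat{L}_p(\XX_h)$ via the inverse matrix, and bound the collected coefficients using $\binom{\ii+\jj}{\ii}\leq 2^{r_1+\cdots+r_m}$, $V\leq (eN)^{r_1+\cdots+r_m}$ and $2eN^2\leq 6N^2$. The paper merely packages your explicit multinomial bookkeeping into the submultiplicativity estimates \eqref{6.4} for $\|\cdot\|_{v,1}$, and it silently relies on the same harmless normalization of $C_v$ (only $\prod_v\max(1,C_v)$ matters downstream, via Lemma \ref{le:11.4}) that you flag explicitly.
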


\begin{proof}
Define new variables $Y_{hl}:=\widehat{L}_l(\XX_h)$ for $h=1\kdots m$, $l=1\kdots N$.
Then by \eqref{12.2},
\begin{eqnarray*}
P_{\ii}&=&\sum_{\jj\in\Uri}\binom{\ii +\jj}{\ii} a(\ii +\jj )\XX^{\jj}
\\
&=&\sum_{\jj\in\Uri} a(\ii +\jj )\prod_{h=1}^m\prod_{l=1}^N 
\left(\binom{i_{hl}+j_{hl}}{i_{hl}}\Big(\sum_{j=1}^N \beta_{lj}Y_{lj}\Big)^{j_{hl}}\right)
\\
&=:&\sum_{\jj\in\Uri} a(\ii +\jj )D_{\ii,\jj}({\bf Y}).
\end{eqnarray*}
Let $v\in M_K$. Then by \eqref{6.4} we have for $\jj\in\Uri$, 
on noting
$\binom{\ii +\jj}{\ii}\leq 2^{\sum_{h,l}(i_{hl}+ j_{hl})}=2^{r_1+\cdots +r_m}$,
\[
\| D_{\ii, \jj}\|_{v,1}\leq 
\binom{\ii +\jj}{\ii}^{s(v)}\big( N^{s(v)}C_v\big)^{\sum_{h,l}j_{hl}}
\\
\leq (2NC_v)^{r_1+\cdots +r_m}.
\]
Together with \eqref{6.4}, \eqref{13.8}, this implies for $\jj\in\Uri$,
\[
\| d_{\ii ,\jj}\|_{v,1}\leq V^{s(v)}\max_{{\bf k}\in\Ur}\| D_{\ii ,{\bf k}}\|_{v,1}
\leq (6N^2C_v)^{r_1+\cdots +r_m}.
\]  
\end{proof}

As before, let $\mathcal{L} ,\cc ,n,R,\delta$ satisfy \eqref{8.1}--\eqref{8.8}.
We fix $k\in\{ 1\kdots n-1\}$, and consider 
all reals $Q$ satisfying \eqref{11.13a}, \eqref{11.13b}.

Let $v_0$ be the place from \eqref{8.2}, 
and $\widehat{L}_i^{(v)}$ ($v\in M_K,\, i=1\kdots N$) the linear forms
and $\widehat{c_{iv}}$ ($v\in M_K\setminus\{ v_0\},\, i=1\kdots N$),
$\widehat{c}_{i,v_0}(Q)$ ($i=1\kdots N$) the reals from Section \ref{11}.

We want to construct a suitable non-zero polynomial $P$
of the shape \eqref{13.9}. 
The next lemma is our first step. For $v\in M_K$ we write
\begin{equation}\label{13.11}
P=\sum_{\jj\in\Ur} d_{\jj}^{(v)}({\bf a}_P)
\prod_{h=1}^m\prod_{l=1}^N \widehat{L}_l^{(v)}(\XX_h)^{j_{hl}}
\end{equation}
where $d_{\jj}^{(v)}$ is a linear form with coefficients in $K$ in $V$ variables
in the coefficient vector ${\bf a}_P$ of $P$.

\begin{lemma}\label{le:13.5}
Let $S_0$ be a subset of
\[
S_1:=\{ v\in M_K:\, {\bf c}_v:=(c_{1v}\kdots c_{nv})\not= 0\}
\]
and put $s_0:=\# S_0$. 
\\
Let $\eps$ be a real with $0<\eps <1$,
$m$ an integer with
\begin{equation}\label{13.13}
m\, >\, 2\eps^{-2}\log (2s_0+2)
\end{equation}
and $r_1\kdots r_m$ positive integers.
\\
Lastly, let $Q_1\kdots Q_m$ be reals with \eqref{11.13a}, \eqref{11.13b}.
\\[0.15cm]
Then there exists a non-zero polynomial $P$ of the type \eqref{13.9}
with the following properties:
\\[0.15cm]
(i) For every $v\in S_0$ and each $\jj\in\Ur$ with
\begin{equation}\label{13.14}
\sum_{h=1}^m \frac{1}{r_h}\left( \sum_{l=1}^N j_{hl}\widehat{c}_{lv}\right)\geq
mn\eps\cdot 
\left(\max_{1\leq i\leq n} c_{iv}\right)
\end{equation}
we have
\begin{equation}\label{13.15}
d_{\jj}^{(v)}({\bf a}_P)=0.
\end{equation}
(ii) For each $\jj\in\Ur$ with
\begin{equation}\label{13.16}
\sum_{h=1}^m \frac{1}{r_h}\left( \sum_{l=1}^N j_{hl}\widehat{c}_{l,v_0}(Q_h)\right)\geq
-\frac{m\delta}{nN}\,+mn\eps 
\end{equation}
we have
\begin{equation}\label{13.17}
d_{\jj}^{(v_0)}({\bf a}_P)=0.
\end{equation}
(iii) For the height of $P$ we have
\begin{equation}\label{13.18}
H_2(P)\leq C_K\big(2^{3n}\HL^{R^n}\big)^{r_1+\cdots +r_m}.
\end{equation}
\end{lemma}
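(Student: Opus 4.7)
The plan is to apply Siegel's Lemma (Lemma~\ref{le:13.1}) to find $\mathbf{a}_P\in K^V$, viewing conditions (13.15) and (13.17) as a homogeneous linear system in the $V$ unknown coefficients $a(\jj)$ ($\jj\in\Ur$) of $P$. The defining linear forms are the $d_{\jj}^{(v)}$ for those ``bad'' indices $\jj$ picked out by (13.14) for some $v\in S_0$, together with those picked out by (13.16). Two ingredients drive the argument: Lemma~\ref{le:13.3} to count the equations, and Lemma~\ref{le:13.4} together with Lemma~\ref{le:11.4} to bound their heights.

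First I count the equations. Fix $v\in S_0$; since $c_{i,v_0}=0$ by \eqref{8.2}, we have $v\ne v_0$, so \eqref{11.14} gives $\sum_{l=1}^N\widehat{c}_{lv}=0$ and \eqref{11.14a} gives $|\widehat{c}_{lv}|\le(n-1)\max_i c_{iv}=:\gamma_v$. Applying Lemma~\ref{le:13.3} with $\widehat{c}_{hl}:=\widehat{c}_{lv}$ (constant in $h$), $\gamma:=\gamma_v$, and $\eps':=n\eps/(n-1)$, the conclusion of that lemma is implied by (13.14), so the number of $\jj\in\Ur$ satisfying (13.14) is at most $e^{-m(\eps')^2/2}V\le e^{-m\eps^2/2}V$. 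For $v_0$ I apply Lemma~\ref{le:13.3} with $\widehat{c}_{hl}:=\widehat{c}_{l,v_0}(Q_h)$, $\gamma:=n$ (by \eqref{11.17}) and $\eps':=\eps$; inequality \eqref{11.16} gives $\tfrac{1}{N}\sum_{h,l}\widehat{c}_{hl}\le -m\delta/(nN)$, so (13.16) again implies (13.5), and the number of bad $\jj$ is at most $e^{-m\eps^2/2}V$. Hence the total number of equations satisfies
\[
U\le (s_0+1)\,e^{-m\eps^2/2}V<\tfrac{1}{2}V
\]
by hypothesis \eqref{13.13}, so $V-U>V/2$ and Siegel's Lemma applies with ample freedom.

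Next I bound the heights. The expansion \eqref{13.11} is precisely what Lemma~\ref{le:13.4} (applied with $\ii=\mathbf{0}$ and linear forms $\widehat{L}_l^{(v)}$) produces, so \eqref{13.10} yields, at every place $v'\in M_K$, the local bound
\[
\|d_{\jj}^{(v)}\|_{v',1}\le\bigl((6N^2)^{s(v')}C_{v'}^{(v)}\bigr)^{r_1+\cdots+r_m},
\]
where $C_{v'}^{(v)}$ is the maximum $v'$-norm of the entries of $\widehat{A}_v^{-1}$. Taking the product over $v'\in M_K$, using $\|\cdot\|_{v',2}\le\|\cdot\|_{v',1}$, and invoking Lemma~\ref{le:11.4} to control $\prod_{v'}C_{v'}^{(v)}\le\HL^{R^n}$, I obtain $H_2(d_{\jj}^{(v)})\le (6N^2\HL^{R^n})^{r_1+\cdots+r_m}$ uniformly in $v,\jj$.

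Finally, Lemma~\ref{le:13.1} applied to the system of at most $U$ equations in $V$ unknowns produces the required $\mathbf{a}_P\in K^V\setminus\{\mathbf{0}\}$ with $H_2(\mathbf{a}_P)\le V^{1/2}C_K(6N^2\HL^{R^n})^{r_1+\cdots+r_m}$, since $U/(V-U)\le 1$. Using $V\le(eN)^{r_1+\cdots+r_m}$ from \eqref{13.8} and $N=\binom{n}{k}\le 2^n$ to absorb $V^{1/2}$ and the $6N^2$ into a single factor of $2^{3n}$ in the exponential gives \eqref{13.18}. The main obstacle, as I see it, is the combinatorial bookkeeping at the outset: one must match the $\gamma$-scaling and the extra $\eps'$ in Lemma~\ref{le:13.3} to the precise thresholds in (13.14) and (13.16), and then verify that the probabilistic factor $e^{-m\eps^2/2}$ dominates the count $s_0+1$ of equation-systems, which is exactly what the hypothesis \eqref{13.13} is engineered to deliver.
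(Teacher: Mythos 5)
Your proposal is correct and follows essentially the same route as the paper: the conditions (i), (ii) are treated as a homogeneous linear system, the number of equations is bounded by $(s_0+1)e^{-m\eps^2/2}V\le\tfrac12 V$ via Lemma~\ref{le:13.3} (Hoeffding), the heights of the forms $d_{\jj}^{(v)}$ are bounded via Lemmas~\ref{le:13.4} and~\ref{le:11.4}, and Siegel's Lemma (Lemma~\ref{le:13.1}) finishes. The only quibble is cosmetic: to absorb $V^{1/2}$ and $6N^2$ into $2^{3n}$ for all $n\ge 2$ one needs the sharper estimate $N=\binom{n}{k}\le 2^{n-1}$ (as the paper uses) rather than $N\le 2^{n}$, since $6e^{1/2}2^{5n/2}$ exceeds $2^{3n}$ for small $n$.
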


\noindent
We recall here that by \eqref{8.5} the set $S_1$ is finite
and that the place $v_0$ given by \eqref{8.2} does not belong to $S_1$.

\begin{proof}
We prove that there exists a non-zero polynomial $P$ of the type \eqref{13.9}
such that for every $v\in S_0$, and each $\jj\in\Zz_{\geq 0}^{mN}$ with 
\begin{equation}\label{13.14a}
\sum_{h=1}^m \frac{1}{r_h}\left( \sum_{l=1}^N j_{hl}\widehat{c}_{lv}\right)\geq
\left(\frac{m}{N}\sum_{l=1}^N \widehat{c}_{lv}\right)+mn\eps\cdot 
\left(\max_{1\leq i\leq n} c_{iv}\right)
\end{equation}
we have \eqref{13.15}, and such that for each $\jj\in\Zz_{\geq 0}^{mN}$
with
\begin{equation}\label{13.16a}
\sum_{h=1}^m \frac{1}{r_h}\left( \sum_{l=1}^N j_{hl}\widehat{c}_{l,v_0}(Q_h)\right)\geq
\left(\frac{1}{N}\sum_{h=1}^m\sum_{l=1}^N 
\widehat{c}_{l,v_0}(Q_h)\right)+mn\eps 
\end{equation}
we have \eqref{13.17}.
This suffices, since by \eqref{11.14}, \eqref{11.16}, the conditions
\eqref{13.14}, \eqref{13.16} imply \eqref{13.14a}, \eqref{13.16a}. 

We may view \eqref{13.15} with \eqref{13.14a} and \eqref{13.17} with \eqref{13.16a}
as a system of linear equations
in the unknown vector ${\bf a}_P\in K^V$, where $V=\#\Ur$.
By \eqref{11.14a}, \eqref{11.17}, Lemma \ref{le:13.3}, and assumption \eqref{13.13},
the number of equations, i.e., the number of $\jj$ with \eqref{13.14a}, \eqref{13.16a},
is
\[
U\leq (s_0+1)Ve^{-m\eps^2/2}\leq \half V.
\]
Combining Lemma \ref{le:11.4} with Lemma \ref{le:13.4} gives us
\[
H_2(d_{\jj}^{(v)})\leq \big( 6N^2\HL^{R^n}\big)^{r_1+\cdots +r_m}
\]
for $v\in S_0\cup\{ v_0\}$, $\jj\in\Ur$. 
Now Lemma \ref{le:13.1} implies
that there is a non-zero ${\bf a}_P\in K^V$ with
\[
H_2({\bf a}_P)\leq C_KV^{1/2}\left( 6N^2\HL^{R^n}\right)^{(r_1+\cdots +r_m)U/(V-U)}.
\]
By inserting \eqref{13.8} and $N=\binom{n}{k}\leq 2^{n-1}$ we arrive at
\begin{eqnarray*}
H_2(P)=H_2({\bf a}_P)&\leq& 
C_K\left( 6e^{1/2}N^{5/2}\HL^{R^n}\right)^{r_1+\cdots +r_m}
\\
&\leq& C_K\left( 2^{3n}\HL^{R^n}\right)^{r_1+\cdots +r_m}.
\end{eqnarray*}
Our Lemma follows.
\end{proof}

The next proposition lists the properties of our final auxiliary polynomial.
For $v\in M_K$, $\ii\in\Zz_{\geq 0}^{mN}$, we write,
analogously to \eqref{13.9a},
\begin{equation}\label{13.12}
P_{\ii}=\sum_{\jj\in\Uri} d_{\ii ,\jj}^{(v)}({\bf a}_P)
\prod_{h=1}^m\prod_{l=1}^N \widehat{L}_l^{(v)}(\XX_h)^{j_{hl}}
\end{equation}
where $\Uri =\{ \jj\in\Zz_{\geq 0}^{mN}:\, \ii +\jj\in\Ur\}$ and where 
$d_{\ii ,\jj}^{(v)}$ is a linear form in $V$ variables with coefficients in $K$.

\begin{prop}\label{pr:13.6}
Let $\eps$ be a real with $0<\eps \leq 1$, $m$ an integer with
\begin{equation}\label{13.19}
m\geq 2n\eps^{-2}\log (4R/\eps )
\end{equation}
and $r_1\kdots r_m$ positive integers.
\\
Further, let $Q_1\kdots Q_m$ be reals with  \eqref{11.13a}, \eqref{11.13b}.
\\[0.15cm]
Then there exists a non-zero polynomial $P$ of the type \eqref{13.9}
with the following properties:
\\[0.15cm]
(i) For every $v\in M_K\setminus\{ v_0\}$, each tuple $\ii\in\Zz_{\geq 0}^{mN}$
with
\begin{equation}\label{13.20} 
\sum_{h=1}^m \frac{1}{r_h}\left(\sum_{l=1}^N i_{hl}\right)\leq 2m\eps
\end{equation}
and each $\jj\in\Uri$ with
\begin{equation}\label{13.22}
\sum_{h=1}^m \frac{1}{r_h}\left(\sum_{l=1}^N \widehat{c}_{lv}j_{hl}\right)
> 4mn\eps\max_{1\leq i\leq n}c_{iv}
\end{equation}
we have
\begin{equation}\label{13.23}
d_{\ii ,\jj}^{(v)}({\bf a}_P)=0.
\end{equation}
(ii) For each $\ii$ with \eqref{13.20} and each $\jj\in\Uri$ with
\begin{equation}\label{13.24}
\sum_{h=1}^m \frac{1}{r_h}\left(\sum_{l=1}^N \widehat{c}_{l,v_0}(Q_h)j_{hl}\right)
> -\frac{m\delta}{nN}\, +4mn\eps
\end{equation}
we have
\begin{equation}\label{13.25}
d_{\ii ,\jj}^{(v_0)}({\bf a}_P)=0.
\end{equation}
(iii) For the height of $P$ we have
\begin{equation}\label{13.26}
H_2(P)\leq C_K\left( 2^{3n}\HL^{R^n}\right)^{r_1+\cdots +r_m}.
\end{equation}
(iv) For all $\ii\in\Zz_{\geq 0}^{mN}$ we have
\begin{equation}\label{13.27}
\prod_{v\in M_K}\left(\max_{\jj\in\Uri}\|d_{\ii ,\jj}^{(v)}({\bf a}_P)\|_v\right)
\leq C_K\left( 2^{6n}\HL^{2R^n}\right)^{r_1+\cdots +r_m}.
\end{equation}
\end{prop}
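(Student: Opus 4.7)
The plan is to invoke Lemma~\ref{le:13.5} to obtain a non-zero polynomial $P$ whose coefficients $d_{\jj}^{(v)}({\bf a}_P)$ vanish on the appropriate ``bad'' index sets, and then to transfer these vanishing statements to the derivatives $P_{\ii}$ by a chain-rule expansion in which the weight shift is bound to the (small) size of $\ii$.

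First I would apply Lemma~\ref{le:13.5} with parameter $\eps_0:=2\eps$ and with $S_0$ chosen to be the finite set $S_1=\{v\in M_K:{\bf c}_v\neq 0\}$ intersected with those places where the vanishing condition of that lemma is non-vacuous, so that \eqref{13.13} is a consequence of \eqref{13.19} via \eqref{8.7}. For $v\in M_K\setminus(S_1\cup\{v_0\})$ every $\widehat{c}_{lv}$ vanishes and \eqref{13.22} is empty. This yields (iii) directly and produces the vanishing of $d_{\jj}^{(v)}({\bf a}_P)$ at every $v\in S_0$ and at $v_0$, for $\jj\in\Ur$ meeting the Lemma~\ref{le:13.5} weight conditions.

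The key transfer step is to expand $P_{\ii}$ in the basis $\prod_{h,l}\widehat{L}_l^{(v)}(\XX_h)^{j_{hl}}$ after the change of variables $Y_{hl}:=\widehat{L}_l^{(v)}(\XX_h)$. Writing $\partial/\partial X_{hl_0}=\sum_l\alpha_{l,l_0}^{(v)}\,\partial/\partial Y_{hl}$, where $\widehat{L}_l^{(v)}=\sum_{l_0}\alpha_{l,l_0}^{(v)}X_{l_0}$, and expanding multinomially, one finds that $d_{\ii,\jj}^{(v)}({\bf a}_P)$ is a linear combination of $d_{\jj+{\bf K}}^{(v)}({\bf a}_P)$ over non-negative integer arrays ${\bf K}=(K_{hl})$ satisfying $\sum_l K_{hl}=\sum_{l_0}i_{hl_0}$ for every $h$. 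By \eqref{11.14a} together with \eqref{13.20},
\[
\Big|\sum_{h,l}\tfrac{1}{r_h}\widehat{c}_{lv}K_{hl}\Big|\leq (n-1)\max_ic_{iv}\cdot\sum_h\tfrac{1}{r_h}\sum_{l_0}i_{hl_0}\leq 2m(n-1)\eps\max_ic_{iv}.
\]
Hence if \eqref{13.22} holds, the shifted weight $\sum(1/r_h)\widehat{c}_{lv}(j_{hl}+K_{hl})$ exceeds $4mn\eps\max_ic_{iv}-2m(n-1)\eps\max_ic_{iv}>2mn\eps\max_ic_{iv}=mn\eps_0\max_ic_{iv}$, which triggers the vanishing in Lemma~\ref{le:13.5}(i) and forces $d_{\ii,\jj}^{(v)}({\bf a}_P)=0$. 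This yields (i); statement (ii) is analogous at $v_0$, with \eqref{11.17} bounding $|\widehat{c}_{l,v_0}(Q_h)|\leq n$ so that the corresponding correction is at most $2mn\eps$, exactly the slack between \eqref{13.24} and \eqref{13.16}.

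For (iv) I would bound $\|d_{\ii,\jj}^{(v)}({\bf a}_P)\|_v\leq\|d_{\ii,\jj}^{(v)}\|_{v,1}\cdot\|{\bf a}_P\|_v$ via Cauchy--Schwarz at infinite places and the ultrametric inequality at finite places. Lemma~\ref{le:13.4} gives $\|d_{\ii,\jj}^{(v)}\|_{v,1}\leq((6N^2)^{s(v)}C_v)^{r_1+\cdots+r_m}$ with $C_v=\max_{i,j}\|\beta_{ij}^{(v)}\|_v$ the entries of $\widehat{A}_v^{-1}$, and Lemma~\ref{le:11.4} yields $\prod_vC_v\leq\HL^{R^n}$. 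Combining these with (iii) and \eqref{6.3} produces \eqref{13.27}, where $6N^2\cdot 2^{3n}\leq 2^{6n}$ is verified using $N\leq 2^{n-1}$. The main obstacle is the chain-rule bookkeeping---deriving the explicit expansion together with the crucial constraint $\sum_lK_{hl}=\sum_{l_0}i_{hl_0}$, which is what ties the weight shift to the small size of $\ii$ and so converts Lemma~\ref{le:13.5}'s vanishing for $P$ into the derivative-level vanishing required in (i) and (ii).
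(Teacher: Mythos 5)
There is a genuine gap at the very first step: you cannot take $S_0$ to be (essentially all of) $S_1=\{v\in M_K:\ {\bf c}_v\neq 0\}$. Lemma~\ref{le:13.5} requires $m>2\eps^{-2}\log(2s_0+2)$ with $s_0=\#S_0$, and the hypothesis \eqref{13.19} only gives $m\geq 2n\eps^{-2}\log(4R/\eps)$, a bound depending on $n,R,\eps$ alone. But $\#S_1$ is \emph{not} controlled by these parameters: condition \eqref{8.7} bounds $\sum_{v}\max_i c_{iv}$ by $1$, yet each $v\in S_1$ may contribute an arbitrarily small positive amount to that sum, so $S_1$ can contain arbitrarily many places (only the number of distinct \emph{systems of linear forms} is bounded, by $R^n$, not the number of places carrying nonzero weights). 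Restricting to places where the vanishing condition is "non-vacuous" does not help, since for every $v\in S_1$ one has $\gamma_v>0$ and generically tuples $\jj$ satisfying \eqref{13.14} exist. Hence \eqref{13.13} simply cannot be derived from \eqref{13.19} with your choice of $S_0$, and Lemma~\ref{le:13.5} is not applicable.

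The missing idea — which is the main content of the paper's proof beyond Lemma~\ref{le:13.5} — is a discretization/pigeonhole step. The paper partitions $S_1$ into at most $R^n\big([2/\eps]+1\big)^n\leq(3R/\eps)^n$ classes according to (a) the ordered tuple $(L_1^{(v)},\dots,L_n^{(v)})$ and (b) which small subcube of sidelength $\leq\eps$ of $[-1,1]^n$ contains the normalized vector $(c_{1v}/\gamma_v,\dots,c_{nv}/\gamma_v)$, and takes $S_0$ to consist of one representative per class; then $s_0\leq(3R/\eps)^n$ and \eqref{13.13} does follow from \eqref{13.19}. The vanishing is transferred from a representative $v_1$ to every $v$ in its class via $d_{\jj}^{(v)}=d_{\jj}^{(v_1)}$ (same linear forms) and $|\widehat{c}_{lv}/n\gamma_v-\widehat{c}_{l,v_1}/n\gamma_{v_1}|\leq\eps$, at the cost of doubling the threshold from $mn\eps\gamma_v$ to $2mn\eps\gamma_v$; the remaining factor of $2$ up to the $4mn\eps\gamma_v$ in \eqref{13.22} absorbs the derivative shift. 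Your chain-rule bookkeeping for that last step (the constraint $\sum_l K_{hl}=\sum_{l_0}i_{hl_0}$, the bound via \eqref{11.14a}, and the analogous estimate at $v_0$ via \eqref{11.17}) and your treatment of (iii) and (iv) are sound and match the paper, but without the covering argument the construction of $P$ itself is not justified.
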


\begin{proof}
We construct a subset $S_0$ of
\[
S_1:=\{ v\in M_K:\, {\bf c}_v=(c_{1v}\kdots c_{nv})\not= {\bf 0}\}
\]
and apply Lemma \ref{le:13.5} with this set.
The set $S_0$ is obtained by
dividing $S_1$ into subsets and picking one element
from each subset.
For $v\in M_K$, we put $\gamma_v:=\max_{1\leq i\leq n} c_{iv}$.

First, we divide $S_1$ into $t_1$ subsets
$S_{11}\kdots S_{1,t_1}$ in such a way that two places $v_1,v_2$ belong to
the same subset if and only if
\[
L_i^{(v_1)}=L_i^{(v_2)}\ \mbox{for } i=1\kdots n.
\]
By \eqref{8.4}, we have $t_1\leq R^n$.

We further subdivide the subsets $S_{1j}$. Let $j\in\{ 1\kdots t_1\}$.
Divide the cube $[-1,1]^n$ into $t_2:=\big( [2/\eps ]+1\big)^n$
small subcubes of sidelength
\[
\frac{2}{[2/\eps ]+1}\leq \eps .
\]
Now divide $S_{1,j}$ into $t_2$ subsets $S_{1j1}\kdots S_{1j,t_2}$
such that two places $v_1,v_2$ belong to the same subset if
the two points
\[
\left( \frac{c_{1,v_1}}{\gamma_{v_1}}\kdots \frac{c_{n,v_1}}{\gamma_{v_1}}\right),\ \ 
\left( \frac{c_{1,v_2}}{\gamma_{v_2}}\kdots \frac{c_{n,v_2}}{\gamma_{v_2}}\right)
\]
belong to the same small subcube. In this way, we have divided $S_1$ into
\[
t_1t_2\leq R^n\left([2/\eps ]+1\right)^n\leq\left( 3R/\eps\right)^n
\]
subsets. Let $S_0$ consist of one element from each of the subsets.
Thus,
\begin{equation}\label{13.28}
s_0:=\# S_0\leq (3R/\eps )^n.
\end{equation}
Further, for each $v\in S_1$, there is $v_1\in S_0$ with
\[
L_i^{(v)}=L_i^{(v_1)},\ \ \ \
\left|\frac{c_{iv}}{\gamma_v}-\frac{c_{i,v_1}}{\gamma_{v_1}}\right|\leq \eps\ \ 
\mbox{for } i=1\kdots n.
\]
This implies that for every $v\in S_1$ there is $v_1\in S_0$ such that
\begin{eqnarray}
\label{13.29}
&&\widehat{L}_l^{(v)}=\widehat{L}_l^{(v_1)}\ \mbox{for } l=1\kdots N,
\\
\label{13.30}
&&\left|\frac{\widehat{c}_{lv}}{n\gamma_v}-\frac{\widehat{c}_{l,v_1}}{n\gamma_{v_1}}\right|\leq
\eps\ \mbox{for } l=1\kdots N.
\end{eqnarray}

We apply Lemma \ref{le:13.5} with the subset $S_0$ constructed above.
Condition \eqref{13.13} of this lemma is satisfied, in view 
of our assumption \eqref{13.19} on $m$, and of \eqref{13.28}.
Let $P$ be the non-zero polynomial from Lemma \ref{le:13.5}.
We show that this polynomial has all properties listed in our Proposition.

To prove (i), we first show that for every $\jj\in\Ur$, $v\in M_K\setminus\{ v_0\}$ with
\begin{equation}\label{13.31}
\sum_{h=1}^m\frac{1}{r_h}\left(\sum_{l=1}^N \widehat{c}_{lv}j_{hl}\right)\,>
2mn\eps\gamma_v
\end{equation}
we have 
\begin{equation}\label{13.32}
d_{\jj}^{(v)}({\bf a}_P)=0.
\end{equation}
For $v\in M_K\setminus (S_1\cup\{ v_0\})$ 
we have $c_{iv}=0$ for $i=1\kdots n$, whence $\gamma_v=0$ and $\widehat{c}_{lv}=0$
for $l=1\kdots N$, so there are no $\jj$ with \eqref{13.31}.
For $v\in S_0$ we have \eqref{13.32} for all $\jj$ with \eqref{13.14}, 
and so certainly for all $\jj$ with the weaker condition \eqref{13.31}.
Finally, let $v\in S_1\setminus S_0$ and take $\jj\in\Ur$ with
\eqref{13.31}. 
Take $v_1\in S_0$ with \eqref{13.29}, \eqref{13.30}.
Condition \eqref{13.29} implies that $d_{\jj}^{(v)}({\bf a}_P)=d_{\jj}^{(v_1)}({\bf a}_P)$,
hence it suffices to show that $d_{\jj}^{(v_1)}({\bf a}_P)=0$.
Now condition \eqref{13.31} together with \eqref{13.30} implies
\[
\sum_{h=1}^m\frac{1}{r_h}
\left(\sum_{l=1}^N \frac{\widehat{c}_{l,v_1}}{n\gamma_{v_1}}\cdot j_{hl}\right)
\geq
\sum_{h=1}^m\frac{1}{r_h}\left(\sum_{l=1}^N \frac{\widehat{c}_{lv}}{n\gamma_{v}}
\cdot j_{hl}\right)-\eps\displaystyle{\sum_{h=1}^m\sum_{l=1}^N\frac{j_{hl}}{r_h}}
\,> m\eps .
\]
Hence $\jj ,v_1$ satisfy \eqref{13.14} and so $d_{\jj}^{(v_1)}({\bf a}_P)=0$ by
Lemma \ref{le:13.5}. This shows \eqref{13.32} for $v\in M_K\setminus\{ v_0\}$.

We now prove (i). Let $\ii\in\Zz_{\geq 0}^{mN}$ be a tuple with \eqref{13.20}
and let $v\in M_K\setminus\{ v_0\}$. Using expression \eqref{13.11} for $P$,
we infer that $P_{\ii}$ is a $K$-linear combination of polynomials
\[
\sum_{\jj\in\Ur} d_{\jj}^{(v)}({\bf a}_P)\binom{\jj}{\kk}
\prod_{h=1}^M\prod_{l=1}^N \widehat{L}_{l}(\XX_h)^{j_{hl}-k_{hl}}
\]
taken over tuples $\kk\in\Zz_{\geq 0}^{mN}$ with
\begin{equation}\label{13.33}
\sum_{h=1}^m\frac{1}{r_h}\left(\sum_{j=1}^N k_{hl}\right)\leq 2m\eps .
\end{equation}
Hence if $\jj\in\Uri$,
then $d_{\ii ,\jj}^{(v)}({\bf a}_P)$ is a $K$-linear
combination of terms $d_{\jj +\kk}^{(v)}({\bf a}_P)$, over tuples $\kk$
with \eqref{13.33}. Now take $\jj\in\Uri$ and suppose that $\jj$ satisfies 
\eqref{13.22}. 
Then for all $\kk$ with \eqref{13.33} we have
$\jj +\kk\in\Ur$ and moreover, by \eqref{11.14a},
\[  
\sum_{h=1}^m \frac{1}{r_h}\left(\sum_{l=1}^N \widehat{c}_{lv}(j_{hl}+k_{hl})\right)
> 4mn\eps\gamma_v-2mn\eps\gamma_v=2mn\eps\gamma_v.
\]
i.e., $\jj +\kk$ satisfies \eqref{13.31}. So for all
$\kk$ with \eqref{13.33} we have that
$\jj +\kk$ satisfies \eqref{13.32}, i.e., $d_{\jj +\kk}^{(v)}({\bf a}_P)=0$.
This implies that $d_{\ii ,\jj}^{(v)}({\bf a}_P)=0$.
This proves (i).

The proof of (ii) follows the same lines, using part (ii) of Lemma \ref{le:13.5}
instead of \eqref{13.32}.
(iii) is merely a copy of part (iii) of Lemma \ref{le:13.5}.

It remains to prove (iv). Let $\ii$ satisfy \eqref{13.20} and let $\jj\in\Uri$.
Then by Lemma \ref{le:13.4},
\[
\| d_{\ii ,\jj}^{(v)}\|_{v,1}\leq \big(\, (6N^2)^{s(v)}C_v \big)^{r_1+\cdots +r_m},
\]
and so
\[
\| d_{\ii ,\jj}^{(v)}({\bf a}_P)\|_v\leq \| d_{\ii ,\jj}^{(v)}\|_{v,1}\cdot \|{\bf a}_P\|_v
\leq 
\|{\bf a}_P\|_v\cdot\big(\, (6N^2)^{s(v)}C_v \big)^{r_1+\cdots +r_m}
\]
for $v\in M_K$, where by Lemma \ref{le:11.4} we have
\[
\prod_{v\in M_K} C_v\leq\HL^{R^n} .
\]
By taking the product over $v\in M_K$, using (iii), $N\leq 2^{n-1}$
we obtain
\begin{eqnarray*}
\prod_{v\in M_K} \max_{\jj\in\Uri}\| d_{\ii ,\jj}^{(v)}({\bf a}_P)\|_v &\leq& 
C_K\big( 2^{3n}\HL^{R^n}\cdot 6N^2\HL^{R^n}\big)^{r_1+\cdots +r_m}
\\
&\leq& C_K\big( 2^{6n}\HL^{2R^n}\big)^{r_1+\cdots +r_m}.
\end{eqnarray*}
This proves (iv). 
\end{proof}

\section{Proof of Theorem \ref{th:8.1}}\label{14}

We keep the notation and definitions from the previous sections.
Assume that Theorem \ref{th:8.1} is false. 
Define the following parameters:
\begin{equation}\label{14.1}
\eps := \frac{\delta}{11n^2 2^{n-1}},\ \ m:=\left[ 2n\eps^{-2}\log (4R/\eps )\right]+1. 
\end{equation}
Notice that
\begin{equation}\label{14.1a}
nm\leq n+\,2\cdot 11^2 n^6 2^{2n-2}\delta^{-2}\log (4\cdot 11n^2 2^{n-1} R/\delta )\leq m_2.
\end{equation}
Hence by Lemma \ref{le:10.3}, there exist $k\in\{ 1\kdots n-1\}$,
and reals $Q_1\kdots Q_m$, such that
\begin{eqnarray}
\label{14.2}
&\hspace*{0.5cm} Q_1\geq C_2,&
\\
\label{14.3}
&\hspace*{0.5cm} Q_{h+1}>Q_h^{\omega_2}\ \ (h=1\kdots m-1),&
\\
\label{14.4}
&\hspace*{0.5cm} \lambda_1(Q_h)\leq Q_h^{-\delta},\ \lambda_k(Q_h)\leq Q_h^{-\delta /(n-1)}\lambda_{k+1}(Q_h)
\ \ (h=1\kdots m).&
\end{eqnarray}
Put
\[
N:=\binom{n}{k}.
\]
For $h=1\kdots m$, let $\widehat{\hh}_{h1}:=\widehat{\hh}_1(Q_h)\kdots 
\widehat{\hh}_{h,N-1}:=\widehat{\hh}_{N-1}(Q_h)$ be linearly independent
vectors from $\OQq^N$ satisfying \eqref{11.13} with $Q=Q_h$.
By the remark following \eqref{11.13}, we may take for the field $E$ any finite extension
of $K$ containing the coordinates of $\widehat{\hh}_{hj}$ for $h=1\kdots m$,
$j=1\kdots N-1$. Thus, we have for $h=1\kdots m$, $l=1\kdots N$, $j=1\kdots N-1$,
\begin{equation}\label{14.5}
\left\{
\begin{array}{l}
\| \widehat{L}_l^{(w)}(\widehat{\hh}_{hj})\|_w\leq Q_h^{\widehat{c}_{lw}}\ 
(w\in M_E,\, w\nmid v_0),
\\[0.15cm]
\|\widehat{L}_l^{(w)}(\widehat{\hh}_{hj})\|_w\leq Q_h^{\widehat{c}_{lw}(Q_h)}\ (w\in M_E,\, w\mid v_0).
\end{array}\right. 
\end{equation}
For $h=1\kdots m$, denote by $\widehat{T}_h$ the $\OQq$-vector space generated by
$\widehat{\hh}_{h1}$$\kdots$$\widehat{\hh}_{h,N-1}$, and define the grid
\begin{equation}\label{14.grid}
\Gamma_h:=\left\{ \sum_{j=1}^N x_j\widehat{\hh}_{hj}:\, x_j\in\Zz ,\, |x_j|\leq N/\eps\ \mbox{for }
j=1\kdots N-1\right\}.
\end{equation}

Now choose a positive integer $r_1$ such that
\[
r_1>\frac{\eps^{-1}\log Q_m}{\log Q_1}
\]
and then integers $r_2\kdots r_m$ such that
\[
\frac{r_1\log Q_1}{\log Q_h}\leq r_h <1+\, \frac{r_1\log Q_1}{\log Q_h}\ \mbox{for } h=2\kdots m.
\]
Thus, $r_1\kdots r_m$ are all positive integers with
\begin{equation}\label{14.6}
Q_1^{r_1}\leq Q_h^{r_h}< Q_1^{r_1(1+\eps )}\ \mbox{for } h=2\kdots m.
\end{equation}
Further, by choosing $r_1$ sufficiently large as we may, we can guarantee that
\begin{equation}\label{14.7}
1.1^{r_1}>C_K.
\end{equation}

With our choice of $m$ in \eqref{14.1}, there exists a non-zero polynomial
$P$ with the properties listed in Proposition \ref{pr:13.6}.
We apply our non-vanishing result Proposition
\ref{pr:12.1} to $P$.
We verify the conditions of that proposition. 
Condition \eqref{12.3} is satisfied since by \eqref{14.6}, \eqref{14.3}, 
\eqref{8.definitions}, \eqref{14.1a},
\[
\frac{r_{h+1}}{r_h}\geq (1+\eps )^{-1}\frac{\log Q_{h+1}}{\log Q_h}
\geq (1+\eps )^{-1}m_2^{5/2}\geq 2m^2/\eps .
\]
\eqref{12.4} follows by combining the lower bound for $H_2(\widehat{T}_h)$
from Lemma \ref{le:11.5} with the lower bound $Q_1\geq C_2$ from \eqref{14.2}
and the upper bound for $H_2(P)$ from \eqref{13.26}. 
More precisely, we have for $h=1\kdots m$,
\begin{eqnarray*}
H_2(\widehat{T}_h)^{r_h}&\geq& Q_h^{r_h\delta/3R^n}\geq Q_1^{r_1\delta/3R^n}\ \ 
\mbox{by Lemma \ref{le:11.5}, \eqref{14.6}}
\\
&\geq& C_2^{r_1\delta/3R^n}=(2\HL )^{r_1\cdot m_2^{2m_2}\delta/3R^n}\ \ 
\mbox{by \eqref{14.2}, \eqref{8.definitions}}
\\
&\geq& \left((2\HL )^{(nm)^{2nm}\delta/3mR^n}\right)^{r_1+\cdots +r_m}
\ \ \mbox{by \eqref{14.1a}}
\\
&\geq&
\left(e\cdot 2^{4n}\HL^{R^n}\right)^{(N-1)(3m^2/\eps )^m(r_1+\cdots +r_m)}
\ \ \mbox{by \eqref{14.1}}
\\
&\geq& \left(e^{r_1+\cdots +r_m}H_2(P)\right)^{(N-1)(3m^2/\eps )^m}\ \ 
\mbox{by \eqref{13.26}, \eqref{14.7},}
\end{eqnarray*}
which is condition \eqref{12.4}.
 
Now we conclude from Proposition \ref{pr:12.1} that there exist a tuple
$\ii\in\Zz_{\geq 0}^{mN}$ such that
\[
\sum_{h=1}^m \frac{1}{r_h}\left(\sum_{l=1}^N i_{hl}\right)\leq 2m\eps
\]
and non-zero points ${\bf x}_h\in\Gamma_h$ ($h=1\kdots m$), such that
\[
P_{\ii}({\bf x}_1\kdots {\bf x}_m)\not= 0.
\]

We finish by showing that $\prod_{w\in M_E}\| P_{\ii}(\x_1\kdots\x_m)\|_w<1$.
Then by the Product Formula, $P_{\ii}(\x_1\kdots\x_m)=0$
which is against what we just proved.
Thus, our assumption that Theorem \ref{th:8.1} is false leads
to a contradiction.

We express $P_{\ii}$ as in \eqref{13.12} for $v\in M_K$.
We define in the usual manner, where in all cases $w\in M_E$
and $v$ is the place of $K$ below $w$:
\begin{eqnarray*}
&&\widehat{L}_l^{(w)}:=\widehat{L}_l^{(v)}\ \ (l=1\kdots N),
\\
&&\widehat{c}_{lw}:=d(w|v)\widehat{c}_{lv}\ \ 
(w\nmid v_0,\,\, l=1\kdots N),
\\
&&\widehat{c}_{lw}(Q_h):=d(w|v_0)\widehat{c}_{l,v_0}(Q_h)\ \ 
(w|v_0 ,\,\, l=1\kdots N),
\\ 
&&d_{\ii ,\jj}^{(w)}({\bf a}_P):=d_{\ii ,\jj}^{(v)}({\bf a}_P)\ \
(\jj\in\Uri ),
\end{eqnarray*}
and also
\[
\gamma_w:=\max_{1\leq i\leq n} c_{iw}.
\]
Then $\gamma_w=d(w|v)\max_{1\leq i\leq n} c_{iv}$ if $v$ is the place
of $K$ below $w$ and moreover, by \eqref{8.7} and $\sum_{w|v} d(w|v)=1$
for $v\in M_K$,
\begin{equation}\label{14.8}
\sum_{w\in M_E}\gamma_w \leq 1.
\end{equation} 
Now \eqref{13.12}, \eqref{13.22}, \eqref{13.24} imply that for $w\in M_E$
we have 
\begin{equation}\label{14.8a}
P_{\ii}=\sum_{\jj\in\Uw} d_{\ii ,\jj}^{(w)}({\bf a}_P)
\prod_{h=1}^m\prod_{l=1}^N \widehat{L}_l^{(w)}(\XX_h)^{j_{hl}},
\end{equation}
where for $w\in M_E$ with $w\nmid v_0$, $\Uw$ is the set of $\jj\in\Uri$
with
\begin{equation}\label{14.8b}
\sum_{h=1}^m \frac{1}{r_h}\left(\sum_{l=1}^N \widehat{c}_{lw}j_{hl}\right)
\leq 4mn\eps\gamma_w
\end{equation}
and for $w\in M_E$ with $w|v_0$, $\Uw$ is the set of $\jj\in\Uri$ with
\begin{equation}\label{14.8c}
\sum_{h=1}^m \frac{1}{r_h}\left(\sum_{l=1}^N \widehat{c}_{l,w}(Q_h)j_{hl}\right)
\leq  d(w|v_0)\left(-\frac{m\delta}{nN}\, +4mn\eps \right).
\end{equation}
Further, by \eqref{13.27}, \eqref{14.7} we have
\begin{eqnarray}\label{14.9}
&&\prod_{w\in M_E} A_w\leq \left( 2^{7n}\HL^{2R^n}\right)^{r_1+\cdots +r_m},
\\
\nonumber
&&
\qquad\mbox{with } A_w:=\max_{\jj\in\Uw}\| d_{\ii ,\jj}^{w)}({\bf a}_P)\|_w\
\mbox{for } w\in M_E.
\end{eqnarray}
Finally, we observe that by \eqref{14.5} we have for the points
$\x_h\in\Gamma_h$ ($h=1\kdots N$) and for $l=1\kdots N$,
\begin{equation}\label{14.10}
\left\{
\begin{array}{l}
\| \widehat{L}_l^{(w)}(\x_h)\|_w\leq N^{s(w)}Q_h^{\widehat{c}_{lw}}\ 
(w\in M_E,\, w\nmid v_0),
\\[0.15cm]
\|\widehat{L}_l^{(w)}(\x_h)\|_w\leq Q_h^{\widehat{c}_{lw}(Q_h)}\ (w\in M_E,\, w\mid v_0).
\end{array}\right. 
\end{equation}
where we have used that $w$ with $w|v_0$ is non-archimedean.

First, take $w\in M_E$ with $w\nmid v_0$. Then we have, 
in view of \eqref{14.8a}, \eqref{13.8}, \eqref{14.6}, \eqref{11.14a},
\eqref{14.8b},
\begin{eqnarray*}
&&
\| P_{\ii}(\x_1\kdots\x_m)\|_w
\leq V^{s(w)}A_w\cdot\max_{\jj\in\Uw}
\prod_{h=1}^m\prod_{l=1}^N \|\widehat{L}_l^{(w)}(\x_h)^{j_{hl}}\|_w
\\
&&\qquad
\leq A_w (eN^2)^{s(w)(r_1+\cdots +r_m)}\prod_{h=1}^m Q_h^{\sum_{l=1}^N \widehat{c}_{lw}j_{lw}}
\\
&&\qquad\leq
A_w(eN^2)^{s(w)(r_1+\cdots +r_m)}(Q_1^{r_1})^{\alpha_w}
\end{eqnarray*}
with
\begin{eqnarray*}
\alpha_w&\leq& \sum_{h=1}^m
\frac{1}{r_h}\left(\sum_{l=1}^N \widehat{c}_{lw}j_{lw}\right)
+\eps m\max_l |\widehat{c}_{lw}|
\\
&\leq& 
5\gamma_wmn\eps .
\end{eqnarray*}
So altogether, we have for $w\in M_E$ with $w\nmid v_0$,
\begin{equation}\label{14.11}
\| P_{\ii}(\x_1\kdots\x_m)\|_w\leq A_w(eN^2)^{s(w)(r_1+\cdots +r_m)}
(Q_1^{mr_1})^{5\gamma_wn\eps}.
\end{equation}
In a similar fashion we find for $w\in M_E$ with $w|v_0$,
using \eqref{14.8a}, \eqref{14.6}, \eqref{11.17}, \eqref{14.8c},
noting that now we don't have a factor $(eN^2)^{s(w)(r_1+\cdots +r_m)}$
since $w$ is non-archimedean,
\[
\| P_{\ii}(\x_1\kdots\x_m)\|_w\leq A_w(Q_1^{r_1})^{\alpha_w}
\]
with
\begin{eqnarray*}
\alpha_w&\leq& \sum_{h=1}^m
\frac{1}{r_h}\left(\sum_{l=1}^N \widehat{c}_{lw}(Q_h)j_{lw}\right)
+\eps m\max_{h,l} |\widehat{c}_{lw}(Q_h)|
\\
&\leq& 
d(w|v_0)\left(-\frac{m\delta}{nN}\, +5mn\eps\right).
\end{eqnarray*}
This gives for $w\in M_E$ with $w|v_0$,
\begin{equation}\label{14.12}
\|P_{\ii}(\x_1\kdots\x_m)\|_w\leq 
A_w\big(Q_1^{mr_1}\big)^{d(w|v_0)(-(\delta /nN)+5n\eps)}.
\end{equation} 
Now taking the product over $w\in M_E$, combining \eqref{14.11}, 
\eqref{14.12}, \eqref{14.9}, \eqref{14.8}, $\sum_{w|v_0} d(w|v_0)=1$, 
we obtain
\[
\prod_{w\in M_E}\| P_{\ii}(\x_1\kdots\x_m )\|_w\leq 
(eN^2\cdot 2^{7n}\HL^{2R^n})^{r_1+\cdots +r_m}
\big( Q_1^{mr_1}\big)^{10n\eps-\delta/nN}.
\]
By our choice of $\eps$ in \eqref{14.1}, and the inequalities
$n\geq 2$, $N\leq 2^{n-1}$,
the exponent on $Q_1^{mr_1}$ is $\leq -\delta /(11n\cdot 2^{n-1})$.
Together with \eqref{14.2} this implies
\[
\prod_{w\in M_E}\| P_{\ii}(\x_1\kdots\x_m )\|_w\leq 
\left(2^{9n}\HL^{2R^n}\cdot Q_1^{-\delta /11n\cdot 2^{n-1}}\right)^{mr_1}<1,
\]
as required. This completes the proof of Theorem \ref{th:8.1}.

\section{Construction of a filtration}\label{15}

\noindent
We construct a vector space filtration
which is an adaptation of the Harder-Narasimhan filtration
constructed in \cite{Faltings-Wuestholz1994}.

Let $K\subset\OQq$ be an algebraic number field, and $n$ an integer
which we now assume $\geq 1$ instead of $\geq 2$.
Further,
let $\LL =(L_i^{(v)}:\, v\in M_K ,\, i=1\kdots n)$ be a tuple of linear forms
and $\cc =(c_{iv}:\, v\in M_K,\, i=1\kdots n)$ a tuple of reals,
satisfying \eqref{2.10a}--\eqref{2.7a}. 

Let $w_v=w_{\mathcal{L},\cc ,v}$ ($v\in M_K$) be the local weight
functions on the collection of linear subspaces of $\OQq^n$,
defined by \eqref{2.weightv-space}. Then the global weight function
is given by $w=w_{\mathcal{L},\cc}=\sum_{v\in M_K} w_v$.
 
We give some convenient expressions for the local weights $w_v$. 
For $v\in M_K$ we reorder the indices $1\kdots n$ in such a way that
\begin{equation}\label{15.2}
c_{1v}\leq \cdots \leq c_{nv}\ \mbox{for } v\in M_K.
\end{equation}
Let $U$ be a $k$-dimensional linear subspace of $\OQq^n$.
Let $v\in M_K$.
Define 
\begin{equation}\label{15.1}
\left\{\begin{array}{l}
\mbox{$I_v(U):=\emptyset$ if $k=0$,} 
\\
\mbox{$I_v(U):=\{ i_1(v)\kdots i_k(v)\}$ if $k>0$,}
\end{array}\right. 
\end{equation}
where
$i_1(v)$ is the smallest index $i\in\{ 1\kdots n\}$
such that 
$L_i^{(v)}|_U\not =0$, and for $l=2\kdots k$,  
$i_l(v)$ is the smallest index $i>i_{l-1}(v)$ in $\{ 1\kdots n\}$ 
such that $L_{i_1(v)}^{(v)}|_U\kdots L_{i_{l-1}(v)}^{(v)}|_U,\, L_i^{(v)}|_U$ 
are linearly independent. 
Then
\begin{equation}\label{15.6}
w_v(U)=\sum_{i\in I_v(U)} c_{iv}.
\end{equation}
It is not difficult to show that $I_v(U_1)\subseteq I_v(U_2)$ if 
$U_1$ is a linear subspace of $U_2$.

Define the linear subspaces of $\OQq^n$,
\begin{eqnarray*}
&&U_{0v}:=\OQq^n ,
\\
\nonumber 
&&U_{iv}:=\{ \x\in \OQq^n:\, L_1^{(v)}(\x)=\cdots = L_i^{(v)}(\x )=0\} \ \ 
(v\in M_K,\,\,i=1\kdots n).
\end{eqnarray*}
Then
\begin{eqnarray}
\label{15.3}
w_v(U)&=&\sum_{i=1}^n 
c_{iv}\big(\dim (U\cap U_{i-1,v})-\dim (U\cap U_{iv})\big)
\\
\nonumber
&=&
c_{1v}\dim U\, 
+\sum_{i=1}^n (c_{i+1,v}-c_{iv})\dim (U\cap U_{iv}).
\end{eqnarray}

\begin{lemma}\label{le:15.1}
For any two linear subspaces $U_1,U_2$ of $\OQq^n$ we have
\[
w(U_1\cap U_2)+w(U_1+U_2)\geq w(U_1)+w(U_2).
\]
\end{lemma}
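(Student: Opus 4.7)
The plan is to prove the inequality locally at each place $v \in M_K$, that is,
$$w_v(U_1 \cap U_2) + w_v(U_1 + U_2) \;\geq\; w_v(U_1) + w_v(U_2) \qquad (v \in M_K),$$
and then sum over $v$. The local inequality will be derived from the Abel-summation formula \eqref{15.3}, combined with a standard submodularity property of dimensions of intersections.

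First I would fix $v$ and assume, as in \eqref{15.2}, that $c_{1v} \leq \cdots \leq c_{nv}$, so that the differences $c_{j+1,v} - c_{jv}$ for $j=1,\ldots,n-1$ are all non-negative. Using the flag of subspaces $U_{jv}$ defined before \eqref{15.3}, formula \eqref{15.3} yields
$$w_v(U) \;=\; c_{1v}\dim U \,+\, \sum_{j=1}^{n-1}(c_{j+1,v}-c_{jv})\dim(U \cap U_{jv}),$$
on noting that $U_{nv} = \{{\bf 0}\}$ because the linear forms $L_1^{(v)},\ldots,L_n^{(v)}$ are linearly independent.

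The key geometric input is the following: for any linear subspace $W$ of $\OQq^n$ and any $U_1,U_2$,
$$\dim((U_1 \cap U_2)\cap W) + \dim((U_1+U_2)\cap W) \;\geq\; \dim(U_1\cap W) + \dim(U_2\cap W).$$
This follows by observing that $(U_1\cap U_2)\cap W = (U_1\cap W)\cap (U_2\cap W)$ and $(U_1+U_2)\cap W \supseteq (U_1\cap W) + (U_2\cap W)$, and then applying the identity $\dim A + \dim B = \dim(A\cap B) + \dim(A+B)$ to $A = U_1\cap W$, $B = U_2 \cap W$. For $W = \OQq^n$ this gives the equality $\dim(U_1\cap U_2) + \dim(U_1+U_2) = \dim U_1 + \dim U_2$, which handles the $c_{1v}\dim U$ term, and for $W = U_{jv}$ it handles each remaining term.

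Since each coefficient $c_{j+1,v}-c_{jv}$ is non-negative, I can multiply the dimension inequality through by $c_{j+1,v}-c_{jv}$ and sum over $j=1,\ldots,n-1$, then add $c_{1v}$ times the identity for $W=\OQq^n$. This yields the desired local inequality. Finally, summing over $v \in M_K$ (only finitely many terms are non-zero by \eqref{2.7a}) gives the global inequality $w(U_1\cap U_2) + w(U_1 + U_2) \geq w(U_1) + w(U_2)$. There is no substantial obstacle; the only care needed is the choice of ordering \eqref{15.2} so that the weights $c_{j+1,v}-c_{jv}$ in the Abel-summed expression for $w_v$ are non-negative, allowing the dimension submodularity to translate directly into weight submodularity.
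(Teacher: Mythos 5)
Your proof is correct and follows essentially the same route as the paper: reduce to a local inequality at each place $v$, use the Abel-summed expression \eqref{15.3} for $w_v$ together with the non-negativity of the increments $c_{j+1,v}-c_{jv}$ from the ordering \eqref{15.2}, and conclude via the dimension identity and the inclusion $(U_1+U_2)\cap W\supseteq (U_1\cap W)+(U_2\cap W)$. The only difference is that you spell out the submodularity step for each $W=U_{jv}$ explicitly, which the paper leaves implicit.
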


\begin{proof}
Let $U_1,U_2$ be two linear subspaces of $\OQq^n$. 
It clearly suffices to show that for any $v\in M_K$, we have
\begin{equation}\label{15.5}
w_v(U_1\cap U_2)+w_v(U_1+U_2)\geq w_v(U_1)+w_v(U_2).
\end{equation}
But this follows easily 
by combining \eqref{15.3}
with $c_{i+1,v}-c_{iv}\geq 0$ for $i=1\kdots n-1$
and
\begin{eqnarray*}
&&\dim (U_1\cap U_2)+\dim (U_1+U_2)=\dim U_1 +\dim U_2,
\\
&&U\cap (U_1+U_2)\supseteq (U\cap U_1)+(U\cap U_2)
\end{eqnarray*}
for any three linear subspaces $U,U_1,U_2$ of $\OQq^n$.
\end{proof}

For any two linear subspaces 
$U_1,U_2$ of $V$ with $\dim U_1<\dim U_2$, we define
\begin{equation}\label{15.slope}
\left\{\begin{array}{l}
d(U_2,U_1):= \dim U_2-\dim U_1,\\[0.1cm]
w(U_2,U_1)=w_{\mathcal{L} ,\cc}(U_2,U_1):= w_{\mathcal{L} ,\cc}(U_2)-w_{\mathcal{L} ,\cc}(U_1),
\\[0.2cm]
\mu (U_2,U_1)=\mu_{\mathcal{L} ,\cc}(U_2,U_1):=\displaystyle{\frac{w(U_2,U_1)}{d(U_2,U_1)}}.
\end{array}\right.
\end{equation}
We prove the following lemma.

\begin{lemma}\label{le:15.between}
Let $V$ be a linear subspace of $\OQq^n$, defined over $K$.
\\[0.1cm]
(i) There exists a unique proper linear subspace $T$ of $V$ such that
\begin{eqnarray*}
&&\mbox{$\mu (V,T)\leq \mu (V,U)$ for every proper linear subspace $U$ of $V$,}
\\
&&\mbox{subject to this constraint, $T$ has minimal dimension.} 
\end{eqnarray*}
This space $T$ is defined over $K$.
\\[0.1cm]
(ii) Let $T$ be as in (i) and let $U$ be any other proper linear subspace of $V$. 
Then $\mu (V,U\cap T)\leq\mu (V,U)$.
\end{lemma}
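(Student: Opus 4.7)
My strategy rests entirely on the submodularity of \lemref{le:15.1}, which in terms of the shifted quantities $w(V,\cdot):=w(V)-w(\cdot)$ from \eqref{15.slope} becomes the subadditivity
\[
w(V,U_1\cap U_2)+w(V,U_1+U_2)\leq w(V,U_1)+w(V,U_2),
\]
together with the exact identity $d(V,U_1\cap U_2)+d(V,U_1+U_2)=d(V,U_1)+d(V,U_2)$. I will regard $(d(V,W),w(V,W))$ as a point in the plane and $\mu(V,W)$ as its slope through the origin; this geometric picture makes the case distinctions below transparent.

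For existence in (i), I would observe that $\mu(V,\cdot)$ takes only finitely many values on the proper subspaces of $V$. Indeed, by \eqref{2.7a} only finitely many places $v$ contribute to $w$, and each local weight $w_v(U)$ is the minimum in \eqref{2.weightv-space} over at most $\binom{n}{\dim U}$ choices, so it is one of finitely many sums of the finitely many $c_{iv}$; hence $w(U)$ ranges over a finite set, while $\dim U$ takes at most $\dim V$ values. So the infimum $\mu_{\min}$ of $\mu(V,\cdot)$ is attained, and among its minimizers I pick one, $T$, of minimal dimension $d_{\min}$.

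For uniqueness and $K$-rationality, suppose $T_1,T_2$ both satisfy the two conditions. Then $\dim T_i=d_{\min}$ and $w(T_1)=w(T_2)$; if $T_1\neq T_2$ then $T_1\cap T_2$ is a proper subspace of $V$ of dimension less than $d_{\min}$. I split on whether $T_1+T_2=V$ or $T_1+T_2\subsetneq V$. In the first case $w(V,T_1+T_2)=d(V,T_1+T_2)=0$ and subadditivity alone yields $\mu(V,T_1\cap T_2)\leq\mu_{\min}$. In the second, substituting $\mu(V,T_1+T_2)\geq\mu_{\min}$ (the defining property of $T$) into the subadditive bound yields the same conclusion. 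Either way, $\mu(V,T_1\cap T_2)=\mu_{\min}$ at a strictly smaller dimension, contradicting minimality of $d_{\min}$, so $T_1=T_2$. For $K$-rationality: each $\sigma\in G_K$ fixes $V$ and preserves linear independence of the $L_i^{(v)}$-restrictions (the coefficients of $L_i^{(v)}$ lie in $K$), so $\dim\sigma(U)=\dim U$ and $w(\sigma(U))=w(U)$ for every $U$. Hence $\sigma(T)$ is another minimal-dimension minimizer, and uniqueness forces $\sigma(T)=T$.

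For (ii), the inequality is trivial if $U\subseteq T$, so assume $U\cap T$ is a proper subspace of $V$ of dimension strictly less than $\dim U$. Subadditivity gives
\[
w(V,U\cap T)\leq w(V,U)+w(V,T)-w(V,U+T).
\]
In both cases $U+T=V$ (where $w(V,U+T)=d(V,U+T)=0$) and $U+T\subsetneq V$ (where $w(V,U+T)\geq\mu_{\min}\,d(V,U+T)$ by the defining property of $T$), using $w(V,T)=\mu_{\min}\,d(V,T)$, $\mu_{\min}\leq\mu(V,U)$, and $d(V,T)\geq d(V,U+T)$, a one-line manipulation gives
\[
w(V,U\cap T)\leq\mu(V,U)\bigl(d(V,U)+d(V,T)-d(V,U+T)\bigr)=\mu(V,U)\,d(V,U\cap T),
\]
which is the desired $\mu(V,U\cap T)\leq\mu(V,U)$. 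The only delicate point in both parts is the uniform treatment of the two subcases $U+T=V$ versus $U+T\subsetneq V$ (analogously for $T_1+T_2$ in the uniqueness step); the slope-geometric viewpoint makes the underlying algebra essentially automatic once one writes out the defining inequality for $T$ in the two regimes.
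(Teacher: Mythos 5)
Your proof is correct and follows essentially the same route as the paper: existence via finiteness of the values of $\mu(V,\cdot)$, uniqueness and part (ii) via the submodularity of Lemma \ref{le:15.1}, and $K$-rationality via Galois invariance of $w$ plus uniqueness. The only cosmetic difference is that the paper proves (ii) first and deduces uniqueness from it (handling the two cases $U+T=V$ and $U+T\propersubset V$ uniformly, since $w(V,W)\geq\mu\, d(V,W)$ holds trivially when $W=V$), whereas you run the same computation twice with an explicit case split.
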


\begin{proof}
Obviously, there exists a proper linear subspace $T$ of $V$ with (i) 
since $\mu (\cdot ,\cdot )$ assumes only finitely many values.
We prove first that $T$ satisfies (ii), 
and then that $T$ is uniquely determined and defined over $K$.
Put $\mu :=\mu (V,T)$.
Then by Lemma \ref{le:15.1} and since $\mu (V,W)\geq\mu$ 
for any proper linear subspace $W$ of $V$,
\begin{eqnarray*}
w(V,U\cap T) 
&\leq& w(V,U)+w(V,T)-w(V,T+U)
\\
&\leq& w(V,U)+\mu d(V,T)-\mu d(V,T+U)
\\
&=& \mu (V,U)d(V,U) +\mu d(T+U,T)
\\
&\leq& \mu (V,U) (d(V,U)+d(T+U,T))
\\
&=&\mu (V,U)d(V,U\cap T).
\end{eqnarray*} 
This clearly proves (ii).

Now suppose that there exists another subspace $T'$ with (i),
i.e., $\mu (V,T')=\mu$ and $\dim T'=\dim T$. By (ii) we have
$\mu (V,T\cap T')\leq \mu (V,T')=\mu$. By the definition of $\mu$
and the minimality of $\dim T$ we must have $T\cap T'=T=T'$.

It remains to prove that $T$ is defined over $K$.
Let $\sigma\in G_K$. Since $V$ is defined over $K$ and all linear forms
$L_i^{(v)}$ have their coefficients in $K$, we have $\mu (V,\sigma (T))=\mu (V,T)=\mu$,
while $\dim \sigma (T) =\dim T$. So by what we just proved, $\sigma (T)=T$.
This holds for arbitrary $\sigma$, hence $T$ is defined over $K$.
\end{proof}

\noindent
{\bf Remark.} In the situation of Section \ref{2} we have $V=\OQq^n$, $w(\OQq^n)=0$, 
and thus, the subspace $T=T(\mathcal{L} ,\cc )$ defined by \eqref{2.scss}
is precisely the subspace from (i).

In a special case we can give more precise information about the subspace $T$.

\begin{lemma}\label{le:15.special}
Let $V=\OQq^n$ and let $T$ be the subspace from Lemma \ref{le:15.between} (i).
Suppose that 
\begin{equation}\label{15.101}
\bigcup_{v\in M_K}\{ L_1^{(v)}\kdots L_n^{(v)}\}\subseteq\{ X_1\kdots X_n,X_1+\cdots +X_n\}.
\end{equation}
Then there are non-empty, pairwise disjoint subsets $I_1\kdots I_p$ of $\{ 1\kdots n\}$
such that
\begin{equation}\label{15.102}
T=\{ \x\in\OQq^n:\, \sum_{j\in I_i} x_j=0\  \mbox{for } j=1\kdots p\}.
\end{equation}
\end{lemma}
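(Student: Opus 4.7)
The plan is to prove this by induction on $n$, combining the uniqueness and extremality of $T$ from Lemma~\ref{le:15.between} with the restricted structure of the linear forms under hypothesis \eqref{15.101}. First I would strengthen the Galois argument in the proof of Lemma~\ref{le:15.between}(i): since each form in $\{X_1,\ldots,X_n,X_1+\cdots+X_n\}$ has rational coefficients, the matroid of restrictions $\{X_j|_U,(X_1+\cdots+X_n)|_U\}$ and hence the weight $w_{\mathcal L,\cc}(U)$ and dimension of $U$ are preserved under every $\sigma\in G_{\mathbb Q}$. The uniqueness of $T$ therefore forces $\sigma(T)=T$ for all $\sigma\in G_{\mathbb Q}$, so $T$ is defined over $\mathbb Q$ and $T^{\perp}\subseteq\mathbb Q[X_1,\ldots,X_n]^{\lin}$ admits a $\mathbb Q$-basis.

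The base case $n=1$ is trivial since $\{0\}=V_{\{1\}}$. For the inductive step I distinguish three cases according to which distinguished linear forms vanish on $T$. \emph{Case A:} if $J_0:=\{j:X_j|_T=0\}$ is nonempty, then $T\subseteq\bigcap_{j\in J_0}\{x_j=0\}\cong\OQq^{I_0}$ with $I_0:=\{1,\ldots,n\}\setminus J_0$. Since $(X_1+\cdots+X_n)$ restricts to $\sum_{j\in I_0}X_j$ on this coordinate subspace, the restricted linear forms again lie in the distinguished set $\{X_j:j\in I_0\}\cup\{\sum_{j\in I_0}X_j\}$ on $\OQq^{I_0}$, and the induction hypothesis gives a disjoint decomposition of $T$ in $\OQq^{I_0}$, which together with the singletons $\{j\}$ ($j\in J_0$) produces the claimed form. \emph{Case B:} if $X_j|_T\neq 0$ for all $j$ but $(X_1+\cdots+X_n)|_T=0$, then $T\subseteq V_{\{1,\ldots,n\}}$; parameterizing this hyperplane by $y_j=x_j$ ($j<n$) sends the distinguished forms (up to sign) to $\{Y_1,\ldots,Y_{n-1},Y_1+\cdots+Y_{n-1}\}$ on $\OQq^{n-1}$, matching the hypothesis, and induction gives $T=\bigcap_i V_{J_i}$ for disjoint $J_i\subseteq\{1,\ldots,n-1\}$. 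Lifting back via $\sum_{j=1}^n x_j=0$ yields the decomposition with the $J_i$'s together with the complementary set $K:=\{1,\ldots,n\}\setminus\bigcup_i J_i$ (which contains $n$), all pairwise disjoint.

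\emph{Case C}, in which $X_j|_T\neq 0$ for every $j$ and $(X_1+\cdots+X_n)|_T\neq 0$, is the hard case and constitutes the main obstacle. Here neither a single coordinate form nor $X_1+\cdots+X_n$ vanishes on $T$, so the inductive reduction strategies of Cases A and~B are not directly available. I would argue by contradiction: assume $T^{\perp}$ contains no form of the shape $\sum_{j\in I}X_j$. Using Lemma~\ref{le:15.between}(ii), I would construct, for each nontrivial $f=\sum a_jX_j\in T^{\perp}$, an auxiliary subspace $U$ built from the positive- and negative-support parts of~$f$ (or from well-chosen subsets compatible with the matroid structure) for which $\mu(V,T\cap U)$ can be shown to be strictly less than $\mu(V,T)$ or to force $T\cap U=T$ with a proper subspace of the claimed form contained in~$U$. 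The first alternative contradicts the extremality of $T$; the second reduces Case~C to one of the previous cases. The delicate point in this reduction is to exploit the sorted ordering \eqref{15.2}: under this ordering the greedy selection of $I_v(U)$ in \eqref{15.6} picks the indices with the smallest $c_{iv}$, so replacing $T$ by a more ``combinatorially aligned'' subspace can only increase each $w_v$, and hence one should be able to show that a non-$0/1$ relation in $T^{\perp}$ can always be refined into a $0/1$ relation without worsening~$\mu$, violating the minimality of~$\dim T$.
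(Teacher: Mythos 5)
Your proposal has a genuine gap at exactly the point where the lemma's content lies. Case C (no coordinate form and not $X_1+\cdots+X_n$ vanishing on $T$) is unavoidable --- e.g.\ $n=3$, $T=\{x_1+x_2=0\}$ already falls into it --- and for that case you offer only a plan (``I would construct an auxiliary subspace $U$ \dots for which $\mu(V,T\cap U)$ can be shown to be strictly less than $\mu(V,T)$ or to force \dots''), with no actual construction and no argument that either alternative occurs. Producing a $0/1$-coefficient relation in $T^{\bot}$ from the extremality of $T$ is precisely what has to be proved, so deferring it to a ``delicate point'' about refining a non-$0/1$ relation leaves the proof incomplete. There is also a secondary issue in Cases A and B: your induction applies the hypothesis to $T$ viewed inside a coordinate subspace or inside the hyperplane $\sum_j x_j=0$, but the induction hypothesis concerns the subspace that is extremal (in the sense of Lemma~\ref{le:15.between}(i)) for the restricted system, and you do not verify that $T$ retains this extremality after restriction; the quantifier ranges over different families of subspaces, so this needs an argument.

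For comparison, the paper's proof avoids any case analysis. It forms the homogenized annihilator
$H=\{(u_0,\ldots,u_n):\ \sum_j u_jX_j-u_0\sum_jX_j\in T^{\bot}\}\subset\OQq^{n+1}$,
which contains $(1,\ldots,1)$, and shows $H$ is closed under coordinatewise multiplication: given ${\bf a}\in H$, shift it to ${\bf b}={\bf a}+c(1,\ldots,1)$ with all entries nonzero; the diagonal map $\varphi(\x)=(b_1x_1,\ldots,b_nx_n)$ then preserves the linear (in)dependence of the restrictions of the distinguished forms, hence $w(\varphi(T))=w(T)$, and the uniqueness in Lemma~\ref{le:15.between}(i) forces $\varphi(T)=T$, which translates into ${\bf b}\cdot{\bf u}\in H$ for ${\bf u}\in H$. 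A subalgebra of $\OQq^{n+1}$ containing $(1,\ldots,1)$ is cut out by equalities of coordinates, and this yields the disjoint sets $I_1,\ldots,I_p$. If you want to salvage your approach, the key missing ingredient is some analogue of this invariance of $T$ under diagonal maps built from elements of $T^{\bot}$; without it, Case C does not close.
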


\begin{proof}
Let $k:=\dim T$, $p:=n-k$.
Define the $\OQq$-linear subspace of $\OQq^{n+1}$:
\[
H:=\{ {\bf u}=(u_0\kdots u_n)\in\OQq^{n+1}:\, \sum_{j=1}^n u_jX_j\, -u_0\sum_{j=1}^n X_j\in T^{\bot}\}.
\]
Notice that $\dim H =p+1$ and $(1\kdots 1)\in H$.
We show that $H$ is closed under coordinatewise multiplication, i.e., $H$ is a sub-$\OQq$-algebra
of $\OQq^{n+1}$. This being done, it is not difficult to show that there are pairwise
disjoint subsets $I_0\kdots I_p$ of $\{ 0\kdots n\}$ such that $H$ is the set of ${\bf u}\in\OQq^{n+1}$
with $u_i=u_j$ for each pair $i,j$ for which there is $l\in \{ 0\kdots p\}$
with $i,j\in I_l$. This easily translates into \eqref{15.102}.      

Fix ${\bf a}=(a_0\kdots a_n)\in H$. Choose $c\in\OQq$ such that
$b_i:= a_i+c\not= 0$ for $i=0\kdots n$. Then ${\bf b}:=(b_0\kdots b_n)\in H$. 
Define the linear transformation 
\[
\varphi :\, \OQq^n\to\OQq^n :\, (x_1\kdots x_n)\mapsto (b_1x_1\kdots b_nx_n).
\]
In general, $\sum_{j=1}^n \xi_jX_j\in \varphi(T)^{\bot}$ if and only if 
$\sum_{j=1}^n b_j\xi_jX_i\in T^{\bot}$. Using this and ${\bf b}\in H$,
it follows that
for $(u_0\kdots u_n)\in \OQq^{n+1}$ we have
\begin{eqnarray}\label{15.103}
&&\sum_{j=1}^n u_jX_j -u_0\sum_{j=1}^n X_j\in \varphi (T)^{\bot} 
\\
\nonumber
&&\qquad\Longleftrightarrow
\sum_{j=1}^n b_ju_j X_j\,-u_0\sum_{j=1}^n b_jX_j\in T^{\bot}
\\
\nonumber
&&\qquad\Longleftrightarrow 
\sum_{j=1}^n b_ju_jX_j\, -b_0u_0\sum_{j=1}^n X_j \in T^{\bot}.
\end{eqnarray}
This implies for any $v\in M_K$ and any subset $\{ i_1\kdots i_k\}$ of $\{ 1\kdots n\}$,
that
$L_{i_1}^{(v)}|_{\varphi (T)}\kdots L_{i_k}^{(v)}|_{\varphi (T)}$ are linearly independent
if and only if $L_{i_1}^{(v)}|_T\kdots L_{i_k}^{(v)}|_T$ are linearly independent.
Consequently, $w(\varphi (T))=w(T)$ and thus,\\ $\mu (\OQq^n,\varphi (T))=\mu (\OQq^n, T)$.
Now Lemma \ref{le:15.between} (i) implies that $\varphi (T)=T$. 

Combined with \eqref{15.103},
this implies that if ${\bf u}\in H$, then ${\bf b}\cdot {\bf u}\in H$.
But then, ${\bf a}\cdot {\bf u}={\bf b}\cdot {\bf u}-c{\bf u}\in H$. This shows that $H$
is closed under coordinatewise multiplication and proves our lemma.
\end{proof}
\vskip0.1cm 
 
For every linear subspace $U$ of $\OQq^n$, 
we define the point $P(U)=P_{\mathcal{L} ,\cc}(U):=(\dim U, w(U))\in\Rr^2$.
In particular, $P(\{{\bf 0}\})=(0,0)$.
Notice that $\mu (U_2,U_1)$ defined by \eqref{15.slope} is precisely the slope of the line segment 
from $P(U_1)$ to $P(U_2)$.

Let again $V$ be a linear subspace of $\OQq^n$, defined over $K$.
Denote by $C(V,\mathcal{L} ,\cc )$ the upper convex hull of the points
$P(U)$ for all linear subspaces $U$ of $V$,
and by $B(V,\mathcal{L} ,\cc )$ the upper boundary of $C(V,\mathcal{L} ,\cc )$.
Thus, $B(V,\mathcal{L} ,\cc )$ is the graph of a piecewise linear, convex function 
from $[0,\dim V]$ to $\Rr$, and $C(V,\mathcal{L} ,\cc )$ is the set of points
on and below $B(V,\mathcal{L} ,\cc )$.
 
As long as it is clear which are the underlying tuples $\LL ,\cc$,
we suppress the dependence on these tuples in our notation,
i.e., we write $w,\mu,P$ for $w_{\mathcal{L} ,\cc},\mu_{\mathcal{L} ,\cc} ,P_{\mathcal{L},\cc}$.

\setlength{\unitlength}{0.7truecm}
\begin{picture}(18,9)
\thicklines

\put(0,2){\line(1,0){18}}
\put(18,1){\makebox(0,0)[r]{\vector(1,0){1.5}\,\raisebox{-0.1truecm}{dim}}}

\put(2,0){\line(0,1){9}}
\put(1,9){\makebox(0,0)[t]{\shortstack{$w$\\ \vector(0,1){1.5}}}}

\put(2,2){\circle*{0.2}}
\put(1.9,2.1){\makebox(0,0)[br]{$P(\{{\bf 0}\})$}}

\put(2,2){\line(1,1){4}}

\put(6,6){\circle*{0.2}}
\put(5.9,6.1){\makebox(0,0)[br]{$P(T_1)$}}

\put(6,6){\line(2,1){4}}

\put(10,8){\circle*{0.2}}
\put(9.9,8.1){\makebox(0,0)[br]{$P(T_2)$}}

\put(10,8){\line(1,0){1.5}}

\multiput(11.6,8)(0.2,-0.08){12}{\circle*{0.07}}

\put(14,7){\line(1,-1){1}}

\put(15,6){\circle*{0.2}}
\put(14.9,5.9){\makebox(0,0)[tr]{$P(T_{r-1})$}}

\put(15,6){\line(2,-3){2}}

\put(17,3){\circle*{0.2}}
\put(16.9,2.9){\makebox(0,0)[tr]{$P(V)$}}

\put(9,4){\circle*{0.2}}
\put(8.9,3.9){\makebox(0,0)[tr]{$P(U)$}}

\end{picture}

\begin{lemma}\label{le:15.3}
There exists a unique filtration
\begin{equation}\label{15.8}
\{ {\bf 0}\}\propersubset T_1\propersubset\cdots\propersubset T_{r-1}\propersubset T_r=V
\end{equation}
such that $P(\{{\bf 0}\}),\, P(T_1)\kdots P(T_{r-1}),\, P(V)$ 
are precisely the vertices of $B(V,\mathcal{L} ,\cc )$.
\\
The spaces $T_1\kdots T_{r-1}$ are defined over $K$.
\end{lemma}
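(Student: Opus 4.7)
The plan is to construct the filtration by iterating \lemref{le:15.between} and then to verify that the resulting points $P(T_i)$ are precisely the vertices of $B(V,\mathcal{L},\cc)$. Set $T_r := V$. Inductively, whenever $T_i \neq \{{\bf 0}\}$, apply \lemref{le:15.between} with $T_i$ in the role of ``$V$'' to produce a proper subspace $T_{i-1} \subsetneq T_i$, defined over $K$, that minimizes $\mu(T_i,\cdot)$ among proper subspaces of $T_i$ and has minimal dimension subject to this condition. The procedure terminates after finitely many steps; after relabeling, $T_0 = \{{\bf 0}\}$. Each $T_i$ is defined over $K$ because at every stage the ambient space is itself defined over $K$ and \lemref{le:15.between} is applied to it.

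Next, I would show that the slopes $\mu(T_{i+1},T_i)$ are strictly decreasing in $i$: for $1\leq i\leq r-1$ the quantity $\mu(T_{i+1},T_{i-1})$ is a weighted average of $\mu(T_{i+1},T_i)$ and $\mu(T_i,T_{i-1})$, and because $T_{i-1}$ has strictly smaller dimension than $T_i$, the minimal-dimension clause in \lemref{le:15.between} forces $\mu(T_{i+1},T_{i-1}) > \mu(T_{i+1},T_i)$ strictly, whence $\mu(T_i,T_{i-1}) > \mu(T_{i+1},T_i)$. Let $\Gamma$ be the resulting strictly concave polygonal curve through $P(T_0),\ldots,P(T_r)$ and let $g$ be the piecewise-linear function whose graph is $\Gamma$.

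The heart of the argument is to prove that $w(U)\leq g(\dim U)$ for every linear subspace $U\subseteq V$, which together with the strict concavity above identifies $\Gamma$ with $B(V,\mathcal{L},\cc)$ and the $P(T_i)$ with its vertices. I would proceed by induction on $r$: the case $r=1$ is just \lemref{le:15.between}(i) for $V$ (since then $T_{r-1}=\{{\bf 0}\}$). For the inductive step, the inductive hypothesis applied inside $T_{r-1}$ controls $w(U\cap T_{r-1})$, while \lemref{le:15.between}(i) for $V$ yields
\[
w(U+T_{r-1}) \leq w(V) - \mu(V,T_{r-1})\bigl(\dim V - \dim(U+T_{r-1})\bigr)
\]
(trivially an equality when $U+T_{r-1}=V$). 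Combining this with the submodularity relation of \lemref{le:15.1} one deduces
\[
w(U) \leq w(U\cap T_{r-1}) + \mu(V,T_{r-1})\bigl(\dim U - \dim(U\cap T_{r-1})\bigr).
\]
Since every slope of $g$ inside $[0,\dim T_{r-1}]$ strictly exceeds the final slope $\mu(V,T_{r-1})$, extending from $(\dim(U\cap T_{r-1}),g(\dim(U\cap T_{r-1})))$ along slope $\mu(V,T_{r-1})$ stays under $g$, giving $w(U)\leq g(\dim U)$. Uniqueness is then automatic: any other filtration with the same property shares the same vertex set of $B$, hence the same dimensions and weights, and at each level \lemref{le:15.between}(i) pins down the subspace uniquely.

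The delicate obstacle is precisely this last inductive step, where one has to combine the submodularity of $w$ with the strict concavity of $\Gamma$---in particular the fact that the final slope $\mu(V,T_{r-1})$ is strictly smaller than all preceding slopes---in order to transport the inductive bound on $w(U\cap T_{r-1})$ to a bound on $w(U)$ for $U$ not contained in $T_{r-1}$, without losing the margin that is needed to keep $P(U)$ below $\Gamma$.
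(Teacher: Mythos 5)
Your proposal is correct and follows essentially the same route as the paper: build the filtration by iterating Lemma \ref{le:15.between}, get strict decrease of the slopes from the minimal-dimension clause, and show every $P(U)$ lies under the polygon by passing through $U\cap T_{r-1}$. The only cosmetic difference is that you re-derive the key inequality for $U\not\subseteq T_{r-1}$ directly from the submodularity in Lemma \ref{le:15.1} plus the extremality of $T_{r-1}$, whereas the paper packages exactly this computation as Lemma \ref{le:15.between}(ii) and cites it.
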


\begin{proof}
The proof is by induction on $m:=\dim V$.
The case $m=1$ is trivial.
Let $m\geq 2$. 
There is only one candidate for the subspace in the filtration preceding $V$,
that is the subspace $T$ from Lemma \ref{le:15.between} (i).
This space $T$ is defined over $K$. 
By the induction hypothesis applied to $T$, 
there exists a unique filtration
\[
\{ {\bf 0}\}\propersubset T_1\propersubset\cdots\propersubset T_{r-1}=T
\]
such that $P(\{{\bf 0}\}),\, P(T_1)\kdots P(T_{r-1})$ 
are precisely the vertices of $B(T,\mathcal{L},\cc )$.
Moreover, $T_1\kdots T_{r-2}$ are defined over $K$.

We have to prove that together with $P(V)$ these points are the vertices of $B(V,\mathcal{L} ,\cc )$.
We first note that since $T_{r-2}\propersubset T_{r-1}$, we have 
$\mu (V,T_{r-2})>\mu (V,T_{r-1})$, hence
\[
\mu (T_{r-1},T_{r-2})=\frac{d(V,T_{r-2})\mu (V,T_{r-2})-d(V,T_{r-1})\mu (V,T_{r-1})}{d(T_{r-1},T_{r-2})}>\mu (V,T_{r-1}).
\]
Therefore, 
$P(\{ {\bf 0}\}),\, P(T_1)\kdots P(V)$ are the vertices of the graph of a 
piecewise linear convex function on $[0,m]$.
Let $C$ be the set of points on and below this graph. 
To prove that this graph is $B(V,\mathcal{L} ,\cc )$,
we have to show that $C$ contains all points $P(U)$ 
with $U$ a linear subspace of $V$.

If $U\subseteq T_{r-1}$ we have $P(U)\in C(T_{r-1},\LL ,\cc )\subset C$. 
Suppose that $U\not\subseteq T_{r-1}$.
Then by Lemma \ref{le:15.between} (ii), we have
$\mu (V,U\cap T_{r-1})\leq\mu (V,U)$.
Since $P(U\cap T_{r-1})\in C$, 
$\dim U\geq \dim U\cap T_{r-1}$ and $C$ is upper convex,
this implies that $P(U)\in C$. This completes our proof.
\end{proof}

\noindent
The filtration constructed above is called the \emph{filtration of } $V$
with respect to $(\mathcal{L} ,\cc )$.

\noindent
{\bf Remark.} 
The Harder-Narasimhan filtration introduced by Faltings and
W\"{u}stholz in \cite{Faltings-Wuestholz1994} is given by
$\{{\bf 0}\}\propersubset T_{r-1}'
\propersubset\cdots\propersubset {\rm Hom}(V,\OQq )$,
where for a linear subspace $T$ of $V$, we define $T'$
as the set of linear functions from $V$ to $\OQq$ that vanish
identically on $T$.

\section{The successive infima of a twisted height}\label{16}

As before, $K\subset\OQq$ is an algebraic number field, $n$ an integer 
$\geq 1$,
and $\LL$ a tuple of linear forms
and $\cc$ a tuple of reals satisfying \eqref{2.10a}--\eqref{2.7a}.
We denote as usual by $\lambda_1(Q)\kdots\lambda_n(Q)$ the successive infima
of $\HLcQ$. In this section, we prove a limit result for these successive
infima as $Q\to\infty$.

Define 
\begin{equation}\label{16.spaces}
T_i(Q):=\bigcap_{\lambda >\lambda_i(Q)}\span\{\x\in\OQq^n:\, 
\HLcQ (\x)\leq\lambda\}\ \ (i=1\kdots n).
\end{equation}
Let
\[
\{ {\bf 0}\}=: T_0\propersubset T_1\propersubset\cdots\propersubset T_{r-1}\propersubset T_r :=\OQq^n
\]
be the filtration of $\OQq^n$ with respect to $(\mathcal{L} ,\cc )$, 
as defined in Lemma \ref{le:15.3}, 
and put $d_l:=\dim T_l$ for $l=0\kdots r$.

Given any two linear subspaces $U,V$ of $\OQq^n$ with $\dim U<\dim V$,
we define again 
$\mu (V,U)=\mu_{\mathcal{L} ,\cc}(V,U):=\frac{w(V)-w(U)}{\dim V-\dim U}$. 

Our general result on the successive infima of $\HLcQ$ is as follows.

\begin{thm}\label{th:16.2}
For every $\delta >0$ there exists $Q_0$, such that for every $Q\geq Q_0$
the following holds:
\begin{eqnarray}
\label{16.4}
&&Q^{-\mu (T_l,T_{l-1})-\delta}\leq \lambda_i(Q)\leq
Q^{-\mu (T_l,T_{l-1})+\delta}
\\
\nonumber
&&
\hspace*{3truecm}\mbox{for $l=1\kdots r$, $i=d_{l-1}+1\kdots d_l$,}
\\
\label{16.5}
&&T_{d_l}(Q)=T_l\ \ \mbox{for } l=1\kdots r.
\end{eqnarray}
\end{thm}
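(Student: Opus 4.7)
The plan is to combine Theorem~\ref{th:8.1} (the semistable interval result) with Proposition~\ref{pr:9.1} (the Minkowski-type bound on the product of successive infima) by working along the Harder--Narasimhan-type filtration $\{{\bf 0}\} = T_0 \propersubset T_1 \propersubset \cdots \propersubset T_r = \OQq^n$ from Lemma~\ref{le:15.3}. Set $\mu_l := \mu(T_l, T_{l-1})$; by the construction of the upper boundary $B(\OQq^n, \LL, \cc)$ the $\mu_l$ are strictly decreasing in $l$, and moreover $\sum_{l=1}^r (d_l - d_{l-1})\mu_l = w(\OQq^n) = \alpha := \sum_{v\in M_K}\sum_{i=1}^n c_{iv}$.

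For the lower bounds in \eqref{16.4}, the idea is to apply Theorem~\ref{th:8.1} not to $(\LL, \cc)$ on $\OQq^n$ directly, but to an auxiliary $n_l$-dimensional system $(\LL^{(l)}, \cc^{(l)})$ associated with each quotient $T_l/T_{l-1}$. Concretely, for each $l$ one chooses a $K$-rational complement $V_l$ of $T_{l-1}$ in $T_l$ (possible since both are defined over $K$), restricts the linear forms $L_i^{(v)}$ to $V_l$ and extracts bases of size $n_l := d_l - d_{l-1}$ at each place, and shifts the exponents via Lemma~\ref{le:9.3} so that the resulting weight function vanishes on $\OQq^{n_l}$. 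The semistability \eqref{8.8} for the shifted pair on $\OQq^{n_l}$ is equivalent, via Lemma~\ref{le:15.between}, to the minimality of $\dim T_l$ in the filtration construction, hence is automatic. Theorem~\ref{th:8.1} then yields that $\lambda_1(\LL^{(l)}, \cc^{(l)}, Q)$ is eventually at least $Q^{-\delta'}$ for $Q$ beyond some effective threshold (since the exceptional union of intervals in Theorem~\ref{th:8.1} is finite), which unwinds to
\[
\lambda_i(Q) \geq Q^{-\mu_l - \delta} \quad \mbox{for } i = d_{l-1}+1, \ldots, d_l,
\]
provided $\delta'$ is taken small compared to $\delta$.

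For the matching upper bounds, I would invoke Proposition~\ref{pr:9.1}: $\prod_{i=1}^n \lambda_i(Q) \leq 2^{n(n-1)/2}\DL Q^{-\alpha}$. Using $\alpha = \sum_l n_l \mu_l$ together with the lower bounds already established for every index $j \neq i$, and isolating the factor $\lambda_i(Q)$ on the left, one obtains
\[
\lambda_i(Q) \leq 2^{n(n-1)/2}\DL Q^{-\alpha + \sum_{j \neq i}(\mu_{l(j)} + \delta)} = O\!\left(Q^{-\mu_{l(i)} + (n-1)\delta}\right),
\]
where $l(i)$ denotes the filtration index with $d_{l(i)-1} < i \leq d_{l(i)}$. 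Since $\DL$ is independent of $Q$, absorbing constants and readjusting $\delta$ gives the upper bound in \eqref{16.4}. Assertion \eqref{16.5} then follows easily: Proposition~\ref{pr:9.1} applied inside $T_l$ with the restricted system yields $d_l$ linearly independent vectors of $T_l$ of twisted height at most $Q^{-\mu_l + \delta}$ for $Q$ large, whence $T_{d_l}(Q) \supseteq T_l$ by \eqref{16.spaces}; the strict inequality $\mu_l > \mu_{l+1}$ means that once $\delta < (\mu_l - \mu_{l+1})/3$, the upper bound for $\lambda_{d_l}(Q)$ drops strictly below the lower bound for $\lambda_{d_l+1}(Q)$, so Lemma~\ref{le:9.0}(ii) forces $\dim T_{d_l}(Q) = d_l$, yielding $T_{d_l}(Q) = T_l$.

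The principal technical obstacle I anticipate lies in the rigorous construction of the auxiliary semistable pair $(\LL^{(l)}, \cc^{(l)})$ on each quotient $T_l/T_{l-1}$: one must verify that the twisted heights on $\OQq^n$ restricted to $T_l$ compare, up to factors bounded independently of $Q$, with the intrinsic twisted heights defined via $V_l$, and that the local weight functions $w_v$ decompose additively along the filtration in a way that precisely cancels the $T_{l-1}$-contribution after shifting by $\mu_l$. This additivity of $w_v$ and the resulting semistability of the quotient system ultimately rest on the strict convexity of the upper boundary $B(\OQq^n, \LL, \cc)$ combined with an iterated application of Lemma~\ref{le:15.between}(ii), and verifying the compatibility place by place (using the ordering \eqref{15.2}) is where the bulk of the work will lie.
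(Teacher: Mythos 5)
Your architecture is essentially the paper's, organized differently: the paper proves Theorem \ref{th:16.2} by induction on the length $r$ of the filtration, peeling off the top quotient $\OQq^n/T_{r-1}$ (which is semistable, so Theorem \ref{th:8.1} applies through Lemma \ref{le:16.1}), invoking the induction hypothesis for the restriction to $T_{r-1}$, and using the gap between consecutive slopes to show that the vectors realizing the first $d_{r-1}$ infima actually lie in $T_{r-1}$ — which simultaneously yields \eqref{16.5}; the remaining upper bound for $\lambda_n(Q)$ comes from Proposition \ref{pr:9.1}, exactly as in your last display. Your variant treats all graded pieces $T_l/T_{l-1}$ at once instead of inducting, and this does work, but the ``unwinding'' of the graded lower bound to $\lambda_i(Q)\geq Q^{-\mu_l-\delta}$ for $i>d_{l-1}$ needs one more sentence: among $d_{l-1}+1$ independent vectors of small twisted height, at least one lies in some $T_{l'}\setminus T_{l'-1}$ with $l'\geq l$, and then the monotonicity $\mu_{l'}\leq\mu_l$ closes the argument.

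The one step where your sketch, taken literally, would fail is the passage to the graded piece. Restricting the original forms $L_i^{(v)}$ to a $K$-rational complement $V_l$ of $T_{l-1}$ in $T_l$ does not give the comparison you need, which is the one-sided inequality ``$\HLcQ(\x)$ small for $\x\in T_l\setminus T_{l-1}$ implies the quotient height of the image of $\x$ is small'': with unmodified forms, $L_i^{(v)}(\x)$ can be small through cancellation between the $T_{l-1}$-component and the $V_l$-component of $\x$ while $L_i^{(v)}$ evaluated on the $V_l$-component alone is large. The paper's remedy is the elimination step \eqref{15.7}: replace $L_i^{(v)}$, for $i$ outside $I_v(T_{l-1})$, by $\widetilde L_i^{(v)}=L_i^{(v)}-\sum_{j}\alpha_{ijv}L_j^{(v)}$ (sum over $j\in I_v(T_{l-1})$ with $j<i$), which vanishes identically on $T_{l-1}$; the ordering \eqref{15.2} ensures the subtracted terms carry exponents $c_{jv}\leq c_{iv}$, which is precisely what makes the comparison constant independent of $Q$ (Lemma \ref{le:16.0}). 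With that substitution, the weight additivity you invoke is Lemma \ref{le:15.2}, and the semistability of each graded piece follows from the convexity of the upper boundary as you indicate; the rest of your argument then goes through.
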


We start with some preparations and lemmas. 
Fix a linear subspace $T$ of $\OQq^n$ 
of dimension $k\in\{ 1\kdots n-1\}$ which is defined over $K$. 
Choose an injective linear map 
\[
\varphi{'}:\, \OQq^k\hookrightarrow \OQq^n\ \ \mbox{with } \varphi{'}(\OQq^n)=T
\]
and a surjective linear map
\[
\varphi{''}:\, \OQq^n\twoheadrightarrow \OQq^{n-k}\ \ \mbox{with } 
{\rm Ker}(\varphi{''})=T,
\]
both defined over $K$.
Recall that for every linear form
$L\in K[X_1\kdots X_n]^{\lin}$ vanishing identically on $T$ 
there is a unique linear form $L{''}\in K[X_1\kdots X_{n-k}]^{\lin}$
such that $L=L{''}\circ\varphi{''}$; we denote this $L{''}$
by $L\circ\varphi{''}^{-1}$.

We assume \eqref{15.2}, which is no loss of generality.
For $v\in M_K$, let the set $I_v(T)$ be given by \eqref{15.1},
and define a tuple $\LL{'}$ from $K[X_1\kdots X_k]^{\lin}$
and a tuple of reals $\cc{'}$ by 
\begin{equation}\label{15.6a}
\left\{\begin{array}{l}
\mathcal{L} ' := (L_i^{(v)}\circ\varphi{'}:\, v\in M_K,\, i\in I_v(T)),\\
\cc ' := (c_{iv}:\, v\in M_K,\, i\in I_v(T)).
\end{array}\right.
\end{equation}

Let $v\in M_K$.
Since $L_j^{(v)}|_T$ ($j\in I_v(T)$) form a basis
of ${\rm Hom}(T,\OQq )$,
and since $T$ is defined over $K$, 
there are unique $\alpha_{ijv}\in K$
such that $L_i^{(v)}|_T =\sum_{j\in I_v(T)} \alpha_{ijv}L_i^{(v)}|_T$
for $i\in I_v(T)^c :=\{ 1\kdots n\}\setminus I_v(T)$.
By our definition of $I_v(T)$, we have $\alpha_{ijv}=0$ 
for $i\in I_v(T)^c,\, j\in I_v(T)$, $j>i$.
In other words, there are unique linear forms
\begin{equation}\label{15.7}
\widetilde{L}_i^{(v)}=L_i^{(v)}-\,\sum_{\stackrel{j\in I_v(T)}{j<i}}\alpha_{ijv}L_j^{(v)}\ \ (i\in I_v(T)^c) 
\end{equation}
with $\alpha_{ijv}\in K$ that vanish identically on $T$. 
These linear forms are linearly independent,
so they may be viewed as a basis of ${\rm Hom} (\OQq^n/T ,\OQq )$.
 
We now define a tuple $\LL{''}$ in $K[X_1\kdots X_{n-k}]^{\lin}$
and a tuple of reals $\cc{''}$ by
\begin{equation}\label{15.7a}
\left\{\begin{array}{l}
\mathcal{L}'':=( \widetilde{L}_i^{(v)}\circ\varphi{''}^{-1}:\, v\in M_K,\,i\in I_v(T)^c),\\
{\bf c}'' := (c_{iv}:\, v\in M_K,\, i\in I_v(T)^c).
\end{array}\right.
\end{equation}

Let $U$ be a linear subspace of $\OQq^k$ of dimension $u$, say.
Then
$w_{\mathcal{L}{'},\cc{'}}(U)=\sum_{v\in M_K} w_{\mathcal{L}{'},\cc{'},v}(U)$,
where in analogy to \eqref{15.6},
\begin{equation}\label{16.101}
w_{\mathcal{L}{'},\cc{'},v}(U)=
\left\{\begin{array}{l}
0\ \mbox{if $u=0$,}
\\
c_{i_1(v),v}+\cdots +c_{i_u(v),v}\ \mbox{if $u>0$,}
\end{array}\right.
\end{equation}
where $i_1(v)$ is the smallest index $i\in I_v(T)$ such that $L_i^{(v)}\circ\varphi{'}|_U\not= 0$ and for $l=2\kdots u$,
$i_l(v)$ is the smallest index $i>i_{l-1}(v)$ in $I_v(T)$ such that
$L_{i_1(v)}^{(v)}\circ\varphi{'}|_U\kdots L_{i_{l-1}(v)}^{(v)}\circ\varphi{'}|_U$, $L_i^{(v)}\circ\varphi{'}|_U$ are linearly independent.

Likewise, if $U$ is an $u$-dimensional linear subspace of $\OQq^{n-k}$, then
$w_{\mathcal{L}{''},\cc{''}}(U)=\sum_{v\in M_K} w_{\mathcal{L}{''},\cc{''},v}(U)$,
with
\begin{equation}\label{16.102}
w_{\mathcal{L}{''},\cc{''},v}(U)=
\left\{\begin{array}{l}
0\ \mbox{if $u=0$,}
\\
c_{i_1(v),v}+\cdots +c_{i_u(v),v}\ \mbox{if $u>0$,}
\end{array}\right.
\end{equation}
where $i_1(v)$ is the smallest index $i\in I_v(T)^c$ such that $\widetilde{L}_i^{(v)}\circ\varphi{''}^{-1}|_U\not= 0$ and for $l=2\kdots u$,
$i_l(v)$ is the smallest index $i>i_{l-1}(v)$ in $I_v(T)^c$ such that
$\widetilde{L}_{i_1(v)}^{(v)}\circ\varphi{''}^{-1}|_U\kdots \widetilde{L}_{i_{l-1}(v)}^{(v)}\circ\varphi{''}^{-1}|_U$, $\widetilde{L}_i^{(v)}\circ\varphi{''}^{-1}|_U$ are linearly independent.

\begin{lemma}\label{le:15.2}
(i) Let $U$ be a linear subspace of $\OQq^k$. Then
\[
w_{\mathcal{L}{'},\cc{'}}(U)=w_{\mathcal{L},\cc}(\varphi{'}(U)).
\]
(ii) Let $U$ be a linear subspace of $\OQq^{n-k}$. Then
\[
w_{\mathcal{L}{''},{\bf c}''}(U)=w_{\mathcal{L} ,\cc}(\varphi{''}^{-1}(U))-w_{\mathcal{L} ,\cc}(T).
\]
\end{lemma}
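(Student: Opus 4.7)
The plan is to work one place $v\in M_K$ at a time, establishing the analogous identities for the local weights and then summing over $v$ using that $w=\sum_{v\in M_K}w_v$. Throughout I assume the normalization \eqref{15.2}, so $w_v(W)$ of any subspace $W$ is computed by the greedy procedure: scan $i=1,2,\ldots,n$ in order and select $i$ whenever $L_i^{(v)}|_W$ is linearly independent of the previously selected restrictions; then $w_v(W)$ is the sum of the corresponding $c_{iv}$.

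For (i), the key point is that when one runs the greedy procedure on $\varphi{'}(U)\subseteq T$ with the full tuple $\mathcal{L}$, no index outside $I_v(T)$ is ever selected. Indeed, by the definition of $I_v(T)$ together with \eqref{15.7}, for $i\in I_v(T)^c$ the restriction $L_i^{(v)}|_T$ is already a linear combination of $L_j^{(v)}|_T$ with $j\in I_v(T)$, $j<i$, so a fortiori $L_i^{(v)}|_{\varphi{'}(U)}$ lies in the span of the already-selected restrictions and fails the independence test. Hence only indices in $I_v(T)$ can be picked, and when they are the independence test on $\varphi{'}(U)$ is tautologically identical to the independence test on $U$ performed with $\mathcal{L}{'}$, since $\varphi{'}$ identifies $U$ with $\varphi{'}(U)$. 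This yields $w_{\mathcal{L},\cc,v}(\varphi{'}(U))=w_{\mathcal{L}{'},\cc{'},v}(U)$.

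For (ii), write $V:=\varphi{''}^{-1}(U)$; then $T\subseteq V$ and $\varphi{''}$ induces an isomorphism $V/T\cong U$. I will show that the greedy procedure on $V$ picks every index of $I_v(T)$ together with exactly those indices of $I_v(T)^c$ that the greedy procedure on $U$ picks using $\mathcal{L}{''}$. For $j\in I_v(T)$: using \eqref{15.7}, any previously selected $L_{j'}^{(v)}|_V$ with $j'<j$, $j'\in I_v(T)^c$ differs from a combination of $L_m^{(v)}|_V$ with $m\in I_v(T)$, $m<j'$ by a form that vanishes on $T$; so the span on $T$ of the previously selected restrictions equals the span of $\{L_{j'}^{(v)}|_T:j'\in I_v(T),\,j'<j\}$, which by definition of $I_v(T)$ does not contain $L_j^{(v)}|_T$. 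Thus $j$ passes the test. For $i\in I_v(T)^c$: let $W$ be the span of $\{L_j^{(v)}|_V:j\in I_v(T),\,j<i\}$. Working modulo $W$, the decomposition \eqref{15.7} lets one replace each $L^{(v)}$ by the corresponding $\widetilde{L}^{(v)}$, which vanishes on $T$, hence lies in $\Hom(V/T,\OQq)\subseteq\Hom(V,\OQq)$. Combining with the fact that $W\cap\Hom(V/T,\OQq)=\{0\}$ (because the map $W\to\Hom(T,\OQq)$ is injective by definition of $I_v(T)$), the independence test on $V$ reduces to a pure independence test among $\widetilde{L}^{(v)}|_{V/T}$, which via $\varphi{''}$ is exactly the test of the greedy procedure for $(\mathcal{L}{''},\cc{''})$ on $U$. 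Summing the contributions yields $w_{\mathcal{L},\cc,v}(V)=w_{\mathcal{L},\cc,v}(T)+w_{\mathcal{L}{''},\cc{''},v}(U)$, and summing over $v\in M_K$ gives (ii).

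The only real obstacle is the bookkeeping for (ii): carefully verifying that, modulo $W$, the test among the $L^{(v)}|_V$'s collapses to a test among the $\widetilde{L}^{(v)}|_V$'s, which requires precisely the injectivity statement $W\cap\Hom(V/T,\OQq)=\{0\}$ coming from the very choice of $I_v(T)$. Once this reduction is in place, both parts follow directly from the definitions of the greedy weight functions \eqref{16.101} and \eqref{16.102}.
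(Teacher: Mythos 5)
Your proof is correct and follows essentially the same route as the paper: for (i) one observes that the greedy selection on $\varphi'(U)\subseteq T$ never leaves $I_v(T)$, and for (ii) one uses the triangular replacement $L_i^{(v)}\mapsto\widetilde L_i^{(v)}$ from \eqref{15.7} to split the selection on $\varphi''^{-1}(U)$ into the part $I_v(T)$ plus the selection of $(\mathcal L'',\cc'')$ on $U$. The paper states these two reductions more tersely (citing the monotonicity $I_v(U_1)\subseteq I_v(U_2)$ and the invariance of the greedy indices under the replacement), whereas you verify them explicitly via the injectivity of the restriction map to $\Hom(T,\OQq)$; the content is the same.
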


\begin{proof}
(i) For $U=\{ {\bf 0}\}$ the assertion is true.
Suppose $U$ has dimension $u>0$. Let $v\in M_K$. 
The set $\{ i_1(v)\kdots i_u(v)\}$ from \eqref{16.101} is precisely $I_v(\varphi{'}(U))$ since
$I_v(\varphi{'}(U))\subseteq I_v(T)$. 
Therefore, $w_{\mathcal{L}{'},\cc{'},v}(U)=w_{\mathcal{L},\cc ,v}(\varphi{'}(U))$ for $v\in M_K$.
Now (i) follows by summing over $v$.

(ii) Suppose $U$ has dimension $u>0$. Let $v\in M_K$. Put $W:=\varphi{''}^{-1}(U)$.
Recall that $I_v(W)=\{ j_1(v)\kdots j_m(v)\}$, where $m:=\dim W$,
$j_1(v)$ is the smallest index $j\in\{ 1\kdots n\}$ 
such that $L_j^{(v)}|_W\not= 0$, etc.
The indices $j_1(v),j_2(v),\ldots$
do not change if we replace $L_j^{(v)}$ by $\widetilde{L}_j^{(v)}$
for $j\in I_v(T)^c$. This implies that the set    
$\{ i_1(v)\kdots i_{n-k}(v)\}$ from \eqref{16.102} is $I_v(W)\setminus I_v(T)$,
and so $w_{\mathcal{L}{''},\cc{''},v}(U)=
w_{\mathcal{L} ,\cc ,v}(W)-w_{\mathcal{L} ,\cc ,v}(T)$.
By summing over $v$ we get (ii).
\end{proof}

The pair $(\mathcal{L} ' ,\cc ')$ gives rise to a class of twisted heights
$H_{\mathcal{L}{'} ,\cc{'},Q}:\, \OQq^k\to\Rr_{\geq 0}$ in the usual
manner. That is, if $\x\in E^k$ for some finite extension $E$ of $K$, then
\begin{equation}\label{16.twisted-height1}
H_{\mathcal{L}{'},\cc{'},Q}(\x )=
\prod_{w\in M_E}\max_{i\in I_w(T)}\|L_i^{(w)}\circ\varphi{'}(\x )\|_wQ^{-c_{iw}}
\end{equation}
where $I_w(T):=I_v(T)$ if $w$ lies above $v\in M_K$.

Likewise, we have twisted heights
$H_{\mathcal{L}{''},\cc{''},Q}:\, \OQq^{n-k}\to\Rr_{\geq 0}$, defined such that if
$\x\in E^{n-k}$ for some finite extension $E$ of $K$, then 
\begin{equation}\label{16.twisted-height2}
H_{\mathcal{L}{''},\cc{''},Q}(\x )=
\prod_{w\in M_E}\max_{i\in I_w^c(T)}\|\widetilde{L}_i^{(w)}\circ\varphi{''}^{-1}(\x )\|_wQ^{-c_{iw}}
\end{equation}
where $\widetilde{L}_i^{(w)}:=\widetilde{L}_i^{(v)}$ if $w$ lies above $v\in M_K$.

In what follows, constants implied by $\ll$, $\gg$ 
depend only on $\LL ,\cc$ and $T$.

\begin{lemma}\label{le:16.0} 
(i) For $\x\in\OQq^k$, $Q\geq 1$ we have
\[
H_{\mathcal{L}{'},\cc{'},Q}(\x )\gg\ll \HLcQ (\varphi{'}(\x)).
\]
(ii) For $\x\in\OQq^n$, $Q\geq 1$ we have
\[
H_{\mathcal{L}{''},\cc{''},Q}(\varphi{''}(\x ))\ll \HLcQ (\x).
\]
\end{lemma}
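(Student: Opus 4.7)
The plan is to exploit the compatibility between the ordering assumption \eqref{15.2} (so that $c_{1v}\leq\cdots\leq c_{nv}$) and the greedy construction of $I_v(T)$, which forces every $L_i^{(v)}|_T$ with $i\in I_v(T)^c$ to be a linear combination of $L_j^{(v)}|_T$ with $j\in I_v(T)$ and $j<i$.

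For part (i), one direction is trivial: since $I_w(T)\subseteq\{1,\ldots,n\}$, definition \eqref{16.twisted-height1} gives immediately $H_{\LL',\cc',Q}(\x)\leq \HLcQ(\varphi'(\x))$. For the reverse direction, I would proceed as follows. By the greedy definition of $I_v(T)$, for each $i\in I_v(T)^c$ there are $\alpha_{ijv}\in K$ with
\[
L_i^{(v)}|_T \;=\; \sum_{\stackrel{j\in I_v(T)}{j<i}}\alpha_{ijv}\,L_j^{(v)}|_T .
\]
Since $\bigcup_v\{L_1^{(v)},\ldots,L_n^{(v)}\}$ is finite by \eqref{2.7}, the tuple $(L_1^{(v)},\ldots,L_n^{(v)})$ takes only finitely many values as $v$ varies, so the set $A:=\{\alpha_{ijv}\}\subset K$ is finite. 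Let $E$ be a number field with $\varphi'(\x)\in E^n$. For every place $w$ of $E$ above $v\in M_K$ and every $i\in I_v(T)^c$, the triangle inequality (for $w$ infinite) or the ultrametric inequality (for $w$ finite), combined with the crucial monotonicity $Q^{-c_{iw}}\leq Q^{-c_{jw}}$ for $j<i$ (a consequence of \eqref{15.2} and $Q\geq 1$), yields
\[
\|L_i^{(w)}(\varphi'(\x))\|_w Q^{-c_{iw}}
\;\leq\; n^{s(w)}\bigl(\max_{\alpha\in A}\|\alpha\|_w\bigr)\cdot\max_{j\in I_v(T)}\|L_j^{(w)}(\varphi'(\x))\|_w Q^{-c_{jw}}.
\]
Taking the max over $i\in\{1,\ldots,n\}$ on the left and the product over $w\in M_E$, and using $\sum_w s(w)=1$ together with the finiteness of $\prod_v\max_{\alpha\in A}\|\alpha\|_v$, gives $\HLcQ(\varphi'(\x))\leq C\,H_{\LL',\cc',Q}(\x)$ with $C$ depending only on $\LL,\cc,T$.

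Part (ii) is analogous. Using \eqref{15.7}, which expresses $\widetilde{L}_i^{(v)}$ for $i\in I_v(T)^c$ as $L_i^{(v)} - \sum_{j\in I_v(T),\,j<i}\alpha_{ijv}L_j^{(v)}$ with the same finite set of coefficients $\alpha_{ijv}\in K$ as above, the same triangle/ultrametric estimate together with $c_{jw}\leq c_{iw}$ for $j<i$ gives, for every $\x\in\OQq^n$ and every $w\in M_E$,
\[
\|\widetilde{L}_i^{(w)}(\x)\|_w Q^{-c_{iw}}
\;\leq\; n^{s(w)}\bigl(\max_{\alpha\in A\cup\{1\}}\|\alpha\|_w\bigr)\cdot\max_{k=1,\ldots,n}\|L_k^{(w)}(\x)\|_w Q^{-c_{kw}}.
\]
Noting $\widetilde{L}_i^{(w)}(\x)=(\widetilde{L}_i^{(w)}\circ\varphi''^{-1})(\varphi''(\x))$, taking the max over $i\in I_v(T)^c$ and the product over $w\in M_E$ yields $H_{\LL'',\cc'',Q}(\varphi''(\x))\ll\HLcQ(\x)$ with an implicit constant depending only on $\LL,\cc,T$.

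The only genuine issue is the uniformity of the implicit constant across all $v\in M_K$; this is precisely why I need to exploit the finiteness of the tuple $(L_1^{(v)},\ldots,L_n^{(v)})$ (hence of the set $A$) coming from \eqref{2.7}, rather than doing the estimate place by place. Everything else is bookkeeping with the triangle and ultrametric inequalities and the monotonicity of $Q^{-c_{iw}}$ in $i$ afforded by \eqref{15.2}.
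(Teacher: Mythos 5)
Your proof is correct and follows essentially the same route as the paper's: the trivial direction for (i) by restricting the max to $I_w(T)$, and for the reverse direction (and for (ii)) the relation $L_i^{(v)}|_T=\sum_{j\in I_v(T),\,j<i}\alpha_{ijv}L_j^{(v)}|_T$ combined with the monotonicity $Q^{-c_{iw}}\leq Q^{-c_{jw}}$ from \eqref{15.2}, a triangle/ultrametric estimate at each place, and a product over places with a convergent constant. The paper phrases the uniformity as "constants $C_v$, all but finitely many equal to $1$" where you invoke the finiteness of the coefficient set $A$ coming from \eqref{2.7}; these are the same observation.
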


\begin{proof}
(i) The inequality $H_{\mathcal{L}{'},\cc{'},Q}(\x )\leq \HLcQ (\varphi{'}(\x))$
for $\x\in\OQq^k$, $Q\geq 1$ is trivial. We prove the reverse inequality.
Since the linear forms $\widetilde{L}_i^{(v)}$ $(i\in I_v(T)^c)$ defined in \eqref{15.7}
vanish identically on $T$,
there exist constants $C_v>0$ ($v\in M_K$), all but finitely many of which are $1$,
such that for $\x\in K^k$, $v\in M_K$, $i\in I_v(T)^c$,
\[
\| L_i^{(v)}(\varphi{'}(\x ))\|_v\leq C_v\max_{\stackrel{j\in I_v(T)}{j<i}} \|L_j^{(v)}(\x )\|_v.
\]
Taking $Q\geq 1$ we obtain, in view of \eqref{15.2},
\[
\| L_i^{(v)}(\varphi{'}(\x ))\|_vQ^{-c_{iv}}\leq 
C_v\max_{\stackrel{j\in I_v(T)}{j<i}} \|L_j^{(v)}(\x )\|_vQ^{-c_{jv}}.
\]
This shows that for $\x\in K^k$, $Q\geq 1$, $v\in M_K$, we have
\[
\max_{1\leq i\leq n}\| L_i^{(v)}(\varphi{'}(\x ))\|_vQ^{-c_{iv}}\leq 
C_v\max_{j\in I_v(T)} \|L_j^{(v)}(\x )\|_vQ^{-c_{jv}}.
\]
If instead we have $\x\in E^k$ for some finite extension $E$ of $K$, we have the same
inequalities for $w\in M_E$, but with constants $C_w:=C_v^{d(w|v)}$ 
where $v\in M_K$ is the place below $w$.
By taking the product over $w\in M_E$, we get (i).

The proof of (ii) is entirely similar.  
\end{proof}

\begin{lemma}\label{le:16.1}
Suppose that
\begin{equation}\label{16.1}
\mu (\OQq^n,U)\geq \mu (\OQq^n,\{ {\bf 0}\})\ \
\mbox{for every proper linear subspace $U$ of $\OQq^n$.}
\end{equation}
Then for every $\delta >0$ there is $Q_0$ such that for every $Q\geq Q_0$,
\begin{equation}\label{16.2}
Q^{-\mu (\OQq^n,\{ {\bf 0}\}) -\delta}
\leq \lambda_1(Q)\leq\cdots\leq \lambda_n(Q)
\leq Q^{-\mu (\OQq^n ,\{ {\bf 0}\})+\delta}.
\end{equation}
\end{lemma}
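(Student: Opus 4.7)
My approach is to reduce Lemma~\ref{le:16.1} to the semistable interval result Theorem~\ref{th:8.1} via the normalizations furnished by Lemmas~\ref{le:9.2} and~\ref{le:9.3}, and then combine the resulting lower bound on $\lambda_1(Q)$ with Minkowski's theorem (Proposition~\ref{pr:9.1}) to pin down the remaining successive infima. The case $n=1$ is trivial (the product formula forces $\lambda_1(Q)$ to be a constant multiple of $Q^{-\mu_0}$), so assume $n\geq 2$ and write $\mu_0:=\mu_{\mathcal{L},\cc}(\OQq^n,\{{\bf 0}\})=\frac{1}{n}\sum_{v\in M_K}\sum_{i=1}^n c_{iv}$.

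For the reduction, set $d_{iv}:=c_{iv}-\frac{1}{n}\sum_{j=1}^n c_{jv}$. By Lemma~\ref{le:9.3} we have $\sum_i d_{iv}=0$ for every $v$, the identity $H_{\mathcal{L},\dd,Q}(\x)=Q^{\mu_0}\HLcQ(\x)$, and $w_{\mathcal{L},\dd}(U)=w_{\mathcal{L},\cc}(U)-\mu_0\dim U$ for every subspace $U$. In particular, hypothesis~\eqref{16.1} rewrites as $w_{\mathcal{L},\dd}(U)\leq 0$ for every proper $U$, which is precisely the semistability condition~\eqref{8.8} for $(\mathcal{L},\dd)$. Using~\eqref{2.7a} I pick a non-archimedean place $v_0$ with $c_{i,v_0}=0$ (so $d_{i,v_0}=0$) for all $i$, and invoke Lemma~\ref{le:9.2} to replace $\mathcal{L}$ by $\mathcal{L}\circ\varphi$ for a suitable $K$-linear automorphism $\varphi$ of $\OQq^n$, arranging $L_i^{(v_0)}=X_i$ as demanded by~\eqref{8.2}; this alters neither the successive infima nor the weight function. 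If necessary, I then rescale $\dd$ by the positive factor $\theta:=\sum_v\max_i d_{iv}$ and reparametrize $Q\mapsto Q^{\theta}$ (using $H_{\mathcal{L},\dd/\theta,Q^{\theta}}=H_{\mathcal{L},\dd,Q}$) to enforce the normalization~\eqref{8.7}. The remaining hypotheses \eqref{8.1}, \eqref{8.3}--\eqref{8.7a} hold automatically with $R$ equal to the number of distinct forms in the tuple.

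With $(\mathcal{L},\dd)$ now meeting \eqref{8.1}--\eqref{8.8}, Theorem~\ref{th:8.1} applied with a small parameter $\delta'>0$ produces finitely many intervals outside of which $\lambda_1(\mathcal{L},\dd,Q)>Q^{-\delta'}$; in particular, for all $Q$ above some threshold $Q_0(\delta')$ the bound holds. Translating back via Lemma~\ref{le:9.3}(i) (and unwinding the rescaling by $\theta$) gives $\lambda_1(\mathcal{L},\cc,Q)>Q^{-\mu_0-\delta'\theta}$ for all $Q\geq Q_0(\delta')$. Feeding this into Minkowski's product bound $\lambda_1(Q)\cdots\lambda_n(Q)\leq 2^{n(n-1)/2}\DL\,Q^{-n\mu_0}$ from Proposition~\ref{pr:9.1}, monotonicity of the infima yields
\[
\lambda_n(Q)\leq 2^{n(n-1)/2}\DL\,Q^{-\mu_0+(n-1)\delta'\theta}.
\]
Choosing $\delta':=\delta/(n\theta)$ (or $\delta'=\delta$ if $\theta=0$, which forces $\dd\equiv 0$ and makes $\lambda_i(\mathcal{L},\dd,Q)$ constant in $Q$) and absorbing the multiplicative constant into a mild further shrinkage of the exponent, valid for $Q$ sufficiently large, delivers the two-sided bound~\eqref{16.2}. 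The only delicate part is the reduction step: performing the three normalizations (zero column sums, \eqref{8.7}, and \eqref{8.2}) in a compatible order while keeping the semistability condition~\eqref{8.8} intact. Once that bookkeeping is settled, the deduction from Theorem~\ref{th:8.1} and Minkowski is essentially mechanical.
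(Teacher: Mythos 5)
Your proposal is correct and follows essentially the same route as the paper's own proof: dispose of $n=1$ via the product formula, normalize to zero column sums with Lemma~\ref{le:9.3} (turning \eqref{16.1} into the semistability condition \eqref{8.8}), rescale and apply Lemma~\ref{le:9.2} to secure \eqref{8.7} and \eqref{8.2}, then invoke Theorem~\ref{th:8.1} for the lower bound on $\lambda_1(Q)$ and Proposition~\ref{pr:9.1} for the upper bound on $\lambda_n(Q)$. The bookkeeping you flag as delicate is exactly what the paper carries out, in the same order.
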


\begin{proof}
We first assume that $n=1$. In this case, 
$L_1^{(v)}=\alpha_vX$ with $\alpha_v\in K^*$ for $v\in M_K$, 
and $\mu (\OQq ,\{0\})= \sum_{v\in M_K} c_{1v}$.
By the product formula, we have for ${\bf x}=x\in K^*$,
\[
\HLcQ (x)=\prod_{v\in M_K}\|\alpha_vx\|_vQ^{-c_{1v}}=CQ^{-\mu (\OQq, \{ 0\}) }
\]
for some non-zero constant $C$. This is true also for $x\not\in K$.
So for $n=1$, our lemma is trivially true.

Next, we assume $n\geq 2$. 
We first make some reductions and then apply Theorem \ref{th:8.1}.
By Lemma \ref{le:9.3} there is no loss of generality if in the proof of our lemma,
we replace $c_{iv}$ by $c_{iv}' := c_{iv}-\frac{1}{n}\sum_{j=1}^n c_{jv}$ 
for $v\in M_K$, $j=1\kdots n$.
This shows that there is no loss of generality to assume that
$\sum_{i=1}^n c_{iv}=0$ for $v\in M_K$, i.e., condition \eqref{8.6}.
This being the case, suppose that $\sum_{v\in M_K}\max_{1\leq i\leq n} c_{iv}\leq \theta$
with $\theta >0$. Then we can make a reduction to \eqref{8.7} by replacing $Q$ by $Q^{\theta}$
and $c_{iv}$ by $c_{iv}/\theta$ for $v\in M_K$, $i=1\kdots n$. So we may also assume that
\eqref{8.7} is satisfied. Finally, by Lemma \ref{le:9.2} and the subsequent remark,
there is no loss of generality to assume \eqref{8.2}.
Under assumption \eqref{8.6}, condition \eqref{16.1} translates into \eqref{8.8}. 
So we may assume without loss of generality that all conditions of Theorem \ref{th:8.1} are satisfied.
Notice that with these assumptions,
\[
\mu (\OQq^n ,\{{\bf 0}\})=\frac{1}{n}\sum_{v\in M_K}\sum_{i=1}^n c_{iv} =0.
\] 

Let $0<\delta\leq 1$. 
Theorem \ref{th:8.1} implies that the set of $Q$ with
$\lambda_1(Q)\leq Q^{-\delta /2n}$ is bounded.
Together with \eqref{10.minkowski}, this implies that for every sufficiently large $Q$,
we have $\lambda_1(Q)\geq Q^{-\delta /2n}$, 
$\lambda_n(Q)\leq Q^{\delta}$. 
\end{proof}

\begin{proof}[Proof of Theorem \ref{th:16.2}]
We proceed by induction on $r$. For $r=1$ we can apply
Lemma \ref{le:16.1}. Assume $r\geq 2$.
We fix $\delta >0$, and then $\delta '>0$ which is a sufficiently
small function of $\delta$.
We write $w$ for $w_{\mathcal{L} ,\cc}$, $\mu$ for $\mu_{\mathcal{L} ,\cc}$.

By Lemma \ref{15.2} (ii) with $T=T_{r-1}$, $k=d_{r-1}=\dim T$, 
we have for any two 
linear subspaces $U_1\propersubset U_2$ of $\OQq^{n-d_{r-1}}$ that
\[
\mu_{\mathcal{L}{''},\cc{''}}(U_2,U_1)=
\mu(\varphi{''}^{-1}(U_2),\varphi{''}^{-1}(U_1)).
\]
Thus, the property of $T_{r-1}$ that 
$\mu (\OQq^n ,T_{r-1})\leq \mu (\OQq^n,U)$ 
for any proper linear
subspace $U$ of $\OQq^n$ translates into
\[ 
\mu_{\mathcal{L}{''},\cc{''}}(\OQq^{n-d_{r-1}},\{ {\bf 0}\})\leq
\mu_{\mathcal{L}{''},\cc{''}}(\OQq^{n-d_{r-1}},U)
\]
for any proper linear subspace $U$ of $\OQq^{n-d_{r-1}}$. So by
Lemma \ref{le:16.1} we have for every sufficiently large $Q$,  
\[
H_{\mathcal{L}{''},\cc{''},Q}({\bf y})\geq  Q^{-\mu (\OQq^n ,T_{r-1})-\delta ' }\ \ \mbox{for }
{\bf y}\in\OQq^{n-d_{r-1}}\setminus\{ {\bf 0}\}.
\]
Together with Lemma \ref{le:16.0} (ii),
this implies for every sufficiently large $Q$,
\begin{equation}\label{16.8}
\HLcQ (\x )\geq Q^{-\mu (\OQq^n,T_{r-1})-2\delta '}\ \ \mbox{for } \x\in\OQq^n\setminus T_{r-1}.
\end{equation}
Consequently, for every sufficiently large $Q$ we have
\begin{equation}\label{16.9}
Q^{-\mu (\OQq^n,T_{r-1})-2\delta '}\leq \lambda_{d_{r-1}+1}(Q)\leq\cdots\leq\lambda_n(Q).
\end{equation}

For $i=1\kdots d_{r-1}$, 
denote by $\lambda_i'(Q)$ the $i$-th successive infimum
of $\HLcQ$ restricted to $T_{r-1}$, 
i.e., the infimum of all
$\lambda >0$ such that the set of $\x\in T_{r-1}$ 
with $\HLcQ (\x)\leq \lambda$ contains
at least $i$ linearly independent points.
By Lemma \ref{le:16.0} (i) with $T=T_{r-1}$, $k=d_{r-1}$ this is,
apart from bounded multiplicative factors independent of $Q$,
equal to the $i$-th successive
infimum of $H_{\mathcal{L}{'},\cc{'},Q}$.
Further, 
by Lemma \ref{le:15.2} (i) with $T=T_{r-1}$, $k=d_{r-1}$, 
for any two subspaces $U_1\propersubset U_2$ of $\OQq^{d_{r-1}}$ 
we have $w_{\mathcal{L} ',\cc '}(U_2,U_1)=w(\varphi '(U_2),\varphi '(U_2))$.
By applying the induction hypothesis to $(\mathcal{L}',\cc' )$
and then carrying it over to $T_{r-1}$ by means of $\varphi '$, 
we infer that for every sufficiently
large $Q$, we have
\begin{equation}\label{16.10}
Q^{-\mu (T_l,T_{l-1})-\delta '}\leq \lambda_i'(Q)\leq
Q^{-\mu (T_l,T_{l-1})+\delta '}
\end{equation}
for $l=1\kdots r-1$, $i=d_{l-1}+1\kdots d_l$ and moreover, 
\begin{equation}\label{16.11}
\bigcap_{\lambda>\lambda_{d_l}'(Q)}\span\{ \x\in T_{r-1}:\,\HLcQ(\x )\leq\lambda\}= T_l
\end{equation}
for $l=1\kdots r-1$. 
Clearly, we have $\lambda_i(Q)\leq\lambda_i'(Q)$ for $i=1\kdots d_{r-1}$, 
and so
\[ 
\lambda_{d_{r-1}}(Q)\leq Q^{-\mu (T_{r-1},T_{r-2})+\delta '}
\]
for $Q$
sufficiently large. Assuming $\delta '$ is sufficiently small,
this is smaller
than the lower bound $Q^{-\mu (\OQq^n,T_{r-1})-2\delta '}$ in \eqref{16.8}.
Hence for sufficiently large $Q$ and sufficiently small $\eps$,
all vectors $\x\in\OQq^n$ with $\HLcQ (\x )\leq\lambda_{d_{r-1}}(Q)+\eps$ 
lie in $T_{r-1}$. 
That is,
\[
T_{d_{r-1}}(Q)=T_{r-1},\ \lambda_i(Q)=\lambda_i'(Q)\ 
\mbox{for $i=1\kdots d_{r-1}$.}
\]
Together with \eqref{16.11} this implies \eqref{16.5}. Further,
\eqref{16.10} becomes
\begin{equation}\label{16.x}
Q^{-\mu (T_l,T_{l-1})-\delta '}\leq \lambda_i(Q)\leq
Q^{-\mu (T_l,T_{l-1})+\delta '}
\end{equation}
for $l=1\kdots r-1$, $i=d_{l-1}+1\kdots d_l$.
Using subsequently  
Proposition \ref{pr:9.1}, the lower bounds in \eqref{16.x}, \eqref{16.9}, 
and that the quantity $\alpha =\sum_{v\in M_K}\sum_{i=1}^n c_{iv}$ from
Propostion \ref{pr:9.1} equals
\[
w(\OQq^n,\{ {\bf 0}\})=\sum_{l=1}^r w(T_l,T_{l-1})
=\sum_{l=1}^r d_l\mu (T_l,T_{l-1}),
\]
and taking $\delta '$ sufficiently small,
we infer that for every sufficiently large $Q$,
\begin{eqnarray*} 
\lambda_n(Q)&\leq& 
2^{n(n-1)/2}\DL Q^{-\alpha}\big( \lambda_1(Q)\cdots\lambda_{n-1}(Q)\big)^{-1} 
\\
&\leq& Q^{-\alpha +
\sum_{l=1}^r d_l\mu (T_l,T_{l-1})-\mu (\OQq^n,T_{r-1})+2n\delta{'}}
\leq Q^{-\mu (\OQq^n,T_{r-1})+\delta}.
\end{eqnarray*} 
As a consequence, \eqref{16.4} holds as well. This completes our proof.
\end{proof}

\section{A height estimate for the filtration subspaces}\label{17}

As before, $K$ is a number field, $n$ an integer $\geq 2$,
and $(\mathcal{L} ,\cc )$ a pair with \eqref{2.10a}--\eqref{2.7a}.
We derive an upper bound for the heights of the spaces
occurring in the filtration of $(\mathcal{L} ,\cc )$ in terms of the heights
of the linear forms from $\mathcal{L}$. We start with some
auxiliary results. 

Let $p$ be an integer with $1<p<n$. Put $N:=\binom{n}{p}$.
Similarly as in Section \ref{6}, let $C(n,p)=(I_1\kdots I_N)$ 
be the lexicographically ordered sequence of $p$-element subsets 
of $\{ 1\kdots n\}$.
For $j=1\kdots N$, $v\in M_K$ define
\begin{equation}\label{17.1}
\widehat{L}_j^{(v)}:=L_{i_1}^{(v)}\wedge\cdots\wedge L_{i_p}^{(v)},\ \ 
\widehat{c}_{jv}:= c_{i_1,v}+\cdots +c_{i_p,v}
\end{equation}
where $I_j=\{ i_1<\cdots <i_p\}$ is the $j$-th set from $C(n,p)$, and put 
\begin{equation}\label{17.2}
\left\{
\begin{array}{l}
\widehat{\mathcal{L}}:=(\widehat{L}_j^{(v)}:\, v\in M_K,\,j=1\kdots N),
\\
\widehat{{\bf c}}:=(\widehat{c}_{jv}:\, v\in M_K,\, j=1\kdots N).
\end{array}\right.
\end{equation}
Then $H_{\widehat{{\mathcal{L}}},\widehat{{\bf c}},Q}:\, \OQq^N\to\Rr_{\geq 0}$ 
is defined in a similar manner as $\HLcQ$, i.e.,
if $\widehat{\x}\in E^N$ for some finite extension $E$ of $K$,
then 
\[
H_{\widehat{{\mathcal{L}}},\widehat{{\bf c}},Q}(\widehat{\x}):=\prod_{w\in M_E}
\max_{1\leq j\leq N} \| \widehat{L}_j^{(w)}(\widehat{\x})\|_wQ^{-\widehat{c}_{jw}}
\]
where $\widehat{L}_j^{(w)}:=\widehat{L}_j^{(v)}$,
$\widehat{c}_{jw}:=d(w|v)\widehat{c}_{jv}$ if $w$ lies above $v\in M_K$.

\begin{lemma}\label{le:17.1}     
Let $\x_1\kdots\x_p\in \OQq^n$, $Q\geq 1$. Then
\[
H_{\widehat{{\mathcal{L}}},\widehat{{\bf c}},Q}(\x_1\wedge\cdots\wedge \x_p)\leq
p^{p/2}\HLcQ (\x_1)\cdots\HLcQ (\x_p).
\]
\end{lemma}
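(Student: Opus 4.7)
The plan is to establish the inequality place by place on a finite extension $E \supseteq K$ containing all coordinates of $\x_1,\ldots,\x_p$, and then take the product over $w \in M_E$. The core idea is that, via identity \eqref{6.7}, the value of $\widehat{L}_j^{(w)}$ on the exterior product $\x_1\wedge\cdots\wedge\x_p$ is a $p\times p$ determinant, which admits a Hadamard-type estimate after the $Q$-twists are absorbed into the matrix entries.

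First I would fix $w\in M_E$ and $j\in\{1,\ldots,N\}$ with $I_j=\{i_1<\cdots<i_p\}$. By \eqref{6.7},
\[
\widehat{L}_j^{(w)}(\x_1\wedge\cdots\wedge\x_p)=\det\bigl(L_{i_k}^{(w)}(\x_l)\bigr)_{k,l=1,\ldots,p}.
\]
The next step is to move the $Q^{-\widehat{c}_{jw}}=\prod_k Q^{-c_{i_k,w}}$ factor inside the determinant by scaling the $k$-th row by $Q^{-c_{i_k,w}}$. Thus setting $a_{kl}:=L_{i_k}^{(w)}(\x_l)Q^{-c_{i_k,w}}$,
\[
\|\widehat{L}_j^{(w)}(\x_1\wedge\cdots\wedge\x_p)\|_w\,Q^{-\widehat{c}_{jw}}=\|\det(a_{kl})\|_w.
\]

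Next I would apply the appropriate determinant inequality at $w$. For infinite $w$, Hadamard's inequality applied to $\sigma_w(a_{kl})$ and the fact that $\|\cdot\|_w=|\sigma_w(\cdot)|^{s(w)}$ give
\[
\|\det(a_{kl})\|_w\leq p^{ps(w)/2}\prod_{l=1}^p\max_{1\leq k\leq p}\|a_{kl}\|_w;
\]
for finite $w$ the ultrametric inequality applied to the Leibniz expansion gives the same bound with $s(w)=0$. Since $\{i_1,\ldots,i_p\}\subseteq\{1,\ldots,n\}$, enlarging the inner max to range over $1\le i\le n$ yields
\[
\max_{1\leq k\leq p}\|a_{kl}\|_w\leq \max_{1\leq i\leq n}\|L_i^{(w)}(\x_l)\|_w\,Q^{-c_{iw}}.
\]
Taking the maximum over $j$ on the left leaves the right side unchanged, so for every $w\in M_E$,
\[
\max_{1\le j\le N}\|\widehat{L}_j^{(w)}(\x_1\wedge\cdots\wedge\x_p)\|_w\,Q^{-\widehat{c}_{jw}}\leq p^{ps(w)/2}\prod_{l=1}^p\Bigl(\max_{1\le i\le n}\|L_i^{(w)}(\x_l)\|_w\,Q^{-c_{iw}}\Bigr).
\]

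Finally I would take the product over $w\in M_E$. Using that $\sum_{w\mid\infty}s(w)=1$ (sum of local degrees equals the global degree), the archimedean factors contribute $\prod_w p^{ps(w)/2}=p^{p/2}$, while interchanging the product over $w$ and $l$ on the right produces $\prod_{l=1}^p \HLcQ(\x_l)$ (independently of the choice of $E$, by the standard invariance of the twisted height under field extensions). This yields the claimed bound. There is no real obstacle here; the only point requiring a small amount of care is the row-scaling trick that pulls $Q^{-\widehat{c}_{jw}}$ inside the determinant so that Hadamard can be applied cleanly to the twisted entries.
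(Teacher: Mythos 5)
Your proof is correct and follows essentially the same route as the paper's: the paper likewise invokes \eqref{6.7} to write $\widehat{L}_j^{(w)}(\x_1\wedge\cdots\wedge\x_p)$ as a $p\times p$ determinant and then bounds it by $p^{ps(w)/2}\prod_{l}\max_{i}\|L_i^{(w)}(\x_l)\|_wQ^{-c_{iw}}$ via the Hadamard/ultrametric argument (referring back to the computations \eqref{4.4b}, \eqref{4.4c}, which is exactly your row-scaling step), before taking the maximum over $j$ and the product over $w$. No gaps.
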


\begin{proof}
Put $\widehat{\x}:=\x_1\wedge\cdots\wedge\x_p$.
Let $E$ be a finite extension of $K$ such that $\x_1\kdots\x_p\in E^n$.
Let $I_j=\{ i_1<\cdots <i_p\}$ be one of the $p$-element subsets
from $I_1\kdots I_N$ and let $w\in M_E$.
Then
by an argument completely similar to the proofs of \eqref{4.4b},\eqref{4.4c},
one shows
\begin{eqnarray*}
\| \widehat{L}_j^{(w)}(\widehat{\x} )\|_wQ^{-\widehat{c}_{jw}}&=&
\|\det \big(L_{i_k}^{(w)}(\x_l)\big)_{k,l=1\kdots p}\|_wQ^{-\widehat{c}_{jw}}
\\
&\leq& 
p^{ps(w)/2}\prod_{l=1}^p\max_{1\leq k\leq p} 
\| L_{i_k}^{(w)}(\x_l)\|_wQ^{-c_{i_k,w}}
\\
&\leq&
p^{ps(w)/2}\prod_{l=1}^p\max_{1\leq i\leq n} \| L_i^{(w)}(\x_l)\|_wQ^{-c_{iw}}.
\end{eqnarray*}
By taking the maximum over $j=1\kdots N$ and then the product over
$w\in M_E$, our Lemma follows.
\end{proof}

We keep the notation from above.
For $Q\geq 1$, let $\lambda_1(Q)\kdots\lambda_n(Q)$ denote the
successive infima of $\HLcQ$.
Further, let $\nu_1(Q)\kdots \nu_N(Q)$ 
be the products $\lambda_{i_1}(Q)\cdots\lambda_{i_p}(Q)$
($1\leq i_1<\cdots <i_p\leq n$), ordered such that
\[ 
\nu_1(Q)\leq\cdots\leq \nu_N(Q),
\]
and let $\widehat{\lambda}_1(Q)\kdots\widehat{\lambda}_N(Q)$ 
denote the successive infima of $H_{\widehat{\mathcal{L}},\widehat{\cc},Q}$.

\begin{lemma}\label{le:17.2}
For $Q\geq 1$, $j=1\kdots N$ we have
\[
N^{-npN}\nu_j(Q)\leq\widehat{\lambda}_j(Q)\leq p^{p/2}\nu_j(Q).
\]
\end{lemma}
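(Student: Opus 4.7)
The plan is to establish the two bounds by very different methods. For the upper bound, I would fix $\eps>0$ and choose linearly independent $\x_1,\ldots,\x_n\in\OQq^n$ with $\HLcQ(\x_i)\leq (1+\eps)\lambda_i(Q)$ for $i=1,\ldots,n$. Since the $\x_i$ are linearly independent, the $N$ wedge products $\x_{i_1}\wedge\cdots\wedge\x_{i_p}$ taken over all $1\leq i_1<\cdots<i_p\leq n$ are linearly independent in $\OQq^N$. By Lemma~\ref{le:17.1}, each such wedge satisfies
\[
H_{\widehat{\mathcal{L}},\widehat{\cc},Q}(\x_{i_1}\wedge\cdots\wedge\x_{i_p})\leq p^{p/2}(1+\eps)^p\lambda_{i_1}(Q)\cdots\lambda_{i_p}(Q).
\]
For each $j$, the $j$ wedges corresponding to the $j$ smallest products $\nu_1(Q),\ldots,\nu_j(Q)$ give $j$ linearly independent vectors of twisted height at most $p^{p/2}(1+\eps)^p\nu_j(Q)$; letting $\eps\to 0$ produces $\widehat{\lambda}_j(Q)\leq p^{p/2}\nu_j(Q)$.

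For the lower bound, the idea is to combine Minkowski-type product estimates with the already-proved upper bound. I would first record two easy identities arising from the exterior power construction: (a) applying \eqref{6.4b} to the coefficient vectors of the linear forms $L_1^{(v)},\ldots,L_n^{(v)}$ for each $v\in M_K$ and taking the product over $v$ yields $\Delta_{\widehat{\mathcal{L}}}=\DL^{\binom{n-1}{p-1}}$; (b) since each index $i\in\{1,\ldots,n\}$ belongs to exactly $\binom{n-1}{p-1}$ of the sets $I_1,\ldots,I_N$, one has $\sum_{v,j}\widehat{c}_{jv}=\binom{n-1}{p-1}\sum_{v,i}c_{iv}$ and $\prod_{j=1}^N\nu_j(Q)=\big(\prod_{i=1}^n\lambda_i(Q)\big)^{\binom{n-1}{p-1}}$. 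Applying Proposition~\ref{pr:9.1} to $(\mathcal{L},\cc)$ and to $(\widehat{\mathcal{L}},\widehat{\cc})$ and combining with (a), (b) then yields
\[
\prod_{j=1}^N\widehat{\lambda}_j(Q)\;\geq\;c(n,p)\prod_{j=1}^N\nu_j(Q),
\]
for an explicit constant $c(n,p)$ depending only on $n$ and $p$.

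Finally, I would combine the two bounds. For each fixed $j$, the upper bound gives $\widehat{\lambda}_i(Q)\leq p^{p/2}\nu_i(Q)$ for $i\neq j$, hence
\[
\widehat{\lambda}_j(Q)\;=\;\frac{\prod_i\widehat{\lambda}_i(Q)}{\prod_{i\neq j}\widehat{\lambda}_i(Q)}\;\geq\;\frac{c(n,p)\prod_i\nu_i(Q)}{p^{p(N-1)/2}\prod_{i\neq j}\nu_i(Q)}\;=\;c(n,p)\,p^{-p(N-1)/2}\,\nu_j(Q).
\]
The crude constant $N^{-npN}$ in the statement comfortably absorbs $c(n,p)p^{-p(N-1)/2}$. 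The only nontrivial point in the argument is the verification of identity (a) for linear forms rather than vectors and the bookkeeping of constants from Proposition~\ref{pr:9.1}; no new ideas beyond Lemma~\ref{le:17.1}, Proposition~\ref{pr:9.1}, and \eqref{6.4b} are needed.
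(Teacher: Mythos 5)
Your proposal is correct and follows essentially the same route as the paper: the upper bound via near-optimal independent vectors and Lemma~\ref{le:17.1}, and the lower bound by comparing the two Minkowski-type product estimates from Proposition~\ref{pr:9.1} (using $\Delta_{\widehat{\mathcal{L}}}=\DL^{\binom{n-1}{p-1}}$ and $\widehat{\alpha}=\binom{n-1}{p-1}\alpha$) and then dividing out the upper bounds for the $N-1$ remaining infima. The paper's own computation confirms that the resulting constant is indeed absorbed by $N^{-npN}$.
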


\begin{proof}
Fix $Q\geq 1$ and write $\lambda_i,\widehat{\lambda}_j,\nu_j$ for
$\lambda_i,\widehat{\lambda}_j(Q),\nu_j(Q)$.
Let $\varepsilon >0$.
Choose $\OQq$-linearly independent vectors 
${\bf g}_1\kdots {\bf g}_n\in\OQq^n$
such that $\HLcQ ({\bf g}_i)\leq \lambda_i(1+\varepsilon )$
for $i=1\kdots n$. 
Then the vectors ${\bf g}_{i_1}\wedge\cdots\wedge {\bf g}_{i_p}$
($1\leq i_1<\cdots <i_p\leq n$) are $\OQq$-linearly independent.
Let $j\in\{ 1\kdots N\}$ and let $i_1\kdots i_p$
be the indices from $\{ 1\kdots n\}$ such that
$i_1<\cdots <i_p$ and $\nu_j=\lambda_{i_1}\cdots\lambda_{i_p}$.
Then by Lemma \ref{le:17.1}, 
\begin{equation}\label{9.0}
H_{\widehat{\mathcal{L}},\widehat{\cc},Q}({\bf g}_{i_1}\wedge\cdots\wedge {\bf g}_{i_p})
\leq p^{p/2}(1+\varepsilon )^p\nu_j .
\end{equation}
So $\widehat{\lambda}_j\leq p^{p/2}(1+\varepsilon )^p\nu_j$.
This holds for every $\varepsilon >0$, hence 
\begin{equation}\label{17.3}
\frac{\widehat{\lambda}_j}{\nu_j}\leq p^{p/2}\ \ \mbox{for } j=1\kdots N.
\end{equation}

Put
\[
\widehat{\alpha}:=\sum_{v\in M_K}\sum_{j=1}^N \widehat{c}_{jv}.
\]
Notice that $\widehat{\alpha}=N'\alpha$,
where $\alpha :=\sum_{v\in M_K}\sum_{j=1}^n c_{iv}$, $N' := \binom{n-1}{p-1}$.
Also, by \eqref{6.4b},
$\Delta_{\widehat{\mathcal{L}}}=\DL^{N'}$.
These facts together with Proposition \ref{pr:9.1} imply
\[
\nu_1\cdots\nu_N\leq 2^{n(n-1)N'/2}\Delta_{\widehat{\mathcal{L}}}Q^{-\widehat{\alpha}}.
\]  
On the other hand, Proposition \ref{pr:9.1} 
applied to $\widehat{\mathcal{L}},\widehat{\cc}$ gives
\[
\widehat{\lambda}_1\cdots\widehat{\lambda}_N\geq 
N^{-N/2}\Delta_{\widehat{\mathcal{L}}}Q^{-\widehat{\alpha}},
\]
and so
\[
\prod_{j=1}^N \frac{\widehat{\lambda}_j}{\nu_j}\,
\geq N^{-N/2} 2^{-n(n-1){N'}/2}.
\]
Now our lemma follows by combining this with \eqref{17.3}.
\end{proof}

Let $T_i(Q)$ ($i=1\kdots n$) be the spaces defined by \eqref{16.spaces}. 
Further, define the linear subspaces of $\OQq^N$,
\[ 
\widehat{T}_j(Q):=
\bigcap_{\lambda>\widehat{\lambda}_j(Q)}\span\{ \widehat{\x}\in\OQq^N:\, H_{\widehat{\mathcal{L}},\widehat{\cc},Q}(\widehat{\x})\leq\lambda\}\ \ (j=1\kdots N).
\]

\begin{lemma}\label{le:17.3}
Put $k:=n-p$. Let $Q\geq 1$ and suppose that
\begin{equation}\label{17.4}
\lambda_{k+1}(Q)> 2^{2n^3 2^n}\lambda_k(Q).
\end{equation}
Then
\begin{eqnarray}
\label{17.4a}
&&\frac{\widehat{\lambda}_{N-1}(Q)}{\widehat{\lambda}_N(Q)}
\leq 2^{n^3 2^n}\frac{\lambda_k(Q)}{\lambda_{k+1}(Q)}<2^{-n^32^n},
\\
\label{17.4b}
&&H_2(\widehat{T}_{N-1}(Q))=H_2(T_k(Q)).
\end{eqnarray}
\end{lemma}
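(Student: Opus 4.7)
The plan is to establish (a) by directly comparing $\widehat{\lambda}_{N-1}(Q),\widehat{\lambda}_N(Q)$ with $\nu_{N-1}(Q),\nu_N(Q)$ via Lemma~\ref{le:17.2}, and then to deduce (b) by producing $N-1$ explicit exterior product vectors which necessarily span $\widehat{T}_{N-1}(Q)$, and finally invoking Lemma~\ref{le:6.1}.

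For (a), I first identify $\nu_{N-1}$ and $\nu_N$. Since $\lambda_1(Q)\leq\cdots\leq\lambda_n(Q)$, the largest product $\mu_I:=\prod_{i\in I}\lambda_i(Q)$ over $p$-subsets $I\subset\{1,\ldots,n\}$ is attained at $I=\{k+1,\ldots,n\}$, giving $\nu_N(Q)=\lambda_{k+1}(Q)\cdots\lambda_n(Q)$; the second largest is attained at $I=\{k,k+2,\ldots,n\}$, giving $\nu_{N-1}(Q)=\lambda_k(Q)\lambda_{k+2}(Q)\cdots\lambda_n(Q)$ (a single-swap argument: any $I$ with $|I\cap\{1,\ldots,k\}|\geq 2$ gives $\mu_I/\nu_N\leq(\lambda_k/\lambda_{k+1})^2$). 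Hence $\nu_{N-1}(Q)/\nu_N(Q)=\lambda_k(Q)/\lambda_{k+1}(Q)$. Lemma~\ref{le:17.2} then yields
\[
\frac{\widehat{\lambda}_{N-1}(Q)}{\widehat{\lambda}_N(Q)}\,\leq\,p^{p/2}N^{npN}\cdot\frac{\lambda_k(Q)}{\lambda_{k+1}(Q)},
\]
and a routine estimate using $N\leq 2^{n-1}$ and $p\leq n$ shows $p^{p/2}N^{npN}\leq 2^{n^32^n}$. This gives the first inequality in \eqref{17.4a}; the second then follows directly from hypothesis \eqref{17.4}.

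For (b), assumption \eqref{17.4} gives $\lambda_k(Q)<\lambda_{k+1}(Q)$ and part (a) gives $\widehat{\lambda}_{N-1}(Q)<\widehat{\lambda}_N(Q)$, so by Lemma~\ref{le:9.0} the spaces $T_k(Q)$ and $\widehat{T}_{N-1}(Q)$ have dimensions $k$ and $N-1$ respectively. For a small $\eps>0$, I use the standard greedy selection to choose linearly independent ${\bf g}_1,\ldots,{\bf g}_n\in\OQq^n$ with $\HLcQ({\bf g}_i)\leq(1+\eps)\lambda_i(Q)$; the first $k$ lie in $T(Q,\lambda_k(Q)+\eps')=T_k(Q)$ for $\eps'$ small (Lemma~\ref{le:9.0}(ii)) and hence form a basis of it. Set $\widehat{\bf g}_j:={\bf g}_{i_1}\wedge\cdots\wedge{\bf g}_{i_p}$ for $I_j=\{i_1<\cdots<i_p\}\in C(n,p)$. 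For $j<N$ we have $I_j\neq\{k+1,\ldots,n\}$, so $\mu_{I_j}\leq\nu_{N-1}(Q)$, and Lemma~\ref{le:17.1} yields $H_{\widehat{\mathcal L},\widehat{\cc},Q}(\widehat{\bf g}_j)\leq p^{p/2}(1+\eps)^p\nu_{N-1}(Q)$. Combining this with the lower bound $\widehat{\lambda}_N(Q)\geq N^{-npN}\nu_N(Q)$ from Lemma~\ref{le:17.2}, the factor $\nu_{N-1}/\nu_N=\lambda_k/\lambda_{k+1}$, and hypothesis \eqref{17.4}, I get $H_{\widehat{\mathcal L},\widehat{\cc},Q}(\widehat{\bf g}_j)<\widehat{\lambda}_N(Q)$ for $\eps$ small. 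Picking any $\lambda$ with $\max\bigl(\widehat{\lambda}_{N-1}(Q),H_{\widehat{\mathcal L},\widehat{\cc},Q}(\widehat{\bf g}_j)\bigr)<\lambda<\widehat{\lambda}_N(Q)$ and applying Lemma~\ref{le:9.0}(ii) to $(\widehat{\mathcal L},\widehat{\cc})$ shows $\widehat{\bf g}_j\in T(Q,\lambda)=\widehat{T}_{N-1}(Q)$. Hence the $N-1$ linearly independent vectors $\widehat{\bf g}_1,\ldots,\widehat{\bf g}_{N-1}$ span $\widehat{T}_{N-1}(Q)$. An application of Lemma~\ref{le:6.1} to $T_k(Q)$ with basis ${\bf g}_1,\ldots,{\bf g}_k$, extended by ${\bf g}_{k+1},\ldots,{\bf g}_n$, then yields $H_2(\widehat{T}_{N-1}(Q))=H_2(T_k(Q))$.

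The only mildly subtle point is ensuring that the greedy basis ${\bf g}_1,\ldots,{\bf g}_k$ actually sits inside $T_k(Q)$; this uses the strict gap $\lambda_k(Q)<\lambda_{k+1}(Q)$ together with Lemma~\ref{le:9.0}(ii). The remaining steps are essentially bookkeeping: the combinatorial identification of $\nu_{N-1}$ and $\nu_N$, the elementary bound $p^{p/2}N^{npN}\leq 2^{n^32^n}$, and the comparison inequalities already packaged in Lemmas~\ref{le:17.1}, \ref{le:17.2} and \ref{le:6.1}.
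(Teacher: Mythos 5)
Your proof is correct and follows essentially the same route as the paper's: identify $\nu_{N-1}/\nu_N=\lambda_k/\lambda_{k+1}$ and apply Lemma~\ref{le:17.2} for \eqref{17.4a}, then use a near-optimal basis ${\bf g}_1\kdots{\bf g}_n$, the bound of Lemma~\ref{le:17.1}, and Lemma~\ref{le:17.2} to show the wedges $\widehat{{\bf g}}_1\kdots\widehat{{\bf g}}_{N-1}$ form a basis of $\widehat{T}_{N-1}(Q)$, finishing with Lemma~\ref{le:6.1}. You merely make explicit a few details the paper leaves implicit (the single-swap identification of $\nu_{N-1}$ and the numerical bound $p^{p/2}N^{npN}\leq 2^{n^3 2^n}$).
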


\begin{proof}
Write again $\lambda_i,\widehat{\lambda}_j,\nu_j$ for
$\lambda_i(Q),\widehat{\lambda}_j(Q),\nu_j(Q)$. 
Since
\[
\nu_{N-1}=\lambda_k\lambda_{k+2}\cdots\lambda_N,\ \ \nu_N=\lambda_{k+1}\cdots\lambda_N
\]
we have $\nu_{N-1}/\nu_N=\lambda_k/\lambda_{k+1}$.
Together with Lemma \ref{le:17.2}, $N=\binom{n}{p}\leq 2^n$ 
and assumption \eqref{17.4} this implies \eqref{17.4a}.

As for \eqref{17.4b}, let $\varepsilon >0$.
Put $T:= T_k(Q)$, $\widehat{T}:=\widehat{T}_{N-1}(Q)$. 
Choose $\OQq$-linearly independent vectors ${\bf g}_1\kdots {\bf g}_n$
such that $\HLcQ({\bf g}_i)\leq (1+\varepsilon )\lambda_i$ for $i=1\kdots n$.
Write $\widehat{{\bf g}}_j:=
{\bf g}_{i_1}\wedge\cdots\wedge {\bf g}_{i_p}$ where $I_j=\{ i_1<\cdots <i_p\}$
is the $j$-th set in $C(n,p)$.
Then by \eqref{9.0}, 
\[
H_{\widehat{\mathcal{L}},\widehat{\cc},Q}(\widehat{{\bf g}}_j)\leq 
p^{p/2}(1+\varepsilon )^p\nu_{N-1}\ 
\mbox{for $j=1\kdots N-1$.}
\]
Assuming $\varepsilon$ is sufficiently small,
$\{ {\bf g}_1\kdots {\bf g}_k\}$ is a basis of $T$.
Moreover, by Lemma \ref{le:17.2} and \eqref{17.4a} we have
$p^{p/2}(1+\varepsilon )^p\nu_{N-1}<\widehat{\lambda}_N$.
Hence by \eqref{9.0},
$\{\widehat{{\bf g}}_1\kdots\widehat{{\bf g}}_{N-1}\}$ 
is a basis of $\widehat{T}$.
Now $H_2(\widehat{T})=H_2(T)$ follows from
Lemma \ref{le:6.1}.
\end{proof} 
 
We now make a first step towards estimating the heights of the subspaces
in the filtration of $(\mathcal{L} ,\cc )$. As usual, 
$n$ is an integer $\geq 2$,
$K$ an algebraic number field,
and $(\mathcal{L} ,\cc )$ a pair satisfying \eqref{2.10a}--\eqref{2.7a}. 
Put
\[
H_2:=\max\{ H_2(L_i^{(v)}):\, v\in M_K,\, i=1\kdots n\}.
\]

\begin{lemma}\label{le:17.4}
Assume that the subspace $T_{r-1}$ preceding $\OQq^n$ in the filtration
of $(\mathcal{L} ,\cc )$ has dimension $n-1$. Then
\[
H_2(T_{r-1})\leq H_2^{(n-1)^2}.
\]
\end{lemma}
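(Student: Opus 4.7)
The plan is to show that the linear form $L\in K[X_1,\ldots,X_n]^{\lin}$ defining the hyperplane $T_{r-1}=\ker L$ lies in a one-dimensional intersection of at most $n-1$ subspaces of $K[X_1,\ldots,X_n]^{\lin}$, each of dimension at most $n-1$ and spanned by some of the $L_i^{(v)}$. Then by \eqref{6.10}, \eqref{6.8} and an iterated application of \eqref{6.9}, the bound $H_2(T_{r-1})=H_2(L)\le H_2^{(n-1)^2}$ will follow. I assume throughout the reordering \eqref{15.2}, which is harmless and makes the combinatorial description below uniform.

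First I set up the picture: since $T_{r-1}$ is defined over $K$, I write $T_{r-1}=\ker L$ with $L\in K[X_1,\ldots,X_n]^{\lin}$, and \eqref{6.10} gives $H_2(T_{r-1})=H_2(\langle L\rangle)=H_2(L)$. For each $v\in M_K$ the forms $\{L_i^{(v)}\}_i$ form a $K$-basis of $K[X_1,\ldots,X_n]^{\lin}$, so I may expand $L=\sum_i\alpha_{iv}L_i^{(v)}$ and let $j(v)$ be the largest index $i$ with $\alpha_{iv}\neq 0$. Define the $K$-subspace $V_v:=\operatorname{span}(L_1^{(v)},\ldots,L_{j(v)}^{(v)})$; by construction $L\in V_v$. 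A short bookkeeping check of the greedy algorithm in \eqref{15.1} shows that $I_v(T_{r-1})=\{1,\ldots,n\}\setminus\{j(v)\}$, so $w_v(T_{r-1})=\sum_{i\neq j(v)}c_{iv}$.

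The crucial step, which I expect to be the main obstacle, is to prove that $W:=\bigcap_v V_v$ equals $\langle L\rangle$. Given any $L'\in W\setminus\{0\}$, put $T':=\ker L'$ and let $j'(v)$ be the largest index with nonzero coefficient when $L'$ is expanded in $\{L_i^{(v)}\}_i$; since $L'\in V_v$ I have $j'(v)\le j(v)$, and \eqref{15.2} then gives $c_{j'(v),v}\le c_{j(v),v}$, hence $w_v(T')\ge w_v(T_{r-1})$ for every $v$. Summing, $w(T')\ge w(T_{r-1})$, i.e.\ $\mu(\OQq^n,T')\le\mu(\OQq^n,T_{r-1})$. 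If $T'\neq T_{r-1}$, then $T'+T_{r-1}=\OQq^n$ and Lemma \ref{le:15.1} yields $w(T'\cap T_{r-1})\ge w(T')+w(T_{r-1})-w(\OQq^n)\ge 2w(T_{r-1})-w(\OQq^n)$, so $\mu(\OQq^n,T'\cap T_{r-1})\le\mu(\OQq^n,T_{r-1})$ while $\dim(T'\cap T_{r-1})=n-2<n-1=\dim T_{r-1}$, contradicting the minimality clause of Lemma \ref{le:15.between}(i). Hence $L'\in\langle L\rangle$, proving $W=\langle L\rangle$.

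To finish I select greedily places $v_1,\ldots,v_m$ so that each new intersection $V_{v_1}\cap\cdots\cap V_{v_i}$ has strictly smaller dimension than the preceding one, discarding $v$'s with $j(v)=n$ (for which $V_v$ is the whole space and gives no reduction). Since the dimension drops from $n$ down to $1=\dim W$ in unit steps, $m\le n-1$, and each retained $V_{v_i}$ satisfies $j(v_i)\le n-1$. Then \eqref{6.8} gives $H_2(V_{v_i})\le H_2(L_1^{(v_i)})\cdots H_2(L_{j(v_i)}^{(v_i)})\le H_2^{j(v_i)}\le H_2^{n-1}$, and iterating \eqref{6.9} produces
\[
H_2(T_{r-1})\,=\,H_2(L)\,=\,H_2\bigl(V_{v_1}\cap\cdots\cap V_{v_m}\bigr)\,\le\,\prod_{i=1}^m H_2(V_{v_i})\,\le\,H_2^{m(n-1)}\,\le\,H_2^{(n-1)^2},
\]
as required.
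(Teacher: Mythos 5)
Your argument is correct and is, up to orthogonal duality, the paper's own proof: your $V_v$ is $U_{i(v),v}^{\bot}$ for the paper's index $i(v)=j(v)$, your identity $\bigcap_v V_v=\langle L\rangle$ is exactly the paper's $\sum_v U_{i(v),v}=T_{r-1}$, and both proofs finish with at most $n-1$ places and the Struppeck--Vaaler inequality \eqref{6.9}. The only cosmetic difference is that you exclude a second extremal hyperplane via Lemma \ref{le:15.1} applied to $T'\cap T_{r-1}$ and the minimal-dimension clause of Lemma \ref{le:15.between}, where the paper directly invokes the strict inequality $w(T')<w(T_{r-1})$ for hyperplanes $T'\neq T_{r-1}$.
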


\begin{proof}
We assume without loss of generality that $c_{1v}\leq\cdots\leq c_{nv}$
for $v\in M_K$.  
Put $T:=T_{r-1}$.
By our choice of $T$, if $T'$ is any other $(n-1)$-dimensional 
linear subspace of $\OQq^n$,
then $\mu (\OQq^n,T)<\mu (\OQq^n,T')$, implying $w(T')<w(T)$.

Take $v\in M_K$. Let $i(v)$ be the smallest index $i$ such that
\[
U_{iv}:=\{ \x\in\OQq^n:\, L_1^{(v)}(\x )=\cdots =L_i^{(v)}(\x )=0\}\subseteq T.
\]
$T$ is given by an up to a constant factor unique linear equation,
which we may express as $\sum_{j=1}^n \alpha_{jv}L_j^{(v)}(\x )=0$
where not all $\alpha_{jv}$ are $0$. In fact, $T$ is given by
$\sum_{j=1}^{i(v)}\alpha_{jv}L_j^{(v)}(\x )=0$, where $\alpha_{i(v),v}\not= 0$. 
It follows that
$i(v)$ is the largest index $i$ such that
$\{ L_i^{(v)}|_T:\, j\in\{ 1\kdots n\}\setminus\{ i\}\}$
is linearly independent.  
Hence
\begin{equation}\label{17.6}
w(T)=\sum_{v\in M_K} w_v(T)=
\sum_{v\in M_K}\sum_{\stackrel{j=1}{j\not= i(v)}}^n c_{jv}.
\end{equation}
Moreover,
\begin{equation}\label{17.7}
\sum_{v\in M_K} U_{i(v),v}\subseteq T.
\end{equation}
We prove that in \eqref{17.7} we have equality. Assume the contrary.
Then there is an $(n-1)$-dimensional linear subspace $T'\not= T$ of $\OQq^n$
such that $\sum_{v\in M_K} U_{i(v),v}\subset T'$. Then if $j(v)$ denotes
the smallest index $i$ such that $U_{iv}\subseteq T'$ we have $j(v)\leq i(v)$
for $v\in M_K$.
So 
\[
w(T')= \sum_{v\in M_K}\sum_{\stackrel{j=1}{j\not= j(v)}}^n c_{jv}\geq w(T),
\]
contrary to what we observed above. 
  
Knowing that we have equality in \eqref{17.7}, 
there is a subset $\{ v_1\kdots v_s\}$
of $M_K$ with $s\leq n-1$ such that $T=U_{i(v_1),v_1}+\cdots +U_{i(v_s),v_s}$.
By \eqref{6.10}, \eqref{6.8} we have
\[
H_2(U_{i(v_l),v_l})=H_2(U_{i(v_l),v_l}^{\bot})\leq H_2^{n-1}\ \ 
\mbox{for } l=1\kdots s,
\]
and then by \eqref{6.9},
\[
H_2(T)\leq\prod_{l=1}^s H_2(U_{i(v_l),v_l})\leq H_2^{(n-1)^2}.
\]
This completes our proof.
\end{proof}

Our final result is as follows.

\begin{prop}\label{pr:17.5}
Let $T_1\kdots T_{r-1}$ be the subspaces of $\OQq^n$ in the filtration
of $(\mathcal{L} ,\cc )$. 
Put $H_2:=\max\{ H_2(L_i^{(v)}):\, v\in M_K,\, i=1\kdots n\}$. 
Then
\[
H_2(T_i)\leq H_2^{4^n}\ \ \mbox{for } i=1\kdots r-1.
\]
\end{prop}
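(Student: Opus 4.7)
The strategy is to reduce each $T_i$ to the codimension-one case handled by \lemref{le:17.4} via the Pl\"{u}cker/exterior power machinery of Section~\ref{11}. Fix $i \in \{1,\ldots,r-1\}$, set $k := \dim T_i$ and $p := n-k$. Since the dimensions in the filtration strictly increase, the case $p = 1$ forces $i = r-1$, and \lemref{le:17.4} gives $H_2(T_i) \leq H_2^{(n-1)^2} \leq H_2^{4^n}$ directly.

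Assume $p \geq 2$ and put $N := \binom{n}{p}$. Form the pair $(\widehat{\mathcal{L}},\widehat{\cc})$ on $\OQq^N$ as in \eqref{17.1}--\eqref{17.2}. In the notation of \lemref{le:6.1}, extending a basis of $T_i$ to a basis $\{{\bf g}_1,\ldots,{\bf g}_n\}$ of $\OQq^n$ produces the $(N-1)$-dimensional subspace $\widehat{T}_i \subset \OQq^N$ spanned by $\widehat{{\bf g}}_1,\ldots,\widehat{{\bf g}}_{N-1}$, and that lemma yields $H_2(\widehat{T}_i) = H_2(T_i)$. The goal is to identify $\widehat{T}_i$ with the penultimate subspace $\widehat{T}'_{r'-1}$ in the filtration of $(\widehat{\mathcal{L}},\widehat{\cc})$; granted this, \lemref{le:17.4} applied in $\OQq^N$ yields
\[
H_2(T_i) = H_2(\widehat{T}'_{r'-1}) \leq \widehat{H}_2^{(N-1)^2},
\]
and \eqref{6.6} bounds $\widehat{H}_2 := \max_{v,j} H_2(\widehat{L}_j^{(v)}) \leq H_2^p$, giving $H_2(T_i) \leq H_2^{p(N-1)^2}$.

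To perform the identification, I apply \thmref{th:16.2} to $(\mathcal{L},\cc)$: because $k$ is a jump in the filtration, the vertex inequality $\mu(T_i,T_{i-1}) > \mu(T_{i+1},T_i)$ implies that for every sufficiently large $Q$, one has $T_k(Q) = T_i$ together with $\lambda_{k+1}(Q)/\lambda_k(Q) > 2^{2n^3 2^n}$, i.e.\ condition \eqref{17.4} holds. \lemref{le:17.3} then delivers both $\widehat{\lambda}_{N-1}(Q)/\widehat{\lambda}_N(Q) < 2^{-n^3 2^n}$ and the equality $H_2(\widehat{T}_{N-1}(Q)) = H_2(T_k(Q)) = H_2(T_i)$. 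Feeding this large gap back into \thmref{th:16.2} applied now to $(\widehat{\mathcal{L}},\widehat{\cc})$ in $\OQq^N$, the ratio $\widehat{\lambda}_{N-1}/\widehat{\lambda}_N$ cannot approach a nonzero limit, so there must be a filtration jump at dimension $N-1$ in the exterior power: $\widehat{T}'_{r'-1}$ has dimension $N-1$ and $\widehat{T}_{N-1}(Q) = \widehat{T}'_{r'-1}$ for $Q$ large, yielding $H_2(\widehat{T}'_{r'-1}) = H_2(T_i)$ as required. Making this cross-level coordination of \thmref{th:16.2} on the two sides via the gap produced by \lemref{le:17.3} is the main technical obstacle.

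Finally, $p(N-1)^2 \leq 4^n$ for $1 \leq p \leq n-1$ follows from the standard Stirling-type estimate $\binom{n}{\lfloor n/2 \rfloor} \leq 2^n/\sqrt{n}$ (valid for $n \geq 2$): then $(N-1)^2 \leq N^2 \leq 4^n/n$, and $p \leq n$ gives $p(N-1)^2 \leq 4^n$, completing the proof.
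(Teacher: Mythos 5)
Your proposal follows the paper's proof almost exactly: reduce to the codimension-one case of \lemref{le:17.4} by passing to the $p$-th exterior power, use \thmref{th:16.2} together with \lemref{le:17.3} to show that $\widehat{T}_{N-1}(Q)$ stabilises to an $(N-1)$-dimensional subspace of the filtration of $(\widehat{\mathcal{L}},\widehat{\cc})$ whose height equals $H_2(T_i)$, and finish with $p\binom{n}{p}^2\leq 4^n$. Your explicit treatment of $p=1$ and your verification of the final numerical inequality are fine.

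The one step you must tighten is the deduction of a filtration jump at dimension $N-1$ on the exterior power side. What you explicitly establish is the constant bound $\widehat{\lambda}_{N-1}(Q)/\widehat{\lambda}_N(Q)<2^{-n^3 2^n}$, and you then argue that ``the ratio cannot approach a nonzero limit.'' Neither half of this works as stated: a constant bound $<1$ is perfectly compatible with $N-1$ and $N$ lying in the same segment of the filtration of $(\widehat{\mathcal{L}},\widehat{\cc})$, since in that case \thmref{th:16.2} only yields $\widehat{\lambda}_{N-1}(Q)/\widehat{\lambda}_N(Q)\geq Q^{-2\delta}$ for each $\delta>0$ and $Q$ large --- a quantity that itself tends to $0$; and conversely, lying in the same segment does not force the ratio to converge to a nonzero limit. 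To exclude the same-segment case you need polynomial decay: \thmref{th:16.2} applied to $(\mathcal{L},\cc)$ gives $\lambda_k(Q)/\lambda_{k+1}(Q)\leq Q^{-\theta}$ for a fixed $\theta$ with $0<\theta<\mu(T_i,T_{i-1})-\mu(T_{i+1},T_i)$, and the first inequality of \eqref{17.4a} transfers this with only a constant loss, so that $\widehat{\lambda}_{N-1}(Q)/\widehat{\lambda}_N(Q)\leq 2^{n^3 2^n}Q^{-\theta}\leq Q^{-\theta/2}$ for $Q$ large. This does contradict the same-segment lower bound $Q^{-2\delta}$ once $\delta<\theta/4$, and is exactly how the paper concludes $\dim\widehat{T}_{N-1}(Q)=N-1$ and the stabilisation $\widehat{T}_{N-1}(Q)=\widehat{T}$. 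You have all the ingredients for this (both halves of \lemref{le:17.3}(i) and the two-sided estimates of \thmref{th:16.2}); you just need to carry the power of $Q$ through instead of discarding it for a constant.
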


\begin{proof}
Let $i\in\{ 1\kdots r-1\}$ and put $T:=T_i$, $k:=\dim T$, $p:=n-k$, $N:=\binom{n}{p}$.
Further, let $\widehat{\mathcal{L}}$, $\widehat{\cc}$ be as in
\eqref{17.1}, \eqref{17.2}. By \eqref{6.5}, for the linear forms $\widehat{L}_j^{(v)}$ in 
$\widehat{\mathcal{L}}$ we have
\begin{equation}\label{17.8}
H_2(\widehat{L}_j^{(v)})\leq H_2^p\ \ \mbox{for } v\in M_K,\, j=1\kdots N.
\end{equation}

Let $0<\theta <\mu (T_{i+1},T_i)-\mu (T_{i+2},T_{i+1})$.
By Theorem \ref{th:16.2} we have for every sufficiently large $Q$, that
\begin{equation}\label{17.9}
T_k(Q)=T
\end{equation}
and $\lambda_k(Q)/\lambda_{k+1}(Q)\leq Q^{-\theta}$. Together with
Lemma \ref{le:17.3} (i), this implies that for $Q$ sufficiently large we have
$\widehat{\lambda}_{N-1}(Q)/\widehat{\lambda}_N(Q)\leq Q^{-\theta /2}$,
with a positive exponent $\theta /2$ independent of $Q$,
and so $\dim \widehat{T}_{N-1}(Q)=N-1$.
Again from Theorem \ref{th:16.2}, 
but now applied with $\widehat{\mathcal{L}}, \widehat{\cc},N$
instead of $\mathcal{L} ,\cc ,n$, it follows that 
there is a subspace $\widehat{T}$ of dimension $N-1$
in the filtration of $(\widehat{\mathcal{L}} ,\widehat{\cc})$, such that
\[
\widehat{T}_{N-1}(Q)=\widehat{T}
\]
for every sufficiently large $Q$.

Now using subsequently \eqref{17.9}, Lemma \ref{le:17.3} (ii), Lemma \ref{le:17.4} (with 
$\widehat{\mathcal{L}},\widehat{\cc},N$ instead of $\mathcal{L} ,\cc ,n$),
and \eqref{17.8}, we obtain for $Q$ sufficiently large,
\[
H_2(T)=H_2(T_k(Q))=H_2(\widehat{T}_{N-1}(Q))=H_2(\widehat{T})\leq (H_2^p)^{(N-1)^2}
\leq H_2^{4^n}
\]
where in the last step we have used $p(N-1)^2\leq p\binom{n}{p}^2\leq 4^n$.
This completes our proof.
\end{proof}

\section{Proof of Theorem \ref{th:2.3}}\label{18}

Let $n,\LL ,\cc ,\delta ,R$ satisfy \eqref{2.10a}--\eqref{2.10}.
Let $T=T(\LL ,\cc )$ be the subspace from \eqref{2.scss}.
Recall that this space is defined over $K$.
The hard core of our proof is to make explicit Lemma \ref{le:16.0} (ii).

Put $k:=\dim T$. 
Choose a basis $\{ {\bf g}_1\kdots {\bf g}_k\}$ of $T$, contained in $K^n$.
Write in the usual manner 
$\bigcup_{v\in M_K}\{ L_1^{(v)}\kdots L_n^{(v)}\}=\{ L_1\kdots L_r\}$, where $r\leq R$, 
and let $\theta_1\kdots\theta_u$ be the distinct, non-zero numbers among
\begin{equation}\label{18.1}
\left(\det (L_{i_l}({\bf g}_j)\right)_{l,j=1\kdots k},\ \ 
1\leq i_1<\cdots <i_k\leq r.
\end{equation}
For $v\in M_K$, put
\[
M_v:=\max (\|\theta_1\|_v\kdots \|\theta_u\|_v),\ \ 
m_v:=\min (\|\theta_1\|_v\kdots \|\theta_u\|_v).
\]

\begin{lemma}\label{le:18.1} 
We have
\[
\prod_{v\in M_K} \frac{M_v}{m_v} \leq (2\HL )^{(4R)^n}.
\]
\end{lemma}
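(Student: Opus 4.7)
The first step is to put ourselves in a position to apply Lemma~\ref{le:10.3a}, which requires that $\mathcal{L}$ contain the coordinate forms $X_1\kdots X_n$. By the remark following Lemma~\ref{le:9.2}, we may assume without loss of generality that $L_i^{(v_0)}=X_i$ for $i=1\kdots n$ for some place $v_0$, since $H_{\mathcal{L}}$ is invariant under precomposition by an invertible linear map (Lemma~\ref{le:9.2}(iv)), the subspace $T$ transforms covariantly (Lemma~\ref{le:9.2}(iii)) and, with a matching change of basis $\mathbf{g}_j\mapsto \varphi^{-1}(\mathbf{g}_j)$, the determinants in \eqref{18.1} are left invariant. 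The case $k=\dim T=0$ is trivial: the empty determinant gives $\theta_1=1$, so $M_v=m_v=1$ for all $v$ and the left hand side equals~$1$. We henceforth assume $1\leq k\leq n-1$.

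Since $\mathbf{g}_j\in K^n$ and the $L_{i_l}$ lie in $K[X_1\kdots X_n]^{\lin}$, we have $\theta_i\in K$, so the products over $M_E$ in Lemma~\ref{le:10.3a} collapse to products over $M_K$. Writing $M:=\binom{n}{k}^{1/2}H_{\mathcal{L}}\cdot H_2(T)$ and noting that the number $u$ of distinct nonzero $\theta_i$ is at most $\binom{r}{k}\leq\binom{R}{k}$, Lemma~\ref{le:10.3a} yields
\[
\prod_{v\in M_K}\frac{M_v}{m_v}\,\leq\, \frac{M}{M^{1-u}}\,=\,M^{u}\,\leq\,
\left(\binom{n}{k}^{1/2}H_{\mathcal{L}}\cdot H_2(T)\right)^{\binom{R}{k}}.
\]

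Next, I bound $H_2(T)$. The subspace $T=T(\mathcal{L},\cc)$ is precisely the penultimate member $T_{r-1}$ of the filtration of $\OQq^n$ with respect to $(\mathcal{L},\cc)$ constructed in Section~\ref{15}, so Proposition~\ref{pr:17.5} gives $H_2(T)\leq H_2^{4^n}$ with $H_2=\max_{v,i}H_2(L_i^{(v)})$. Using our WLOG assumption together with Lemma~\ref{le:9.5}, every coefficient of every $L_j$ (for $j=1\kdots r$) is, up to sign, one of the quantities $d_1\kdots d_m$ of Lemma~\ref{le:9.5}, since the coordinate forms belong to $\{L_1\kdots L_r\}$; hence $\prod_v\max_i\|a_{ji}\|_v\leq H_{\mathcal{L}}$ for each $j$, and the standard estimate $\|\cdot\|_{w,2}\leq n^{s(w)/2}\|\cdot\|_w$ at infinite places gives $H_2\leq n^{1/2}H_{\mathcal{L}}$. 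Consequently
\[
H_2(T)\leq (n^{1/2}H_{\mathcal{L}})^{4^n}=n^{4^n/2}H_{\mathcal{L}}^{4^n}.
\]

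It now remains to assemble the numerical estimate. Combining the two displays above with $\binom{n}{k}^{1/2}\leq 2^{n/2}$, $n\leq 2^n$, $1+4^n\leq 2\cdot 4^n$ and $\binom{R}{k}\leq R^{n-1}$ (since $k\leq n-1$), one gets
\[
\prod_{v\in M_K}\frac{M_v}{m_v}\,\leq\,\bigl(2^{n\cdot 4^n}H_{\mathcal{L}}^{2\cdot 4^n}\bigr)^{R^{n-1}}
=2^{n\cdot 4^n R^{n-1}}H_{\mathcal{L}}^{2\cdot 4^n R^{n-1}}.
\]
The desired bound $(2H_{\mathcal{L}})^{(4R)^n}=2^{4^n R^n}H_{\mathcal{L}}^{4^n R^n}$ then follows from the two elementary inequalities $n\cdot 4^n R^{n-1}\leq 4^n R^n$ and $2\cdot 4^n R^{n-1}\leq 4^n R^n$, which hold because $R\geq r\geq n\geq 2$ by \eqref{2.10a}--\eqref{2.10}.

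The main obstacle is really the bookkeeping with the many exponents; the only nontrivial conceptual ingredient is the invocation of Proposition~\ref{pr:17.5}, whose proof via the limit theorem for successive infima (Theorem~\ref{th:16.2}) sits deep in the paper. The rest of the argument is Lemma~\ref{le:10.3a} plus a careful chain of elementary estimates.
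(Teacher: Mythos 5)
Your proof is correct and follows essentially the same route as the paper's: reduce via Lemma \ref{le:9.2} to the case where $\mathcal{L}$ contains $X_1\kdots X_n$ (noting the invariance of the quotients $M_v/m_v$), apply Lemma \ref{le:10.3a}, bound $H_2(T)\leq (n^{1/2}\HL)^{4^n}$ through Proposition \ref{pr:17.5} and Lemma \ref{le:9.5}, and finish with elementary estimates on the exponents. The only differences are cosmetic (you bound $\binom{r}{k}$ by $R^{n-1}$ where the paper uses $R^n/n!$, and you make the $k=0$ case explicit), so there is nothing to add.
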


\begin{proof}
Let $\varphi$ be a linear transformation of $\OQq^n$, defined over $K$.
By Lemma \ref{le:9.2}, replacing $\LL$ by $\LL\circ\varphi$ has the effect
that $T=T(\LL ,\cc )$ is replaced by $\varphi^{-1}(T)$.
Taking the basis $\varphi^{-1}({\bf g}_1)\kdots\varphi^{-1}({\bf g}_k)$ of $\varphi^{-1}(T)$,
we see that the quotients $M_v/m_v$ ($v\in M_K$) remain unchanged.
This shows that to 
prove our lemma, we may replace $\LL$ by $\LL\circ\varphi$.
Now choose linearly independent $L_1\kdots L_n$ from $\LL$, and then $\varphi$
such that $L_i\circ\varphi =X_i$ for $i=1\kdots n$. Then $L\circ\varphi$
contains $X_1\kdots X_n$.

So we may assume without loss of generality that 
$\LL$ contains $X_1\kdots X_n$
and then apply Lemma \ref{le:10.3a}.  
Thus, we conclude that
\begin{equation}\label{18.2}
\prod_{v\in M_K} \frac{M_v}{m_v} 
\leq \left(\binom{n}{k}^{1/2}\HL\cdot H_2(T)\right)^{\binom{r}{k}}.
\end{equation}
We estimate $H_2(T)$ from above by means of Proposition \ref{pr:17.5}.
The coefficients of $L_1\kdots L_r$ belong to the set $\{ d_1\kdots d_m\}$
from Lemma \ref{le:9.5}. Hence
\[
H_2(L_i)\leq n^{1/2}\prod_{v\in M_K}\max (\|d_1\|_v\kdots \|d_m\|_v)\leq n^{1/2}\HL
\]
for $i=1\kdots r$, and so $H_2(T)\leq (n^{1/2}\HL)^{4^n}$.
By inserting this inequality together with $\binom{r}{k}\leq R^n/n!$
into \eqref{18.2}, we infer
\[
\prod_{v\in M_K} \frac{M_v}{m_v} \leq
\left(\binom{n}{k}^{1/2}n^{4^n/2}\cdot \HL^{4^n+1}\right)^{R^n/n!}
\leq (2\HL )^{(4R)^n}.
\]
\end{proof}

In addition to \eqref{2.10a}--\eqref{2.10}, we assume that
\begin{equation}
\label{18.3}
c_{1v}\leq\cdots \leq c_{nv}\ \ \mbox{for } v\in M_K
\end{equation}
which is no restriction.

By \eqref{15.6} we have
\[ 
w(T)=w_{\mathcal{L},\cc}(T)=\sum_{v\in M_K}\sum_{i\in I_v} c_{iv},
\]
where $I_v=I_v(T)=\{ i_1(v)\kdots i_k(v)\}$ is the set defined by \eqref{15.1}.
Put $I_v^c := \{ 1\kdots n\}\setminus T$.

Let
\begin{equation}\label{18.3a}
\widetilde{L}_i^{(v)}:=L_i^{(v)}-
\sum_{\stackrel{j\in I_v}{j<i}}\alpha_{ijv}L_j^{(v)}
\ \ (v\in M_K,\, i\in I_v^c)
\end{equation}
be the linear forms from \eqref{15.7}. Recall that these linear forms
vanish identically on $T$.
For $v\in M_K,\, i\in I_v$, 
put $\widetilde{L}_i^{(v)}:=L_i^{(v)}$, and define the system
\[
\widetilde{\mathcal{L}}:=(\widetilde{L}_i^{(v)}:\, v\in M_K,\, i=1\kdots n).
\]
Clearly, for every $v\in M_K$,
the set $\{\widetilde{L}_i^{(v)}:\, i=1\kdots n\}$ is linearly
independent.

\begin{lemma}\label{le:18.2} 
The system $\widetilde{\LL}$ has the following properties:
\begin{eqnarray}
\label{18.4}
&H_{\widetilde{\mathcal{L}},\cc, Q}(\x )\leq (2\HL )^{(8R)^n}\HLcQ(\x )\ \ 
\mbox{for $\x\in\OQq^n$, $Q\geq 1$;}&
\\[0.1cm]
\label{18.7}
&H_{\widetilde{\mathcal{L}}}\leq (n\HL )^{(8R)^n}.&
\end{eqnarray}
\end{lemma}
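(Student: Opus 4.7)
The whole lemma is driven by one uniform estimate on the scalars $\alpha_{ijv}$ appearing in \eqref{18.3a}. For fixed $v\in M_K$ and $i\in I_v^c$, evaluating the identity $L_i^{(v)}|_T=\sum_{j\in I_v}\alpha_{ijv}L_j^{(v)}|_T$ at the basis $\{{\bf g}_1,\ldots,{\bf g}_k\}$ of $T$ yields a $k\times k$ linear system for $(\alpha_{ijv})_{j\in I_v}$ whose coefficient determinant is $\det(L_j^{(v)}({\bf g}_l))_{j\in I_v,\,l=1\ldots k}$, one of the nonzero $\theta_l$ in \eqref{18.1}. By Cramer's rule each $\alpha_{ijv}$ is a quotient of two such $\theta$'s (or is zero), hence $\|\alpha_{ijv}\|_v\leq M_v/m_v$ for every $v$. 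Lemma \ref{le:18.1} then bounds the global product $\prod_v M_v/m_v\leq (2\HL)^{(4R)^n}$, and this is the only nontrivial input.

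For \eqref{18.4}: when $i\in I_v$ we have $\widetilde{L}_i^{(v)}=L_i^{(v)}$, so nothing need be done. For $i\in I_v^c$, I would apply the triangle (resp.\ ultrametric) inequality to \eqref{18.3a}, using the ordering $c_{jv}\leq c_{iv}$ from \eqref{18.3} for $j\in I_v$, $j<i$, which makes $Q^{-c_{iv}}\leq Q^{-c_{jv}}$. The upshot is
\[
\|\widetilde{L}_i^{(v)}(\x)\|_v Q^{-c_{iv}}\leq n^{s(v)}\max\bigl(1,\max_j\|\alpha_{ijv}\|_v\bigr)\cdot\max_{1\leq j\leq n}\|L_j^{(v)}(\x)\|_v Q^{-c_{jv}}.
\]
Maximizing over $i$, and taking the product over $w\in M_E$ for a finite extension $E\supset K$ with $\x\in E^n$ (using that $\alpha_{ijv}\in K$, so $\prod_{w|v}\|\alpha_{ijv}\|_w=\|\alpha_{ijv}\|_v$), yields $H_{\widetilde{\mathcal{L}},\cc,Q}(\x)\leq n\cdot \prod_v(M_v/m_v)\cdot \HLcQ(\x)$. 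Lemma \ref{le:18.1} and $\HL\geq 1$ (a direct consequence of the product formula) then absorb the residual factor $n$ into the gap between $(4R)^n$ and $(8R)^n$.

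For \eqref{18.7}: each $\widetilde{L}_i^{(v)}$ is an explicit combination of at most $n$ of the forms $L_1,\ldots,L_r$, with coefficient $1$ on $L_i^{(v)}$ and coefficients $\pm\alpha_{ijv}$ on the others. Given any $n$ distinct forms $\widetilde{L}_{k_1},\ldots,\widetilde{L}_{k_n}$ in $\widetilde{\mathcal{L}}$, multilinearity expands $\det(\widetilde{L}_{k_1},\ldots,\widetilde{L}_{k_n})$ as a sum of at most $n^n$ terms, each a product of at most $n$ factors $\pm\alpha_{ijv}$ times some $\det(L_{l_1}^{(v_1)},\ldots,L_{l_n}^{(v_n)})$. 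Triangle/ultrametric bounds give, at each $v$,
\[
\|\det(\widetilde{L}_{k_1},\ldots,\widetilde{L}_{k_n})\|_v\leq n^{ns(v)}(M_v/m_v)^n\cdot D_v,\qquad D_v:=\max_{1\leq l_1<\cdots<l_n\leq r}\|\det(L_{l_1},\ldots,L_{l_n})\|_v.
\]
Taking the product over $v$ (so $\prod_v D_v=\HL$) and applying Lemma \ref{le:18.1} gives $H_{\widetilde{\mathcal{L}}}\leq n^n(2\HL)^{n(4R)^n}\HL$, which the elementary inequalities $(8R)^n=2^n(4R)^n\geq n(4R)^n+1$ and $n\leq 2^n$ collapse into $(n\HL)^{(8R)^n}$.

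The only real obstacle is bookkeeping the multilinear expansion for \eqref{18.7}: one has to recognize that each row of the determinant contributes at most one $\alpha$-factor, so the whole expansion has at most $n^n$ summands each with at most $n$ $\alpha$-factors, and that the resulting determinants $\det(L_{l_1}^{(v_1)},\ldots,L_{l_n}^{(v_n)})$ always lie among those counted by $\HL$. Everything else — Cramer's rule for the $\alpha$'s, the use of \eqref{18.3} to make $Q^{-c_{iv}}\leq Q^{-c_{jv}}$, and the numerical absorption of the polynomial factors — is routine once the strategy is in place.
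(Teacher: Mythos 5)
Your proposal is correct and follows essentially the same route as the paper: Cramer's rule expresses each $\alpha_{ijv}$ as a quotient of determinants from \eqref{18.1} so that $\|\alpha_{ijv'}\|_v\leq M_v/m_v$ at every place $v$, Lemma \ref{le:18.1} controls the global product, the ordering \eqref{18.3} handles the twisted-height comparison for \eqref{18.4}, and the multilinear expansion of the determinants (at most $n^n$ terms, each carrying at most $n$ factors $\alpha_{ijv}$) gives \eqref{18.7}. The only nuance worth stating explicitly is that for \eqref{18.7} you need the bound on $\|\alpha_{ijv'}\|_v$ with the place $v$ of the norm independent of the place $v'$ indexing the coefficient, which your Cramer's-rule argument does deliver since the $\theta$'s are global quantities.
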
 

\begin{proof}
Let $v\in M_K$. 
We find expressions for the coefficients $\alpha_{ijv}$ from the relations
\[
L_i^{(v)}({\bf g}_h)=\sum_{j\in I_v} \alpha_{ijv}L_j^{(v)}({\bf g}_h)\ \ 
\mbox{for $i\in I_v^c$, $h=1\kdots k$}
\]
and Cramer's rule. Recall
that $\alpha_{ijv}=0$ for $j>i$ by the definition of $I_v$.
In fact, each $\alpha_{ijv}$ is of the shape $\delta_{ijv}/\delta_v$,
where 
$\delta_v=\det\left( (L_{i_l(v)}^{(v)}({\bf g}_h))_{l,h=1\kdots k}\right)$,
and $\delta_{ijv}$ is a similar sort of determinant,
but with $L_j^{(v)}$ replaced by $L_i^{(v)}$.
Clearly, 
$\delta_v$ and the numbers $\delta_{ijv}$ all occur among
the numbers \eqref{18.1}. Hence
\begin{equation}\label{18.8}
\|\alpha_{ijv}\|_{v'}\leq \frac{M_{v{'}}}{m_{v{'}}}\ \ 
\mbox{for $i\in I_v^c,\,\, j\in I_v,\,\, v'\in M_K$.}
\end{equation} 

We now prove \eqref{18.4}.
Let $\x\in\OQq^n$, $Q\geq 1$, and choose a finite extension $E$ of $K$ 
such that $\x\in E^n$. For $w\in M_E$ lying above $v\in M_K$,
define in the usual manner
$c_{iw}, L_i^{(w)}$ by \eqref{2.5} and similarly,
$\widetilde{L}_i^{(w)}:=\widetilde{L}_i^{(v)}$, $\alpha_{ijw}:=\alpha_{ijv}$,
$I_w:=I_v$, $M_w:= M_v^{d(w|v)}$, $m_w:= m_v^{d(w|v)}$.
Thus, \eqref{18.3a}, \eqref{18.8} and Lemma \ref{le:18.1} hold with
$w\in M_E$ instead of $v\in M_K$. It follows that for $w\in M_E$
we have
\[
\max_{1\leq i\leq n} \|\widetilde{L}_i^{(w)}(\x )\|_wQ^{-c_{iw}}
\leq
 n^{s(w)}\frac{M_w}{m_w}\cdot
\max_{1\leq i\leq n} \| L_i^{(w)}(\x )\|_wQ^{-c_{iw}}.
\]
By taking the product over $w\in M_E$ it follows
\[
H_{\widetilde{\mathcal{L}},\cc ,Q}(\x )\leq n(2\HL)^{(4R)^n}\HLcQ (\x ),
\]
which implies \eqref{18.4}.

We next prove \eqref{18.7}. Let 
$d_1\kdots d_t$ be the determinants of the $n$-element subsets
of $\bigcup_{v\in M_K}\{ L_1^{(v)}\kdots L_n^{(v)}\}$,
and 
$\widetilde{d}_1\kdots\widetilde{d}_s$
the determinants of the $n$-element subsets of 
$\bigcup_{v\in M_K}\{ \widetilde{L}_1^{(v)}\kdots \widetilde{L}_n^{(v)}\}$.
Then each $\widetilde{d}_i$ is a linear combination 
of elements from $d_1\kdots d_t$ 
with at most $n^n$ terms, each coefficient of which is a product of 
at most $n$ elements
from $\alpha_{ijv}$ ($v\in M_K,\, i\in I_v,\, j\in I_v^c$).
So by \eqref{18.8},
\[
\max_{1\leq i\leq s} \|\widetilde{d}_i\|_v\leq
n^{ns(v)}\left(\frac{M_v}{m_v}\right)^n\cdot\max_{1\leq i\leq t}\| d_i\|_v
\]
for $v\in M_K$. By taking the product over $v\in M_K$ and using 
Lemma \ref{le:18.1}, we obtain
\[
H_{\widetilde{\mathcal{L}}}\leq n^n (2\HL)^{n(4R)^n}\cdot \HL\leq (2\HL)^{(8R)^n},
\] 
which is \eqref{18.7}.
\end{proof}  

In the proof of Theorem \ref{th:2.3} we assume
\begin{eqnarray}
\label{18.9}
&
\begin{array}{l}
\mbox{there is a non-archimedean place $v_0\in M_K$ such that}
\\
\mbox{$c_{i,v_0}=0$, $\widetilde{L}_i^{(v_0)}=X_i$ for $i=1\kdots n$,}
\end{array}
&
\\[0.1cm]
\label{18.10}
&T=\{ \x\in\OQq^n:\, x_1=\cdots =x_{n-k}=0\}.&
\end{eqnarray}
We show that these are no restrictions.
Let $\varphi$ be a linear transformation of $\OQq^n$, defined over $K$.
Lemma \ref{le:9.2} says that $T(\LL\circ\varphi ,\cc )=\varphi^{-1}(T)$. 
Hence, if we construct a system of linear forms from $\LL\circ\varphi$
and $T(\LL\circ\varphi ,\cc )$ in the same way as $\widetilde{\LL}$ has been
constructed from $\LL$ and $T$, we obtain $\widetilde{\LL}\circ\varphi$. 
Now choose $\varphi$ such that 
$\{ \widetilde{L}_1^{(v)}\circ\varphi\kdots \widetilde{L}_n^{(v)}\circ\varphi\}=
\{ X_1\kdots X_n\}$, and moreover,
$\{ \widetilde{L}_i^{(v)}\circ\varphi :\, i\in I_v^c\} =\{ X_1\kdots X_{n-k}\}$.
Then $\widetilde{\LL}\circ\varphi$ contains $X_1\kdots X_n$,
and $T(\LL\circ\varphi ,\cc )$ is given by $X_1=\cdots =X_{n-k}=0$.
Now Lemma \ref{le:9.2} implies that in the proof of Theorem \ref{th:2.3},
we may replace $\LL$ by $\LL\circ\varphi$.

So henceforth, in addition to \eqref{2.10a}--\eqref{2.10} and \eqref{18.3},
we assume \eqref{18.9}, \eqref{18.10}.

The projection 
\begin{equation}\label{18.101}
\varphi '' :\, (x_1\kdots x_n)\mapsto (x_1\kdots x_{n-k})
\end{equation}
has kernel $T$.
We now define a tuple in $K[X_1\kdots X_{n-k}]^{\lin}$,
\begin{eqnarray}
\label{18.102}
&&\mathcal{L}{''}=(L_i^{(v)}{''}:\, v\in M_K,\, i\in I_v^c)
\\
\nonumber
&&\mbox{with }
L_i^{(v)}{''}:= \widetilde{L}_i^{(v)}\circ\varphi{''}^{-1}\ \, (v\in M_K,\, i\in I_v^c)
\end{eqnarray}
and a tuple of reals
\begin{eqnarray}
\label{18.103}
&&{\bf d}=(d_{iv}:\, v\in M_K,\, i\in I_v^c)
\\
\nonumber
&&\mbox{with } d_{iv}:= \frac{n-k}{n}\left( c_{iv}-\theta_v\right),
\ \, (v\in M_K,\,\, i\in I_v^c),
\\
\nonumber
&&\mbox{where } \theta_v:=\frac{1}{n-k}\Big(\sum_{j\in I_v^c} c_{jv}\Big)\ \,
(v\in M_K).
\end{eqnarray}
Notice that by 
Lemma \ref{le:15.2} (ii) and assumption \eqref{2.8} we have
\begin{equation}\label{18.14}
\sum_{v\in M_K} \theta_v=\frac{w(\OQq^n )-w(T)}{n-k}=-\,\frac{w(T)}{n-k}.
\end{equation} 
The tuple $\mathcal{L}''$ is precisely that defined in \eqref{15.7a},
while ${\bf d}$ is a normalization of the tuple $\cc ''$ from \eqref{15.7a}.
Eventually, we want to apply Theorem \ref{th:8.1} to $(\LL{''} , {\bf d})$,
and to this end we have to verify that this pair satisfies the analogues
of \eqref{8.5}--\eqref{8.8} with $\LL ,\cc$ replaced by $\LL '' ,{\bf d}$;
in fact, the tuple ${\bf d}$ has been chosen to satisfy 
\eqref{8.6}, \eqref{8.7}. Further, we need an estimate for $H_{\mathcal{L}{''}}$
in terms of $\HL$. Finally, we have to relate 
the twisted height $H_{\mathcal{L}{''},{\bf d},Q'}(\varphi '' (\x ))$
to $\HLcQ (\x )$, where $Q' := Q^{n/(n-k)}$.
 
We start with the verification of \eqref{8.5}--\eqref{8.8},
with $n-k,nR^n,\mathcal{\LL}{''},{\bf d}$ replacing $n,R,\LL ,\cc$, 
and with indices $i$ taken from $I_v^c$
instead of $\{ 1\kdots n\}$, for $v\in M_K$.
It is clear that ${\bf d}$ satisfies \eqref{8.5}, \eqref{8.6}, and that
$\LL ''$ satisfies \eqref{8.3}. Further, from \eqref{18.9}, \eqref{18.10}
it follows easily that $\LL ''$ satisfies \eqref{8.2}.
In the lemma below we show that $\LL '' ,{\bf d}$ has properties
\eqref{18.11}, \eqref{18.12}, \eqref{18.13}. 
which are precisely \eqref{8.7}, \eqref{8.4}, \eqref{8.8}
with $n-k,nR^n,\mathcal{\LL}{''},{\bf d}$ replacing $n,R,\LL ,\cc$.
The weight $w_{\mathcal{L}{''},{\bf d}}$ and twisted heights 
$H_{\mathcal{L}{''},{\bf d},Q}$ are defined similarly as in
Section \ref{16}, but with $d_{iv}$ in place of $c_{iv}$
in \eqref{16.102}, \eqref{16.twisted-height2}.

\begin{lemma}\label{le:18.3}
We have
\begin{eqnarray}
\label{18.11}
&\displaystyle{\sum_{v\in M_K}\max_{i\in I_v^c} d_{iv}\, \leq 1,}&
\\[0.1cm]
\label{18.12}
&\displaystyle{\#\left(\bigcup_{v\in M_K}\{ L_i^{(v)}{''}:\, i\in I_v^c\}
\right)\leq nR^n,}&
\\[0.1cm]
\label{18.13}
&w_{\mathcal{L}{''},{\bf d}}(U)\leq 0\ \ \mbox{for every linear subspace $U$
of $\OQq^{n-k}$.}
\end{eqnarray}
\end{lemma}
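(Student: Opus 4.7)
The plan is to verify the three claims separately. I use throughout the sort convention \eqref{18.3} together with the identity
\[
\theta_v=\tfrac{1}{n-k}\sum_{j\in I_v^c}c_{jv}=-\tfrac{w_v(T)}{n-k},
\]
which follows from $\sum_{i=1}^n c_{iv}=0$ and the formula \eqref{15.6} for $w_v(T)$.

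The bound \eqref{18.12} is a simple count. By \eqref{2.10} at most $R$ distinct linear forms occur among the $L_i^{(v)}$, so there are at most $R^n$ distinct ordered tuples $(L_1^{(v)},\ldots,L_n^{(v)})$ as $v$ varies over $M_K$. Each tuple determines the partition $I_v\sqcup I_v^c$ via the greedy rule \eqref{15.1}, hence the modified forms $\widetilde L_i^{(v)}$ of \eqref{18.3a} for $i\in I_v^c$, and finally at most $n-k$ forms $L_i^{(v)''}=\widetilde L_i^{(v)}\circ\varphi''^{-1}$. Multiplying yields $R^n(n-k)\leq nR^n$.

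For \eqref{18.11} I compute directly. From $d_{iv}=\frac{n-k}{n}(c_{iv}-\theta_v)$ together with \eqref{18.14},
\[
\sum_{v\in M_K}\max_{i\in I_v^c}d_{iv}
=\tfrac{n-k}{n}\sum_{v\in M_K}\max_{i\in I_v^c}c_{iv}+\tfrac{w(T)}{n}.
\]
The first sum is at most $\sum_v\max_{1\leq i\leq n}c_{iv}\leq 1$ by \eqref{2.9} (each term being nonnegative since $\sum_i c_{iv}=0$). The elementary estimate $w_v(T)=\sum_{i\in I_v}c_{iv}\leq k\max_{1\leq i\leq n}c_{iv}$ summed over $v$ gives $w(T)\leq k$. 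Combining yields $\sum_v\max_{i\in I_v^c}d_{iv}\leq\frac{n-k}{n}+\frac{k}{n}=1$.

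The inequality \eqref{18.13} is the substantive point, encoding the semistability of the quotient system. Let $U\subseteq\OQq^{n-k}$ and put $W:=\varphi''^{-1}(U)\supseteq T$, so $\dim W=\dim U+k$. Because $i\mapsto d_{iv}$ is an order-preserving affine function of $i\mapsto c_{iv}$ (the factor $(n-k)/n$ is positive and $\theta_v$ is constant in $i$), the greedy prescriptions for $w_{\mathcal{L}'',{\bf d},v}(U)$ and $w_{\mathcal{L}'',{\bf c}'',v}(U)$ select the same subset $J_v\subseteq I_v^c$ of size $\dim U$, hence
\[
w_{\mathcal{L}'',{\bf d},v}(U)=\tfrac{n-k}{n}\bigl(w_{\mathcal{L}'',{\bf c}'',v}(U)-(\dim U)\theta_v\bigr).
\]
Summing over $v$, applying \lemref{le:15.2}(ii) to rewrite $w_{\mathcal{L}'',{\bf c}''}(U)=w(W)-w(T)$, and substituting $\sum_v\theta_v=-w(T)/(n-k)$ from \eqref{18.14}, the expression collapses to
\[
w_{\mathcal{L}'',{\bf d}}(U)=\tfrac{1}{n}\bigl[(n-k)\,w(W)-(n-\dim W)\,w(T)\bigr].
\]
For proper $W\propersubset\OQq^n$ the defining maximality property \eqref{2.scss} of $T$ gives $(n-\dim W)w(T)\geq(n-k)w(W)$, hence $w_{\mathcal{L}'',{\bf d}}(U)\leq 0$; the boundary case $W=\OQq^n$ is handled by $w(\OQq^n)=0$ and direct substitution. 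The main technical obstacle is carrying the two normalizations—the scaling by $(n-k)/n$ and the shift by $\theta_v$—through cleanly to reach the displayed closed formula; once it is in hand, \eqref{2.scss} finishes the argument with no further work.
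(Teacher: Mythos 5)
Your proposal is correct and follows essentially the same route as the paper: \eqref{18.12} by counting ordered tuples, \eqref{18.11} by the elementary estimate giving $\frac{n-k}{n}+\frac{k}{n}=1$ (you globalize the $\theta_v$-sum into $w(T)/n\leq k/n$ where the paper keeps the two pieces local per place, but the computation is the same), and \eqref{18.13} by reducing via Lemma \ref{le:15.2}(ii) and \eqref{18.14} to the closed formula $\frac{1}{n}\bigl[(n-k)w(W)-(n-\dim W)w(T)\bigr]$ and invoking \eqref{2.scss}. No gaps.
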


\begin{proof} We start with \eqref{18.11}.
Put $c_{iv}' :=c_{iv}-\frac{1}{n}\sum_{j=1}^n c_{jv}$
for $v\in M_K$, $i=1\kdots n$. Then $\sum_{i=1}^n c_{iv}'=0$ for $v\in M_K$,
while $\sum_{v\in M_K}\max_{1\leq i\leq n} c_{iv}'\leq 1$ by \eqref{2.9}.

Consequently,
\begin{eqnarray*}
\sum_{v\in M_K}\max_{i\in I_v^c} d_{iv}&=&
\frac{n-k}{n}\sum_{v\in M_K}\left( \max_{i\in I_v^c} c_{iv}'-\frac{1}{n-k}\sum_{j\in I_v^c} c_{jv}'\right)
\\
&=&\frac{n-k}{n}\cdot \sum_{v\in M_K}
\left(\max_{i\in I_v^c} c_{iv}'+\frac{1}{n-k}\sum_{j\in I_v} c_{jv}'\right)
\\
&\leq&\frac{n-k}{n}\cdot 
\left(1 +\frac{k}{n-k}\right)\max_{1\leq i\leq n} c_{iv}'
\leq 1. 
\end{eqnarray*}
This proves \eqref{18.11}.

Next, we prove \eqref{18.12}.
Let $v\in M_K$. The set
$\{L_i^{(v)}{''}:\, i\in I_v^c\}$ is determined by the linear forms
$\widetilde{L}_i^{(v)}$ given by \eqref{18.3a}, and the latter
by the ordered tuple $(L_1^{(v)}\kdots L_n^{(v)})$.
By \eqref{2.7} there are at most $R^n$ distinct tuples among these
as $v$ runs through $M_K$. This proves \eqref{18.12}.

We finish with proving \eqref{18.13}.
Take a linear subspace $U$ of $\OQq^{n-k}$ and let $W:=\varphi{''}^{-1}(U)$.
By \eqref{18.103}, \eqref{18.14}, we have
\begin{eqnarray*}
w_{\mathcal{L}{''},{\bf d}}(U)&=&
\frac{n-k}{n}\left(w_{\mathcal{L}{''},\cc{''}}(U)-\dim U\sum_{v\in M_K} \theta_v\right)
\\
&=&
\frac{n-k}{n}\left(w_{\mathcal{L}{''},\cc{''}}(U)+\,\dim U\cdot\frac{w(T)}{n-k}\right)
\end{eqnarray*}
and then by Lemma \ref{le:15.2} (ii),
\begin{eqnarray*}
w_{\mathcal{L}{''},{\bf d}}(U)&=&
\frac{n-k}{n}\left(w(W)-w(T)+\,\dim U\cdot\frac{w(T)}{n-k}\right)
\\
&=&
\frac{n-k}{n}
\left( w(W)\, -\frac{w(T)}{n-k}\cdot (n -\dim W)\right).
\end{eqnarray*}
Since this is $\leq 0$ by \eqref{2.scss}, this proves \eqref{18.13}.
\end{proof}

\begin{lemma}\label{le:18.4} 
We have
\[
H_{\mathcal{L}{''}}\leq (2\HL )^{(8R)^n}.
\]
\end{lemma}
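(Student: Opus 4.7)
The plan is to reduce the bound on $H_{\mathcal{L}''}$ to the bound on $H_{\widetilde{\mathcal{L}}}$ from Lemma \ref{le:18.2} via a determinantal comparison that exploits the special coordinate form of $T$ in assumption \eqref{18.10}.

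First, I would observe that because $T=\{x_1=\cdots=x_{n-k}=0\}$, a linear form in $K[X_1\kdots X_n]^{\lin}$ vanishes identically on $T$ if and only if its coefficients of $X_{n-k+1}\kdots X_n$ are zero. By construction \eqref{18.3a}, for each $v\in M_K$ and $i\in I_v^c$ the form $\widetilde{L}_i^{(v)}$ vanishes on $T$, so it lies in $K[X_1\kdots X_{n-k}]^{\lin}$, and the form $L_i^{(v)''}=\widetilde{L}_i^{(v)}\circ\varphi{''}^{-1}$ of \eqref{18.102} is simply $\widetilde{L}_i^{(v)}$ reinterpreted as a linear form in the first $n-k$ variables.

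The key step is the following determinantal identity: for any $n-k$ distinct linear forms $L'_{j_1}\kdots L'_{j_{n-k}}$ taken from the set of distinct elements of the tuple $\mathcal{L}''$, the $(n-k)\times(n-k)$ coefficient determinant (in the variables $X_1\kdots X_{n-k}$) equals, up to sign, the $n\times n$ coefficient determinant (in $X_1\kdots X_n$) of the tuple $(\widetilde{L}'_{j_1}\kdots \widetilde{L}'_{j_{n-k}},X_{n-k+1}\kdots X_n)$, where each $\widetilde{L}'_{j_l}$ is the unique element of $\widetilde{\mathcal{L}}$ projecting to $L'_{j_l}$. This is immediate from the block triangular structure of the matrix: the first $n-k$ rows have zeros in their last $k$ columns, and the last $k$ rows form the identity. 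By assumption \eqref{18.9}, the forms $X_{n-k+1}\kdots X_n$ appear among $\widetilde{L}_1^{(v_0)}\kdots \widetilde{L}_n^{(v_0)}$, so all $n$ forms in this enlarged tuple are distinct elements of $\widetilde{\mathcal{L}}$.

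Consequently, for every place $v\in M_K$, the $v$-factor in $H_{\mathcal{L}''}$ — the maximum of $\|\det(L'_{j_1}\kdots L'_{j_{n-k}})\|_v$ over all $(n-k)$-element subsets — is bounded above by the maximum of $\|\det(\widetilde{L}_{i_1}\kdots \widetilde{L}_{i_n})\|_v$ over all $n$-element subsets of the set of distinct forms in $\widetilde{\mathcal{L}}$, which is the $v$-factor of $H_{\widetilde{\mathcal{L}}}$. Taking the product over $v\in M_K$ gives
\[
H_{\mathcal{L}''}\leq H_{\widetilde{\mathcal{L}}},
\]
and the required bound then follows from inequality \eqref{18.7} of Lemma \ref{le:18.2}, which provides $H_{\widetilde{\mathcal{L}}}\leq (2\HL)^{(8R)^n}$ (this is the actual estimate obtained in the proof of Lemma \ref{le:18.2}, up to the absorption of the factor $n^n$ into the exponent). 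The only real subtlety in this plan is bookkeeping: one must verify that two elements $\widetilde{L}_i^{(v)}$ which project to the same form in $\mathcal{L}''$ were already equal in $\widetilde{\mathcal{L}}$, so that the correspondence used in the determinantal comparison is unambiguous; this is immediate because both forms live in $K[X_1\kdots X_{n-k}]^{\lin}$ and projection from that subspace is the identity.
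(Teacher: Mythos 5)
Your proof is correct and follows essentially the same route as the paper: both reduce the bound to $H_{\widetilde{\mathcal{L}}}$ by observing that each $(n-k)\times(n-k)$ determinant from $\mathcal{L}''$ equals, up to sign, the $n\times n$ determinant $\det(\widetilde{L}_{i_1}\kdots\widetilde{L}_{i_{n-k}},X_{n-k+1}\kdots X_n)$, which by \eqref{18.9} is one of the determinants defining $H_{\widetilde{\mathcal{L}}}$, and then invoke the bound from the proof of Lemma \ref{le:18.2}. Your remark that the bound actually needed is the $(2\HL)^{(8R)^n}$ established at the end of that proof (rather than the $(n\HL)^{(8R)^n}$ in the displayed statement of \eqref{18.7}) is an accurate and worthwhile observation.
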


\begin{proof}
Let $\widetilde{d}_1\kdots\widetilde{d}_s$ be the determinants of the $n$-element subsets
of\\
$\bigcup_{v\in M_K}\{ \widetilde{L}_1^{(v)}\kdots \widetilde{L}_n^{(v)}\}$$=:$
$\{ \widetilde{L}_1\kdots \widetilde{L}_r\}$,
and let $d_1{''}\kdots d_u{''}$ be the determinants of the $(n-k)$-element subsets
of $\bigcup_{v\in M_K}\{ L_i^{(v)}{''}:\, i\in I_v^c\}$.
Pick one of the determinants $d_i{''}$. Then for some 
$i_1\kdots i_{n-k}$, by \eqref{18.101}, \eqref{18.102}, 
\[
d_i{''}=\det (\widetilde{L}_{i_1}\circ\varphi{''}^{-1}\kdots\widetilde{L}_{i_{n-k}}\circ\varphi{''}^{-1})
=\det (\widetilde{L}_{i_1}\kdots\widetilde{L}_{i_{n-k}},X_{n-k+1}\kdots X_n)
\]
and then by \eqref{18.9}, $\pm d_i{''}\in\{ \widetilde{d}_1\kdots \widetilde{d}_s\}$.
Consequently, 
\[
H_{\mathcal{L}{''}}=\prod_{v\in M_K}\max_{1\leq i\leq u}\|d_i{''}\|_v
\leq\prod_{v\in M_K}\max_{1\leq i\leq s}\|\widetilde{d}_i\|_v= H_{\widetilde{\mathcal{L}}}.
\]
Together with \eqref{18.7} this implies our lemma.  
\end{proof}

\begin{prop}\label{pr:18.5}
Let $Q$ be a real with
\begin{equation}\label{18.15}
Q\geq (2\HL )^{200(8R)^n/\delta}
\end{equation}
and $\x\in\OQq^n$ with
\begin{equation}\label{18.16}
\HLcQ (\x )\leq\DL^{1/n}Q^{-\delta}.
\end{equation}
Put $Q' := Q^{n/(n-k)}$. Then
\begin{equation}\label{18.17}
H_{\mathcal{L}{''},{\bf d},Q'}(\varphi '' (\x ))
\leq Q{'}^{-\frac{99}{100}\delta /n}.
\end{equation}
\end{prop}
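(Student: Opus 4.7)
The plan is to compute $H_{\mathcal{L}{''},\dd,Q'}(\varphi{''}(\x))$ place by place, reduce it to $H_{\widetilde{\mathcal{L}},\cc,Q}(\x)$ up to an explicit factor, and then invoke Lemma~\ref{le:18.2} together with the hypothesis \eqref{18.16}.

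The first step is the key algebraic identity. For each $v\in M_K$ and each $i\in I_v^c$, the form $\widetilde L_i^{(v)}$ vanishes on $T=\ker\varphi{''}$, so by \eqref{18.102} one has $\widetilde L_i^{(v)}=L_i^{(v){''}}\circ\varphi{''}$. Substituting $Q'=Q^{n/(n-k)}$ into \eqref{18.103} gives $Q'^{-d_{iw}}=Q^{\theta_w}\,Q^{-c_{iw}}$ at every place $w$ of a finite extension $E$ of $K$ containing the coordinates of $\x$. Taking the product over $w\in M_E$ then yields (the case $\varphi{''}(\x)=\mathbf 0$ being trivial)
\begin{equation*}
H_{\mathcal{L}{''},\dd,Q'}(\varphi{''}(\x))
\;=\; Q^{-w(T)/(n-k)}\;\prod_{w\in M_E}\max_{i\in I_w^c}\|\widetilde L_i^{(w)}(\x)\|_w\,Q^{-c_{iw}},
\end{equation*}
where I have used \eqref{18.14} to rewrite $\sum_w\theta_w=\sum_v\theta_v=-w(T)/(n-k)$.

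The second step is to enlarge the inner maximum from $I_w^c$ to $\{1,\ldots,n\}$, which majorizes the remaining product by $H_{\widetilde{\mathcal{L}},\cc,Q}(\x)$. Lemma~\ref{le:18.2} then bounds this by $(2\HL)^{(8R)^n}\HLcQ(\x)$, and the hypothesis \eqref{18.16} combined with $\DL^{1/n}\leq\HL$ (from \eqref{9.3}) reduces everything to $(2\HL)^{(8R)^n+1}\,Q^{-\delta}$. To discard the leading factor $Q^{-w(T)/(n-k)}$ I use $w(T)\geq 0$, which follows directly from the defining property \eqref{2.scss} of $T$ applied with $U=\{\mathbf 0\}$, since $w(\OQq^n)=0$ by \eqref{2.8}.

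Finally, the lower bound \eqref{18.15} on $Q$ absorbs $(2\HL)^{(8R)^n+1}\leq (2\HL)^{2(8R)^n}$ into a factor $Q^{\delta/100}$, giving $H_{\mathcal{L}{''},\dd,Q'}(\varphi{''}(\x))\leq Q^{-99\delta/100}$; converting to $Q'=Q^{n/(n-k)}$ and using $n-k\geq 1$ yields the desired $Q'^{-99\delta/(100n)}$. The only mildly subtle point is the non-negativity of $w(T)$; everything else is a bookkeeping calculation made clean by the construction of $\mathcal{L}{''}$ and $\dd$ in \eqref{18.101}--\eqref{18.103}.
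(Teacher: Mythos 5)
Your proposal is correct and follows essentially the same route as the paper: the identity $Q'^{-d_{iw}}=Q^{\theta_w}Q^{-c_{iw}}$, enlarging the maximum from $I_w^c$ to $\{1,\ldots,n\}$ to reach $H_{\widetilde{\mathcal{L}},\cc,Q}(\x)$, Lemma \ref{le:18.2}, \eqref{9.3}, and absorbing the constants via \eqref{18.15}. The paper discards the factor $Q^{\sum_w\theta_w}$ by noting $\sum_v\theta_v=-w(T)/(n-k)<0$, which is the same observation as your $w(T)\geq 0$ (obtained from \eqref{2.scss} with $U=\{\mathbf 0\}$, using $w(\{\mathbf 0\})=0$).
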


\begin{proof} We need the crucial observation that by \eqref{18.14}, \eqref{2.scss}, \eqref{2.8},
\begin{equation}\label{18.observation}
\sum_{v\in M_K} \theta_v= -\, \frac{w(T)}{n-k}<-\,\frac{w(\OQq^n)}{n}=0.
\end{equation}

Let $E$ be a finite extension of $K$ with $\x\in E^n$.
In accordance with our usual conventions, we put 
$L_i^{(w)}{''} :=L_i^{(v)}{''}$,
$d_{iw}:=d(w|v)d_{iv}$, $I_w^c:=I_v^c$ for places $w\in M_E$ 
lying above $v\in M_K$. 
Thus, \eqref{18.103}, \eqref{18.14}, \eqref{18.observation} imply 
$d_{iw}:=\frac{n-k}{n}({c_{iw}-\theta_w})$ for $w\in M_E, i\in I_w^c$
with $\sum_{w\in M_E}\theta_w<0$,
and so
\begin{eqnarray*}
H_{\mathcal{L}{''},{\bf d},Q'}(\varphi '' (\x ))&=&
\prod_{w\in M_E} 
\max_{i\in I_w^c}\| \widetilde{L}_i^{(w)}(\x )\|_w{Q'}^{-d_{iw}}
\\
&=&
\prod_{w\in M_E} Q^{\theta_w}
\max_{i\in I_w^c} \|\widetilde{L}_i^{(w)}(\x )\|_wQ^{-c_{iw}}
\\
&\leq&
\prod_{w\in M_E}\max_{1\leq i\leq n} \|\widetilde{L}_i^{(w)}(\x )\|_wQ^{-c_{iw}}
\\
&=&H_{\widetilde{\mathcal{L}},\cc ,Q}(\x ).
\end{eqnarray*}
Together with \eqref{18.4}, \eqref{9.3}, \eqref{18.observation} this implies
\begin{eqnarray*}
H_{\mathcal{L}{''},{\bf d},Q{'}}(\varphi{''} (\x ))
&\leq& (2\HL )^{(8R)^n}\HLcQ (\x )
\\
&\leq& (2\HL )^{(8R)^n+R^n}\cdot \DL^{-1/n}\HLcQ (\x ).
\end{eqnarray*}
Now \eqref{18.17} follows easily from this last inequality and 
\eqref{18.15}, \eqref{18.16}.
\end{proof}  
\vskip0.2cm

\begin{proof}[Proof of Theorem \ref{2.3}]
We assume for the moment, that $n-k\geq 2$.
We intend to apply Theorem \ref{th:8.1}
with 
\begin{equation}\label{18.substitutions}
n-k,\ nR^n,\ \frac{99}{100}\delta/n,\ \LL{''},\ {\bf d}
\end{equation}
replacing
$n,R,\delta, \LL ,\cc$, respectively.
Clearly, with these replacements
\eqref{8.1} holds, and we verified above that
conditions \eqref{8.5}--\eqref{8.8}
are satisfied as well.

Let $m_2'$, $\omega_2'$ be the quantities $m_2,\omega_2$ from
Theorem \ref{th:8.1}, with the objects in \eqref{18.substitutions}
replacing $n,R,\delta, \LL ,\cc$, respectively.
Further, let $C_2'$ be the quantity obtained by applying the substitutions
from \eqref{18.substitutions} to $C_2$, but replacing $H_{\mathcal{L}{''}}$
by the upper bound $(2\HL)^{(8R)^n}$ from Lemma \ref{le:18.4}.  
Then Theorem \ref{th:8.1} implies that 
there exist reals $Q'_1\kdots Q'_{m_2{'}}$ with 
$C_2'\leq Q_1'<\cdots <Q'_{m_2{'}}$
such that if $Q'\geq 1$ is a real with
\begin{equation}\label{18.18}
\{ {\bf y}\in\OQq^{n-k}:\, H_{\mathcal{L}{''},{\bf d},Q{'}}({\bf y})
\leq {Q'}^{-\frac{99}{100}\delta /n}\}\, \not=\{ {\bf 0}\},
\end{equation}
then
\begin{equation}\label{18.19}
Q'\in \left[\left. 1,C_2'\right)\right.\cup\,
\bigcup_{h=1}^{m_2{'}} 
\left[\left. Q_h' ,{Q_h'}^{\omega_2{'}}\right)\right. .
\end{equation}

We proved \eqref{18.19} under the assumption $n-k\geq 2$.
We now assume that $n-k=1$ 
and show that \eqref{18.19} is valid also in this case.
The quantities $m_2' ,\omega_2' ,C_2'$ are defined
as above, but with $n-k=1$ replacing $n$. We have
$L_1^{(v)}{''}=\alpha_vX$, $d_{1v}=0$ for $v\in M_K$, 
and so for ${\bf y}=y\in K^*$, by the product formula,
\[
H_{\mathcal{L}{''},{\bf d},Q'}(y)=\prod_{v\in M_K} \|\alpha_vy\|_v
=\prod_{v\in M_K}\|\alpha_v\|_v.
\]
This is valid also if $y\not\in K$. 
Let $\{ \alpha_v:\, v\in M_K\}=\{ \alpha_1\kdots \alpha_r\}$.
By \eqref{18.12}, we have $r\leq nR^n$. Moreover, by Lemma \ref{le:18.4},
\[
\prod_{v\in M_K}\max_{1\leq i\leq r}\|\alpha_i\|_v=H_{\mathcal{L}{''}}
\leq (2\HL )^{(8R)^n}.
\]
Hence if $y\not= 0$,
\begin{eqnarray*}
H_{\mathcal{L}{''},{\bf d},Q'}(y)&\geq& 
\displaystyle{\prod_{v\in M_K}\min_{1\leq i\leq r}\|\alpha_i\|_v}
\\
&\geq& \prod_{v\in M_K}
\frac{\|\alpha_1\cdots\alpha_r\|_v}
{(\max_{1\leq i\leq r}\|\alpha_i\|_v)^{r-1}}\geq (2\HL )^{-(8R)^{2n}}.
\end{eqnarray*}
Now if $y$ satisfies \eqref{18.18}, then certainly, $Q'\leq C_2'$
and so \eqref{18.19} is satisfied.

Let $Q$ be one of the reals 
being considered in Theorem \ref{th:2.3}, i.e., with
\[
\{ \x\in\OQq^n :\, \HLcQ (\x )\leq\DL^{1/n}Q^{-\delta}\}\not\subset T.
\]
Then by Proposition \ref{pr:18.5}, either $Q$ does not satisfy \eqref{18.15},
or $Q':= Q^{n/(n-k)}$ satisfies \eqref{18.18}. 
The first alternative implies $Q< C_2{'}^{n/(n-k)}$. So in either case,
\[
Q\in \left[\left. 1,{C_2'}^{(n-k)/n}\right)\right.\cup\, 
\bigcup_{h=1}^{m_2{'}} 
\left[\left. Q_h^* ,{Q_h^*}^{\omega_2{'}}\right)\right. ,
\]
where $Q_h^* := {Q_h'}^{(n-k)/n}$ for $h=1\kdots m_2{'}$.

To prove Theorem \ref{th:2.3}, we have to cut the intervals into smaller pieces.
In general, any interval $[A,A^{\theta})$ is contained in a union of at most
$[\log \theta /\log \omega_0 ]+1$ intervals of the shape $[Q^* ,{Q^*}^{\omega_0})$.
It follows that there are reals $Q_1\kdots Q_m$, with $C_0\leq Q_1<\cdots <Q_m$,
such that
\[
Q\in\left[\left. 1,C_0\right)\right.\cup\,
\bigcup_{h=1}^m \left[\left. Q_h ,Q_h^{\omega_0}\right)\right. ,  
\]
where
\[
m:= 1+\left[\frac{\log(\log C_2{'}^{(n-k)/n}/\log C_0 )}{\log\omega_0}\right]
\, +\, m_2'\left( 1+\left[\frac{\log \omega_2'}{\log\omega_0}\right]\right).
\]
To finish our proof, we have to show that $m\leq m_0$.

We first estimate from above $m_2'$. Taking the definition of $m_2$ from 
\eqref{8.definitions} and the substitutions from \eqref{18.substitutions},
and using $R\geq n\geq 2$, we obtain
\begin{eqnarray*}
m_2'&\leq& 61(n-k)^6 2^{2(n-k)}(100n/99\delta )^2
\log( 22(n-k)^2 2^{n-k}\cdot nR^n\cdot 100n/99\delta )
\\
&\leq& 62n^8 2^{2n}\delta^{-2}\log (23n^4 2^n R^n\delta^{-1})
\leq 62n^{10} 2^{2n}\delta^{-2}\log\big( (3\delta^{-1}R)^{3n}\big)
\\
&\leq& 186n^9 2^{2n}\delta^{-2}\log (3\delta^{-1}R)\, =: m_* .
\end{eqnarray*}
Further,
\begin{eqnarray*}
&&1+\left[\frac{\log(\log C_2{'}^{(n-k)/n}/\log C_0)}{\log\omega_0}\right]
\\
&&\qquad\leq
1+\left[
\frac{\log\Big(\log \big(2\times (2\HL)^{(8R)^n}\big)^{m_*^{2m_*}}/
\log\max (\HL^{1/R} ,n^{1/\delta})\Big)}
{\log\omega_0}\right]
\\
&&\qquad\leq
\frac{3m_*\log m_*}{\log (\delta^{-1}\log 3R)},
\end{eqnarray*}
and
\[
1+\left[\frac{\log \omega_2'}{\log\omega_0}\right]\leq 1+\frac{5}{2}
\cdot 
\frac{\log m_*}{\log \omega_0}\leq \frac{3\log m_*}{\log (\delta^{-1}\log 3R)}.
\]
So altogether,
\[
m\leq \frac{6m_*\log m_*}{\log (\delta^{-1}\log 3R )}.
\]
Using $R\geq n\geq 2$, $186n^9 2^{2n}\leq 50^{2n}$, 
$\delta^{-2}\log (3\delta^{-1}R)\leq (\delta^{-1}\log 3R)^3$, 
this leads to
\begin{eqnarray*}
m&\leq& 6m_*\times
\frac{\log \big(186 n^9 2^{2n}\delta^{-2}\log (3\delta^{-1}R)\big)}{\log (\delta^{-1}\log 3R )}
\\
&\leq&
6m_*\left(\frac{2n\log 50}{\log\log 6}\, + 3\right)
\leq 100n m_*
\\
&\leq&
10^5 2^{2n}n^{10}\delta^{-2}\log (3\delta^{-1}R) ,
\end{eqnarray*}
i.e., $m\leq m_0$. 
This completes the proof of Theorem \ref{th:2.3}.
\end{proof}

\end{document}